\colorlet{mdtRed}{red!50!black}
\definecolor{dblue}{rgb}{0,0,.6}
\numberwithin{equation}{subsection}
\newtheorem{theorem}[equation]{Theorem}
\newtheorem{corollary}[equation]{Corollary}
\newtheorem{lemma}[equation]{Lemma}
\newtheorem{proposition}[equation]{Proposition}
\newtheorem{definition}[equation]{Definition}
\newtheorem*{theorem*}{Theorem}
\newtheorem*{corollary*}{Corollary}
\theoremstyle{remark}
\newtheorem{remark}[equation]{Remark}
\newcommand{\Z}{\mathbb{Z}}
\newcommand{\C}{\mathbb{C}}
\DeclareMathOperator{\et}{\textnormal{\'et}}
\newcommand{\mf}[1]{\mathfrak{#1}}
\newcommand{\ms}[1]{\mathscr{#1}}
\newcommand{\mb}[1]{\mathbb{#1}}
\newcommand{\mc}[1]{\mathcal{#1}}
\begin{document}
	
\title[Recent results on Punctual Quot Schemes]{Some recent results on the punctual Quot schemes} 

	\author[I. Biswas]{Indranil Biswas} 
	
	\address{Department of Mathematics, Shiv Nadar University, NH91, Tehsil Dadri, Greater Noida, Uttar
		Pradesh 201314, India} 
	
	\email{indranil.biswas@snu.edu.in, indranil29@gmail.com} 
	
	\author[C. Gangopadhyay]{Chandranandan Gangopadhyay} 
	
	\address{Department of Mathematics, Indian Institute of Science Education and Research, Pune, 411008, Maharashtra, India.}
	
	\email{chandranandan@iiserpune.ac.in}
	
	\author[R. Sebastian]{Ronnie Sebastian} 
	
	\address{Department of Mathematics, Indian Institute of Technology Bombay, Powai, Mumbai 
		400076, Maharashtra, India}
	
	\email{ronnie@math.iitb.ac.in} 
	
	\subjclass[2010]{14J60, 14F35, 14L15, 14C05}
	
	\date{\today}
	
	\begin{abstract}
			Let $C$ be a smooth projective curve defined over the field of complex numbers. 
			Let $E$ be a vector bundle on $C$, and fix an integer $d\,\geqslant  \, 1$. 
			Let $\mc Q\,:=\,{\rm Quot}(E,d)$ be the Quot Scheme which parameterizes all
			torsion quotients of $E$ of degree $d$. In this article, 
			we survey some recent results on various invariants of $\mc Q$.
	\end{abstract}
	
	\maketitle

\tableofcontents

	\maketitle
	
\section{Introduction}

Let $C$ be a smooth projective curve over the field of complex numbers. 
	Let $E$ be a vector bundle on $C$, and fix an integer $d\,\geqslant   \,1$.
	Let $\mc Q\,:=\,{\rm Quot}(E,d)$ be the Quot Scheme which parameterizes all
	torsion quotients of $E$ of degree $d$. Then $\mc Q$ is a smooth 
	projective variety which admits various moduli theoretic interpretations. 
	For example, in \cite{BGL} Bifet, Ghione and Letizia interpreted 
	this Quot scheme as the space of higher rank effective divisors and 
went on to construct a higher dimensional analogue of the Abel-Jacobi map. 
	They used this map to compute the Betti numbers of the moduli space of vector bundles. 
	This approach has been used by Hoskins and Lehalleur \cite{HP} to 
	study the Voevodsky motives of moduli space of bundles over curves 
	as well. The Quot scheme $\mc Q$ can also be thought as the moduli 
	space of vortices of a certain numerical type \cite{Biswas-Romao}. 

Recently, there has been a lot of research in understanding various 
aspects of the geometry of $\mc Q$, and many invariants associated 
to $\mc Q$ has been computed. 
See for example, \cite{BDH-aut}, \cite{G19} \cite{BGS}, for automorphism 
groups of $\mc Q$, \cite{Bis-Dh-Hu} for the Brauer group of $\mc Q$, \cite{GS} 
for the $S$-fundamental group, \cite{GS-nef} for the nef cone of $\mc Q$. 
There has been significant efforts towards understanding the cohomology of 
various natural bundles on $\mc Q$. In \cite{BGS} the cohomology of 
tangent bundle was computed. In \cite{OS} the Euler characteristic 
of certain tautological bundles was computed, and a conjectural formula 
was given by for their cohomology. In \cite{Krug}, the author proves this conjecture 
in various cases. In \cite{Marian-Negut}, the authors give a representation-theoretic interpretation of
the cohomology (with $\mathbb Q$ coefficients) of $\mc Q$.  

In this article, we survey some of the recent results on various invariants associated to $\mc Q$. Broadly, we discuss the
computations of three invariants of $\mc Q$: The $S$-fundamental group scheme, the nef cone and the cohomology of tangent bundle of 
$\mc Q$. One of the crucial ingredients in these computations is a study of a map called Hilbert-Chow map, which is a map from $\mc Q$ 
to $C^{(d)}$, the $d$-th symmetric product of the curve $C$; see Section \ref{Hilbert-Chow} for the
construction of this map. More specifically, we 
study the fibers of this map. The fibers are not smooth but there exists certain natural resolution of this spaces. We use these 
resolutions to reduce various computation on $\mc Q$ to computations on $C^{(d)}$ (see Lemma \ref{the map g_d} and Corollary 
\ref{cor-cohomology over Q_D and S_D}).
	
	\section{$S$-fundamental group scheme of ${\rm Quot}(E,d)$}

	Let $X$ be a connected, reduced and complete scheme over a perfect field $k$,
	and let $x \,\in\, X$ be a $k$-rational point.
	In \cite{No1}, Nori introduced a $k$-group scheme $\pi^N(X,\,x)$ associated to essentially 
	finite vector bundles on $X$. In \cite{No2}, Nori extends the definition of $\pi^N(X,\,x)$ 
	to connected and reduced $k$-schemes. 
	In \cite{BPS}, Biswas, Parameswaran and Subramanian defined the notion of {\it $S$-fundamental group scheme} 
	$\pi^S(X,\, x)$ for $X$ a smooth projective curve over any algebraically closed field $k$. 
	This is generalized to higher dimensional connected smooth projective $k$-schemes 
	and studied extensively by Langer in \cite{La, La2}.
	
	Let $C$ be a connected smooth projective curve defined 
	over an algebraically closed field $k$ of characteristic $p$, with $p \,>\, 0$. 
	Fix a locally free sheaf $E$ on $C$ --- of rank at least two --- and
	an integer $d \,\geqslant \, 2$. 
	Let ${\rm Quot}(E,d)$ denote the quot scheme parametrizing quotients of $E$
	of length $d$. For ease of notation, in this section, we shall denote ${\rm Quot}(E,d)$ by $\mc Q$. 
	In \cite{GS}, the authors compute the $S$-fundamental group scheme
	of $\mc Q$. In this section we summarize some of the results and ideas in that 
	article. 
	
	We mention some earlier results where fundamental
	group schemes were computed. In \cite{BH} it is proved that for a smooth projective 
	surface $X$, the \'etale fundamental group $\pi^{\et}(Hilb_X^n,\,nx)$ of the Hilbert 
	scheme of $n$ points ($n\,\geqslant  \, 2$), is isomorphic to $\pi^{\et}(X,\,x)_{\rm ab}$. The main 
	result in \cite{PS-surface} is to generalize this to the $S$-fundamental
	group scheme. In \cite{La2} it is proved that $\pi^S({\rm Alb}(C),\,0)$ 
	is isomorphic to $\pi^S(C,\,c)_{\rm ab}$. 
	Let $S_d$ ($d\,\geqslant \, 2$) be the permutation group of $d$ symbols
	and denote $C^{(d)} \,:=\, C^d/S_d$.
	In \cite{PS-curve} the authors 
	prove that the $\pi^S(C^{(d)},\,d[c])$ is isomorphic to $\pi^S(C,\,c)_{\rm ab}$.
	Once we have such a result for the $S$-fundamental group scheme, one deduces similar results 
	for the Nori and \'etale fundamental group schemes.
	
	\subsection{Hilbert-Chow morphism}\label{Hilbert-Chow}

	We recall the definition of the Hilbert Chow morphism 
	$\phi\,:\,\mc Q\,\longrightarrow\, C^{(d)}$. Let $p_1\,:\,C\times \mc Q\,\longrightarrow\, C$
	and $p_2\,:\,C\times \mc Q\,\longrightarrow\, \mc Q$ denote the natural projections,
	and let 
	\begin{equation}\label{hc-e1}
0\,\longrightarrow\, K\,\longrightarrow\, p_1^*E\,\longrightarrow\,\mc B\,\longrightarrow\, 0
	\end{equation}
	denote the universal short exact sequence on $C\times \mc Q$.
	Since $ \mc B$ and $p_1^*E$ are flat over $\mc Q$, it follows 
	that $K$ is a flat $\mc Q$ sheaf. Let $q\,\in\, \mc Q$ denote 
	a closed point. Restricting this quotient to $C\times \{q\}$ we get the exact sequence
	\begin{equation}\label{hc-e2}
0\,\longrightarrow\, K\vert_{C\times q}\,\longrightarrow\, E\,\longrightarrow\, \mc B\vert_{C\times q}
\,\longrightarrow\, 0\,. 
\end{equation}
	It follows that $K\vert_{C\times q}$ is a locally free 
	sheaf on $C$. From Nakayama's lemma it follows that 
	$K$ is a locally free $C\times \mc Q$ sheaf of rank $r\,:=\,{\rm rank}\, E$.
	Taking determinant of the inclusion in \eqref{hc-e1} we get an exact sequence
	$$0\,\longrightarrow\, {\rm det}(K)\,\longrightarrow\, {\rm det}(p_1^*E)\,\longrightarrow\, \mc F
\,\longrightarrow\, 0\,.$$
	To show that $\mc F$ is flat over $\mc Q$ it suffice 
	to prove that the restriction of this sequence to 
	$C\times q$ remains exact on the left. But this is clear
	as the restriction of this sequence to $C\times q$ 
	is precisely the sequence obtained by taking determinant
	of the inclusion in \eqref{hc-e2}, which remains exact on the left.
	Thus, on $C\times \mc Q$ we get a quotient 
	$$0\,\longrightarrow\, {\rm det}(K)\otimes{\rm det}(p_1^*E)^{-1}\,\longrightarrow\, \mc O
\,\longrightarrow\, \mc F\otimes {\rm det}(p_1^*E)^{-1}\,\longrightarrow\, 0\,.$$
	This defines a morphism
	\begin{equation}\label{defn-hc}
		\phi\,:\,\mc Q \,\longrightarrow\, C^{(d)}\,.
	\end{equation}
	In the following sections we describe properties of the fibers of this morphism.
	
	\subsection{Locus where $\phi$ is smooth}
	Consider the map $\phi\,:\,\mathcal{Q}\,\longrightarrow\, C^{(d)}$.
	Let $D$ denote the divisor $\sum^{k}_{i=1}d_{i}[c_{i}]$,
	and consider a quotient $q$ 
	\begin{equation}\label{thick-points}
		E\,\xrightarrow{\,\,\,q\,\,\,} \mathcal{O}_{D}\,\longrightarrow\, 0\,. 
	\end{equation}

	\begin{lemma}\label{GS-L3}
		The map $\phi\,:\,\mathcal{Q}\,\longrightarrow \,C^{(d)}$ 
		is smooth at $q$ (corresponding to the quotient in equation 
		\eqref{thick-points}).
	\end{lemma}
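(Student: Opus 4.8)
The plan is to verify smoothness of $\phi$ at $q$ by showing that the differential of $\phi$ at $q$ is surjective. Since the punctual Quot scheme $\mc Q$ and the symmetric product $C^{(d)}$ are both smooth, a morphism between them is smooth at a point precisely when its differential on Zariski tangent spaces is surjective there, so it suffices to produce such a surjection. First I would record the two tangent spaces. Writing $K$ for the kernel of the quotient $q$ in \eqref{thick-points}, the deformation theory of the Quot functor gives $T_q\mc Q \,=\, {\rm Hom}(K,\, \mc O_D)$, the obstruction space ${\rm Ext}^1(K,\,\mc O_D)$ vanishing because $K$ is locally free (this also recovers the smoothness of $\mc Q$). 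On the target side, viewing $C^{(d)}$ as the scheme of length-$d$ quotients of ${\rm det}\,E$ through the determinant construction of \eqref{defn-hc}, the tangent space at $D$ is ${\rm Hom}({\rm det}\,K,\, \mc F)$, where $\mc F \,=\, {\rm det}\,E/{\rm det}\,K$ is the determinant quotient supported on $D$.

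Next I would describe $d\phi_q$ concretely. A tangent vector $\psi \,\in\, {\rm Hom}(K,\,\mc O_D)$ corresponds to a first-order deformation $\widetilde K \,\subset\, E\otimes_k k[\epsilon]$ of the subsheaf $K$; applying ${\rm det}$ produces a deformation ${\rm det}\,\widetilde K \,\subset\, ({\rm det}\,E)\otimes_k k[\epsilon]$ of ${\rm det}\,K$, and $d\phi_q(\psi)$ is the element of ${\rm Hom}({\rm det}\,K,\,\mc F)$ that classifies it. The crucial simplification is that $\mc O_D$ and $\mc F$ are both supported on the finite set $\{c_1,\,\ldots,\,c_k\}$, so all the $\rm Hom$-spaces and the differential decompose as direct sums of their local contributions at the $c_i$, and it is enough to check surjectivity one point at a time.

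Finally comes the local computation, which I expect to be the heart of the argument. Fix $c_i$ with local parameter $t$; after an automorphism of $E_{c_i}\,\cong\,\mc O^{\,r}$ the surjection $q$ becomes the standard projection $(x_1,\,\ldots,\,x_r)\,\longmapsto\, \overline{x_1}$ onto $\mc O/t^{d_i}$, so $K_{c_i} \,=\, t^{d_i}\mc O \oplus \mc O^{\,r-1}$, with basis $f_1 \,=\, t^{d_i}e_1,\, f_2\,=\,e_2,\,\ldots,\, f_r\,=\,e_r$. Given $\psi$ with $\psi(f_j)\,=\,b_j \,\in\, \mc O/t^{d_i}$, I would lift each $b_j$ to $\widetilde b_j\in\mc O$, set $\eta_j\,=\,\widetilde b_j e_1$, and expand the generator $(f_1+\epsilon\eta_1)\wedge\cdots\wedge(f_r+\epsilon\eta_r)$ of ${\rm det}\,\widetilde K$ to first order in $\epsilon$. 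The key point is that every term with $j\,\geqslant\, 2$ contains $e_1$ twice, once from $f_1\,=\,t^{d_i}e_1$ and once from $\eta_j\,=\,\widetilde b_j e_1$, and hence vanishes; only the $j=1$ term survives, giving $d\phi_q(\psi)\,=\,b_1$ under the identification $\mc F\,\cong\,\mc O/t^{d_i}$. Thus the local differential is the coordinate projection $(b_1,\,\ldots,\,b_r)\,\longmapsto\, b_1$, which is surjective; summing over the $c_i$ shows that $d\phi_q$ is surjective, and therefore $\phi$ is smooth at $q$. The only genuine obstacle is organizing the determinant-of-a-deformation bookkeeping so that this wedge cancellation becomes transparent; once the standard local form of $q$ is in place, the remainder is formal.
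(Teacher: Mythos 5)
Your proposal is correct and takes essentially the same approach as the paper: the paper's proof (deferring details to \cite{GS}) consists precisely of showing that the map of Zariski tangent spaces $T_{q}\mathcal{Q}\,\longrightarrow\, T_{\phi(q)}C^{(d)}$ is surjective, which is exactly what you do. Your identification $T_q\mathcal{Q}\,=\,{\rm Hom}(K,\,\mc O_D)$, the local normal form of the cyclic quotient at each $c_i$, and the wedge-product computation showing the differential is the coordinate projection $(b_1,\,\ldots,\,b_r)\,\longmapsto\, b_1$ supply correctly the details that the survey omits.
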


		This is proved in \cite{GS} by showing that the map of Zariski tangent spaces
		$T_{q}\mathcal{Q}\,\longrightarrow\, T_{\phi(q)}C^{(d)}$
		is surjective. 
	
	\begin{proposition}\label{Generic Smoothness}
		In every fiber of $\phi$ there is a point at which 
		$\phi$ is a smooth morphism.
	\end{proposition}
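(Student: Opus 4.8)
The plan is to deduce the proposition directly from Lemma \ref{GS-L3} by producing, in each fiber, a point of the special form treated there. Fix a point of the base $C^{(d)}$, that is, an effective divisor $D \,=\, \sum_{i=1}^{k} d_i [c_i]$ of degree $d$. By the construction of $\phi$ in Subsection \ref{Hilbert-Chow}, the fiber $\phi^{-1}(D)$ consists of those length-$d$ torsion quotients of $E$ whose associated divisor, obtained from the inclusion $\det K \,\hookrightarrow\, \det E$, equals $D$. The first step is to observe that this fiber always contains a quotient of the form $E \,\xrightarrow{\,q\,}\, \mathcal{O}_D \,\longrightarrow\, 0$ with $\mathcal{O}_D \,=\, \mathcal{O}_C/\mathcal{O}_C(-D)$, exactly as in \eqref{thick-points}. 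Indeed, for such a quotient the kernel $K$ satisfies $\det K \,=\, \det E \otimes \mathcal{O}_C(-D)$, so the determinant construction returns the divisor $D$ and hence $\phi(q) \,=\, D$. Lemma \ref{GS-L3} then asserts that $\phi$ is smooth at $q$, which is precisely what the proposition requires.

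Thus the only thing left to verify is that a surjection $E \,\twoheadrightarrow\, \mathcal{O}_D$ exists for every effective divisor $D$ of degree $d$. I would establish this by a local-to-global argument. The sheaf $\mathcal{H}om(E,\,\mathcal{O}_D) \,=\, E^{\vee}\vert_D$ is torsion, supported on the finite set $\{c_1,\ldots,c_k\}$, so its higher cohomology vanishes and its global sections are the direct sum of its stalks; consequently a homomorphism $E \to \mathcal{O}_D$ may be prescribed independently at each point $c_i$. At a given $c_i$, trivializing $E$ as $\mathcal{O}_{C,c_i}^{\oplus r}$ (with $r \,=\, \mathrm{rank}\,E$) and composing the projection onto the first factor with the quotient $\mathcal{O}_{C,c_i} \,\twoheadrightarrow\, \mathcal{O}_{C,c_i}/\mathfrak{m}_{c_i}^{d_i}$ produces a local surjection onto $(\mathcal{O}_D)_{c_i}$. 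Assembling these local pieces yields a global homomorphism that is surjective on every stalk, hence a surjection $E \,\twoheadrightarrow\, \mathcal{O}_D$.

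The argument is essentially formal once Lemma \ref{GS-L3} is in hand, so there is no serious obstacle; the only point demanding a little care is the existence of the cyclic quotient, and even there the torsion (hence cohomologically trivial) nature of $\mathcal{H}om(E,\,\mathcal{O}_D)$ removes any global gluing issue. It is worth emphasizing that this construction works for \emph{every} $D$, including non-reduced divisors with some $d_i \geq 2$, because Lemma \ref{GS-L3} is stated for arbitrary thick points; this is exactly what allows the conclusion to hold for every fiber rather than only for the generic ones.
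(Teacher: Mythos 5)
Your proof is correct and follows essentially the same route as the paper: both reduce the statement to Lemma \ref{GS-L3} by exhibiting, in each fiber, a quotient of the form $E \twoheadrightarrow \mathcal{O}_D$. The only difference is in how that surjection is produced --- the paper composes a line-bundle quotient $E \twoheadrightarrow L$ with the restriction $L \to L\otimes \mathcal{O}_D$ (implicitly using that $L\otimes \mathcal{O}_D \cong \mathcal{O}_D$ since $D$ is zero-dimensional), whereas you glue local projections via the finite-support decomposition of $\mathcal{H}om(E,\,\mathcal{O}_D)$; both constructions are valid.
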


	\begin{proof}
		Let $D$ be the divisor corresponding to a point $x\,\in\, C^{(d)}$.
		Fix a line bundle $L$ which is a surjective quotient of 
		$E$. Then the composition of maps $E\,\longrightarrow\, L\,\longrightarrow\, L\otimes \mc O_D$,
which is denoted by $q$, is a quotient such that $\phi(q)\,=\,x$. The proposition now 
		follows from Lemma \ref{GS-L3}.
	\end{proof}

 \subsection{The scheme $S_d$ and fibers of the Hilbert Chow map}\label{section canonical bundle S_D}\label{S_d}

As before, $C$ is a complex projective curve and $E\, \longrightarrow\, C$
a vector bundle of rank $r$. Let $D\,\in \,C^{(d)}$. 
We fix an ordering of the points of $D$ 
\begin{equation}\label{define-sequence-c_i}
(c_1,\,c_2,\,\cdots,\,c_d)\, \in\, C^d\, .
\end{equation}
We will use this ordering to inductively construct a variety $S_j$ and a vector bundle
$A_j\, \longrightarrow\, C\times S_j$ for all $1\, \leqslant\, j\, \leqslant\, d$.

Set $S_{0}\,=\,{\rm Spec}\,\C$ and $A_{0}\,=\,E$. For $j\,\geqslant  \, 1$, we will define
$(S_{j},\, A_{j})$ assuming that the pair $(S_{j-1},\, A_{j-1})$ has been defined. 
Let 
$$\alpha_{j-1}\,:\,\{c_j\}\times S_{j-1}\,\hookrightarrow\, C\times S_{j-1} $$ 
be the natural closed immersion, where $c_j$ is the point in \eqref{define-sequence-c_i}.
Consider the projective bundle
\begin{equation}\label{def f_j,j-1}
f_{j,j-1}\,:\, S_{j}\,:=\,\mathbb{P}(\alpha^{*}_{j-1}A_{j-1})
\,\longrightarrow\, S_{j-1},
\end{equation}
and define the map
\begin{equation}
F_{j,j-1}\,:=\,{\rm Id}_C\times f_{j,j-1}\,:\,C\times S_{j}
\,\longrightarrow\, C\times S_{j-1}\,.
\end{equation}
We also have the natural closed immersion 
$$i_{j}\,:\,\{c_j\}\times S_{j}\,\hookrightarrow\, C\times S_{j}\,.$$
Let $p_{1,j}\, :\, C\times S_{j} \,\longrightarrow\, C$
and $p_{2,j}\, :\, C\times S_{j} \,\longrightarrow\, S_{j}$
be the natural projections. For each $j$, we have the following diagram
\[
\begin{tikzcd}
	\{c_j\}\times S_{j} \arrow[r,hook,"i_j"] \arrow[rd, "="]& C\times S_{j} \arrow[r,"F_{j,j-1}"] 
	\arrow[d,"p_{2,j}"] & C\times S_{j-1} \arrow[d,"p_{2,j-1}"] \\
	& S_j \arrow[r,"f_{j,j-1}"] & S_{j-1} \arrow[u, bend right=90, "\alpha_{j-1}", labels=below right]
\end{tikzcd}
\]
Let $\mathcal{O}_{j}(1) \,\longrightarrow\, S_{j}$ denote the universal line bundle.
Then over $C\times S_{j}$ we have the homomorphisms
\begin{align}\label{quotient-1}
F^{*}_{j,j-1}A_{j-1}\,\longrightarrow\, &\,\, (i_{j})_{*}i^{*}_{j}F^{*}_{j,j-1}A_{j-1} \\
	=\, &\,\, (i_{j})_{*}f^{*}_{j,j-1}\alpha_{j-1}^*A_{j-1} \nonumber\\
	\longrightarrow\, &\,\, (i_{j})_{*}\mathcal{O}_{j}(1)\nonumber
\end{align}
Define $A_{j}$ to be the kernel of the composition of
homomorphisms $$F^{*}_{j,j-1}A_{j-1}\,\longrightarrow\,(i_{j})_{*}\mathcal{O}_{j}(1)$$
in \eqref{quotient-1}. 
Thus, we have the following short exact sequence on $C\times S_j$
\begin{equation}\label{def-A_j}
	0\,\longrightarrow\, A_j\,\longrightarrow\, F_{j,j-1}^*A_{j-1}
	\,\longrightarrow\, i_{j*}(\mc O_j(1))\,\longrightarrow\, 0\,.
\end{equation}
For any $x\,\notin\, \{c_j\}\times S_{j} \,\subset\, C\times S_j$ the localizations of $A_j$ and $F_{j,j-1}^*A_{j-1}$ at $x$
are same. In particular, $A_j$ is locally free at $x$.
Now assume that $x\,\in\, \{c_j\}\times S_{j} \,\subset\, C\times S_j$,
and let $R$ be the local ring of $C\times S_j$ at $x$. Then the localization of $i_{j*}(\mc O_j(1))$ is given by $R/rR$
for some regular element $r\,\in\, R$ and hence its depth is given by 
${\rm dim}~R-1$. By Auslander--Buchsbaum formula we get that projective dimension of $i_{j*}(\mc O_j(1))$ is $1$. This implies that
$A_j$ is locally free at $x$. 
Therefore $A_j$ is locally free on $C\times S_j$. 
Thus, $(S_j,\, A_j)$ is constructed.

For $d\, \geqslant  \, j\,>\, i\,\geqslant  \, 0$, define morphisms
\begin{align}
f_{j,i}\,=\,f_{j,j-1}\circ \cdots \circ f_{i+1,i}\,& :\,S_{j}
\,\longrightarrow\, S_{i}, \label{f10}\\
F_{j,i}\,=\,{\rm Id}_C\times f_{j,i}\,=\,F_{j,j-1}\circ \cdots \circ F_{i+1,i}\,\,& :\,
C\times S_{j}\,\longrightarrow\, C\times S_{i}\,.\nonumber
\end{align}
Note that both the morphisms are flat. 

Closed points of $S_d$ are in bijective correspondence with the filtrations
\begin{equation*}
E_d\,\subset\, E_{d-1} \,\subset\, E_{d-2} \,\subset\, \cdots \,\subset\,
E_1\,\subset\, E_0\,=\,E,
\end{equation*}
where each $E_j$ is a locally free sheaf of rank $r$ on $C$
and $E_j/E_{j+1}$ is a skyscraper sheaf of degree one supported at 
$c_{j+1}\,\in \, C$. 

Let $p_1\,:\,C\times S_d\,\longrightarrow\, C$ denote the natural projection.
We will construct a quotient of $p_1^*E$.
Using the flatness of $F_{d,i}$, and pulling back \eqref{def-A_j},
for $i\,=\,0,\,\cdots,\,d-1$, we get a sequence of inclusions 
\begin{equation}\label{filtration on C times S_d}
A_{d}\,\subset\, F_{d,d-1}^{*}A_{d-1}\,\subset\, F_{d,d-2}^{*}A_{d-2}\,\subset\, \cdots 
\,\subset\, F_{d,1}^{*}A_{1}\,\subset\, F_{d,0}^*A_0\,=\,p^{*}_{1}E. 
\end{equation}
Define 
$$
B_{j}^d \,:=\, p^{*}_{1}E/F_{d,j}^{*}A_{j}\, \cong\, F_{d,j}^{*}(p^{*}_{1}E/A_j)\, ,
$$
so $B_{d}^d$ fits in the exact sequence
\begin{equation}\label{univ-quotient-S_D}
0\,\longrightarrow\, A_d\,\longrightarrow\, p^{*}_{1}E\,\longrightarrow\,
B^{d}_{d}\,\longrightarrow\, 0\,.
\end{equation}
The sheaf $B^{d}_{d}$ is $S_d$--flat.

Pulling back the exact sequence \eqref{def-A_j} on $C\times S_j$ along $F_{d,j}$,
and using flatness of $F_{d,j}$, we see that
$$F_{d,j-1}^{*}A_{j-1}/F_{d,j}^{*}A_j
\,\,\cong \,\,F_{d,j}^*(i_j)_*(\mc O_j(1))\,\,\cong\,\, f_{d,j}^*(\mc O_j(1))\,.$$ 
Thus, for each $j$ there is an exact sequence on $C\times S_d$
\begin{equation*}
0 \,\longrightarrow\, f_{d,j}^*(\mc O_j(1))\,\longrightarrow\, B_{j}^d
\,\longrightarrow\, B_{j-1}^d\,\longrightarrow\, 0\,.
\end{equation*}
The support of $B^d_d$ is finite over $S_d$. 

\begin{remark}
Alternatively, we can define the space $S_d$ as follows. 
Consider the nested Hilbert scheme ${\rm Quot}(E,d,d-1,\ldots,2,1)$. 
The closed points of this scheme are in bijection with quotients
$\{E\to B_d\to B_{d-1}\to \ldots \to B_1\}$, where each $B_j$ is a 
0-dimensional sheaf of length $j$, and all the arrows are surjections. 
There is a natural map ${\rm Quot}(E,d,d-1,\ldots,2,1)\to C^d$.
The space $S_d$ is precisely the fiber over the point 
$(c_1,\,c_2,\,\cdots,\,c_d)\, \in\, C^d$. \hfill \qedsymbol
\end{remark}

Consider the Hilbert-Chow map
\begin{equation}\label{hc}
\phi\,\,:\,\,\mc Q\,\longrightarrow\, C^{(d)}.
\end{equation} 
The universal property of $\mathcal{Q}$ and \eqref{univ-quotient-S_D} yields a morphism 
$g_d\,:\,S_d\,\longrightarrow\, \mathcal{Q}$. 
Let $\mc Q_D$ denote the scheme theoretic fiber of $\phi$ over the point
$D\,\in\, C^{(d)}$. It is straightforward to
check that $g_d$ factors as
\begin{equation}\label{f7}
g_d\,:\,S_d\,\longrightarrow\, \mc Q_D
\end{equation}
(the same notation $g_d$ is used for the map it is factoring through).
Define $V$ to be the open subset of $\mc Q_D$ consisting of all quotients of the form $E\,\longrightarrow\, \mc O_D$.
\begin{lemma}\label{the map g_d} ~
\begin{enumerate}
\item $g_d$ is surjective on closed points.

\item $g_d^{-1}(V) \,\longrightarrow\, V$ is an isomorphism.

\item $S_d\setminus g_d^{-1}(V) \,\longrightarrow\, \mc Q_D \setminus V$ has positive dimensional fibers.
\end{enumerate}
\end{lemma}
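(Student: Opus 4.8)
The plan is to reduce all three statements to the module theory of finite length modules over the discrete valuation rings $R_i \,:=\, \mathcal O_{C,c_i}$ at the points $c_1,\dots,c_k$ of the reduced support of $D \,=\, \sum_{i=1}^k d_i[c_i]$. Fix a closed point $q \,=\, (E \twoheadrightarrow T)$ of $\mc Q_D$; then $T$ is a length $d$ torsion sheaf with ${\rm length}_{c_i} T \,=\, d_i$, and since its support points are distinct there is a canonical splitting $T \,=\, \bigoplus_{i=1}^k T_{c_i}$ into $R_i$-modules $T_{c_i}$ of length $d_i$. By the correspondence recorded just before the statement, a closed point of $S_d$ lying over $q$ is a filtration $E_d \,\subset\, E_{d-1} \,\subset\, \cdots \,\subset\, E_0 \,=\, E$ with $E_d \,=\, \ker q$ and each $E_j/E_{j+1}$ of length one supported at $c_{j+1}$; passing to the quotient by $E_d$ this is the same as a complete flag of $T$ whose successive quotients are supported, in the prescribed order, at the points $c_1,\dots,c_d$. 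As the summands $T_{c_i}$ have pairwise disjoint support, every subsheaf of $T$ splits compatibly, so such a flag of $T$ is precisely a tuple consisting of a complete flag (composition series) of each $T_{c_i}$ over $R_i$, the prescribed ordering only dictating at which global steps the individual local flags grow. Hence on closed points $g_d^{-1}(q) \,\cong\, \prod_{i=1}^k {\rm Fl}(T_{c_i})$, where ${\rm Fl}(M)$ denotes the variety of complete flags of submodules of a finite length $R_i$-module $M$. All three assertions are read off from this identification.

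For (1), a finite length module over a discrete valuation ring always possesses a composition series, so each ${\rm Fl}(T_{c_i})$ is nonempty; picking one flag in each factor yields a closed point of $S_d$ over $q$, proving surjectivity on closed points. For (3), observe that a length one submodule of an $R_i$-module $M$ is simple, hence a line in the socle $M[\mathfrak m_i] \,=\, \{ m \in M : \mathfrak m_i m \,=\, 0 \}$, and every such line is the bottom step of some complete flag; thus the first-step map ${\rm Fl}(M) \twoheadrightarrow \mathbb P(M[\mathfrak m_i])$ is surjective. Now $q \,\in\, \mc Q_D \setminus V$ means exactly that $T \,\not\cong\, \mc O_D$, i.e. some $T_{c_i}$ is not cyclic, equivalently $\dim_{\C} T_{c_i}[\mathfrak m_i] \,=\, \dim_{\C} T_{c_i}/\mathfrak m_i T_{c_i} \,\geqslant\, 2$. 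For that index $\mathbb P(T_{c_i}[\mathfrak m_i])$ has positive dimension, so $g_d^{-1}(q) \,\cong\, \prod_i {\rm Fl}(T_{c_i})$ is positive dimensional, giving (3).

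For (2) the decisive point is that $\mc O_D$ is a cyclic sheaf: at $c_i$ it is $R_i/\mathfrak m_i^{d_i}$, whose submodules $\mathfrak m_i^m/\mathfrak m_i^{d_i}$ are totally ordered, so it admits a unique composition series. Consequently, for $q \,\in\, V$ each ${\rm Fl}((\mc O_D)_{c_i})$ is a single reduced point and $g_d^{-1}(V) \to V$ is a bijection on closed points. To upgrade this to an isomorphism of schemes I would exhibit the inverse morphism. Restricting the universal quotient to $V$ gives a sheaf $\mc B_V$ on $C \times V$, flat over $V$, with every fibre isomorphic to $\mc O_D$. Writing $D_j \,:=\, D - [c_1] - \cdots - [c_j]$ for the divisors obtained by removing the points in the prescribed order, set $\mc B_j \,:=\, \mc B_V \otimes_{\mathcal O_{C \times V}} \mathcal O_{D_j \times V}$. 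Fibrewise this is $\mc O_D \otimes \mc O_{D_j} \,=\, \mc O_{D_j}$, of constant length $d-j$, so each $\mc B_j$ is $V$-flat and the canonical surjections $\mc B_{j-1} \twoheadrightarrow \mc B_j$ have length one kernel supported along $\{c_j\} \times V$. This chain of quotients of $p_1^* E$ is a family over $V$ of exactly the data parametrised by $S_d$ (in its description as a fibre of the nested Quot scheme), and so defines a morphism $\psi : V \to S_d$ factoring through $g_d^{-1}(V)$. Because the composition series of $\mc O_D$ is unique and the construction of $\psi$ is functorial in the flat family $\mc B_V$, the morphisms $\psi$ and $g_d$ are mutually inverse, which one checks on the universal families; hence $g_d^{-1}(V) \to V$ is an isomorphism.

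The main obstacle is precisely the scheme-theoretic isomorphism in (2): parts (1) and (3) concern only closed points and dissolve into elementary linear algebra over the $R_i$, whereas (2) demands an honest inverse morphism, and thus the verification that the sheaves $\mc B_j$ are genuinely flat over $V$ (so that $\psi$ is a morphism to $S_d$ and not merely a map on points) together with the fact that $\psi$ is a two-sided inverse on the level of families and not just on closed points. As a consistency check one may note that $S_d$ is smooth of dimension $d(r-1)$, being an iterated $\mathbb P^{r-1}$-bundle over ${\rm Spec}\,\C$, while Lemma \ref{GS-L3} shows that $\mc Q_D$ is smooth along $V$ of the same dimension, so an isomorphism between $g_d^{-1}(V)$ and $V$ is at least dimensionally possible.
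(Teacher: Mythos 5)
Your reduction to finite-length modules over the discrete valuation rings $\mc O_{C,c_i}$ is the natural framework here (the paper itself states this lemma without proof, citing \cite{GS}, so I am judging the argument on its own merits), and parts (1) and (3) are correct as written: the set-theoretic fibre of $g_d$ over $q$ is the set of complete flags of $T=\bigoplus_i T_{c_i}$ compatible with the fixed ordering, such flags always exist, and when some $T_{c_i}$ is non-cyclic the lines in its socle produce infinitely many closed points in the fibre, hence positive dimension. The problems are both in part (2), which you correctly single out as the crux. The first is a genuine logical gap: your flatness claim for $\mc B_j$ (``fibrewise of constant length $d-j$, so $V$-flat'') is valid only over a \emph{reduced} base, and at this stage you are not entitled to assume $V$ is reduced. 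Indeed $V$ is an open subscheme of the scheme-theoretic fibre $\mc Q_D$, and reducedness of $\mc Q_D$ is precisely one of the conclusions (Proposition \ref{fibres are of same dimension}) that the paper deduces \emph{from} this lemma, so assuming it silently is circular. The gap is repairable: Lemma \ref{GS-L3} says $\phi$ is smooth at every point of $V$, so $V$ is an open subscheme of the fibre of $\phi$ contained in the smooth locus of $\phi$, hence smooth and in particular reduced. But you invoke Lemma \ref{GS-L3} only as a dimension ``consistency check''; it is actually a required ingredient, both for the flatness of the $\mc B_j$ and for the identification $\mc B_V\cong \mc B_V\otimes\mc O_{D\times V}$ that you need when checking $g_d\circ\psi=\mathrm{id}_V$ on universal families.

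The second issue is a concrete indexing error in the construction of $\psi$. Points of $S_d$ are filtrations $E_d\subset\cdots\subset E_0=E$ with $E_{j-1}/E_j$ supported at $c_j$; equivalently, chains of quotients $E\to B_d\to\cdots\to B_1$ in which the length-$j$ quotient $B_j$ is supported on $[c_1]+\cdots+[c_j]$ and $\ker(B_j\to B_{j-1})$ sits at $c_j$. Your sheaves $\mc B_j=\mc B_V\otimes\mc O_{D_j\times V}$ with $D_j=D-[c_1]-\cdots-[c_j]$ have $\ker(\mc B_{j-1}\to\mc B_j)$ supported at $c_j$, so after re-indexing by length ($B_l:=\mc B_{d-l}$) one gets $\ker(B_l\to B_{l-1})$ supported at $c_{d-l+1}$: your family is a point of the nested Quot fibre over the \emph{reversed} tuple $(c_d,\dots,c_1)$, not over $(c_1,\dots,c_d)$, so as written $\psi$ does not land in $S_d$ and cannot be an inverse to $g_d$. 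The fix is to truncate from the front rather than from the back: set $\Delta_j:=[c_1]+\cdots+[c_j]$ and $B_j:=\mc B_V\otimes\mc O_{\Delta_j\times V}$, whose fibres are $\mc O_{\Delta_j}$ since $\Delta_j\leqslant D$ and $T_v\cong\mc O_D$ is cyclic; then $\ker(B_j\to B_{j-1})$ is supported at $c_j$ as required, and with the reducedness point above the rest of your argument goes through.
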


From this lemma, we can deduce the following
\begin{proposition}\label{fibres are of same dimension} \mbox{}
\begin{enumerate}
\item $\mc Q_{D}$ is irreducible, reduced and normal of dimension $d(r-1)$.

\item The Hilbert-Chow map $\phi$ is flat.
\end{enumerate}
\end{proposition}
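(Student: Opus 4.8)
The plan is to use the resolution $g_d\colon S_d\,\longrightarrow\, \mc Q_D$ furnished by Lemma \ref{the map g_d} to transport geometric information from the smooth variety $S_d$ onto each fiber, and then to combine this with the smoothness of the total space $\mc Q$ to run Serre's criteria. Although the statement lists (1) before (2), I would establish flatness in the middle and feed it back into the proof of reducedness and normality. I would begin with dimension and irreducibility. By the construction in \eqref{def f_j,j-1}, each $S_j$ is a $\mathbb{P}^{r-1}$-bundle over $S_{j-1}$ with $S_0\,=\,{\rm Spec}\,\C$, so $S_d$ is an iterated projective bundle over a point and is therefore smooth and irreducible of dimension $d(r-1)$. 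Since $g_d$ is surjective on closed points by Lemma \ref{the map g_d}(1), $\mc Q_D$ is irreducible. For the dimension, Lemma \ref{the map g_d}(2) identifies $V$ with the nonempty open set $g_d^{-1}(V)\,\subseteq\, S_d$, so $\dim \mc Q_D\,\geqslant\, \dim V\,=\,d(r-1)$, while surjectivity from a variety of dimension $d(r-1)$ forces $\dim \mc Q_D\,\leqslant\, d(r-1)$; hence $\dim \mc Q_D\,=\,d(r-1)$.

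Next I would prove flatness. The construction of $S_d$ and of $g_d$ applies verbatim to every $D\,\in\, C^{(d)}$ (after ordering the points of $D$), so the previous paragraph shows that \emph{every} fiber of $\phi$ is irreducible of dimension exactly $d(r-1)$. Since $\mc Q$ is smooth, hence Cohen--Macaulay of pure dimension $rd$, and $C^{(d)}$ is smooth, hence regular of dimension $d$, and all fibers have the expected dimension $rd-d\,=\,d(r-1)$, the miracle flatness criterion (a morphism from a Cohen--Macaulay scheme to a regular scheme all of whose fibers have the expected dimension is flat) applies and gives part (2).

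With flatness in hand I would return to part (1). Because $\phi$ is flat, $\mc Q$ is Cohen--Macaulay and $C^{(d)}$ is regular, each fiber $\mc Q_D$ is Cohen--Macaulay: the depth formula for a flat local homomorphism gives ${\rm depth}\,\mathcal{O}_{\mc Q_D,x}\,=\,{\rm depth}\,\mathcal{O}_{\mc Q,x}-{\rm depth}\,\mathcal{O}_{C^{(d)},D}\,=\,\dim \mc Q_D$ at each point $x$. In particular $\mc Q_D$ satisfies Serre's condition $S_1$. On the other hand, Proposition \ref{Generic Smoothness} provides a point at which $\phi$ is smooth, so $\mc Q_D$ is regular there; as $\mc Q_D$ is irreducible, its regular locus is dense and $\mc Q_D$ satisfies $R_0$. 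Serre's criterion $R_0+S_1$ then shows $\mc Q_D$ is reduced.

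For normality I would use Serre's criterion $R_1+S_2$, where $S_2$ is immediate from the Cohen--Macaulayness just established. For $R_1$ I would first locate the singular locus: since $g_d^{-1}(V)$ is open in the smooth $S_d$ and $g_d^{-1}(V)\,\cong\, V$ by Lemma \ref{the map g_d}(2), the open set $V$ is smooth, whence ${\rm Sing}(\mc Q_D)\,\subseteq\, \mc Q_D\setminus V$. The decisive input is Lemma \ref{the map g_d}(3): writing $W\,:=\,S_d\setminus g_d^{-1}(V)$, which is a proper closed subset of the irreducible $d(r-1)$-dimensional $S_d$ and hence has dimension $\leqslant d(r-1)-1$, the map $g_d$ contracts $W$ onto $\mc Q_D\setminus V$ with fibers of dimension $\geqslant 1$, so the fiber-dimension theorem yields $\dim(\mc Q_D\setminus V)\,\leqslant\, \dim W-1\,\leqslant\, d(r-1)-2$. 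Thus ${\rm Sing}(\mc Q_D)$ has codimension at least $2$, $R_1$ holds, and $\mc Q_D$ is normal. The step I expect to be most delicate is precisely this codimension estimate: it depends on the \emph{positivity} of the exceptional fiber dimension in Lemma \ref{the map g_d}(3), since knowing merely that the exceptional fibers are nonempty would give only codimension $\geqslant 1$ (enough for reducedness but not normality). Consequently the real content lives in the analysis in \cite{GS} of how $g_d$ fails to be an isomorphism over $\mc Q_D\setminus V$.
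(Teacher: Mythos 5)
Your proof is correct and follows exactly the route the paper intends: the survey states this proposition as a consequence of Lemma \ref{the map g_d} (deferring details to \cite{GS}), and the argument there is precisely your combination of the iterated projective-bundle structure of $S_d$, miracle flatness, Cohen--Macaulayness of the fibers, generic smoothness (Proposition \ref{Generic Smoothness}), and the codimension-$\geqslant 2$ estimate for $\mc Q_D\setminus V$ obtained from the positive-dimensional fibers in Lemma \ref{the map g_d}(3). Your closing remark that the \emph{positivity} of the exceptional fiber dimension is what upgrades reducedness to normality identifies exactly the role that part of the lemma plays in the original argument.
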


Furthermore, we have the following lemma which allows us to reduce computations of cohomology of vector bundles on $\mc Q_D$ to 
cohomology of vector bundles on $S_d$.

\begin{lemma}\label{lemma-canonical bundle of S_D}
There is a line bundle $\mc L$ on $\mc Q_D$
such that the canonical bundle $K_{S_d}$ of $S_d$ is isomorphic to $g_d^*\mc L$.
\end{lemma}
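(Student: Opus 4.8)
The plan is to write $\mc L$ down explicitly as a tautological line bundle coming from the universal quotient, and then to verify $g_d^*\mc L\,\cong\, K_{S_d}$ by computing both sides directly. Since $\mc B$ is flat and finite of length $d$ over $\mc Q$, the sheaf $p_{2*}\mc B$ is locally free of rank $d$ on $\mc Q$ with $R^{>0}p_{2*}\mc B\,=\,0$, so $\det p_{2*}\mc B$ is a genuine line bundle. I would set
\[
\mc L\,:=\,\big((\det p_{2*}\mc B)\vert_{\mc Q_D}\big)^{\otimes(-r)},\qquad r\,=\,{\rm rank}\,E,
\]
and aim to prove $g_d^*\mc L\,\cong\, K_{S_d}$.

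First I would compute $K_{S_d}$ from the iterated projective-bundle structure. Each $f_{j,j-1}\,:\,S_j\,=\,\mathbb P(\alpha_{j-1}^*A_{j-1})\,\to\, S_{j-1}$ is the projectivization of a rank-$r$ bundle, with tautological quotient $f_{j,j-1}^*\alpha_{j-1}^*A_{j-1}\,\twoheadrightarrow\,\mathcal O_j(1)$, so the relative Euler sequence gives $K_{S_j/S_{j-1}}\,\cong\,\mathcal O_j(-r)\otimes f_{j,j-1}^*\det(\alpha_{j-1}^*A_{j-1})$. Iterating the adjunction $K_{S_j}\,\cong\, K_{S_j/S_{j-1}}\otimes f_{j,j-1}^*K_{S_{j-1}}$ down the tower $S_d\to\cdots\to S_0\,=\,{\rm Spec}\,\C$ and pulling each factor up to $S_d$ (using $f_{j,j-1}\circ f_{d,j}\,=\,f_{d,j-1}$) yields
\[
K_{S_d}\,\cong\,\Big(\bigotimes_{j=1}^d f_{d,j}^*\mathcal O_j(1)\Big)^{\otimes(-r)}\otimes\bigotimes_{j=1}^d f_{d,j-1}^*\det(\alpha_{j-1}^*A_{j-1}).
\]

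The crux, and the step I expect to be the main obstacle, is to show that each determinant factor $f_{d,j-1}^*\det(\alpha_{j-1}^*A_{j-1})$ is trivial. Using $f_{d,j-1}^*\det(\alpha_{j-1}^*A_{j-1})\,\cong\,(\det F_{d,j-1}^*A_{j-1})\vert_{\{c_j\}\times S_d}$ together with the exact sequence $0\to F_{d,j-1}^*A_{j-1}\to p_1^*E\to B_{j-1}^d\to 0$, I would reduce to computing $\det B_{j-1}^d$. From the filtration of $B_{j-1}^d$ with graded pieces $f_{d,i}^*\mathcal O_i(1)$ supported on the Cartier divisors $\{c_i\}\times S_d$ for $1\le i\le j-1$, and the fact that the determinant of the pushforward of a line bundle from a Cartier divisor $Z$ equals $\mathcal O(Z)$ independently of the line bundle, one gets $\det B_{j-1}^d\,\cong\,\mathcal O_{C\times S_d}\big(\sum_{i<j}\{c_i\}\times S_d\big)$, and hence $\det F_{d,j-1}^*A_{j-1}\,\cong\, p_1^*\det E\otimes\mathcal O_{C\times S_d}\big(-\sum_{i<j}\{c_i\}\times S_d\big)$. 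Restricting to $\{c_j\}\times S_d$, the factor $p_1^*\det E$ becomes a constant (trivial) line bundle, while each $\mathcal O(\{c_i\}\times S_d)\vert_{\{c_j\}\times S_d}$ is trivial: either the two slices are disjoint, or they coincide and the self-restriction is the trivial normal bundle $p_1^*T_{c_j}C$. Thus every determinant factor is trivial and the second product above vanishes. This torsion-sheaf determinant bookkeeping is where I expect the real work to lie.

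It remains to identify the first factor with $g_d^*\mc L$. By construction $g_d$ satisfies $({\rm Id}_C\times g_d)^*\mc B\,\cong\, B_d^d$, so cohomology and base change give $g_d^*p_{2*}\mc B\,\cong\, p_{2,d*}B_d^d$ (the higher direct images vanish since the support is finite over $S_d$). The filtration $0\to f_{d,j}^*\mathcal O_j(1)\to B_j^d\to B_{j-1}^d\to 0$ pushes forward to a filtration of $p_{2,d*}B_d^d$ with graded pieces $f_{d,j}^*\mathcal O_j(1)$, so that $\det p_{2,d*}B_d^d\,\cong\,\bigotimes_{j=1}^d f_{d,j}^*\mathcal O_j(1)$, giving $\bigotimes_j f_{d,j}^*\mathcal O_j(1)\,\cong\, g_d^*\big((\det p_{2*}\mc B)\vert_{\mc Q_D}\big)$. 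Combining this with the vanishing of the determinant factors, we conclude $K_{S_d}\,\cong\,\big(g_d^*(\det p_{2*}\mc B)\vert_{\mc Q_D}\big)^{\otimes(-r)}\,=\,g_d^*\mc L$, as required.
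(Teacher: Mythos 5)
Your proposal is correct and takes essentially the same route as the proof in the cited source [GS] (the survey states this lemma without proof): compute $K_{S_d}$ down the tower of projective bundles via the relative Euler sequence, kill each factor $f_{d,j-1}^*\det(\alpha_{j-1}^*A_{j-1})$ by the determinant-of-torsion-quotient computation (restriction of $\mathcal O(\{c_i\}\times S_d)$ to $\{c_j\}\times S_d$ being trivial whether the slices are disjoint or equal), and identify $\bigotimes_j f_{d,j}^*\mathcal O_j(1)$ with $g_d^*\bigl(\det p_{2*}\mc B\big\vert_{\mc Q_D}\bigr)$ using base change (Lemma \ref{pullback-lemma}) and the filtration of $B^d_d$, so that $\mc L=\bigl(\mc O_{\mc Q}(1)\big\vert_{\mc Q_D}\bigr)^{\otimes(-r)}$ works. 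There is nothing to add.
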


\begin{corollary}\label{cor-cohomology over Q_D and S_D}
Let $V$ be a locally free sheaf on $\mc Q_D$. Then $H^i(\mc Q_D,\,V)\,\cong\,
H^i(S_d,\,g_d^*V)$.
\end{corollary}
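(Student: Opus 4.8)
The plan is to prove that $g_d$ realizes $\mc Q_D$ as a variety with \emph{rational singularities}, i.e.\ that the natural map $\mc O_{\mc Q_D}\,\longrightarrow\, Rg_{d*}\mc O_{S_d}$ is an isomorphism in the derived category; granting this, the corollary follows at once from the projection formula and the Leray spectral sequence. First I would collect the geometric properties of $g_d$ that make it a resolution. The scheme $S_d$ is an iterated projective bundle over $\mathrm{Spec}\,\C$ (each stage $\mathbb{P}(\alpha_{j-1}^*A_{j-1})$ has relative dimension $r-1$), hence $S_d$ is smooth and projective of dimension $d(r-1)$. By Proposition \ref{fibres are of same dimension} the target $\mc Q_D$ is irreducible, reduced and normal of the same dimension $d(r-1)$, so $g_d$ is a proper morphism of projective varieties. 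By Lemma \ref{the map g_d}(2) the map $g_d^{-1}(V)\,\longrightarrow\, V$ is an isomorphism onto the nonempty open set $V$, which is dense because $\mc Q_D$ is irreducible; hence $g_d$ is birational.

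Next I would establish the two identity/vanishing statements. Since $g_d$ is proper and birational and $\mc Q_D$ is normal, Zariski's main theorem gives $g_{d*}\mc O_{S_d}\,\cong\,\mc O_{\mc Q_D}$. For the higher direct images I would invoke the Grauert--Riemenschneider vanishing theorem: as $g_d\,:\,S_d\,\longrightarrow\,\mc Q_D$ is proper with $S_d$ smooth and $g_d$ generically finite (indeed birational, so that $\dim S_d-\dim g_d(S_d)=0$), one has $R^ig_{d*}K_{S_d}\,=\,0$ for all $i\,>\,0$. This is where Lemma \ref{lemma-canonical bundle of S_D} enters: writing $K_{S_d}\,\cong\,g_d^*\mc L$ for the line bundle $\mc L$ on $\mc Q_D$ and applying the projection formula gives
$$
0\,=\,R^ig_{d*}K_{S_d}\,=\,R^ig_{d*}(g_d^*\mc L)\,\cong\,\mc L\otimes R^ig_{d*}\mc O_{S_d}
$$
for every $i\,>\,0$. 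Since $\mc L$ is invertible, tensoring by it is faithfully exact, and we conclude $R^ig_{d*}\mc O_{S_d}\,=\,0$ for all $i\,>\,0$. Together with $g_{d*}\mc O_{S_d}\,\cong\,\mc O_{\mc Q_D}$ this says precisely that $\mc Q_D$ has rational singularities.

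Finally I would deduce the stated isomorphism. Let $V$ be locally free on $\mc Q_D$. The projection formula gives $R^ig_{d*}(g_d^*V)\,\cong\,V\otimes R^ig_{d*}\mc O_{S_d}$, which vanishes for $i\,>\,0$ and equals $V\otimes g_{d*}\mc O_{S_d}\,\cong\,V$ for $i\,=\,0$. Thus $Rg_{d*}g_d^*V$ is concentrated in degree zero and equal to $V$, so the Leray spectral sequence $H^p(\mc Q_D,\,R^qg_{d*}g_d^*V)\,\Rightarrow\,H^{p+q}(S_d,\,g_d^*V)$ degenerates and yields $H^i(S_d,\,g_d^*V)\,\cong\,H^i(\mc Q_D,\,V)$, as desired. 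The main obstacle is not the formal homological bookkeeping but the vanishing $R^ig_{d*}\mc O_{S_d}\,=\,0$; its proof hinges entirely on the canonical-bundle identity $K_{S_d}\,\cong\,g_d^*\mc L$ of Lemma \ref{lemma-canonical bundle of S_D}, which converts Grauert--Riemenschneider (a statement about $K_{S_d}$) into a statement about $\mc O_{S_d}$. One should also check carefully that the equality $\dim S_d\,=\,\dim\mc Q_D$ places the Grauert--Riemenschneider vanishing in the sharp range $i\,>\,0$.
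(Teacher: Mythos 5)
Your proposal is correct and follows essentially the same route as the paper: birationality of $g_d$ plus normality of $\mc Q_D$ gives $g_{d*}\mc O_{S_d}\,=\,\mc O_{\mc Q_D}$, Grauert--Riemenschneider vanishing (the paper cites it as \cite[Theorem 4.3.9]{Laz}) gives $R^qg_{d*}K_{S_d}\,=\,0$ for $q\,>\,0$, Lemma \ref{lemma-canonical bundle of S_D} together with the projection formula converts this into $R^qg_{d*}\mc O_{S_d}\,=\,0$, and the Leray spectral sequence concludes. Your write-up merely makes explicit some steps the paper leaves implicit (the projection formula applied to $g_d^*V$ and the density of the open set over which $g_d$ is an isomorphism).
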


\begin{proof}
As $g_d$ is birational (by Lemma \ref{the map g_d}), and $\mc Q_D$
is normal (by Proposition \ref{fibres are of same dimension}), it follows that 
$g_{d*}(\mc O_{S_d})\,=\,\mc O_{\mc Q_D}$. From \cite[Theorem 4.3.9]{Laz}
it follows that $R^qg_{d*}(K_{S_d})\,=\,0$ for all $q\,>\,0$. Using
Lemma \ref{lemma-canonical bundle of S_D}
this shows that $R^qg_{d*}(\mc O_{S_d})\otimes \mc L\,=\,0$ for $q\,>\,0$, that is,
$R^qg_{d*}(\mc O_{S_d})\,=\,0$ for $q\,>\,0$. Now the result follows using the 
Leray spectral sequence.
\end{proof}

\subsection{Fundamental group schemes}

\begin{definition}\label{cat-nf}
Let $X$ be a connected, projective and reduced $k$-scheme. Let $\mathcal{C}^{\rm nf}(X)$ denote the full subcategory of 
coherent sheaves whose objects are coherent sheaves $E$ on $X$ satisfying the following two conditions: 
\begin{enumerate}
\item $E$ is locally free, and 

\item for any smooth projective curve $C$ over $k$ and any morphism $f \,: \,C\, \longrightarrow \,X$, 
the vector bundle $f^*E$ is semistable of degree $0$. 
\end{enumerate}
\end{definition}

We call the objects of the category $\mc C^{\rm nf}(X)$ {\it numerically flat vector bundles} on $X$. 
Fix a $k$-valued point $x \,\in\, X$. 
	Let ${\rm Vect}_k$ be the category of finite dimensional $k$-vector spaces. 
	Let $T_x \,:\, \mc C^{\rm nf}(X) \,\longrightarrow\, {\rm Vect}_k$ be the fiber functor defined by 
	sending an object $E$ of $\mc C^{\rm nf}(X)$ to its fiber $E_x \,\in\, {\rm Vect}_k$ at $x$. 
	Then $(\mc C^{\rm nf}(X),\, \otimes,\, T_x,\, \mathcal{O}_X)$ is a neutral Tannakian category 
	\cite[Proposition 5.5, p.~2096]{La}.
	The affine $k$-group scheme $\pi^S(X,\, x)$ Tannakian dual to this category is called the 
	{\it S-fundamental group scheme} of $X$ with base point $x$ \cite[Definition 6.1, p.~2097]{La}. 
	
	A vector bundle $E$ is said to be {\it finite} if there are distinct non-zero polynomials 
	$f,\, g \,\in\, \mathbb{Z}[t]$ with non-negative coefficients such that $f(E) \,\cong\, g(E)$. 
	
	\begin{definition}
		A vector bundle $E$ on $X$ is said to be {\it essentially finite} if there exist two 
		numerically flat vector bundles $V_1,\, V_2$ and 
		finitely many finite vector bundles $F_1,\, \cdots,\ F_n$ on $X$ with 
		$V_2 \,\subseteq \,V_1 \,\subseteq\, \bigoplus\limits_{i=1}^n F_i$ such that $E \cong V_1/V_2$. 
	\end{definition}
	
	Let ${\rm EF}(X)$ be the full subcategory of coherent sheaves 
	whose objects are essentially finite vector bundles on $X$. 
	Fix a closed point $x \,\in\, X$ and let 
	$T_x \,: \,{\rm EF}(X) \,\longrightarrow\, {\rm Vect}_k$
	be the fiber functor defined by 
	sending an object $E \,\in\, {\rm EF}(X)$ to its fiber $E_x$ at $x$. 
	Then the quadruple $({\rm EF}(X),\, \bigotimes, \,T_x, \,\mathcal{O}_X)$ is a neutral Tannakian category. 
	The affine $k$-group scheme $\pi^N(X,\, x)$ Tannakian dual to this category is referred to as 
	the {\it Nori-fundamental group scheme} of $X$ with base point $x$, see 
	\cite{No1} for more details. 
	
	In \cite[Proposition 8.2]{La} it is proved that the $S$-fundamental group 
	of projective space is trivial. In \cite{Mehta-Hogadi} it is proved 
	that the $S$-fundamental group scheme is a birational invariant of
	smooth projective varieties. 

	Recall that for a divisor $D\,\in\, C^{(d)}$ we have fiber $\mc Q_D$ of the Hilbert-Chow map and also the space $S_{d}$
defined in section \ref{S_d}. By Lemma \ref{the map g_d},
	we have the birational map $g_d\,:\,S_d\,\longrightarrow\, \mc Q_D$.
	
\begin{proposition}
		A numerically flat bundle on $\mc Q_D$ is trivial.
	\end{proposition}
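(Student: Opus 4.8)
The plan is to transfer the question to the iterated projective bundle $S_d$ via the birational morphism $g_d\colon S_d\to \mc Q_D$ of Lemma \ref{the map g_d}, prove triviality there, and then descend. Let $W$ be a numerically flat bundle on $\mc Q_D$, of rank $n$. First I would observe that $g_d^*W$ is again numerically flat: given any smooth projective curve $C'$ and a morphism $h\colon C'\to S_d$, the composite $g_d\circ h\colon C'\to \mc Q_D$ satisfies $(g_d\circ h)^*W=h^*(g_d^*W)$, which is semistable of degree zero by the hypothesis on $W$; since every test curve in $S_d$ arises this way, $g_d^*W$ lies in $\mc C^{\mathrm{nf}}(S_d)$.

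Next I would show that every numerically flat bundle on $S_d$ is trivial. By construction each $S_j\to S_{j-1}$ in \eqref{def f_j,j-1} is the projectivization of a rank $r$ bundle, so $S_d$ is a tower of $\mathbb{P}^{r-1}$-bundles over $S_0=\mathrm{Spec}\,\C$; as each such projectivization is Zariski-locally trivial, $S_d$ is rational, that is, birational to $\mathbb{P}^{d(r-1)}$. Since the $S$-fundamental group scheme is a birational invariant of smooth projective varieties \cite{Mehta-Hogadi} and is trivial for projective space \cite[Proposition 8.2]{La}, we obtain $\pi^S(S_d,\,\ast)\,=\,1$. Equivalently, the fiber functor identifies every object of $\mc C^{\mathrm{nf}}(S_d)$ with a trivial bundle, so $g_d^*W\,\cong\,\mathcal{O}_{S_d}^{\oplus n}$.

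Finally I would descend along $g_d$. Since $\mc Q_D$ is normal (Proposition \ref{fibres are of same dimension}) and $g_d$ is birational, we have $g_{d*}\mathcal{O}_{S_d}=\mathcal{O}_{\mc Q_D}$, as already recorded in the proof of Corollary \ref{cor-cohomology over Q_D and S_D}. The projection formula then gives $g_{d*}g_d^*W\,\cong\,W\otimes g_{d*}\mathcal{O}_{S_d}\,\cong\,W$, while on the other hand $g_{d*}(g_d^*W)\,\cong\,g_{d*}(\mathcal{O}_{S_d}^{\oplus n})\,\cong\,\mathcal{O}_{\mc Q_D}^{\oplus n}$. Comparing the two isomorphisms yields $W\,\cong\,\mathcal{O}_{\mc Q_D}^{\oplus n}$, which is the assertion.

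The main obstacle I anticipate is the middle step: one must be sure that the birational invariance of $\pi^S$ genuinely applies, which requires $S_d$ to be smooth and projective (true by construction) and genuinely rational. Verifying rationality reduces to the standard fact that $\mathbb{P}(V)\to Y$ is Zariski-locally a product, so that a projective bundle over a rational base is rational. If one prefers to avoid invoking birational invariance, the same conclusion can be reached by an induction down the tower: restrict $g_d^*W$ to the $\mathbb{P}^{r-1}$-fibers of each $S_j\to S_{j-1}$, where it is trivial by Langer's computation, push it forward, and check at each stage that the pushforward remains numerically flat before descending to $S_0=\mathrm{Spec}\,\C$.
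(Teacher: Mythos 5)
Your proposal is correct and takes essentially the same approach as the paper: pull back along $g_d$, use that $S_d$ is a smooth rational variety together with the birational invariance of the $S$-fundamental group scheme \cite{Mehta-Hogadi} to trivialize $g_d^*W$, then descend via $g_{d*}\mc O_{S_d}\,=\,\mc O_{\mc Q_D}$ and the projection formula. The paper's proof is simply a terser version of this argument, with the verification that $g_d^*W$ is numerically flat and the rationality of the tower $S_d$ left implicit.
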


	\begin{proof}
		Let $W$ be a numerically flat bundle on $\mc Q_D$.
		As $\mc Q_D$ is normal, $g_d$ is birational and $S_{d}$ 
		is a smooth rational variety, we have 
		\begin{align*}
			W &\,\cong\, g_{d*}g_d^*W\\
			&\,\cong\, g_{d*}(\mc O)^{\oplus r}\\
			&\,\cong\, \mc O_{\mc Q_D}^{\oplus r}
		\end{align*}
		In the above we have used the result of \cite{Mehta-Hogadi}, which says that the $S$-fundamental group 
		scheme is a birational invariant. This proves the proposition. 
	\end{proof}
	
	\begin{theorem}\label{m-t}
		The induced map $\phi^S_*\,:\,\pi^S(\mc Q,\,q) \,\longrightarrow\, \pi^S(C^{(d)},\,\phi(q))$ is an isomorphism.
	\end{theorem}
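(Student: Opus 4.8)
The plan is to prove the theorem Tannaka-theoretically, by showing that the pullback functor $\phi^*\,:\,\mc C^{\rm nf}(C^{(d)})\,\longrightarrow\,\mc C^{\rm nf}(\mc Q)$ is an equivalence of neutral Tannakian categories compatible with the fibre functors; by Tannakian duality this is exactly the statement that $\phi^S_*$ is an isomorphism. First I would check that $\phi^*$ is well defined, i.e.\ that it preserves numerical flatness: for any smooth projective curve $B$ and any morphism $h\,:\,B\,\longrightarrow\,\mc Q$, composing with $\phi$ shows that $h^*\phi^*V$ is the pullback of the numerically flat bundle $V$ along $\phi\circ h$, hence semistable of degree $0$. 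Since $(\phi^*V)_q\,=\,V_{\phi(q)}$, the functor $\phi^*$ intertwines the fibre functors $T_q$ and $T_{\phi(q)}$, so an exact tensor equivalence will yield the desired isomorphism of group schemes.

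The first half is full faithfulness, which I would reduce to the identity $\phi_*\mc O_{\mc Q}\,=\,\mc O_{C^{(d)}}$. This holds because $\phi$ is proper, flat (Proposition \ref{fibres are of same dimension}) and surjective with geometrically connected and reduced fibres --- indeed the fibres $\mc Q_D$ are irreducible, reduced and normal --- while $C^{(d)}$ is normal. Given this, for $V,\,W\,\in\,\mc C^{\rm nf}(C^{(d)})$ the projection formula gives $\phi_*\phi^*(V^\vee\otimes W)\,\cong\,V^\vee\otimes W$, whence $\mathrm{Hom}(\phi^*V,\,\phi^*W)\,=\,H^0(\mc Q,\,\phi^*(V^\vee\otimes W))\,\cong\,H^0(C^{(d)},\,V^\vee\otimes W)\,=\,\mathrm{Hom}(V,\,W)$.

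The essential surjectivity is the main obstacle. Let $W$ be numerically flat on $\mc Q$. Restricting to a fibre $\mc Q_D$, the bundle $W\vert_{\mc Q_D}$ is numerically flat, hence trivial by the preceding proposition; so $W$ is fibrewise trivial and $h^0(\mc Q_D,\,W\vert_{\mc Q_D})$ equals $\mathrm{rank}\,W$ on every fibre. By Grauert's theorem $\phi_*W$ is then locally free and the counit $\phi^*\phi_*W\,\longrightarrow\, W$ is an isomorphism, being a surjection of bundles of equal rank that is an isomorphism on each fibre. It remains to see that $\phi_*W$ lies in $\mc C^{\rm nf}(C^{(d)})$, and for this I would use a section of $\phi$: fixing a line-bundle quotient $E\,\twoheadrightarrow\, L$ and sending $D\,\mapsto\,(E\to L\to L\otimes\mc O_D)$ defines a morphism $s\,:\,C^{(d)}\,\longrightarrow\,\mc Q$ with $\phi\circ s\,=\,\mathrm{id}$ (this is the construction underlying Proposition \ref{Generic Smoothness}), so that $\phi_*W\,\cong\,s^*\phi^*\phi_*W\,\cong\,s^*W$. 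Then for any morphism $f\,:\,B\,\longrightarrow\, C^{(d)}$ from a smooth projective curve, $f^*\phi_*W\,\cong\,(s\circ f)^*W$ is the restriction of the numerically flat bundle $W$ to the curve $s\circ f$ in $\mc Q$, hence semistable of degree $0$; thus $\phi_*W$ is numerically flat. This exhibits $W\,\cong\,\phi^*(\phi_*W)$ in the essential image and completes the proof that $\phi^*$ is an equivalence. The delicate points I expect are the algebraicity of the section $s$ and the descent of numerical flatness, both of which ultimately hinge on the triviality of numerically flat bundles on the fibres $\mc Q_D$ established above, which in turn rests on the resolution $g_d\,:\,S_d\,\longrightarrow\,\mc Q_D$ and the birational invariance of $\pi^S$.
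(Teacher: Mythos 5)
Your proof is correct, and it rests on exactly the same geometric inputs as the paper's argument: flatness of $\phi$ with integral fibres (giving $\phi_*\mc O_{\mc Q}\,=\,\mc O_{C^{(d)}}$), triviality of numerically flat bundles on the fibres $\mc Q_D$ (the proposition preceding the theorem), and Grauert's theorem (giving that $\phi_*W$ is locally free and that $\phi^*\phi_*W\,\longrightarrow\, W$ is an isomorphism). Where you differ is in the Tannakian packaging. The paper works on the group-scheme side: it deduces that $\phi^S_*$ is faithfully flat from $\phi_*\mc O_{\mc Q}\,=\,\mc O_{C^{(d)}}$ by citing \cite[Lemma 8.1]{La}, and that it is a closed immersion from \cite[Proposition 2.21(b)]{DMOS} once every numerically flat $W$ on $\mc Q$ is shown to be $\phi^*(\phi_*W)$ with $\phi_*W$ numerically flat; faithfully flat plus closed immersion then forces an isomorphism. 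You instead prove directly that $\phi^*$ is an exact tensor equivalence --- full faithfulness via the projection formula, essential surjectivity via the same Grauert argument --- and invoke Tannakian duality once. The two routes are equivalent translations through the Tannakian dictionary, but yours buys two things: it is more self-contained (no appeal to Langer's lemma, whose criterion for faithful flatness also involves a condition on subobjects that your equivalence subsumes), and it supplies an actual argument for the step the paper dismisses with ``It follows easily that $\phi_*(W)$ is numerically flat.'' Your mechanism for that step --- the section $\eta$ of \eqref{section-HC}, giving $\phi_*W\,\cong\, s^*W$ and lifting any curve $f\,:\,B\,\longrightarrow\, C^{(d)}$ to the curve $s\circ f$ in $\mc Q$ --- is precisely the clean way to see it, and the section is indeed algebraic by the universal property of the Quot scheme, as the paper itself notes in constructing \eqref{section-HC} and in Proposition \ref{Generic Smoothness}.
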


	\begin{proof}
		Since the fibers of $\phi$ are projective integral varieties, and $\phi$ is flat, 
		it follows that $\phi_*(\mc O_{\mc Q_D})\,=\,\mc O_{C^{(d)}}$. Now applying \cite[Lemma 8.1]{La}
		we see that $\phi^S_*$ is faithfully flat. To prove $\phi^S_*$ is a closed immersion
		we will use \cite[Proposition 2.21(b)]{DMOS}. By Grauert's theorem 
		\cite[Corollary 12.9]{Ha} and the previous proposition, it follows that 
		if $W$ is a numerically flat bundle on $\mc Q$ then $\phi_*(W)$ is a locally 
		free sheaf on $C^{(d)}$ and the natural map $\phi^*\phi_*(W)\,\longrightarrow\, W$ is an 
		isomorphism. It follows easily that $\phi_*(W)$ is numerically flat. 
		This proves that $\phi^S_*$ is a closed immersion.
	\end{proof}

	From the $S$-fundamental group scheme we recover the Nori fundamental group scheme
	as the inverse limit of finite quotients. Similarly, the \'etale 
	fundamental group scheme can be recovered as the inverse limit of finite and 
	reduced quotients. Thus, we get the following corollary. (See \S~5.5 in \cite{PS-surface}
	for more details.)
	
	\begin{corollary}\label{m-c}
		The induced map $\phi^N_*\,:\,\pi^N(\mc Q,\,q) \,\longrightarrow\, \pi^N(C^{(d)},\,\phi(q))$ is an isomorphism.
		The induced map $\phi^{\et}_*\,:\,\pi^{\et}(\mc Q,\,q)\,\longrightarrow\, \pi^{\et}(C^{(d)},\,\phi(q))$ is an isomorphism.
	\end{corollary}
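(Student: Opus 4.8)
The plan is to deduce Corollary \ref{m-c} from the already-established isomorphism $\phi^S_*$ of Theorem \ref{m-t} by a purely Tannakian functoriality argument, exploiting the fact that both the Nori and \'etale fundamental group schemes arise as canonical quotients of the $S$-fundamental group scheme. First I would recall the standard fact, referenced in the excerpt, that $\pi^N(X,x)$ is the pro-finite completion of $\pi^S(X,x)$, i.e.\ its inverse limit over finite quotient group schemes, while $\pi^{\et}(X,x)$ is the inverse limit over finite \emph{reduced} (equivalently, finite \'etale) quotients. The guiding principle is that these completion operations are \emph{functorial}: a homomorphism of affine group schemes induces homomorphisms on the corresponding pro-finite and pro-\'etale completions, and an isomorphism induces isomorphisms.

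The core of the argument is to verify that the completion operations preserve isomorphisms in the present setting, so that applying them to the isomorphism $\phi^S_*$ yields the two desired isomorphisms. Concretely, I would set $G\,:=\,\pi^S(\mc Q,\,q)$ and $H\,:=\,\pi^S(C^{(d)},\,\phi(q))$, so that $\phi^S_*\colon G\,\longrightarrow\, H$ is an isomorphism by Theorem \ref{m-t}. The key observation is that an isomorphism $G\,\cong\, H$ induces a bijection between the finite quotient group schemes of $G$ and those of $H$, compatible with the transition maps, and likewise a bijection between the finite reduced quotients. Passing to inverse limits, one obtains
\begin{equation*}
\pi^N(\mc Q,\,q)\,\cong\,\varprojlim_{\text{finite}} G\,\cong\,\varprojlim_{\text{finite}} H\,\cong\,\pi^N(C^{(d)},\,\phi(q)),
\end{equation*}
and the analogous chain with finite reduced quotients gives the \'etale statement. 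One should check that under this identification the induced map on completions is precisely $\phi^N_*$ (respectively $\phi^{\et}_*$), which follows from the naturality of the Tannakian construction: the functor sending $\pi^S$ to $\pi^N$ is induced by the inclusion of the category of essentially finite bundles into the category of numerically flat bundles, and $\phi^S_*$, $\phi^N_*$ are both induced by pullback $\phi^*$ along $\phi$.

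Since virtually all of the substantive geometry, normality of the fibers, flatness of $\phi$, and triviality of numerically flat bundles on $\mc Q_D$, has already been absorbed into the proof of Theorem \ref{m-t}, I expect that no genuinely new obstacle arises here. The one point requiring care is the \emph{compatibility} of the induced maps: one must confirm that the quotient functors $\pi^S\,\twoheadrightarrow\,\pi^N$ and $\pi^S\,\twoheadrightarrow\,\pi^{\et}$ are natural in $X$, so that they intertwine $\phi^S_*$ with $\phi^N_*$ and $\phi^{\et}_*$. This is exactly the content pointed to by the reference to \S~5.5 of \cite{PS-surface}, where the analogous recovery of the Nori and \'etale group schemes from the $S$-fundamental group scheme is carried out; I would simply cite that discussion rather than reproduce the formal verification. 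Thus the corollary follows formally once Theorem \ref{m-t} is in hand.
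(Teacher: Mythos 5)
Your proposal is correct and follows exactly the paper's route: the paper likewise deduces the corollary from Theorem \ref{m-t} by recovering $\pi^N$ as the inverse limit of finite quotients of $\pi^S$ and $\pi^{\et}$ as the inverse limit of finite reduced quotients, invoking functoriality of these completions and citing \S~5.5 of \cite{PS-surface} for the details. Your extra remark on checking that the quotient functors intertwine $\phi^S_*$ with $\phi^N_*$ and $\phi^{\et}_*$ is precisely the compatibility the paper delegates to that reference.
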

	
\section{NEF cone of ${\rm Quot}(\mc O_C^{\oplus n},d)$}

	Let $X$ be a smooth projective variety and let $N^1(X)$ 
	be the $\mb R$-vector space of $\mb R$-divisors modulo 
	numerical equivalence. It is known that $N^1(X)$ is a 
	finite dimensional vector space. The closed cone 
	${\rm Nef}(X)\,\subset\, N^1(X)$ is the cone of all 
	$\mb R$-divisors whose intersection product with 
	any curve in $X$ is non-negative. Similarly, let $N_1(X)$
	be the $\mb R$-vector space of 1-dimensional cycles
	modulo numerical equivalence. Let $\overline{NE}(X)\subset N_1(X)$
	denote the closure of the cone generated by classes 
	of effective curves.

	Let $E$ be a vector bundle over a smooth projective curve $C$. 
	Fix a polynomial $P\,\in \,\mb Q[t]$. Let ${\rm Quot}(E,P)$ denote 
	the Quot scheme parametrizing quotients of $E$ with Hilbert 
	polynomial $P$. In \cite{Str}, when $C\,=\,\mb P^1$, 
	the quot scheme ${\rm Quot}(\mc Q^{\oplus n}_{C},P)$ is 
	studied. In particular, its nef cone is described. 
	In \cite{GS-nef}, the authors attempt to understand the nef cone of the 
	Quot scheme ${\rm Quot}(\mc O_C^{\oplus n},d)$. 
	In this section, we describe some results and ideas in \cite{GS-nef}.
	For ease of notation, in this section, we 
	shall denote ${\rm Quot}(\mc O_C^{\oplus n},d)$ by $\mc Q$. 
	
	\subsection{Nef cone of $C^{(d)}$}\label{section-nef-C^{(d)}}
	
We follow \cite[\S~2]{Pa} for this section.
	Assume that either $C$ is an elliptic curve or is a very general curve of genus $g\,\geqslant \, 2$. 
	Then it is known that the N\'eron-Severi space is $2$-dimensional.
	So in this case, to compute the nef cone,
	it is enough to give two classes in $N^1(C)$ which are nef but not ample. 
	
	For any smooth projective curve and $d\,\geqslant  \,2$ 
	(not just a very general curve) 
	there is a natural line bundle $L_0$ on $C^{(d)}$ which is nef but not ample. 
	This line bundle is constructed in the following manner. Consider the map 
$$\phi\,:\,C^d 	\,\longrightarrow\, J(C)^{d \choose 2}\,,$$
	$$(x_i)\,\longmapsto\, (x_i-x_j)_{i<j}\,.$$
	Let $p_{ij}$ denote the projections from $J(C)^{d \choose 2}$. 
	Since $\phi$ is not finite, as it contracts the diagonal,
	the line bundle $\phi^*(\otimes p^*_{ij}\Theta)$ is nef but not ample. 
	This line bundle is invariant under the action of $S_d$ on $C^d$. 
	This follows from the fact that 
	$\Theta$ in $J(C)$ is invariant under the involution $L\,\longmapsto\, L^{-1}$. 
	\begin{definition}\label{def-L_0}
		$\phi^*(\otimes p^*_{ij}\Theta)$ descends to a line bundle $L_0$ on $C^{(d)}$. 
	\end{definition}

	Since $\phi$ contracts the small diagonal 
	$\delta\,:\,C\,\hookrightarrow \,C^{(d)}$, we have 
	$\delta^*[L_0]\,=\,0$. Hence $L_0$ is nef but not ample \cite[Lemma 2.2]{Pa}.
	Therefore, in the case where $C$ is very general, 
	computing the nef cone of $C^{(d)}$ boils down 
	to finding another class which is nef but not ample. 
	
	Recall that the gonality of $C$, denoted ${\rm gon}(C)$, is the smallest
	positive integer $e$ such that there is a degree $e$ morphism $C\, \longrightarrow\, \mb P^1$.
In the case where $d\,\geqslant \, {\rm gon}(C)\,=:\,e$, \cite[Lemma 2.3]{Pa},
we can easily construct another line bundle which is nef but not ample: 
	Then we have a map $g_e\,:\,C\,\longrightarrow\, \mb P^1$ of degree $e$. 
	This induces a closed immersion
$\mb P^1\,\longrightarrow\, C^{(e)}$ with $v\,\longmapsto\, [(g_e)^{-1}(v)]\,\in\, C^{(e)}$. 
This in turn gives a closed immersion
$\mb P^1\,\longrightarrow\, C^{(d)}$ with $v\,\longmapsto\, [(g_e)^{-1}(v)+(d-e)x]$ for some point $x\,\in\, C$. 

\begin{definition}\label{p1-in-C^(d)}
Denote the class of this $\mb P^1$ in $N_1(C^{(d)})$ by $[l']$. 
\end{definition}

The composition of maps $\mb P^1\,\longrightarrow\, C^{(d)}\,\xrightarrow{\,\,\,u_d\,\,\,}\, J(C)$ is a
constant one, 
since there can be no non-constant maps from $\mb P^1\,\longrightarrow\, J(C)$. 
Hence $u_d\,:\,C^{(d)}\,\longrightarrow\, J(C)$ is not finite and we get that 
$u_d^*\Theta$ is nef but not ample. 

\begin{definition}\label{def-theta_d}
Define $\theta_d\,\,:=\,\,u_d^*\Theta$.
\end{definition}

Recall that over $C^{(d)}$ we have following natural divisors \cite[\S~2]{Pa}:
	
\begin{definition}\label{three divisors of symmetric product of curves}\mbox{}
\begin{enumerate}
\item $\theta_d$;

\item the big diagonal $\Delta_d\,\hookrightarrow\, C^{(d)}$;

\item If $i_{d-1}\,:\,C^{(d-1)}\,\longrightarrow\, C^{(d)}$
is the map given by $D\,\longmapsto\, D+x$ for a point $x\,\in\, C$,
then the image $i_{d-1}(C^{(d-1)})$. This divisor will be denoted $[x]$. 
\end{enumerate}
\end{definition}
	
It is known that when $g\,=\,1$ or $C$ is very general of $g\,\geqslant \, 2$,
then $N^1(C^{(d)})$ is of dimension $2$ and any two of the 
above three forms a basis. 
	
By abuse of notation, let us denote the class ($\delta$ is the small diagonal) 
$[\delta_*(C)]\,\in\, N_1(C^{(d)})$
by $\delta$. We summarize the above discussion in the following proposition.
	
\begin{proposition}[{\cite[Proposition 2.4]{Pa}}]\label{nef-cone-C^(d)}
When $d\,\geqslant \, {\rm gon}(C)$, we have:
\begin{enumerate} 
\item ${\rm Nef}(C^{(d)})\,=\,\mb R_{\geqslant  0}[L_0]\oplus \mb R_{\geqslant  0}[\theta_d]$,

\item $\overline{NE}(C^{(d)})\,=\,\mb R_{\geqslant  0}[l']\oplus \mb R_{\geqslant  0}[\delta]$.
\end{enumerate}
The above two basis-es are dual to each other.
\end{proposition}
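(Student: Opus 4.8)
The plan is to reduce the whole statement to a $2\times 2$ matrix of intersection numbers. Since $N^1(C^{(d)})$ is two-dimensional, so is its dual $N_1(C^{(d)})$, and the cones ${\rm Nef}(C^{(d)})$ and $\overline{NE}(C^{(d)})$ are mutually dual under the intersection pairing by Kleiman's criterion; moreover ${\rm Nef}(C^{(d)})$ is pointed because $C^{(d)}$ is projective. Hence each of the two cones is the convex hull of exactly two extremal rays, and the proposition comes down to identifying those rays through the pairings of $(L_0,\theta_d)$ against $([l'],\delta)$.

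Two of the four pairings are already available from the construction of the classes: $L_0\cdot\delta=0$, because $\delta^*[L_0]=0$; and $\theta_d\cdot[l']=0$, because the composite $\mb P^1\to C^{(d)}\xrightarrow{u_d}J(C)$ is constant, so $\theta_d=u_d^*\Theta$ restricts to the trivial class on $[l']$. The one genuinely new computation I need is the positivity $\theta_d\cdot\delta>0$. For this I note that the small diagonal $\delta\colon C\hookrightarrow C^{(d)}$, $x\mapsto d[x]$, satisfies $u_d\circ\delta\colon x\mapsto d([x]-[p_0])=[d]\circ u_1(x)$; that is, it is the Abel--Jacobi embedding $u_1\colon C\to J(C)$ followed by multiplication by $d$ on $J(C)$. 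Since $u_1$ is a closed immersion for $g\geqslant 1$, this composite is nonconstant, and as $\Theta$ is a principal polarization we get $\theta_d\cdot\delta=\deg\big((u_d\circ\delta)^*\Theta\big)>0$.

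I now assemble part (1). The classes $L_0$ and $\theta_d$ are nonproportional: $\theta_d\neq 0$ since $\theta_d\cdot\delta>0$, while $L_0\neq 0$ by construction (its pullback to $C^d$ is the nonzero class $\phi^*(\otimes p^*_{ij}\Theta)$), and $L_0\cdot\delta=0\neq\theta_d\cdot\delta$ then excludes any proportionality. Both classes are nef but not ample, hence lie on $\partial{\rm Nef}(C^{(d)})$; as the boundary of a pointed two-dimensional cone is exactly its pair of extremal rays, two nonproportional boundary classes must generate these two distinct rays, giving ${\rm Nef}(C^{(d)})=\mb R_{\geqslant 0}[L_0]\oplus\mb R_{\geqslant 0}[\theta_d]$. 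The remaining pairing is then automatic: $L_0+\theta_d$ lies in the interior of the nef cone and is therefore ample, so it meets the irreducible curve class $[l']$ positively; since $\theta_d\cdot[l']=0$, this forces $L_0\cdot[l']>0$. Consequently the pairing matrix of $(L_0,\theta_d)$ against $([l'],\delta)$ is diagonal with strictly positive diagonal entries, which is exactly the asserted duality of the two bases.

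Part (2) now follows formally. By Kleiman duality, $\overline{NE}(C^{(d)})={\rm Nef}(C^{(d)})^\vee=\{\gamma:\ L_0\cdot\gamma\geqslant 0\ \text{and}\ \theta_d\cdot\gamma\geqslant 0\}$, a pointed two-dimensional cone whose two extremal rays are cut out by $L_0\cdot\gamma=0$ and by $\theta_d\cdot\gamma=0$ respectively. The effective classes $\delta$ and $[l']$ satisfy exactly these equations and pair strictly positively against the complementary nef generator, so each one generates one of the rays; hence $\overline{NE}(C^{(d)})=\mb R_{\geqslant 0}[l']\oplus\mb R_{\geqslant 0}[\delta]$. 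The only step carrying genuine geometric content is the positivity $\theta_d\cdot\delta>0$ coming from the Abel--Jacobi map; everything else is two-dimensional convex-cone bookkeeping together with Kleiman's duality, the one easily-overlooked point being that $L_0\neq 0$, so that $L_0$ and $\theta_d$ really do span distinct extremal rays.
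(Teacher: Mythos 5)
Your argument is correct, but it necessarily takes a different route from the paper, because the paper gives no proof at all: the proposition is quoted verbatim from Pacienza \cite[Proposition 2.4]{Pa}, and the surrounding discussion only records the inputs (that $L_0$ and $\theta_d$ are nef, that $\delta^*[L_0]=0$, and that $[\theta_d]\cdot[l']=0$ because $\mathbb{P}^1$ admits no nonconstant map to $J(C)$). Measured against that, your proposal is a self-contained proof whose one genuinely new geometric ingredient is the positivity $[\theta_d]\cdot[\delta]>0$, which you correctly obtain by factoring $u_d\circ\delta$ as the multiplication map $m_d$ on $J(C)$ composed with the Abel--Jacobi embedding $u_1$; the rest is Kleiman duality together with two-dimensional convex-cone bookkeeping, including the efficient observation that $[L_0]\cdot[l']>0$ comes for free from ampleness of interior nef classes rather than from any direct intersection computation. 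Two small points would tighten it. First, the claim $L_0\neq 0$, which you rightly flag as necessary to exclude proportionality, deserves its one-line justification: $\pi^*$ is injective on $N^1$ for the finite surjective quotient $\pi\colon C^d\to C^{(d)}$ by the projection formula, and $\phi^*(\otimes p_{ij}^*\Theta)$ is a sum of nef classes of which $\phi^*p_{12}^*\Theta$ pairs strictly positively with the curve obtained by varying $x_1$ and freezing $x_2,\dots,x_d$. Second, your reading of the final assertion as ``the pairing matrix is diagonal with strictly positive diagonal entries'' is the only tenable one with the paper's normalizations: literal duality of bases fails, since for instance $[\theta_d]\cdot[\delta]=\deg\bigl((m_d\circ u_1)^*\Theta\bigr)=d^2\deg(u_1^*\Theta)=d^2g$ by Poincar\'e's formula, so what is being asserted is a correspondence of extremal rays under cone duality, not an identity pairing matrix; stating this explicitly would remove any ambiguity.
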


\begin{lemma}\label{E in terms of various bases}
Let $g,\,d\,\geqslant  \,1$. Let $\mu_0\,:=\,\dfrac{d+g-1}{dg}$. Then 
		\begin{align*}
			[L_0]&\,=\,dg [x]-[\theta_d]\\
			&=\,(dg-d-g+1).[x]+[\Delta_d/2]\\
			&=\,(\dfrac{1}{\mu_0}-1)[\theta_d]+\dfrac{1}{\mu_0}[\Delta_d/2].
		\end{align*}
	\end{lemma}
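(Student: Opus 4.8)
The plan is to reduce the three displayed identities to two core relations in the rational N\'eron--Severi group $N^1(C^{(d)})_{\Q}$, namely
\[
[L_0]\,=\,dg[x]-[\theta_d]\qquad\text{and}\qquad [\Delta_d/2]\,=\,(d+g-1)[x]-[\theta_d],
\]
and then to obtain the remaining two equalities by pure linear algebra. Indeed, substituting the second relation into the first immediately gives the middle expression $(dg-d-g+1)[x]+[\Delta_d/2]$; and since $1/\mu_0=dg/(d+g-1)$, eliminating $[x]$ between the two core relations yields the third expression $(1/\mu_0-1)[\theta_d]+(1/\mu_0)[\Delta_d/2]$. So the entire content is the two core relations, which I would prove simultaneously.

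To establish them I would pull everything back along the quotient map $\pi\colon C^d\to C^{(d)}$. Since $\pi$ is finite and surjective of degree $d!$, the map $\pi^*$ is injective on $H^2(-,\Q)$, so it suffices to verify the identities after applying $\pi^*$. Writing $\beta_k\in H^2(C^d)$ for the pullback of the point class from the $k$-th factor and $\Delta_{ij}\subset C^d$ for the pairwise diagonal $\{x_i=x_j\}$, the two elementary pullbacks are
\[
\pi^*[x]\,=\,\sum_{k}\beta_k,\qquad \pi^*[\Delta_d/2]\,=\,\sum_{i<j}[\Delta_{ij}],
\]
where the factor $1/2$ is exactly what accounts for the simple ramification of $\pi$ along each pairwise diagonal (locally $\pi$ is the double cover $v\mapsto v^2$, so $\pi^*\Delta_d=2\sum_{i<j}\Delta_{ij}$).

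The heart of the argument is a K\"unneth computation of the theta pullbacks. Using $H^1(J(C))\cong H^1(C)$ and the fact that $[\Theta]$ is the cup-product (symplectic) form, so that $u_1^*[\Theta]=g\,[\mathrm{pt}]$ for the Abel--Jacobi map $u_1$, a direct expansion against the $(1,1)$-K\"unneth component of the diagonal gives, for the difference map $\delta\colon C\times C\to J(C)$, $(a,b)\mapsto \mathcal{O}(a-b)$,
\[
\delta^*[\Theta]\,=\,(g-1)(\beta_1+\beta_2)+[\Delta_{C\times C}].
\]
Since $\pi^*L_0=\sum_{i<j}\phi_{ij}^*[\Theta]$ with $\phi_{ij}(x_\bullet)=x_i-x_j$ factoring through $\delta$, summing this formula over all pairs yields $\pi^*L_0=(g-1)(d-1)\sum_k\beta_k+\sum_{i<j}[\Delta_{ij}]$. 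The same computation applied to the sum map $\tilde u\colon C^d\to J(C)$, $(x_k)\mapsto \mathcal{O}\big(\sum_k x_k-dx_0\big)$, defining $\theta_d$, gives $\pi^*[\theta_d]=(g+d-1)\sum_k\beta_k-\sum_{i<j}[\Delta_{ij}]$, which is already the second core relation. Substituting $\sum_{i<j}[\Delta_{ij}]=(g+d-1)\sum_k\beta_k-\pi^*[\theta_d]$ into the expression for $\pi^*L_0$ collapses the coefficient of $\sum_k\beta_k$ to $(g-1)(d-1)+(g+d-1)=dg$, giving $\pi^*L_0=dg\sum_k\beta_k-\pi^*[\theta_d]$, i.e. the first core relation. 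The main obstacle is the careful sign and multiplicity bookkeeping in these two K\"unneth expansions --- in particular fixing the orientation of the $(1,1)$-K\"unneth component of the diagonal and confirming the ramification factor $2$ that legitimizes the class $[\Delta_d/2]$ --- since an error there would flip the coefficients; once these are pinned down the rest is formal.
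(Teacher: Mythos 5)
Your proposal is correct, and in fact it supplies something the paper itself omits: the survey states Lemma \ref{E in terms of various bases} with no proof at all (the surrounding discussion merely points to \cite[\S~2]{Pa} and \cite{GS-nef} for this circle of facts). Your reduction to the two core relations $[L_0]=dg[x]-[\theta_d]$ and $[\Delta_d/2]=(d+g-1)[x]-[\theta_d]$ is exactly right, and the linear algebra deducing the second and third displayed equalities from them checks out, using $1/\mu_0=dg/(d+g-1)$. The K\"unneth computations are also correct as stated: with a symplectic basis $\alpha_1,\dots,\alpha_{2g}$ of $H^1(C)$ and the diagonal class $[\Delta_{C\times C}]=\beta_1+\beta_2-\sum_k\bigl(\alpha_k\times\alpha_{k+g}-\alpha_{k+g}\times\alpha_k\bigr)$, one gets $\delta^*[\Theta]=(g-1)(\beta_1+\beta_2)+[\Delta_{C\times C}]$ for the difference map and $\tilde u^*[\Theta]=(g+d-1)\sum_k\beta_k-\sum_{i<j}[\Delta_{ij}]$ for the sum map, and the coefficient collapse $(g-1)(d-1)+(g+d-1)=dg$ is as you say. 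Two points you handled implicitly but correctly deserve emphasis: injectivity of $\pi^*$ on $N^1(\,\cdot\,)_{\Q}$ follows from the transfer identity $\pi_*\pi^*=d!\cdot\mathrm{id}$, and the translation by $-dx_0$ in the sum map is harmless because translates of $\Theta$ are algebraically, hence numerically, equivalent. This pullback-to-$C^d$ argument is the standard route in the literature (it is how such identities are established in Kouvidakis's and Pacienza's work), so while the paper offers nothing to compare against, your proof is both complete and the expected one.
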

	
\subsection{Picard Group and N\'eron-Severi group of $\mc Q$}
	
	\begin{lemma}\label{pullback-lemma}
		Let $S$ be a scheme over $k$. Let $F$ be a 
		coherent sheaf over $C\times S$ which is $S$-flat 
		and for all $s\,\in \,S$, the restriction $F\big\vert_{C\times s}$ is a torsion 
		sheaf over $C$ of degree $d$. Let 
		$p_S\,:\,C\times S\,\longrightarrow\, S$ be the projection. Then the following two hold:
		\begin{enumerate}[(i)]
			\item $p_{S*}(F)$ is locally free of rank $d$ 
			and for all $s\,\in\, S$ the natural map 
			$p_{S*}(F)|_s\,\longrightarrow\, H^0(C,\,F|_{C\times s})$ is an isomorphism. 
			
\item Given a morphism $\phi\,:\,T \,\longrightarrow\, S$, there is the following diagram:
			\[
			\begin{tikzcd}
				C\times T \ar[r,"id\times \phi"] \ar[d,"p_T"] & C\times S \ar[d,"p_S"]\\
				T \ar[r,"\phi"] & S
			\end{tikzcd}
			\]
			Then the natural morphism
			$$\phi^*p_{S*}(F)\,\longrightarrow\, (p_T)_*(id\times \phi)^*F$$ is an isomorphism.
		\end{enumerate}
	\end{lemma}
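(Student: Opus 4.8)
The plan is to reduce the entire statement to a finite morphism and to the elementary behaviour of push-forward along finite (hence affine) maps, which is robust enough to work over an arbitrary base $S$ and an arbitrary $\phi$.

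First I would record the basic fibrewise facts. For each $s\,\in\,S$ the sheaf $F|_{C\times s}$ is a torsion sheaf on the curve $C$, so its support is $0$-dimensional; consequently $H^i(C,\,F|_{C\times s})\,=\,0$ for $i\,>\,0$, while $\dim_k H^0(C,\,F|_{C\times s})$ equals the length, that is, the degree $d$. Moreover, since $\mathrm{Supp}(F)$ is closed in the proper $S$-scheme $C\times S$ (here $C$ is complete) and has $0$-dimensional fibres over $S$, the projection $\mathrm{Supp}(F)\,\longrightarrow\, S$ is proper and quasi-finite, hence finite. Writing $\iota\,:\,Z\,\hookrightarrow\, C\times S$ for the scheme-theoretic support and $F\,=\,\iota_*F_0$, the composite $\pi\,:=\,p_S\circ\iota\,:\,Z\,\longrightarrow\, S$ is finite and $p_{S*}F\,=\,\pi_*F_0$, with $F_0$ flat over $S$.

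For part (i) I work locally: over an affine $\mathrm{Spec}\,A\,\subseteq\, S$ the finite morphism $\pi$ corresponds to a finite $A$-algebra $B$, and $\pi_*F_0$ is the $A$-module $M$ underlying the coherent $B$-module $F_0$. Since $F_0$ is flat over $S$ and finite over $Z$, the module $M$ is finitely presented and flat over $A$, hence locally free. The fibre $M\otimes_A k(s)$ is canonically $H^0(C,\,F|_{C\times s})$ --- this is exactly base change for the affine morphism $\pi$, which is tautological because sections over $Z_s\,=\,\mathrm{Spec}(B\otimes_A k(s))$ are computed by $M\otimes_A k(s)$ --- so $p_{S*}(F)$ has rank $d$ and the natural map $p_{S*}(F)|_s\,\longrightarrow\, H^0(C,\,F|_{C\times s})$ is an isomorphism for every $s$. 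For part (ii) the same affine description gives the claim at once, and crucially it needs no hypothesis on $\phi$: on a chart $\mathrm{Spec}\,A'\,\longrightarrow\,\mathrm{Spec}\,A$ we have $\phi^*p_{S*}(F)\,=\,M\otimes_A A'$, while $Z\times_S T\,=\,\mathrm{Spec}(B\otimes_A A')$ and $(\mathrm{id}\times\phi)^*F$ corresponds to $M\otimes_A A'$, whose push-forward to $T$ is again $M\otimes_A A'$; the natural morphism identifies the two. Alternatively, one applies (i) to $T$ and the $T$-flat sheaf $(\mathrm{id}\times\phi)^*F$ (whose fibres are still torsion of degree $d$) to see that both sheaves are locally free of rank $d$, checks that the base-change map is a fibrewise isomorphism, and concludes by Nakayama.

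The only genuine subtlety is that $S$ is an arbitrary $k$-scheme and $\phi$ an arbitrary morphism, so the usual cohomology-and-base-change theorems --- cleanest over a Noetherian, often reduced, base, and whose base-change statement normally requires $\phi$ flat --- do not apply directly. The device that removes every such hypothesis is the finiteness of $\pi\,:\,Z\,\longrightarrow\, S$: push-forward along a finite morphism is exact, carries no higher derived functors, and commutes with arbitrary base change. Thus the step I would state most carefully is the verification that $\pi$ is finite (properness from $C$ being complete, quasi-finiteness from the fibres being torsion); once that is in place, parts (i) and (ii) are the standard facts that finitely presented flat modules are locally free and that affine push-forward commutes with base change.
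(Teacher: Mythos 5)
Your proof is correct, but note first that this survey actually states Lemma \ref{pullback-lemma} without proof --- it is imported from \cite{GS-nef}, and the proof there (and the one any reader would expect) is the standard cohomological one: each fibre $F\big\vert_{C\times s}$ is a torsion sheaf on a curve, so $H^1(C,\,F\big\vert_{C\times s})\,=\,0$ and $h^0\,=\,d$, and one then invokes semicontinuity and cohomology-and-base-change for the proper morphism $p_S$ to get local freeness of $p_{S*}F$ and compatibility with base change. Your route is genuinely different: you localize everything on the support, observing that ${\rm Supp}(F)\,\longrightarrow\, S$ is proper (since $C$ is complete) and quasi-finite (since the fibres are torsion), hence finite, so that $p_{S*}F\,=\,\pi_*F_0$ with $\pi$ finite; then (i) becomes the statement that a finite, flat, finitely presented module is locally free, and (ii) becomes the fact that pushforward along an affine morphism commutes with arbitrary base change. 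This buys real robustness: no derived functors or semicontinuity are needed, and part (ii) is transparent for an arbitrary $\phi\,:\,T\,\longrightarrow\, S$, whereas the cohomological theorems are cleanest over a Noetherian (for Grauert, reduced) base and their base-change conclusions are usually formulated fibrewise. Two points should be made explicit if you write this up. First, the identification of the fibre of ${\rm Supp}(F)\,\longrightarrow\, S$ over $s$ with ${\rm Supp}(F\big\vert_{C\times s})$ (which is what gives quasi-finiteness) is Nakayama for finite-type modules and deserves a line. Second, your claim that finiteness of $\pi$ removes \emph{every} hypothesis on $S$ is slightly overstated: the step ``finite flat $\Rightarrow$ locally free'' genuinely requires finite presentation of $M$ over $A$ (finitely generated flat modules over non-Noetherian rings need not be projective), and finite presentation of $M$ over $A$ is not automatic from coherence of $F$ for an arbitrary $k$-scheme $S$. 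In the intended setting --- all parameter schemes arising in the paper are locally Noetherian --- this is harmless, and the cohomological proof faces the identical limitation, but the caveat should be recorded if the lemma is to be read for literally arbitrary $S$.
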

	
We define a line bundle on $\mathcal{Q}$.
	Recall the projections $C\times \mathcal{Q}$ 
	to $C$ and $\mathcal{Q}$ by $p_{1}$ and $p_{2}$, respectively. 
	Then we have the universal quotient 
	$p^*_CE \,\longrightarrow\,\mc B$ over $C\times \mc Q$. By Lemma 
	\ref{pullback-lemma}, the direct image $p_{2*} ( \mc B) $ is a vector bundle of rank 
	$d$.

	\begin{definition}\label{def-O(1)}
		Denote the line bundle 
		{\rm det($p_{2*} ( \mc B) $)} by 
		$\mc {O}_{\mc {Q}}(1)$.
	\end{definition}
	
	As an easy Corollary to Lemma \ref{pullback-lemma} we have the following.
	
	\begin{lemma}\label{lb-lemma}
		Suppose we are given a map $f\,:\,T\,\longrightarrow\, \mc Q$. Let 
		$(id\times f)^* \mc B\,=\, \mc B_T$. Let $p_{1,T}\,:\,C\times T \,\longrightarrow\, C$ and 
		$p_{2,T}\,:\,C\times T \,\longrightarrow\, T$ be the projections. 
		\[
		\begin{tikzcd}
			C\times T \ar[r,"id\times f"] \ar[d,"p_{2,T}"] & C\times \mc Q \ar[d,"p_{2,\mc Q}"] \\
			T \ar[r,"f"] & \mc Q
		\end{tikzcd} 
		\]
		\begin{enumerate}[(i)]
			\item $f^*p_{2,\mc Q*}( \mc B\otimes p^*_C F)\, \longrightarrow\, p_{2,T*}( \mc B_T\otimes p^*_{1,T}F)$ is an isomorphism.
			\item For a vector bundle $F$ on $C$ define 
			$B_{F,T}\,:=\, {\rm det}(p_{2,T*}(\mc B_T\otimes p^*_{1,T}F))$. Then $f^*B_{F,\mc Q}\,=\,B_{F,T}$.
		\end{enumerate}
	\end{lemma}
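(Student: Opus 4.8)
The plan is to deduce both statements directly from Lemma \ref{pullback-lemma} by applying it to the sheaf $\mc B\otimes p_C^*F$ on $C\times\mc Q$. First I would check that this sheaf satisfies the hypotheses of that lemma. Since $\mc B$ is $\mc Q$-flat and $p_C^*F$ is locally free, the tensor product $\mc B\otimes p_C^*F$ is again $\mc Q$-flat. For every closed point $q\in\mc Q$ the restriction $(\mc B\otimes p_C^*F)\vert_{C\times q}$ equals $(\mc B\vert_{C\times q})\otimes F$, which is a torsion sheaf on $C$ of degree $d\cdot{\rm rank}(F)$. Hence Lemma \ref{pullback-lemma} applies verbatim to $\mc B\otimes p_C^*F$ (with $d$ replaced by $d\cdot{\rm rank}(F)$), and in particular $p_{2,\mc Q*}(\mc B\otimes p_C^*F)$ is locally free.

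For part (i) I would apply the base change isomorphism of Lemma \ref{pullback-lemma}(ii), taking $S=\mc Q$, the morphism to be $f$, and the coherent sheaf to be $\mc B\otimes p_C^*F$. This gives that the natural map $f^*p_{2,\mc Q*}(\mc B\otimes p_C^*F)\longrightarrow p_{2,T*}\big((id\times f)^*(\mc B\otimes p_C^*F)\big)$ is an isomorphism. It then remains to identify the target. Using that $(id\times f)^*\mc B=\mc B_T$ by definition, and that $p_C\circ(id\times f)=p_{1,T}$ (so $(id\times f)^*p_C^*F=p_{1,T}^*F$), I get $(id\times f)^*(\mc B\otimes p_C^*F)=\mc B_T\otimes p_{1,T}^*F$, which yields the claimed isomorphism.

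For part (ii), recall that $B_{F,\mc Q}=\det\big(p_{2,\mc Q*}(\mc B\otimes p_C^*F)\big)$. Because $p_{2,\mc Q*}(\mc B\otimes p_C^*F)$ is locally free by the first paragraph, the formation of its determinant line bundle commutes with the pullback along $f$, so $f^*B_{F,\mc Q}=\det\big(f^*p_{2,\mc Q*}(\mc B\otimes p_C^*F)\big)$. Substituting the isomorphism from part (i) gives $f^*B_{F,\mc Q}=\det\big(p_{2,T*}(\mc B_T\otimes p_{1,T}^*F)\big)=B_{F,T}$.

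The computations here are entirely formal, and the only point needing genuine attention is the very first one: confirming that $\mc B\otimes p_C^*F$ still meets the flatness and fiberwise-torsion hypotheses of Lemma \ref{pullback-lemma} (with the modified degree), so that both the base change statement and the local freeness are available. Once that is in place, parts (i) and (ii) follow by pure functoriality of pullback and of the determinant.
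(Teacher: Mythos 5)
Your proof is correct and follows exactly the route the paper intends: the paper introduces this lemma with the words ``As an easy Corollary to Lemma \ref{pullback-lemma}'', and your argument --- verifying that $\mc B\otimes p_C^*F$ is $\mc Q$-flat with fiberwise torsion restrictions of length $d\cdot{\rm rank}(F)$, invoking the base change isomorphism of Lemma \ref{pullback-lemma}(ii), and using local freeness so that the determinant commutes with pullback --- is precisely that corollary spelled out. No gaps; the verification of the hypotheses for the twisted sheaf is the one point needing care, and you handled it.
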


	Recall that we have a Hilbert-Chow map $\phi\,:\,\mc Q \,\longrightarrow\, C^{(d)}$ (see Definition \ref{defn-hc}). 
	For notational convenience, for a divisor $D\,\in\, N^1(C^{(d)})$ we will denote 
	its pullback $\phi^*D\,\in\, N^1(\mc Q)$ by
	$D$, when there is no possibility of confusion.
	The Picard group of $\mc Q$ is well-known (see \cite[Theorem 3.7]{GS-nef}). 
	
\begin{theorem}
		${\rm Pic}(\mathcal{Q})\,\,=\,\,\phi^*{\rm Pic}(C^{(d)})\bigoplus \Z[\mathcal{O}_{\mathcal{Q}}(1)] $.
	\end{theorem}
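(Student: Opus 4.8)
The plan is to realise $\mathrm{Pic}(\mathcal{Q})$ as an extension of $\mathbb{Z}$ by $\phi^*\mathrm{Pic}(C^{(d)})$ by analysing how line bundles restrict to the fibres of the Hilbert--Chow map $\phi$. First I would record two structural facts that are already available: $\phi$ is flat with reduced, irreducible fibres of constant dimension $d(n-1)$ (Proposition \ref{fibres are of same dimension}), and $\phi_*\mathcal{O}_{\mathcal{Q}}=\mathcal{O}_{C^{(d)}}$ (established in the proof of Theorem \ref{m-t}). The latter gives, via the projection formula $\phi_*\phi^*M\cong M\otimes\phi_*\mathcal{O}_{\mathcal{Q}}\cong M$, that $\phi^*\colon\mathrm{Pic}(C^{(d)})\to\mathrm{Pic}(\mathcal{Q})$ is injective. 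It then remains to identify the quotient $\mathrm{Pic}(\mathcal{Q})/\phi^*\mathrm{Pic}(C^{(d)})$ with $\mathbb{Z}[\mathcal{O}_{\mathcal{Q}}(1)]$.

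Next I would describe the fibres over the open locus $U^{\circ}\subseteq C^{(d)}$ of reduced divisors. For $D=x_1+\cdots+x_d$ with distinct $x_i$, a length-$d$ quotient of $\mathcal{O}_C^{\oplus n}$ supported on $D$ amounts to an independent choice, at each $x_i$, of a one-dimensional quotient of the fibre $k^n$; hence $\mathcal{Q}_D\cong(\mathbb{P}^{n-1})^d$ and $\mathrm{Pic}(\mathcal{Q}_D)\cong\mathbb{Z}^d=\bigoplus_i\mathbb{Z}h_i$, with $h_i$ the hyperplane class of the $i$-th factor. As $D$ varies over $U^{\circ}$ these fibres form a fibre bundle whose monodromy permutes the factors: since $U^{\circ}$ is the quotient of the connected ordered configuration space by the free $S_d$-action, the monodromy map $\pi_1(U^{\circ})\twoheadrightarrow S_d$ is surjective and permutes the $h_i$ transitively. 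Consequently, for any $L\in\mathrm{Pic}(\mathcal{Q})$ the class $[L|_{\mathcal{Q}_D}]$ is a monodromy-invariant section of this local system, hence lies in $(\mathbb{Z}^d)^{S_d}=\mathbb{Z}\cdot(h_1+\cdots+h_d)$ and is independent of the general $D$ chosen. This defines $r\colon\mathrm{Pic}(\mathcal{Q})\to\mathbb{Z}$ by $[L|_{\mathcal{Q}_D}]=r(L)\,(h_1+\cdots+h_d)$. A direct computation with $\mathcal{O}_{\mathcal{Q}}(1)=\det p_{2*}\mathcal{B}$ (Definition \ref{def-O(1)}, Lemma \ref{lb-lemma}) shows that $\mathcal{B}|_{C\times\mathcal{Q}_D}$ pushes forward to $\bigoplus_i\pi_i^*\mathcal{O}_{\mathbb{P}^{n-1}}(1)$, so $\mathcal{O}_{\mathcal{Q}}(1)|_{\mathcal{Q}_D}=\mathcal{O}(1,\dots,1)$ and $r(\mathcal{O}_{\mathcal{Q}}(1))=1$. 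Thus $r$ is surjective, and the sum $\phi^*\mathrm{Pic}(C^{(d)})+\mathbb{Z}[\mathcal{O}_{\mathcal{Q}}(1)]$ is direct, since a pullback from the base restricts trivially to every fibre and so lies in $\ker r$.

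The remaining, and main, point is the inclusion $\ker r\subseteq\phi^*\mathrm{Pic}(C^{(d)})$: I must show that an $L$ with $r(L)=0$ is pulled back from $C^{(d)}$. As the restriction class is always $S_d$-invariant and $r(L)=0$, the bundle $L$ is trivial on every fibre over the connected $U^{\circ}$, in particular on the generic fibre. By cohomology and base change (constancy of $h^0$ along these fibres), $\phi_*L$ is then a line bundle $M$ over the dense open $U^{\circ}$ and $\phi^*M\to L$ is an isomorphism over $\phi^{-1}(U^{\circ})$. Choosing $\widetilde M\in\mathrm{Pic}(C^{(d)})$ restricting to $M$ (possible since $C^{(d)}$ is smooth, so $\mathrm{Pic}(C^{(d)})\twoheadrightarrow\mathrm{Pic}(U^{\circ})$), the bundle $L\otimes\phi^*\widetilde M^{-1}$ is trivial on $\phi^{-1}(U^{\circ})$; as $\mathcal{Q}$ is smooth, it therefore equals $\mathcal{O}_{\mathcal{Q}}(\sum_i a_iZ_i)$ for the codimension-one components $Z_i$ of $\mathcal{Q}\setminus\phi^{-1}(U^{\circ})=\phi^{-1}(\Delta_d)$. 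Each such $Z_i$ is \emph{vertical}: because $\phi$ is flat with reduced irreducible fibres of constant dimension, the preimage of a prime divisor $W\subset C^{(d)}$ has a unique codimension-one component, appearing with multiplicity one, so $\mathcal{O}_{\mathcal{Q}}(Z_i)=\phi^*\mathcal{O}_{C^{(d)}}(W_i)$. Hence $L\otimes\phi^*\widetilde M^{-1}\in\phi^*\mathrm{Pic}(C^{(d)})$, giving $L\in\phi^*\mathrm{Pic}(C^{(d)})$. Altogether this produces a split short exact sequence $0\to\phi^*\mathrm{Pic}(C^{(d)})\to\mathrm{Pic}(\mathcal{Q})\xrightarrow{\,r\,}\mathbb{Z}\to0$, split by $\mathcal{O}_{\mathcal{Q}}(1)$, which is the asserted direct sum decomposition.

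I expect the hardest part to be the identification $\ker r=\phi^*\mathrm{Pic}(C^{(d)})$: the delicate ingredients are spreading out the fibrewise trivialisation via base change, and the claim that every codimension-one vertical divisor on $\mathcal{Q}$ is pulled back from $C^{(d)}$, which rests on the flatness, integrality and reducedness of the fibres of $\phi$. The conceptual crux that makes the quotient $\mathbb{Z}$ rather than $\mathbb{Z}^d$ is the monodromy invariance, which forces $r$ to land in the $S_d$-invariants of $\mathrm{Pic}\big((\mathbb{P}^{n-1})^d\big)$.
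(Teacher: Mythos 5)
The survey does not actually prove this theorem --- it is quoted as well known, with a pointer to \cite[Theorem 3.7]{GS-nef} --- so there is no in-paper proof to compare against line by line; judged on its own terms, your argument is correct, and all of its inputs are available in the survey: flatness and the irreducible/reduced/normal fibres come from Proposition \ref{fibres are of same dimension}, $\phi_*\mathcal{O}_{\mathcal{Q}}=\mathcal{O}_{C^{(d)}}$ (hence injectivity of $\phi^*$ on Picard groups) is established in the proof of Theorem \ref{m-t}, the fibre description $\phi^{-1}(\underline{c})\cong\prod_i\mathbb{P}(E_{c_i})$ appears just before Definition \ref{line-fiber-hc}, and the base-change statement needed to restrict $\det p_{2*}\mathcal{B}$ to fibres is Lemma \ref{pullback-lemma}. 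The monodromy step is the conceptual heart and is sound: since $E=\mathcal{O}_C^{\oplus n}$, the family pulled back to the ordered configuration space is the constant family $(\mathbb{P}^{n-1})^d$, so the local system of fibre Picard groups is $\mathbb{Z}^d$ with $\pi_1(U^{\circ})$ acting through the full symmetric group, and a globally defined line bundle restricts to a flat, hence $S_d$-invariant, class in $\mathbb{Z}\cdot(h_1+\cdots+h_d)$. Two compressed steps deserve to be written out. First, in the vertical-divisor step you need $\Delta_d$ irreducible (it is: it is the image of $C\times C^{(d-2)}$), and then irreducibility of $\phi^{-1}(\Delta_d)$, which follows from flatness (hence openness) of $\phi$ together with irreducibility of its fibres; so there is in fact a single boundary component $Z$. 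Second, the multiplicity-one claim $\phi^*\mathcal{O}(\Delta_d)\cong\mathcal{O}_{\mathcal{Q}}(Z)$ is not a purely formal consequence of reducedness of the \emph{closed} fibres: what you need is that the fibre over the \emph{generic} point of $\Delta_d$ is reduced, so that the local ring of $\phi^{-1}(\Delta_d)$ at the generic point of $Z$ is a field and the local equation of $\Delta_d$ pulls back to a uniformizer of the DVR $\mathcal{O}_{\mathcal{Q},\eta_Z}$; this does follow from reducedness of all closed fibres by constructibility of the reduced-fibre locus, but the deduction should be stated. With these two points filled in, your split exact sequence $0\to\phi^*\mathrm{Pic}(C^{(d)})\to\mathrm{Pic}(\mathcal{Q})\xrightarrow{r}\mathbb{Z}\to 0$ gives the theorem (note the argument, like the statement itself, requires $n\geqslant 2$, since $r$ is built from the rank-$(d)$ Picard lattice of the fibres).
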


	As a corollary we get that
	$N^1(\mc Q)\,\cong\, N^1(C^{(d)})\oplus \mb R[\mc O_{\mc Q}(1)]$.
	
	Let $\underline{c}\,=\,(c_1,\,c_2,\,\cdots,\,c_n)\,\in\, C^{(d)}$ be a point outside the big diagonal, that is,
	$c_i\,\neq\, c_j$ if $i\,\neq\, j$. 
	Recall the Hilbert-Chow map $\phi$ from \eqref{defn-hc}. Then 
	$$\phi^{-1}(\underline{c})\,\cong\, \prod \mb P(E_{c_i})\,.$$ 
	Let $\mb P^1\,\hookrightarrow\, \mb P(E_{c_1})$ be a line 
	and let $v_i\,\in\, \mb P(E_{c_i})$ for $i\,\geqslant \, 2$. Then we have an embedding:
	\begin{equation}\label{line-fiber-Phi}
\mb P^1\,\cong\, \mb P^1\times v_2\times\cdots\times v_d\,\hookrightarrow\,
\mb P(E_{c_1}) \times \prod\limits_{i\geqslant  2}\mb P(E|_{c_i})\,=\,\Phi^{-1}(\underline{c})\subset \mc Q.
	\end{equation}

	\begin{definition}\label{line-fiber-hc}
		Let us denote the class of this curve in $N_1(\mc Q)$ by $[l]$. 
	\end{definition}
	
We construct a section of $\phi\,:\,\mc Q\,\longrightarrow\, C^{(d)}$. 
Over $C\times C^{(d)}$ we have the universal divisor $\Sigma$ 
which gives us the universal quotient 
$\mc O_{C\times C^{(d)}}\,\longrightarrow\, \mc O_{\Sigma}$. Choose a surjection 
	$E\, \longrightarrow\, L$ over $C$, where $L$ is a line bundle on $C$. 
	This induces a surjection, obtained by pulling back along the first projection,
	$E\otimes \mc O_{C\times C^{(d)}}\,\longrightarrow\, L\otimes \mc O_{C\times C^{(d)}}$.
	Then the composition of homomorphisms
	$$E\otimes \mc O_{C\times C^{(d)}} \,\longrightarrow\, L\otimes \mc O_{C\times C^{(d)}}
\,\longrightarrow\, L\otimes \mc O_{\Sigma}$$ 
	gives us a morphism 
	\begin{equation}\label{section-HC}
		\eta\,:\,C^{(d)}\,\longrightarrow\, \mc Q .
	\end{equation}
	which is easily seen to be a section of $\phi$.
	We can show that 
		$$N_1(\mc Q)\,\,=\,\,N_1(C^{(d)})\oplus \mb R[l],$$
		where $N_1(C^{(d)})\,\hookrightarrow\, N_1(\mc Q)$ 
		is the morphism given by the push-forward $\eta_*$.

Let $p_1\,:\,C\times \mc Q \,\longrightarrow\, \mc Q$ and $p_{2}\,:\,C\times \mc Q \,\longrightarrow\, C$ 
be the natural projections. Recall the universal quotient $B$ on $C\times \mc Q$. 
	For a vector bundle $F$ over $C$, we define 
	$$B_{F,\mc Q}\,\,:=\,\,{\rm det}(p_{2*}(B\otimes p^*_CF)).$$
	Then we can show the following equality in $N^1(\mc Q)$
	\begin{equation}\label{compare-B_L-O(1)}
		[B_{L,\mc Q}]\,\,=\,\,[\mc O_{\mc Q}(1)]+{\rm deg}(L)[x].
	\end{equation}

\subsection{Upper bound on NEF cone}

Let $V$ be a vector space of dimension $n$. From now, unless mentioned otherwise, 
the notation $\mc Q$ will be reserved for the space 
$\mc Q(V\otimes \mc O_C,\,d)$. Sometimes we will also 
denote this space by $\mc Q(n,\,d)$ when we want to emphasize
$n$ and $d$. For the rest of this subsection, 
the genus of the curve $C$ will be $g(C)\,\geqslant \, 1$.
If $g(C)\,\geqslant \, 2$ then we will also assume that $C$ is very general. 

Our aim is to compute the NEF cone of $\mc Q$.
Since this cone is dual to the cone of effective
curves, it follows that if we take effective curves
$C_1,\,C_2,\,\cdots,\, C_r$, then taking the cone generated by these 
in $N_1(\mc Q)$, and then taking its dual cone $T$ in $N^1(\mc Q)$, we have the
following: the cone ${\rm Nef}(\mc Q)$ is contained in $T$. This gives us an upper 
bound on ${\rm Nef}(\mc Q)$.
We already know two curves in $\mc Q$. The first 
being a line in the fiber of $\phi\,:\,\mc Q \,\longrightarrow\, C^{(d)}$,
see Definition \ref{line-fiber-hc}, which was denoted $[l]$. 
Recall the section $\eta$ 
of $\Phi$ from equation \eqref{section-HC}, taking $L$ to be the trivial
bundle.
The second curve is $\eta_*([l'])$, where $[l']$ is from Definition 
\ref{p1-in-C^(d)}. Now we will construct a third curve in $\mc Q$.

Define a morphism 
\begin{equation}\label{t delta}
	\widetilde{\delta}\,:\,C \,\longrightarrow\, \mc Q
\end{equation}
as follows. Let $p_1,\,p_2\,:\,C\times C \,\longrightarrow\, C$ be the first and second projections respectively. 
Let $i\,:\,C \,\longrightarrow\, C\times C$ be the diagonal. 
Fix a surjection $k^n \,\longrightarrow\, k^d$ of vector spaces. 
Then define the quotient over $C\times C$
$$ \mc O^n_{C\times C} \,\longrightarrow\, \mc O^d_{C\times C} \,\longrightarrow\, i_*i^* \mc O^d_{C\times C}.$$ 
This induces a morphism $\widetilde{\delta}\,:\,C \,\longrightarrow\, \mc Q$ 
defined by $$c \,\longmapsto\, [\mc O^n_C \, \longrightarrow\, k^d_c\, \longrightarrow\, 0].$$
We will abuse notation and denote the class
$[\widetilde{\delta}_*(C)]\,in\, N_1(\mc Q)$ by $[\widetilde\delta]$.

We now give an upper bound for the NEF cone when $n\,\geqslant \, d\,\geqslant  \,{\rm gon}(C)$.

\begin{proposition}\label{nef cone of quot schemes } 
Consider the Quot scheme $\mc Q\,=\,\mc Q(n,\,d)$. Assume that $n\,\geqslant  \,d \,\geqslant \, {\rm gon}(C)$.
Let $\mu_0\,\,:=\,\,\dfrac{d+g-1}{dg}$. Then
$${\rm Nef}(\mc Q)\,\,\subset\, \,\mb R_{\geqslant  0}([\mc O_{\mc Q}(1)]+\mu_0[L_0])+\mb R_{\geqslant  0}[\theta_d]+\mb R_{\geqslant  0}[L_0].$$
\end{proposition}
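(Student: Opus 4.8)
The plan is to exploit the duality between the nef cone and the closure of the cone of effective curves, as laid out in the paragraph preceding the proposition. Since $\mathrm{Nef}(\mc Q)$ is dual to $\overline{NE}(\mc Q)$, it suffices to compute the intersection numbers of the three divisor classes $[\mc O_{\mc Q}(1)]+\mu_0[L_0]$, $[\theta_d]$, and $[L_0]$ against the three explicit effective curve classes we have in hand, namely $[l]$ (a line in a fiber of $\phi$, Definition \ref{line-fiber-hc}), $\eta_*[l']$ (the pushforward under the section $\eta$ of the gonality curve $[l']$ from Definition \ref{p1-in-C^(d)}), and $\widetilde\delta$ (the diagonal curve from \eqref{t delta}). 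If I can show that each of the three proposed generators pairs non-negatively with all three curve classes, then the cone they generate is contained in the dual of the cone spanned by $[l],\,\eta_*[l'],\,\widetilde\delta$, and since $\mathrm{Nef}(\mc Q)$ is contained in that dual cone (as the curves are effective), I obtain the claimed inclusion. Concretely, I would verify that the dual cone to $\mathrm{span}_{\geqslant 0}\{[l],\eta_*[l'],\widetilde\delta\}$ is exactly the right-hand side.

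First I would fix the coordinate system on $N^1(\mc Q)\cong N^1(C^{(d)})\oplus\mb R[\mc O_{\mc Q}(1)]$ and on $N_1(\mc Q)=N_1(C^{(d)})\oplus\mb R[l]$, using the bases from Proposition \ref{nef-cone-C^(d)}: on the $C^{(d)}$ side, $\{[L_0],[\theta_d]\}$ is dual to $\{[l'],[\delta]\}$. The key computations are the pairings of the classes against the three curves. The pairings involving only the $C^{(d)}$-directions are controlled by pullback via $\phi$ and the section $\eta$: since $\eta$ is a section of $\phi$, one has $\phi^*D\cdot\eta_*\gamma=D\cdot\gamma$ for $D\in N^1(C^{(d)})$ and $\gamma\in N_1(C^{(d)})$, reducing those entries to the known dual pairings on $C^{(d)}$ from Proposition \ref{nef-cone-C^(d)}. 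The pairing of any $\phi^*D$ with $[l]$ vanishes because $[l]$ lies in a fiber of $\phi$. The crucial new ingredients are the intersection numbers of $[\mc O_{\mc Q}(1)]$ with each of $[l]$, $\eta_*[l']$, and $\widetilde\delta$, together with the pairings of $[L_0]$ and $[\theta_d]$ against $\widetilde\delta$; the former I would compute from the definition $\mc O_{\mc Q}(1)=\det(p_{2*}\mc B)$ via Lemma \ref{lb-lemma}, pulling the universal quotient back along the maps $l\hookrightarrow\mc Q$, $\eta$, and $\widetilde\delta$ and computing degrees of the resulting determinant line bundles on each curve.

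The main obstacle will be the computation of $[\mc O_{\mc Q}(1)]\cdot\widetilde\delta$ and, relatedly, the pairings of the symmetric-product classes $[L_0]$ and $[\theta_d]$ with $\widetilde\delta$. The diagonal curve $\widetilde\delta$ is genuinely mixed: its image under $\phi$ is the small diagonal $\delta\hookrightarrow C^{(d)}$, so $[\theta_d]\cdot\widetilde\delta$ and $[L_0]\cdot\widetilde\delta$ are governed by the behavior of $\theta_d$ and $L_0$ on the small diagonal, and by construction $L_0$ was arranged (Definition \ref{def-L_0} and the surrounding discussion) to satisfy $\delta^*[L_0]=0$, which should force $[L_0]\cdot\widetilde\delta=0$; this is exactly what makes $[L_0]$ sit on the boundary of the cone. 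Computing $[\mc O_{\mc Q}(1)]\cdot\widetilde\delta$ requires restricting $\mc B$ to $C\times C$ along $\mathrm{id}\times\widetilde\delta$, where by the definition in \eqref{t delta} the quotient is $i_*i^*\mc O_{C\times C}^d$ supported on the diagonal; I expect $\det p_{2*}$ of this to have a degree computable by Riemann--Roch on $C$, and this number, combined with \eqref{compare-B_L-O(1)}, is what pins down the $\mu_0$-coefficient in the first generator. Once all nine pairings are assembled into a $3\times 3$ matrix, checking that the three proposed divisors generate the dual cone is a finite linear-algebra verification; the genuine work is isolating the correct intersection numbers, and the appearance of $\mu_0=\tfrac{d+g-1}{dg}$ strongly suggests that a Riemann--Roch computation for the degree of $\det p_{2*}\mc B$ along $\widetilde\delta$ is the heart of the matter.
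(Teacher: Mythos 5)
Your overall strategy coincides with the paper's: bound ${\rm Nef}(\mc Q)$ by the cone dual to the cone spanned by the three effective curves $[l]$, $\eta_*[l']$ and $[\widetilde\delta]$, and verify, by computing pairings, that this dual cone is exactly $\mb R_{\geqslant 0}([\mc O_{\mc Q}(1)]+\mu_0[L_0])+\mb R_{\geqslant 0}[\theta_d]+\mb R_{\geqslant 0}[L_0]$. Your reductions are also the paper's: pairings of $\phi^*D$ with $[l]$ vanish, and pairings with $\eta_*[l']$ and $[\widetilde\delta]$ reduce by the projection formula to pairings on $C^{(d)}$ with $[l']$ and $[\delta]$. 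One caution about your opening sentence: showing that the three proposed generators pair non-negatively with the three curves only proves that the right-hand cone is contained in the dual cone, which is the wrong direction for the desired inclusion; what is actually needed (and what you do state immediately after) is that the dual cone \emph{equals} the right-hand side, i.e., that the three classes are, up to positive scalars, the dual basis of the three curve classes.

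The one concrete error is your diagnosis of where $\mu_0$ comes from. You predict that $[\mc O_{\mc Q}(1)]\cdot[\widetilde\delta]$ is a nontrivial Riemann--Roch number and that it is what "pins down the $\mu_0$-coefficient". In fact this intersection number is zero, and trivially so by your own method: along $\widetilde\delta$ the universal quotient restricts to $i_*i^*\mc O^d_{C\times C}=i_*\mc O_C^{\oplus d}$ (see \eqref{t delta}), whose pushforward to the second factor is the trivial bundle $\mc O_C^{\oplus d}$, so the determinant has degree $0$; this is exactly the paper's step, via Lemma \ref{pullback-lemma}. The vanishing against $[\widetilde\delta]$ only forces the $C^{(d)}$-part of the first generator to be proportional to $[L_0]$ (since $[L_0]\cdot[\delta]=0$ while $[\theta_d]\cdot[\delta]>0$); it says nothing about the coefficient. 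The coefficient $\mu_0$ is pinned down by the pairing with $\eta_*[l']$, and the essential nontrivial input there is $\eta^*[\mc O_{\mc Q}(1)]=-[\Delta_d/2]$ (G\"ottsche's theorem; Lemma \ref{lemma-pullback of O(1) via various sections} for the trivial partition, cited in the paper as \cite[Lemma 3.15]{GS-nef}), combined with the relation $[-\Delta_d/2]+\mu_0[L_0]=(1-\mu_0)[\theta_d]$ from Lemma \ref{E in terms of various bases} and $[\theta_d]\cdot[l']=0$. Your mechanical plan of computing all nine pairings would eventually recover this: restricting the universal divisor over the pencil $l'$ and computing $\deg\det p_{2*}$ gives $-(d+g-1)$, while $[L_0]\cdot[l']=dg$, whence the coefficient $(d+g-1)/dg=\mu_0$. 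But as written the proposal mislocates the heart of the argument, and \eqref{compare-B_L-O(1)} alone will not substitute for this G\"ottsche-type computation of $\eta^*[\mc O_{\mc Q}(1)]$.
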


\begin{proof}
	We claim that the cone dual to $\langle [l],\,\eta_*([l']),\,[\widetilde\delta]\rangle$
	is precisely 
\begin{equation}\label{z1}
\langle ([\mc O_{\mc Q}(1)]+\mu_0[L_0]),\,[L_0],\,[\theta_d]\rangle.
\end{equation}
To prove the claim, we have the following equalities:
	\begin{enumerate}
		\item $([\mc O_{\mc Q}(1)]+\mu_0[L_0])\cdot [l]\,=\,1$. This is clear.
		
\item $([\mc O_{\mc Q}(1)]+\mu_0[L_0])\cdot \eta_*[l']\,=\,0$. By 
		projection formula and \cite[Lemma 3.15]{GS-nef},
		we get that 
		$$([\mc O_{\mc Q}(1)]+\mu_0[L_0])\cdot [\eta_*l']\,=\,([-\Delta_d/2]+\mu_0[L_0])\cdot [l']\,.$$
		By Lemma \ref{E in terms of various bases} we get that 
		$[-\Delta_d/2]+\mu_0[L_0]\,=\,(1-\mu_0)[\theta_d]$. But as we saw earlier, 
		$[\theta_d]\cdot [l']\,=\, 0$.

		\item $([\mc O_{\mc Q}(1)]+\mu_0[L_0])\cdot [\widetilde{\delta}]\,=\,0$. By 
		Lemma \ref{pullback-lemma}, it is easy to see that 
		$[\mc O_{\mc Q}(1)]\cdot [\widetilde{\delta}]\,=\,0$. By 
		projection formula, we get 
		$$([\mc O_{\mc Q}(1)]+\mu_0[L_0])\cdot [\widetilde{\delta}]\,=\,
		[\mu_0L_0]\cdot [\Phi_*\widetilde{\delta}]\,=\,[\mu_0L_0]\cdot [\delta]=0\,.$$
		
\item $[\theta_d]\cdot [l]\,=\,[L_0]\cdot [l]\,=\, 0$ follows using the projection formula. 	
	\end{enumerate}
	Now the claim in \eqref{z1} follows from Proposition \ref{nef-cone-C^(d)}.

	As explained before, since ${\rm Nef}(\mc Q)$ is contained in the 
	dual to the cone $\langle [l],\,\eta_*([l']),\,[\widetilde \delta]\rangle$,
	the proposition follows.
\end{proof}

When the genus $g\,=\,1$, we have the following improvement of 
Proposition \ref{nef cone of quot schemes }, in which the condition 
that $n\,\geqslant  \,d$ is not required.

\begin{proposition}\label{ub-g=1} 
Let $C$ be a smooth projective curve of genus $g\,=\,1$.
Consider the Quot scheme $\mc Q\,=\,\mc Q(n,\,d)$.
Assume $d \,\geqslant \, {\rm gon}(C)\,=\,2$. Then 
	$${\rm Nef}(\mc Q)\,\,\subset\,\, \mb R_{\geqslant  0}([\mc O_{\mc Q}(1)]+[L_0])+\mb R_{\geqslant  0}[\theta_d]+\mb R_{\geqslant  0}[L_0].$$
\end{proposition}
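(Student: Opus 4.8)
The plan is to run exactly the argument of Proposition \ref{nef cone of quot schemes }, but to replace the curve $\widetilde\delta$ by one that exists for every $n\geqslant 1$. First observe that when $g=1$ we have $\mu_0=\frac{d+g-1}{dg}=1$, so the cone on the right-hand side of the asserted bound is literally the cone \eqref{z1} appearing in the proof of Proposition \ref{nef cone of quot schemes }. Two of the three test curves used there, namely $[l]$ (Definition \ref{line-fiber-hc}) and $\eta_*([l'])$, are available for any $n$; the only place where $n\geqslant d$ entered was in the construction of $\widetilde\delta$ from a surjection $k^n\twoheadrightarrow k^d$. Thus it suffices to produce an \emph{effective} curve $\gamma$ in $\mc Q$ satisfying the same two intersection identities, $\phi_*[\gamma]=[\delta]$ and $[\mc O_{\mc Q}(1)]\cdot[\gamma]=0$.

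I would build $\gamma$ from the thickened diagonal. Let $\Delta\subset C\times C$ be the diagonal, with ideal sheaf $\mc I_\Delta$, and fix a coordinate projection $\mc O_{C\times C}^{\,n}\longrightarrow \mc O_{C\times C}$. Composing with $\mc O_{C\times C}\longrightarrow \mc O_{C\times C}/\mc I_\Delta^{\,d}$ gives a quotient of $\mc O_{C\times C}^{\,n}$. Viewing the second factor as the parameter, the divisor $d\Delta$ is finite of constant length $d$ over $C$ via $p_2$ (the diagonal is a section of $p_2$, hence transverse to its fibres), so this quotient is flat over the parameter with torsion fibres of degree $d$; by the universal property it defines a morphism $\gamma\colon C\longrightarrow \mc Q$, valid for all $n\geqslant 1$. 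On the fibre over $c$ the quotient is the fat point $\mc O_C/\mf m_c^{\,d}$ supported at $c$, so $\phi\circ\gamma$ traces the small diagonal and $\phi_*[\gamma]=[\delta]$.

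The crux is the computation of $[\mc O_{\mc Q}(1)]\cdot[\gamma]$, and this is the step where $g=1$ is decisive. By Lemma \ref{lb-lemma} (with $F=\mc O_C$) one has $\gamma^*\mc O_{\mc Q}(1)=\det\big(p_{2*}(\mc O_{C\times C}/\mc I_\Delta^{\,d})\big)$. I would filter $\mc O_{C\times C}/\mc I_\Delta^{\,d}$ by the powers $\mc I_\Delta^{\,j}/\mc I_\Delta^{\,j+1}$, whose successive quotients are $(\mc I_\Delta/\mc I_\Delta^2)^{\otimes j}\cong K_C^{\otimes j}$ on $\Delta\cong C$, using that the conormal bundle of the diagonal is $\Omega^1_C=K_C$. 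Since $p_2$ is finite on the support, pushforward is exact and respects the filtration, giving $\det\big(p_{2*}(\mc O_{C\times C}/\mc I_\Delta^{\,d})\big)\cong K_C^{\otimes\binom{d}{2}}$, of degree $\binom{d}{2}(2g-2)$. For $g=1$ this degree is $0$, so $[\mc O_{\mc Q}(1)]\cdot[\gamma]=0$; by the projection formula $[L_0]\cdot[\gamma]=[L_0]\cdot[\delta]=0$ as well. (For $g\geqslant 2$ this degree is positive, which is precisely why the fat-point curve cannot replace $\widetilde\delta$ there, and why the improvement is special to genus one.)

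Finally I would assemble the bound. With $\mu_0=1$, pairing $\{[l],\,\eta_*([l']),\,[\gamma]\}$ against $\{[\mc O_{\mc Q}(1)]+[L_0],\,[L_0],\,[\theta_d]\}$ reproduces the matrix of Proposition \ref{nef cone of quot schemes }: one has $([\mc O_{\mc Q}(1)]+[L_0])\cdot[l]=1$, the identity $([\mc O_{\mc Q}(1)]+[L_0])\cdot\eta_*[l']=0$ follows from Lemma \ref{E in terms of various bases} since the coefficient $1-\mu_0$ vanishes, and $[\theta_d]\cdot[l]=[L_0]\cdot[l]=0$ are clear. Since $\phi_*[\gamma]=[\delta]$ forces $[\gamma]=\eta_*[\delta]+a[l]$ for some $a$, the three classes $[l],\eta_*[l'],[\gamma]$ are linearly independent in $N_1(\mc Q)$, so by Proposition \ref{nef-cone-C^(d)} their dual cone is exactly $\mb R_{\geqslant 0}([\mc O_{\mc Q}(1)]+[L_0])+\mb R_{\geqslant 0}[\theta_d]+\mb R_{\geqslant 0}[L_0]$. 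As all three curves are effective, ${\rm Nef}(\mc Q)$ lies in this dual cone, which is the claim. The main obstacle is the determinant computation of the third paragraph; everything else is a transcription of the $n\geqslant d$ argument.
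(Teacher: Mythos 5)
Your proposal is correct, and structurally it is the same argument as the paper's proof of Proposition \ref{nef cone of quot schemes }: produce three effective curves whose intersection matrix against the three candidate generators is diagonal with positive entries, then invoke Proposition \ref{nef-cone-C^(d)} and dualize. (The survey states Proposition \ref{ub-g=1} without proof, deferring to \cite{GS-nef}, so the comparison is really with that template and with the tools the paper provides.) The genuine point of difference is how you handle the third curve. Your fat-point family $c\,\longmapsto\,[\mc O_C^n\to\mc O_C/\mf m_c^{\,d}]$ is in fact exactly $\eta\circ\delta$, the section \eqref{section-HC} (with $L=\mc O_C$) composed with the small diagonal, since $({\rm id}\times\delta)^*\Sigma\,=\,d\Delta$; so your curve is the one the paper's own machinery already handles. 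Namely, Lemma \ref{lemma-pullback of O(1) via various sections} (the partition $\mathbf{d}=(d)$) gives $\eta^*[\mc O_{\mc Q}(1)]=[\mc O(-\Delta_d/2)]$, and Lemma \ref{E in terms of various bases} at $g=1$ (where $\mu_0=1$) gives $[L_0]=[\Delta_d/2]$, so $([\mc O_{\mc Q}(1)]+[L_0])\cdot\eta_*[\delta]=0$ with no further computation --- indeed $[\mc O_{\mc Q}(1)]+[L_0]$ pairs to zero with the entire image of $\eta_*$. What your route buys instead is an elementary, self-contained evaluation of that number: the conormal filtration $\mc I_\Delta^{\,j}/\mc I_\Delta^{\,j+1}\cong K_C^{\otimes j}$, pushed forward by the finite map $p_2$, gives $\det p_{2*}\mc O_{d\Delta}\cong K_C^{\otimes\binom{d}{2}}$, hence $[\mc O_{\mc Q}(1)]\cdot[\gamma]=\binom{d}{2}(2g-2)$; this avoids G\"ottsche's theorem entirely and makes transparent why the improvement is confined to genus one. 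Two minor points of hygiene: the flatness of $\mc O_{d\Delta}$ over the second factor is better justified by noting that its only associated point (the generic point of $\Delta$) dominates $C$, or by constancy of fibre length over the reduced base; and the undetermined coefficient $a$ in $[\gamma]=\eta_*[\delta]+a[l]$ is actually $0$ since $\gamma=\eta\circ\delta$ on the nose, though your weaker statement suffices for linear independence.
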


The following two Lemmas will be useful in showing that certain line bundles are nef.

\begin{lemma}\label{B_L.D geq 0}
	Let $f\,:\,D\,\longrightarrow\, \mc Q$ be a morphism, where $D$ is a smooth projective curve. Fix a point 
	$q\,\in\, f(D)$ and an effective divisor $A$ on $C$ containing the scheme theoretic support of
	$\mc B_q$. If there is a line bundle $L$ on $C$ such that $H^0(C,\, L)\,\longrightarrow\, H^0(A,\, L\big\vert_A)$ is
surjective, then $[B_{L,\mc Q}]\cdot [D]\,\,\geqslant \,\, 0$. 
\end{lemma}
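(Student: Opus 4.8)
The plan is to show that the intersection number $[B_{L,\mc Q}]\cdot [D]$ equals the degree of a line bundle on $D$ that can be exhibited as the determinant of a pushforward, and then argue that this pushforward is a subsheaf of a trivial (or semistable of nonnegative degree) bundle, forcing the degree to be nonnegative. First I would invoke the base-change setup of Lemma \ref{lb-lemma}. Given $f\,:\,D\,\longrightarrow\,\mc Q$, I pull back the universal quotient to get $\mc B_D\,:=\,(id\times f)^*\mc B$ on $C\times D$, which is $D$-flat with fibers torsion of degree $d$. By Lemma \ref{lb-lemma}(ii), $f^*B_{L,\mc Q}\,=\,B_{L,D}\,=\,{\rm det}\big(p_{2,D*}(\mc B_D\otimes p_{1,D}^*L)\big)$, so that
$$
[B_{L,\mc Q}]\cdot[D]\,=\,{\rm deg}_D\,{\rm det}\big(p_{2,D*}(\mc B_D\otimes p_{1,D}^*L)\big)\,=\,{\rm deg}_D\,p_{2,D*}(\mc B_D\otimes p_{1,D}^*L).
$$
Thus it suffices to prove that this rank-$d$ pushforward bundle on $D$ has nonnegative degree.

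**Exploiting the support hypothesis.** The key geometric input is that the scheme-theoretic support of every fiber of $\mc B_D$ is controlled by the fixed divisor $A$. Since $q\in f(D)$ and $A$ contains the support of $\mc B_q$, and $D$ is connected (being a curve mapping with a common fiber structure through $q$), the idea is that the Hilbert--Chow images $\phi(f(D))$ all have support inside $A$; more precisely I would argue that $\mc B_D$ is scheme-theoretically supported on $A\times D\subset C\times D$. This lets me factor the quotient $p_{1,D}^*\mc O_C^n\,\twoheadrightarrow\,\mc B_D$ (recall $E=\mc O_C^n$ in this section) through sections supported on $A$. Applying $p_{2,D*}$ to the surjection $\mc O_C^n\otimes p_{1,D}^*L\,\twoheadrightarrow\,\mc B_D\otimes p_{1,D}^*L$ gives a map
$$
H^0(C,\,L)\otimes\mc O_D\,\longrightarrow\, p_{2,D*}(\mc B_D\otimes p_{1,D}^*L),
$$
where I have used that the $\mc B_D$ are supported on $A\times D$ so pushforward commutes with restriction to $A$. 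The surjectivity hypothesis $H^0(C,L)\twoheadrightarrow H^0(A,L|_A)$ ensures this map is itself surjective onto the pushforward (which only sees $L|_A$).

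**Concluding nonnegativity.** Once $p_{2,D*}(\mc B_D\otimes p_{1,D}^*L)$ is exhibited as a quotient of the trivial bundle $H^0(C,L)\otimes\mc O_D$ on the curve $D$, its determinant is globally generated, hence nef, hence of nonnegative degree on the projective curve $D$. Therefore $[B_{L,\mc Q}]\cdot[D]={\rm deg}_D\,p_{2,D*}(\mc B_D\otimes p_{1,D}^*L)\geqslant 0$, as desired.

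**The main obstacle.** The delicate point I expect to be the crux is establishing that $\mc B_D$ is scheme-theoretically supported on $A\times D$, and hence that $p_{2,D*}(\mc B_D\otimes p_{1,D}^*L)$ depends only on $L|_A$. Flatness gives that each fiber has length $d$, but as $q'$ ranges over $f(D)$ the support of $\mc B_{q'}$ could a priori wander outside $A$; the hypothesis only pins down the support at the single point $q$. I would handle this by the rigidity of the fibers of $\phi$: the composite $\phi\circ f\,:\,D\,\longrightarrow\,C^{(d)}$ and the requirement that $A$ contain ${\rm supp}\,\mc B_q$ must be leveraged so that the relevant sections all factor through restriction to $A$. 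If the support genuinely moves, one instead works on the union of supports and uses that the pushforward is a quotient of $H^0(C,L)\otimes\mc O_D$ directly, making the surjectivity onto $H^0$ of the fiberwise restriction the decisive step; this reduces the whole matter to the stated surjectivity of $H^0(C,L)\to H^0(A,L|_A)$ together with the generic-freeness of pushforward along $p_{2,D}$.
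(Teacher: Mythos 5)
Your opening reduction is correct and is how any proof must begin: by Lemma \ref{lb-lemma} and the projection formula, $[B_{L,\mc Q}]\cdot [D] \,=\, \deg_D \det\bigl(p_{2,D*}(\mc B_D\otimes p_{1,D}^*L)\bigr)$, and the map to study is indeed the pushforward of the surjection $\mc O_{C\times D}^{\oplus n}\otimes p_{1,D}^*L \to \mc B_D\otimes p_{1,D}^*L$ (its source is $H^0(C,L)^{\oplus n}\otimes\mc O_D$, not $H^0(C,L)\otimes\mc O_D$, but that is cosmetic). The genuine gap is the one you flag yourself: your argument needs $\mc B_D$ to be scheme-theoretically supported on $A\times D$, and this is false in general. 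The hypothesis controls the support of $\mc B_q$ at the \emph{single} point $q$; for an arbitrary morphism $f\,:\,D\to\mc Q$ the composite $\phi\circ f$ is non-constant and the supports of the fibers $\mc B_{f(x)}$ move with $x$, leaving $A$ entirely. There is no ``rigidity of the fibers of $\phi$'' to invoke --- $f(D)$ need not lie in a fiber of $\phi$, and in the intended applications (checking nefness against an arbitrary curve in $\mc Q$) it does not. Your fallback is equally unavailable: $p_{2,D*}$ is only left exact, so $V\,:=\,p_{2,D*}(\mc B_D\otimes p_{1,D}^*L)$ is \emph{not} a quotient of $H^0(C,L)^{\oplus n}\otimes\mc O_D$; by Lemma \ref{pullback-lemma}(i) the fiber of this map at $x\in D$ is $H^0(C,L)^{\oplus n}\to H^0(C,\mc B_{f(x)}\otimes L)$, which is surjective only when the $H^0$-surjectivity hypothesis holds for the support of $\mc B_{f(x)}$ --- guaranteed only at $q$.

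The repair is that surjectivity at the single point suffices, because only \emph{generic} surjectivity is needed. At a point $x_0\in f^{-1}(q)$: since $\mc B_q$ is an $\mc O_A$-module, $\mc B_q\otimes L$ is a quotient of $(L\big\vert_A)^{\oplus n}$, and $H^0$ is exact on the $0$-dimensional scheme $A$, so $H^0(A,L\big\vert_A)^{\oplus n}\to H^0(C,\mc B_q\otimes L)$ is surjective; composing with the assumed surjection $H^0(C,L)\to H^0(A,L\big\vert_A)$ shows the fiber map at $x_0$ is surjective. Surjectivity on fibers of a map of coherent sheaves is an open condition (Nakayama), so $H^0(C,L)^{\oplus n}\otimes\mc O_D\to V$ is surjective over a nonempty open subset of the irreducible curve $D$. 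Taking $d$-th exterior powers then gives a nonzero map from a trivial bundle to $\det V$, i.e.\ a nonzero section of $\det V$, whence $\deg\det V\geqslant 0$. (Equivalently: the image $V'\subseteq V$ is a vector bundle quotient of a trivial bundle with $V/V'$ torsion, so $\deg V\geqslant \deg V'\geqslant 0$.) This local-to-generic step --- not global generation of $V$, which is both false and unnecessary --- is what your write-up is missing; it is also how the cited source \cite{GS-nef} argues, and with it the rest of your outline goes through.
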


\begin{lemma}\label{line bundles restricted to a divisor}
Let $A$ be an effective divisor on $C$ of degree $d$.
Then there is a line bundle
$L$ on $C$ of degree $d+g-1$ such that the natural map 
$$H^0(C,\, L)\,\longrightarrow\, H^0(A,\, L\big\vert_A)$$
is surjective.
\end{lemma}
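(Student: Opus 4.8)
The plan is to analyze the restriction map through the standard short exact sequence attached to the subscheme $A\,\subset\, C$. Writing $L(-A)\,:=\,L\otimes\mc O_C(-A)$, there is an exact sequence
\[
0\,\longrightarrow\, L(-A)\,\longrightarrow\, L\,\longrightarrow\, L\big\vert_A\,\longrightarrow\, 0
\]
on $C$. Since $A$ is a finite (zero-dimensional) subscheme, one has $H^1(A,\,L|_A)\,=\,0$, so the associated long exact cohomology sequence reads
\[
H^0(C,\,L)\,\longrightarrow\, H^0(A,\,L|_A)\,\longrightarrow\, H^1(C,\,L(-A))\,\longrightarrow\, H^1(C,\,L)\,\longrightarrow\, 0.
\]
In particular the cokernel of the restriction map $H^0(C,\,L)\,\longrightarrow\, H^0(A,\,L|_A)$ injects into $H^1(C,\,L(-A))$, so the restriction is surjective as soon as $H^1(C,\,L(-A))\,=\,0$. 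Thus the entire lemma reduces to producing a line bundle $L$ of degree $d+g-1$ with this single vanishing.

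Next I would compute degrees. Since $\deg A\,=\,d$, the bundle $L(-A)$ has degree $(d+g-1)-d\,=\,g-1$. As $L$ ranges over ${\rm Pic}^{d+g-1}(C)$, tensoring by $\mc O_C(-A)$ identifies this set bijectively with ${\rm Pic}^{g-1}(C)$; hence it suffices to exhibit some $N\,\in\,{\rm Pic}^{g-1}(C)$ with $H^1(C,\,N)\,=\,0$ and then set $L\,:=\,N\otimes\mc O_C(A)$. By Riemann--Roch a line bundle $N$ of degree $g-1$ satisfies $h^0(C,N)-h^1(C,N)\,=\,(g-1)-g+1\,=\,0$, so $h^1(C,N)\,=\,h^0(C,N)$, and the required vanishing $H^1(C,N)\,=\,0$ is therefore equivalent to $N$ having no nonzero global section.

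The remaining point---and the only substantive one---is the existence of a degree $g-1$ line bundle with no sections. This is classical: the locus $W_{g-1}\,\subset\,{\rm Pic}^{g-1}(C)$ of line bundles admitting a nonzero section is a theta divisor, a proper closed subvariety of the $g$-dimensional abelian variety ${\rm Pic}^{g-1}(C)$. Hence its complement is nonempty, and any $N$ in that complement has $h^0(C,N)\,=\,0$, equivalently $h^1(C,N)\,=\,0$; taking $L\,:=\,N\otimes\mc O_C(A)$ then completes the argument. I expect this existence step to be the crux of the matter, since the degree $d+g-1$ is precisely the threshold at which a general choice renders $L(-A)$ nonspecial: unlike the easy high-degree regime $\deg L\,\geqslant\, 2g-1+d$ (where $H^1(C,\,L(-A))$ vanishes for \emph{every} $L$ by degree reasons), here one genuinely must choose $L$ outside a translate of the theta divisor rather than arbitrarily.
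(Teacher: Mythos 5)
Your proof is correct, and it is the standard argument: reduce surjectivity to the vanishing $H^1(C,\,L(-A))\,=\,0$ via the restriction sequence, then observe that a general line bundle of degree $g-1$ lies off the (translate of the) theta divisor $W_{g-1}\,\subset\,{\rm Pic}^{g-1}(C)$ and hence is non-special. The survey itself states this lemma without proof, deferring to \cite{GS-nef}, and your argument is essentially the one used there; note also that in the setting where the lemma is applied the paper assumes $g\,\geqslant\,1$, so the degenerate genus-zero reading of the theta-divisor step never arises.
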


\subsection{Nef cone in the genus 0 case}\label{genus-0}

Throughout this section we will work with $C\,=\,\mb P^1$.
Let us first compute the nef cone of $\mc Q(n,\,d)$.

Note that we have $C^{(d)}\,\cong \,\mb P^d$. 
Hence $N^1(C^{(d)})\,=\,\mb R[\mc O_{\mb P^d}(1)]$. 
As $N^1(\mc Q)\,\cong\, N^1(C^{(d)})\oplus \mb R[\mc O_{\mc Q}(1)]$,
it follows that $N^1(\mc Q)$ is two dimensional. Therefore, it suffices
to find a line bundle on $\mc Q$ which is different from 
the pullback of $\mc O_{\mb P^d}(1)$ and which is nef but 
not ample. The following result is proved in \cite[Theorem 6.2]{Str}. 

\begin{proposition}
\begin{align*}
{\rm Nef}(\mc Q(n,\,d)) \,=\, & \mb R_{\geqslant  0}[B_{\mc O(d-1),\mc Q}]+\mb R_{\geqslant  0}[\mc O_{\mb P^d}(1)]\\
=\, & \mb R_{\geqslant  0}([\mc O_{\mc Q}(1)]+(d-1)[\mc O_{\mb P^d}(1)])+\mb R_{\geqslant  0}[\mc O_{\mb P^d}(1)]\,.
\end{align*}	
\end{proposition}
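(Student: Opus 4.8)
The plan is to exploit that $N^1(\mc Q)$ is two-dimensional, so ${\rm Nef}(\mc Q)$ is a full-dimensional closed cone bounded by exactly two rays; it then suffices to produce two non-proportional classes that are nef but not ample, together with an effective curve annihilating each. First I would record, via \eqref{compare-B_L-O(1)} and the identification $[x]\,=\,[\mc O_{\mb P^d}(1)]$ (valid because on $C\,=\,\mb P^1$ the divisor $i_{d-1}(C^{(d-1)})$ is a hyperplane in $C^{(d)}\,=\,\mb P^d$), that $[B_{\mc O(d-1),\mc Q}]\,=\,[\mc O_{\mc Q}(1)]+(d-1)[\mc O_{\mb P^d}(1)]$; this yields the equality of the two displayed expressions.

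Next I would verify that both candidate generators are nef. The class $[\mc O_{\mb P^d}(1)]$ is $\phi^*$ of an ample class, hence nef, and it annihilates the fiber line $[l]$ of Definition \ref{line-fiber-hc} since $\phi$ contracts $[l]$; thus it is nef but not ample. For $[B_{\mc O(d-1),\mc Q}]$ I would apply Lemma \ref{B_L.D geq 0} with $L\,=\,\mc O(d-1)$: for any effective divisor $A$ on $\mb P^1$ of degree $d$ containing the support of the relevant fiber of $\mc B$, the map $H^0(\mb P^1,\,\mc O(d-1))\,\longrightarrow\, H^0(A,\,\mc O(d-1)\vert_A)$ is surjective because its obstruction lies in $H^1(\mb P^1,\,\mc O(d-1-d))\,=\,H^1(\mb P^1,\,\mc O(-1))\,=\,0$. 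Hence $[B_{\mc O(d-1),\mc Q}]\cdot[D]\,\geqslant\,0$ for every curve $D\,\longrightarrow\,\mc Q$, so this class is nef.

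To see that $[B_{\mc O(d-1),\mc Q}]$ lies on the boundary, I would pair it with the curve $\eta_*[\ell]$, where $\eta\,:\,\mb P^d\,\longrightarrow\,\mc Q$ is the section \eqref{section-HC} associated with $L\,=\,\mc O$ and $\ell\,\subset\,\mb P^d$ is a line. Since $\phi\circ\eta\,=\,{\rm id}$ one has $[\mc O_{\mb P^d}(1)]\cdot\eta_*[\ell]\,=\,1$. The remaining input is $\eta^*\mc O_{\mc Q}(1)\,=\,\det p_{2*}(\eta^*\mc B)\,=\,\det p_{2*}\mc O_\Sigma$, where $\Sigma\,\subset\,\mb P^1\times\mb P^d$ is the incidence divisor, of bidegree $(d,1)$. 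Pushing $0\,\longrightarrow\,\mc O(-d,-1)\,\longrightarrow\,\mc O\,\longrightarrow\,\mc O_\Sigma\,\longrightarrow\,0$ forward along $p_2$ gives $0\,\longrightarrow\,\mc O_{\mb P^d}\,\longrightarrow\,p_{2*}\mc O_\Sigma\,\longrightarrow\,\mc O_{\mb P^d}(-1)^{\oplus(d-1)}\,\longrightarrow\,0$, using $H^0(\mb P^1,\,\mc O(-d))\,=\,0$ and $\dim H^1(\mb P^1,\,\mc O(-d))\,=\,d-1$; taking determinants gives $\eta^*\mc O_{\mc Q}(1)\,=\,\mc O_{\mb P^d}(-(d-1))$. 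Therefore $[B_{\mc O(d-1),\mc Q}]\cdot\eta_*[\ell]\,=\,-(d-1)+(d-1)\,=\,0$, so $[B_{\mc O(d-1),\mc Q}]$ is not ample.

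Finally I would conclude: $[\mc O_{\mb P^d}(1)]$ and $[B_{\mc O(d-1),\mc Q}]$ form a basis of $N^1(\mc Q)$, are both nef, and are both non-ample, hence both lie on the boundary of the two-dimensional cone ${\rm Nef}(\mc Q)$. Two non-proportional boundary classes of a full-dimensional planar cone must span its two extremal rays, so ${\rm Nef}(\mc Q)\,=\,\mb R_{\geqslant 0}[B_{\mc O(d-1),\mc Q}]+\mb R_{\geqslant 0}[\mc O_{\mb P^d}(1)]$, as claimed. The main obstacle is the determinant-of-pushforward computation $\eta^*\mc O_{\mc Q}(1)\,=\,\mc O_{\mb P^d}(-(d-1))$, which pins down the second extremal ray; everything else is formal once $N^1(\mc Q)$ is known to be two-dimensional.
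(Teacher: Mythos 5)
Your proof is correct, but note that the paper itself gives no proof of this proposition at all: it is quoted from Str{\o}mme \cite[Theorem 6.2]{Str}, so your argument is necessarily a different route. What you have effectively shown is that the genus-zero case is a degeneration of the machinery the paper develops for genus $\geqslant 1$: your choice $L=\mc O(d-1)$ in Lemma \ref{B_L.D geq 0} is exactly the $g=0$ instance of Lemma \ref{line bundles restricted to a divisor} (degree $d+g-1=d-1$), and the application is legitimate because the scheme-theoretic support of $\mc B_q$ is contained in the degree-$d$ Fitting divisor $\phi(q)$, so the vanishing $H^1(\mb P^1,\mc O(-1))=0$ gives the required surjectivity and hence nefness of $B_{\mc O(d-1),\mc Q}$ against every curve. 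Your pushforward computation $\eta^*\mc O_{\mc Q}(1)=\det p_{2*}\mc O_\Sigma=\mc O_{\mb P^d}(-(d-1))$ is also right ($\Sigma$ has class $dH_1+H_2$ and $h^1(\mb P^1,\mc O(-d))=d-1$), and it is consistent with the paper's Lemma \ref{lemma-pullback of O(1) via various sections} applied to the one-part partition $\mathbf d=(d)$: there $[\eta^*\mc O_{\mc Q}(1)]=-[\Delta_d/2]$, and on $\mb P^1$ the discriminant hypersurface has degree $2(d-1)$, so $[\Delta_d/2]=(d-1)[\mc O_{\mb P^d}(1)]$ --- citing that lemma would have saved you the ideal-sheaf computation. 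The closing cone-theoretic step is sound: in the two-dimensional $N^1(\mc Q)$ the nef cone is closed, salient and full-dimensional, its interior is the ample cone by Kleiman's criterion, so two non-proportional nef non-ample classes must lie on the two distinct extremal rays and therefore span the cone (this, and the equality $N^1(\mc Q)=\phi^*N^1(\mb P^d)\oplus\mb R[\mc O_{\mc Q}(1)]$, implicitly uses $n\geqslant 2$, as does the paper). The comparison, then: the citation to Str{\o}mme buys the paper brevity and rests on a proof by quite different methods (Str{\o}mme's explicit study of tautological bundles on Quot schemes over $\mb P^1$), whereas your argument buys a self-contained proof from the paper's own lemmas and makes visible that the genus-0 proposition is not an independent input but the $g=0$ specialization of the authors' general technique.
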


The following generalizes the above to all vector bundles on the projective line.

\begin{theorem}\label{nef cone of quot schemes over projective line}
	Let $C\,=\,\mb P^1$. Let $E\,=\,\bigoplus\limits^k_{i=1} \mc O(a_i)$ with $a_i\,\leqslant\, a_j$ for $i\,<\,j$. 
	Let $d\,\geqslant \, 1$ and $L\,=\,\mc O(-a_1+d-1)$.
	Then 
	\begin{align*}	
		{\rm Nef}(\mc Q(E,\,d)) \,=\, & \mb R_{\geqslant  0}[B_{L,\mc Q(E\,d)}]+\mb R_{\geqslant  0}[\mc O_{\mb P^d}(1)] \\
		=\, & \mb R_{\geqslant  0}([\mc O_{\mc Q(E,d)}(1)]+(-a_1+d-1)[\mc O_{\mb P^d}(1)])+\mb R_{\geqslant  0}[\mc O_{\mb P^d}(1)].
	\end{align*}
\end{theorem}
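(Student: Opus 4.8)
The plan is to exploit that $N^1(\mc Q)$ is two--dimensional, with basis $\{[\mc O_{\mc Q}(1)],\, [\mc O_{\mb P^d}(1)]\}$, so that ${\rm Nef}(\mc Q)$ is a strongly convex cone determined by its two boundary rays. The two displayed descriptions are interchangeable: by \eqref{compare-B_L-O(1)} one has $[B_{L,\mc Q}] = [\mc O_{\mc Q}(1)] + ({\rm deg}\,L)[x]$, and on $C^{(d)} = \mb P^d$ the divisor $[x]$ pulls back to the hyperplane class $[\mc O_{\mb P^d}(1)]$, whence $[B_{L,\mc Q}] = [\mc O_{\mc Q}(1)] + (-a_1+d-1)[\mc O_{\mb P^d}(1)]$. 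It therefore suffices to establish the first equality, which I would split into two inclusions: first that both generators are nef (giving $\supseteq$ by convexity), and second that there are two effective curve classes whose dual cone is exactly the claimed cone (giving $\subseteq$ via the duality ${\rm Nef}(\mc Q) = \overline{NE}(\mc Q)^\vee$).

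For the nef-ness, $[\mc O_{\mb P^d}(1)] = \phi^*[\mc O_{\mb P^d}(1)]$ is the pullback of an ample class, hence nef. To handle $[B_{L,\mc Q}]$ I would first reduce to $a_1 = 0$. Twisting $E$ by $\mc O(-a_1)$ yields a canonical isomorphism $\mc Q(E,d) \cong \mc Q(E(-a_1),d)$ compatible with the Hilbert--Chow maps; under it $[\mc O_{\mb P^d}(1)]$ is preserved while, by \eqref{compare-B_L-O(1)}, $\mc O_{\mc Q(E(-a_1))}(1)$ corresponds to $[\mc O_{\mc Q(E)}(1)] - a_1[x]$. A direct bookkeeping then shows that the claimed cone for $E(-a_1)$ (with line bundle $\mc O(d-1)$) pulls back exactly to the claimed cone for $E$ (with line bundle $\mc O(-a_1+d-1)$), so one may assume $a_1 = 0$ and $L = \mc O(d-1)$. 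Now for any morphism $f: D \to \mc Q$ from a smooth curve and any $q \in f(D)$, the quotient $\mc B_q$ has length $d$, so its scheme--theoretic support lies in an effective divisor $A$ of degree $d$; by Lemma \ref{line bundles restricted to a divisor} (with $g=0$) the restriction $H^0(\mb P^1, \mc O(d-1)) \to H^0(A, \mc O(d-1)|_A)$ is surjective, and Lemma \ref{B_L.D geq 0} gives $[B_{L,\mc Q}]\cdot[D] \geqslant 0$. Hence $B_{L,\mc Q}$ is nef.

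For the reverse inclusion I would use two curves. The first is the line $[l]$ in a fibre of $\phi$ (Definition \ref{line-fiber-hc}); since $\phi$ contracts it, $[\mc O_{\mb P^d}(1)]\cdot[l] = 0$, while a standard computation on the fibre gives $[\mc O_{\mc Q}(1)]\cdot[l] = 1$. The second is $\eta_*[\ell]$, the image under the section $\eta$ of \eqref{section-HC} of a line $\ell \subset \mb P^d = C^{(d)}$; since $\phi\circ\eta = {\rm id}$ one gets $[\mc O_{\mb P^d}(1)]\cdot\eta_*[\ell] = 1$, and using $\eta^*[\mc O_{\mc Q}(1)] = -[\Delta_d/2]$ (from \cite[Lemma 3.15]{GS-nef}) together with the fact that on $\mb P^d$ the discriminant $\Delta_d$ has degree $2(d-1)$, one obtains $[\mc O_{\mc Q}(1)]\cdot\eta_*[\ell] = -(d-1)$. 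Writing a class as $x_1[\mc O_{\mc Q}(1)] + x_2[x]$, pairing against these two effective curves gives the conditions $x_1 \geqslant 0$ and $x_2 - (d-1)x_1 \geqslant 0$, whose solution cone is generated precisely by $[x]$ and $[\mc O_{\mc Q}(1)] + (d-1)[x] = [B_{L,\mc Q}]$. Thus ${\rm Nef}(\mc Q) \subseteq \langle [l],\, \eta_*[\ell]\rangle^\vee = \mb R_{\geqslant 0}[B_{L,\mc Q}] + \mb R_{\geqslant 0}[\mc O_{\mb P^d}(1)]$, completing the argument.

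The main obstacle I anticipate is the nef-ness of $B_{L,\mc Q}$: the degree bookkeeping must come out exactly so that the single line bundle $\mc O(-a_1+d-1)$ works uniformly over all curves $D$, and this is precisely what forces the twist reduction to $a_1 = 0$ (for $a_1 > 0$ a divisor $A$ of degree $d$ would otherwise require a line bundle of degree $\geqslant d-1$, strictly larger than ${\rm deg}\,L$). The other delicate inputs are the two intersection numbers with $\eta_*[\ell]$, which rest on the identification $\eta^*[\mc O_{\mc Q}(1)] = -[\Delta_d/2]$ and on $\deg \Delta_d = 2(d-1)$ on $\mb P^d$; these become routine once those two facts are in place.
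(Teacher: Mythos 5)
There is a genuine gap in the nefness half of your argument. Lemma \ref{B_L.D geq 0} is stated under the standing convention of that subsection that $\mc Q$ means $\mc Q(V\otimes\mc O_C,d)$, i.e.\ the Quot scheme of the \emph{trivial} bundle, and this hypothesis is essential rather than cosmetic: the proof of that lemma rests on the evaluation map $H^0(C,L)^{\oplus n}\otimes\mc O_D\,\cong\, p_{D*}\bigl(p_C^*L^{\oplus n}\bigr)\,\longrightarrow\, p_{D*}\bigl(\mc B_D\otimes p_C^*L\bigr)$ being generically surjective, which uses that the universal quotient is a quotient of a trivial bundle. Read for an arbitrary bundle the lemma is simply false. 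For instance take $E\,=\,\mc O(-1)\oplus\mc O$ on $\mb P^1$, $d\,=\,1$, $L\,=\,\mc O$: the hypothesis $H^0(C,L)\to H^0(A,L|_A)$ holds for every degree-one divisor $A$, yet $[B_{\mc O,\mc Q(E,1)}]\,=\,[\mc O_{\mc Q(E,1)}(1)]$ is not nef (by the very theorem being proved, the nef cone is spanned by $[\mc O_{\mc Q(E,1)}(1)]+[x]$ and $[x]$, and $[\mc O_{\mc Q(E,1)}(1)]$ lies outside it). After your twist reduction the bundle $E(-a_1)$ is globally generated but still not trivial unless all the $a_i$ coincide, so the citation does not go through, and the nefness of $B_{L,\mc Q(E,d)}$ --- the heart of the theorem --- is left unproved.

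The missing idea is exactly how the paper closes this hole: since $E(-a_1)$ is globally generated, the surjection $V\otimes\mc O_C\to E(-a_1)$ with $V\,=\,H^0(C,E(-a_1))$, $\dim V\,=\,n$, induces a morphism $f\,:\,\mc Q(E,d)\to\mc Q(n,d)$, and Lemma \ref{lb-lemma} gives $f^*B_{\mc O(d-1),\mc Q(n,d)}\,=\,B_{L,\mc Q(E,d)}$; nefness then follows because $B_{\mc O(d-1),\mc Q(n,d)}$ is nef by Str\o mme's theorem for the trivial bundle (the Proposition preceding the theorem) and pullbacks of nef classes are nef. (Alternatively you could re-prove Lemma \ref{B_L.D geq 0} for globally generated bundles on $\mb P^1$, replacing $H^0(C,L)^{\oplus n}$ by $H^0(C,E(-a_1)\otimes L)$ and checking surjectivity of $H^0(\mb P^1,\mc O(a_i-a_1+d-1))\to H^0(A,\mc O(a_i-a_1+d-1)|_A)$ summand by summand, using $a_i-a_1\geqslant 0$; but that is precisely the extra argument your proposal omits.) Your second half is fine and is genuinely different from the paper's: you bound ${\rm Nef}(\mc Q)$ by the cone dual to $\langle [l],\,\eta_*[\ell]\rangle$, with the correct intersection numbers coming from $\eta^*[\mc O_{\mc Q}(1)]\,=\,-[\Delta_d/2]$ and $\deg\Delta_d\,=\,2(d-1)$ on $\mb P^d$, whereas the paper instead shows both generators are nef but \emph{not ample} (the non-ampleness of $B_{L,\mc Q(E,d)}$ via $\eta_{\mc Q(E,d)}$ and the same map $f$) and concludes from the two-dimensionality of $N^1(\mc Q)$; either route works once the nefness of $B_{L,\mc Q(E,d)}$ is actually established.
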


\begin{proof}
	It is enough to give two line bundles which are nef but not ample. 
	Clearly $\Phi^*_{\mc Q(E,\,d)}\mc O_{\mb P^d}(1)$ is nef but not ample.
	So it suffices to show that $B_{L,\mc Q(E,d)}$ is nef but not ample.
	
	Since $a_j-a_1\,\geqslant \, 0$ for every $j\,\geqslant \, 1$, we get that $E(-a_1)$ is globally generated. Let 
	$V\,:=\,H^0(C,\, E(-a_1))$ and $\dim\, V\,=\,n$. Then we have a surjection 
	$V\otimes \mc O_C \,\longrightarrow\, E(-a_1)$. This gives a surjection
	$$V\otimes \mc O_C\, \longrightarrow\, p^*_CE(-a_1)\,\longrightarrow\, \mc B_{\mc Q(E,d)}\otimes p^*_C\mc O_C(-a_1)
\,\longrightarrow\, 0.$$
	The above surjection defines a map $f\,:\,\mc Q(E,\,d)\,\longrightarrow\, \mc Q(n,\,d)$. By Lemma \ref{lb-lemma} we get that 
	$$f^*B_{\mc O(d-1),\mc Q(n,d)}\,=\,B_{L,\mc Q(E,d)}\,=\,{\rm det}(p_{\mc Q(E,d)*}(\mc B_{\mc Q(E,d)}\otimes p_C^*L)).$$ 
	Since $B_{\mc O(d-1),\mc Q(n,d)}$ is nef, we get that $B_{L,\mc Q(E,d)}$ is nef.
	We next show that the $B_{L,\mc Q(E,d)}$ is not ample. 
	Consider the section $\eta_{\mc Q(E,d)}$ of 
	$\Phi_{\mc Q(E,d)}\,:\,\mc Q(E,\,d)\,\longrightarrow\, C^{(d)}$ defined by the quotient 
	$p_C^*E\,\longrightarrow\, p_C^*\mc O(a_1)\otimes \mc O_{\Sigma}$ on $C\times C^{(d)}$
	(see (\ref{section-HC})).
	Then $f\circ \eta_{\mc Q(E,d)}$ is a section of $\Phi\,:\,\mc Q(n,\,d)\,\longrightarrow\, C^{(d)}$ 
	defined by a quotient $\mc O^n_C\,\longrightarrow\, \mc O_\Sigma \,\longrightarrow\, 0$ on $C\times C^{(d)}$. Therefore, 
	$\eta_{\mc Q(E,d)}^*B_{L,\mc Q(E,d)}\,=\,\eta^*B_{\mc O(d-1),\mc Q(n,d)}$.
	As $\eta^*B_{\mc O(d-1),\mc Q(n,d)}$ is not ample,
	we get that $B_{L,\mc Q(E,d)}$ is not ample.
	The second equality follows again from the fact that $[x]\,=\,[\mc O_{\mb P^d}(1)]$.
\end{proof}

\subsection{Nef cone in the genus 1 case}\label{subsection-criterion-for-nefness}

Now we are back to the assumption that 
the genus of the curve satisfies $g(C)\,\geqslant \, 1$
and if $g(C)\,\geqslant \, 2$, then we also assume that $C$ is very general. 

\begin{definition}\label{def-U'}
	Let $U'\,\subset\, \mc Q$ be the open set consisting of 
	quotients $\mc O^n_C\,\longrightarrow\,B \,\longrightarrow\, 0$ such that the induced 
	map $H^0(C,\,\mc O^n_C)\,\longrightarrow\, H^0(C,\,B)$ is surjective.
\end{definition}

\begin{lemma}\label{the open set V}
Consider the Quot scheme $\mc Q\,=\,\mc Q(n,\,d)$.
Let $D$ be a smooth projective curve, and let 
	$D \,\longrightarrow\, \mc Q$ be a morphism such that its image intersects $U'$. Then 
$$[\mc O_{\mc Q}(1)]\cdot [D]\,\,\geqslant \,\, 0.$$
\end{lemma}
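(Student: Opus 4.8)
The plan is to pull everything back to the curve $D$ and reduce the claim to a statement about the degree of a vector bundle on $D$. Write $f\,:\,D\,\longrightarrow\,\mc Q$ for the given morphism, let $\mc B_D\,:=\,(id\times f)^*\mc B$ be the pulled-back universal quotient on $C\times D$, and let $p_{1,D},\,p_{2,D}$ denote the two projections of $C\times D$ as in Lemma \ref{lb-lemma}. Taking $F\,=\,\mc O_C$ in Lemma \ref{lb-lemma}, we have $B_{\mc O_C,\mc Q}\,=\,{\rm det}(p_{2*}\mc B)\,=\,\mc O_{\mc Q}(1)$, and the lemma gives $f^*\mc O_{\mc Q}(1)\,=\,{\rm det}\big(p_{2,D*}(\mc B_D)\big)$ (this isomorphism can also be read off from the base-change statement of Lemma \ref{pullback-lemma}). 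Hence, writing $W\,:=\,p_{2,D*}(\mc B_D)$, which is locally free of rank $d$ by Lemma \ref{pullback-lemma}(i), the intersection number $[\mc O_{\mc Q}(1)]\cdot[D]$ equals $\deg W$. So it suffices to prove $\deg W\,\geqslant\,0$.

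The next step is to exhibit enough global sections of $W$. Pushing the tautological surjection $\mc O_{C\times D}^n\,\longrightarrow\,\mc B_D$ forward along $p_{2,D}$ and using $p_{2,D*}(\mc O_{C\times D})\,=\,\mc O_D$ produces a homomorphism $\psi\,:\,\mc O_D^n\,\longrightarrow\, W$. I would then show that $\psi$ is generically surjective. By hypothesis there is a point $d_0\,\in\, D$ with $f(d_0)\,\in\, U'$. By Lemma \ref{pullback-lemma}(i) the fibre $W|_{d_0}$ is canonically $H^0(C,\,\mc B_D|_{C\times d_0})\,=\,H^0(C,\,B)$, and under this identification $\psi|_{d_0}$ is precisely the map $H^0(C,\,\mc O_C^n)\,\longrightarrow\, H^0(C,\,B)$ occurring in Definition \ref{def-U'}, which is surjective since $f(d_0)\,\in\, U'$. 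As surjectivity of $\psi$ at a point is an open condition, $\psi$ is surjective over a dense open subset of $D$.

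Finally I would close with a standard degree estimate on the smooth projective curve $D$. Set $W'\,:=\,\operatorname{im}(\psi)\,\subseteq\, W$. Generic surjectivity of $\psi$ forces $W'$ to have full rank $d$, and as a subsheaf of the locally free sheaf $W$ on a smooth curve it is itself locally free. Being a quotient of $\mc O_D^n$, the bundle $W'$ is globally generated, hence nef, so $\deg W'\,\geqslant\,0$. The quotient $W/W'$ then has rank $0$, i.e.\ is torsion, so $\deg W\,=\,\deg W'+\operatorname{length}(W/W')\,\geqslant\,\deg W'\,\geqslant\,0$, which gives $[\mc O_{\mc Q}(1)]\cdot[D]\,\geqslant\,0$.

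The argument is essentially formal once these reductions are in place; the only point that requires genuine care is the base-change compatibility in the second paragraph, namely verifying that the fibre of $\psi$ at $d_0$ really is the evaluation map appearing in Definition \ref{def-U'}. This is exactly where the canonical isomorphism $p_{S*}(F)|_s\,\cong\, H^0(C,\,F|_{C\times s})$ of Lemma \ref{pullback-lemma}(i) does the essential work, so I would make sure to invoke it precisely rather than appeal to vague generalities about cohomology and base change.
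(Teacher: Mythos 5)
Your proof is correct and follows essentially the same route as the paper's: identify $[\mc O_{\mc Q}(1)]\cdot[D]$ with $\deg\bigl(p_{2,D*}\mc B_D\bigr)$ via the base-change statements of Lemmas \ref{pullback-lemma} and \ref{lb-lemma}, then use the hypothesis that the image meets $U'$ to see that the pushed-forward map $\mc O_D^n\longrightarrow p_{2,D*}\mc B_D$ is generically surjective, forcing the degree to be non-negative. The paper's proof is just a terser version of this argument; your expansions (the fibre identification at $d_0$ and the final degree estimate via the globally generated image subsheaf) are exactly the details it leaves implicit.
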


\begin{proof}
Let $p_D\,:\,C\times D \,\longrightarrow\, D$ be the natural projection. Then applying 
	$(p_D)_*$ to the quotient $\mc O^n_{C\times \mc Q}\,\longrightarrow\, \mc B_D$ we get that the morphism
	$$(p_D)_*\mc O^n_{C\times D}\,=\,\mc O^n_D \,\longrightarrow\, (p_D)_*\mc B_D$$
	is generically surjective by our assumption and Lemma \ref{pullback-lemma}. Hence it follows that 
	$$[\mc O_{\mc Q}(1)]\cdot [D]\,\,=\,\,{\rm deg}((p_D)_*\mc B_D)\,\geqslant \, 0.$$
This completes the proof. 
\end{proof}

In the remainder of this subsection, we will need to work with
$C^{(d)}$ for different values of $d$. The line bundles $L_0$ 
over $C^{(d)}$ will therefore be denoted by $L_0^{(d)}$ when 
we want to emphasize the $d$. Similarly, we will denote 
$\mu_0^{(d)}\,=\,\dfrac{d+g-1}{dg}$.
Let $\mc P^{\leqslant n}_{(d)}$ be the set of all partitions 
$(d_1,\,d_2,\,\cdots,\,d_k)$ of $d$ of length $k\,\leqslant\, n$. 
Given an element $\mathbf{d}\,\in\, \mc P^{\leqslant n}_{(d)}$, define 
$$C^{(\mathbf{d})}\,\,:=\,\,C^{(d_1)}\times C^{(d_2)}\times \cdots \times C^{(d_k)},$$ 
and if $p_{i,\mathbf{d}}\,:\,C^{(\mathbf{d})}\,\longrightarrow\, C^{(d_i)}$ is the $i$-th projection,
we define a class
$$[\mc O(-\Delta_{\mathbf{d}}/2)]\,\,:=\,\,\left[\sum p_{i,\mathbf{d}}^*\mc O(-\Delta_{d_i}/2)\right] \,\in\,
N^1(C^{(\mathbf{d})})$$
Note that we have a natural addition 
$$\pi_{\mathbf{d}}\,\,:\,\,C^{(\mathbf{d})} \,\,\longrightarrow\,\, C^{(d)}.$$
For a partition $\mathbf{d}\,\in\, \mc P^{\leqslant n}_d$ define a morphism 
$$\eta_{\mathbf{d}}\,:\,C^{(\mathbf{d})}\,\longrightarrow\, \mc Q$$
as follows. For any $l\,\geqslant \, 1$, define the universal 
subscheme of $C^{(l)}$ over $C\times C^{(l)}$ by $\Sigma_l$. 
Then over $C\times C^{(\mathbf{d})}$ we have the subschemes 
$(id\times p_{i,\mathbf{d}})^*\Sigma_{d_i}$. We have a quotient 
$$q_{\mathbf{d}}\,:\,\mc O^n_{C\times C^{(\mathbf{d})}}
\,\longrightarrow\, \bigoplus\limits_i \mc O_{(id\times p_{i,\mathbf{d}})^*\Sigma_{d_i}}$$
defined by taking direct sum of morphisms 
$\mc O_{C\times C^{(\mathbf{d})}}\,\longrightarrow\, \mc O_{(id\times p_{i,\mathbf{d}})^*\Sigma_{d_i}}$. 
Then $q_{\mathbf{d}}$ defines a map $C^{(\mathbf{d})}\,\longrightarrow\, \mc Q$. 

Using G\"ottsche's theorem (\cite[page 9]{Pa}), we can show the following Lemma.

\begin{lemma}\label{lemma-pullback of O(1) via various sections}
	Let $\eta_{\mathbf{d}}\,:\,C^{(\mathbf{d})} \,\longrightarrow\, \mc Q$ be the map defined by
	$q_{\mathbf{d}}$. Then
	$$[\eta^*_{\mathbf{d}}\mc O_{\mc Q}(1)]\,\,=\,\,[\mc O(-\Delta_{\mathbf{d}}/2)].$$
\end{lemma}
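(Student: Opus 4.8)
The plan is to reduce the computation of $\eta_{\mathbf{d}}^*\mc O_{\mc Q}(1)$ to an intersection-theoretic identity on the symmetric products, and to exploit the product structure of $C^{(\mathbf{d})}$. Recall that $\mc O_{\mc Q}(1) = \det(p_{2*}\mc B)$ by Definition \ref{def-O(1)}. By Lemma \ref{pullback-lemma}(ii), pulling back commutes with forming the pushforward $p_{2*}$, so $\eta_{\mathbf{d}}^*\mc O_{\mc Q}(1) = \det\bigl((p_{2,\mathbf{d}})_* (\mathrm{id}\times \eta_{\mathbf{d}})^*\mc B\bigr)$, where $p_{2,\mathbf{d}}$ is the projection $C\times C^{(\mathbf{d})}\to C^{(\mathbf{d})}$. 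The quotient defining $\eta_{\mathbf{d}}$ is $q_{\mathbf{d}}$, so $(\mathrm{id}\times\eta_{\mathbf{d}})^*\mc B \cong \bigoplus_i \mc O_{(\mathrm{id}\times p_{i,\mathbf{d}})^*\Sigma_{d_i}}$. Since determinant of a pushforward turns direct sums into tensor products of determinants, I get
\begin{equation*}
\eta_{\mathbf{d}}^*\mc O_{\mc Q}(1) \;\cong\; \bigotimes_i \det\bigl((p_{2,\mathbf{d}})_* \mc O_{(\mathrm{id}\times p_{i,\mathbf{d}})^*\Sigma_{d_i}}\bigr).
\end{equation*}

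**Next I would** identify each factor. The subscheme $(\mathrm{id}\times p_{i,\mathbf{d}})^*\Sigma_{d_i}$ is pulled back along the projection $p_{i,\mathbf{d}}\colon C^{(\mathbf{d})}\to C^{(d_i)}$, so by base change (Lemma \ref{pullback-lemma}(ii) again, applied to the Cartesian square over $p_{i,\mathbf{d}}$) the $i$-th factor equals $p_{i,\mathbf{d}}^*\det\bigl((p_2)_*\mc O_{\Sigma_{d_i}}\bigr)$, a pullback from $C^{(d_i)}$. Thus it suffices to establish, for a single symmetric power, the identity $\det\bigl((p_2)_*\mc O_{\Sigma_{d}}\bigr) = \mc O(-\Delta_d/2)$ on $C^{(d)}$; the general statement then follows by summing over $i$ and using that $[\mc O(-\Delta_{\mathbf{d}}/2)] = \sum_i p_{i,\mathbf{d}}^*[\mc O(-\Delta_{d_i}/2)]$ by definition.

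**The main obstacle** is precisely this last single-power identity, which is where Göttsche's theorem enters. One must compute the class of $\det\bigl((p_2)_*\mc O_{\Sigma_d}\bigr)$ in $N^1(C^{(d)})$. The clean way is to use the standard presentation of the universal subscheme: from $0\to \mc O_{C\times C^{(d)}}(-\Sigma_d)\to \mc O_{C\times C^{(d)}}\to \mc O_{\Sigma_d}\to 0$ one computes the pushforward via Grothendieck–Riemann–Roch (or directly via the theory developed in \cite[page 9]{Pa}), and the first Chern class of the resulting rank-$d$ bundle is known to be $-\Delta_d/2$. I would quote Göttsche's description of the tautological classes on $C^{(d)}$ to conclude that $c_1\bigl((p_2)_*\mc O_{\Sigma_d}\bigr) = [\mc O(-\Delta_d/2)]$, taking care that the half-diagonal $\Delta_d/2$ is a genuine divisor class (the diagonal is $2$-divisible in $\mathrm{Pic}(C^{(d)})$, a point one should flag but which is standard for symmetric products of curves).

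**Assembling the pieces**, I combine the three displays: the splitting of $\eta_{\mathbf{d}}^*\mc O_{\mc Q}(1)$ into a tensor product over $i$, the recognition of each factor as $p_{i,\mathbf{d}}^*\mc O(-\Delta_{d_i}/2)$, and the defining expression $[\mc O(-\Delta_{\mathbf{d}}/2)] = \bigl[\sum_i p_{i,\mathbf{d}}^*\mc O(-\Delta_{d_i}/2)\bigr]$. Passing to $N^1(C^{(\mathbf{d})})$ yields $[\eta_{\mathbf{d}}^*\mc O_{\mc Q}(1)] = [\mc O(-\Delta_{\mathbf{d}}/2)]$, as claimed. The only genuinely nontrivial input is the Göttsche computation of the single-factor diagonal class; everything else is functoriality of $\det\circ p_*$ under base change and the additivity of determinants over direct sums.
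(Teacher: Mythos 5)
Your proposal is correct and follows the same route the paper intends: the paper's entire ``proof'' is the one-line citation of G\"ottsche's theorem from \cite[page 9]{Pa}, which is exactly the single-factor identity $\det\bigl(q_{2*}\mc O_{\Sigma_d}\bigr)=\mc O(-\Delta_d/2)$ that you isolate as the only nontrivial input. The surrounding steps you supply --- pulling back the universal quotient via Lemma \ref{pullback-lemma}(ii), splitting the determinant over the direct sum defining $q_{\mathbf{d}}$, and base-changing each factor to $C^{(d_i)}$ --- are the routine functoriality details the survey omits, so your write-up is a faithful completion of the paper's argument rather than a different one.
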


\begin{definition}\label{def-U}
Define $U\,\subset\, \mc Q$ to be the set of 
quotients of the form 
$$
\mc Q^{\oplus n}_{C}\, \longrightarrow\, \dfrac{\mc O_{C}}{\prod_{i=1}^r\mf m^{d_i}_{C,c_i}} \,\cong \,
\bigoplus \dfrac{\mc O_{C,c_i}}{\mf m^{d_i}_{C,c_i}}\qquad\qquad c_i \neq c_j. $$
\end{definition}

The following Lemma says that some intersection numbers 
with curves which meet $U$ are non-negative. 

\begin{lemma} \label{the open set U}
	Consider the Quot scheme $\mc Q\,=\,\mc Q(n,\,d)$.
	Let $D$ be a smooth projective curve and let 
	$D\,\longrightarrow\, \mc Q$ be a morphism such that its image intersects $U$. Then 
	$([\mc O_{\mc Q}(1)]+[\Delta_d/2])\cdot [D]\geqslant  0$.
\end{lemma}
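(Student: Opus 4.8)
\emph{Overall strategy.} The plan is to rewrite the intersection number $([\mc O_{\mc Q}(1)]+[\Delta_d/2])\cdot[D]$ as the difference of the degrees of two rank-$d$ vector bundles on $D$, and then to exhibit a morphism between these bundles which is an isomorphism at the point where $D$ meets $U$. Positivity will then be formal. Throughout, write $f\colon D\to\mc Q$ for the given map, pick $t_0\in D$ with $f(t_0)=q\in U$, and set $\mc B_D:=(\mathrm{id}\times f)^*\mc B$ on $C\times D$, with $p_D\colon C\times D\to D$ the projection.

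\emph{Reduction to a degree inequality.} By Lemma \ref{pullback-lemma} the sheaf $\mathcal E:=p_{D*}\mc B_D$ is locally free of rank $d$, and by Definition \ref{def-O(1)} together with base change (Lemma \ref{pullback-lemma}(ii)) we have $[\mc O_{\mc Q}(1)]\cdot[D]=\deg\mathcal E$. Let $\psi:=\phi\circ f\colon D\to C^{(d)}$ be the induced support map and let $\mathfrak D:=(\mathrm{id}\times\psi)^*\Sigma\subset C\times D$ be the pulled-back universal divisor, which is flat of degree $d$ over $D$; then $\mathcal E_0:=p_{D*}\mc O_{\mathfrak D}$ is again locally free of rank $d$. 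Since the universal quotient restricts along the section $\eta_{(d)}$ of \eqref{section-HC} to $\mc O_\Sigma$, base change gives $\det\mathcal E_0=\psi^*\det p_{2*}\mc O_\Sigma=\psi^*\bigl(\eta_{(d)}^*\mc O_{\mc Q}(1)\bigr)$, and Lemma \ref{lemma-pullback of O(1) via various sections} (for the partition $\mathbf d=(d)$) identifies this with $\psi^*\mc O(-\Delta_d/2)$. Hence $\deg\mathcal E_0=-[\Delta_d/2]\cdot[D]$, so that $([\mc O_{\mc Q}(1)]+[\Delta_d/2])\cdot[D]=\deg\mathcal E-\deg\mathcal E_0$, and it suffices to prove $\deg\mathcal E_0\leq\deg\mathcal E$.

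\emph{The comparison map.} Here is where the hypothesis $q\in U$ enters. The sheaf $\mc B_D$ is annihilated by its $0$-th Fitting ideal $\mathcal I_{\mathfrak D}$, so it is a sheaf of $\mc O_{\mathfrak D}$-modules. By Definition \ref{def-U} the fibre $\mc B_q\cong\bigoplus_i\mc O_{C,c_i}/\mf m^{d_i}_{C,c_i}$ is cyclic over $\mc O_C$; since $\mc O^n_C\to\mc B_q$ is surjective and the $c_i$ are distinct, a general constant vector $v\in k^n$ gives a global section of $\mc O^n_{C\times D}$ whose image generates $\mc B_q$. Composing $\mc O_{C\times D}\xrightarrow{\,v\,}\mc O^n_{C\times D}\to\mc B_D$ and factoring through $\mc O_{\mathfrak D}$ produces a homomorphism $\bar\sigma\colon\mc O_{\mathfrak D}\to\mc B_D$ whose restriction to the fibre over $t_0$ is the surjection $\mc O_{\mathfrak D_{t_0}}\to\mc B_q$; as both sides have length $d$, this restriction is an isomorphism. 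Pushing forward, $p_{D*}\bar\sigma\colon\mathcal E_0\to\mathcal E$ is a map of rank-$d$ bundles on the smooth curve $D$ whose fibre at $t_0$ is $H^0(\bar\sigma_{t_0})$ (Lemma \ref{pullback-lemma}(i)), hence an isomorphism; therefore $\det(p_{D*}\bar\sigma)$ is a nonzero homomorphism of line bundles, forcing $\deg\mathcal E_0\leq\deg\mathcal E$, as required.

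\emph{Main obstacle.} The one subtle step is the construction of $\bar\sigma$: one must verify carefully that $\mc B_D$ really is an $\mc O_{\mathfrak D}$-module (via the Fitting ideal) and that a single \emph{constant} global section of the trivial bundle $\mc O^n_{C\times D}$ can be chosen to generate the cyclic quotient $\mc B_q$. Once $\bar\sigma$ is shown to be an isomorphism on the fibre over $t_0$, the remaining passage to the degree inequality is purely formal, using only that a nonzero map of line bundles on a curve cannot increase degree.
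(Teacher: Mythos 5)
Your proof is correct. A caveat for the comparison: this survey states Lemma \ref{the open set U} without proof (the proof is deferred to \cite{GS-nef}), but the intended technique is visible in the neighbouring arguments, notably the proof of Lemma \ref{bound on O(1) in terms of diagonals}, and your argument uses exactly the same two essential inputs: (i) a constant vector $v\,\in\, k^n$ whose image generates the fibre $\mc B_q$ --- which exists precisely because $q\,\in\, U$ makes $\mc B_q$ a cyclic $\mc O_C$-module (Definition \ref{def-U}), and this is the only place the hypothesis enters; and (ii) G\"ottsche's identification $[\det q_{2*}\mc O_{\Sigma}]\,=\,[\mc O(-\Delta_d/2)]$ (cf.\ \cite[page 9]{Pa}), which you correctly extract from Lemma \ref{lemma-pullback of O(1) via various sections} with $\mathbf{d}\,=\,(d)$ to convert $\deg p_{D*}\mc O_{\mathfrak D}$ into $-[\Delta_d/2]\cdot[D]$. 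The only genuine difference is the final bookkeeping. The argument in \cite{GS-nef} takes the image $\mc O_{\Gamma}\,\subset\,\mc B_D$ of the section, observes that the cokernel is a sheaf of finite length (it vanishes on the fibre over $t_0$, and its support is finite over $D$), and concludes by additivity of degrees under pushforward; you instead factor the section through $\mc O_{\mathfrak D}$ --- legitimate, since the Fitting ideal $\mc I_{\mathfrak D}$ annihilates $\mc B_D$, the base-changed form of the paper's observation that $\mc O_{C\times\mc Q}(-\mc Z)$ annihilates $\mc B$ --- and push forward to get a map $\mathcal E_0\,\longrightarrow\,\mathcal E$ of rank-$d$ bundles on $D$ which is an isomorphism at $t_0$ (by Lemma \ref{pullback-lemma}(i)), so that its determinant is a nonzero section of $\det\mathcal E_0^{\vee}\otimes\det\mathcal E$, forcing $\deg\mathcal E_0\,\leqslant\,\deg\mathcal E$. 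The two mechanisms are interchangeable: yours avoids having to show the cokernel is $0$-dimensional, at the modest cost of verifying that Fitting ideals commute with base change; the paper's version avoids Fitting ideals but needs the support analysis of the cokernel. Both correctly reduce the lemma to the same pair of inputs, so your proposal is a complete proof.
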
 

\begin{lemma}\label{bound on O(1) in terms of diagonals}
	Let $D$ be a smooth projective curve.
	Let $D\, \longrightarrow\, \mc Q$ be a morphism. Then there exists a 
	partition $\mathbf{d}\,\in\, \mc P^{\leqslant n}_{(d)}$ (which depends on the morphism $D\, \longrightarrow\, \mc Q$) 
	such that the following two conditions hold:
	\begin{enumerate}
		\item the composition of maps $D\, \longrightarrow\, \mc Q\, \longrightarrow\, C^{(d)}$ factors as 
		$D\, \longrightarrow\, C^{(\mathbf{d})}\, \longrightarrow\, C^{(d)}$, and
		\item $[\mc O_{\mc Q}(1)]\cdot [D]\,\geqslant \, [\mc O(-\Delta_{\mathbf{d}}/2)]\cdot [D]$. 
	\end{enumerate} 
\end{lemma}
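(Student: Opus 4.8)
The plan is to pull everything back to the smooth surface $X \,:=\, C\times D$, to break the pulled-back universal quotient into cyclic pieces using the $n$ generators of $\mc O_X^n$, and then to compare each piece with the honest family of divisors it defines generically, the discrepancy being a non-negative length supported at finitely many points. First I set up notation. Given $f\,:\,D\,\longrightarrow\,\mc Q$, Lemma \ref{lb-lemma} identifies $(\mathrm{id}\times f)^*\mc B\,=:\,\mc B_D$, a sheaf on $X$ that is flat over $D$ with fibrewise torsion of degree $d$, and gives $f^*\mc O_{\mc Q}(1)\,=\,\det p_{D*}\mc B_D$, where $p_D\,:\,X\,\longrightarrow\, D$ is the projection; hence $[\mc O_{\mc Q}(1)]\cdot[D]\,=\,\deg_D p_{D*}\mc B_D\,=:\,L$. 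The support of $\mc B_D$ is the family of divisors $\mathcal Z\,\subset\, X$ cut out by $\phi\circ f$, which is finite (indeed flat of degree $d$) over $D$. A sheaf on $X$ whose support is finite over $D$ has $R^1p_{D*}\,=\,0$ and $p_{D*}$ acts exactly on it; consequently $F\,\longmapsto\,\deg_D p_{D*}F$ is additive on short exact sequences of such sheaves, a fact I use repeatedly.

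Next I construct the partition and verify condition (1). Fix the trivialization $\mc O_X^n\,=\,V\otimes\mc O_X$ and let $\mc B^{(l)}$ be the image of $\mc O_X^{\oplus l}$ under $\mc O_X^n\twoheadrightarrow\mc B_D$, giving a filtration $0\,=\,\mc B^{(0)}\subset\cdots\subset\mc B^{(n)}\,=\,\mc B_D$ whose graded pieces $\mc{G}_l\,:=\,\mc B^{(l)}/\mc B^{(l-1)}$ are cyclic, say $\mc{G}_l\,=\,\mc O_X/\mc{I}_l$, with $\mathrm{supp}\,\mc{G}_l\subseteq\mathcal Z$ finite over $D$. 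Over the generic point of $D$ each $\mc{G}_l$ is the structure sheaf of an effective divisor of some degree $d_l$ on $C_{k(D)}$; the corresponding rational map $D\dashrightarrow C^{(d_l)}$ extends to a morphism $h_l\,:\,D\,\longrightarrow\, C^{(d_l)}$ since $C^{(d_l)}$ is proper and $D$ is a smooth curve. Discarding the indices with $d_l\,=\,0$ yields a partition $\mathbf d\,=\,(d_1,\dots,d_k)$ of $d$ with $k\,\leqslant\, n$, and $h\,=\,(h_l)\,:\,D\,\longrightarrow\, C^{(\mathbf d)}$ with $\pi_{\mathbf d}\circ h$ agreeing with $\phi\circ f$ over the generic point, hence everywhere; this is condition (1).

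For condition (2), set $\Gamma_l\,:=\,(\mathrm{id}\times h_l)^*\Sigma_{d_l}$, a relative Cartier divisor flat of degree $d_l$ over $D$, so $\mc O_{\Gamma_l}$ is the flat ``divisorial model'' of $\mc{G}_l$. At every codimension-one point of $X$ the ideals $\mc{I}_l$ and $\mc O_X(-\Gamma_l)$ coincide (over the generic points of horizontal components they agree by construction of $\Gamma_l$, and along vertical fibres both equal $\mc O_X$ because $\mc{G}_l$ has no vertical support); since $\mc O_X(-\Gamma_l)$ is invertible this forces $\mc{I}_l\subseteq\mc O_X(-\Gamma_l)$ and a short exact sequence $0\to\mc{K}_l\to\mc{G}_l\to\mc O_{\Gamma_l}\to 0$ with $\mc{K}_l$ supported in codimension $\geqslant 2$, i.e. $0$-dimensional. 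Applying the additive invariant and Lemma \ref{lemma-pullback of O(1) via various sections} to the section $\eta\circ h_l$ (which realizes the quotient $\mc O^n\to\mc O_{\Gamma_l}$) gives $\deg_D p_{D*}\mc{G}_l\,=\,\mathrm{length}(\mc{K}_l)+\deg_D p_{D*}\mc O_{\Gamma_l}\,=\,\mathrm{length}(\mc{K}_l)+[\mc O(-\Delta_{d_l}/2)]\cdot[D]$. Summing over $l$ along the filtration, $L\,=\,\sum_l\mathrm{length}(\mc{K}_l)+[\mc O(-\Delta_{\mathbf d}/2)]\cdot[D]\,\geqslant\,[\mc O(-\Delta_{\mathbf d}/2)]\cdot[D]$, which is condition (2), with equality precisely when every graded piece is already flat over $D$.

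The main obstacle is exactly the step that produces the inequality rather than an equality: the graded pieces $\mc{G}_l$ need not be flat over $D$, so they are not honest families of divisors, and one must identify their divisorial parts $\Gamma_l$ and show that the kernels $\mc{K}_l$ are $0$-dimensional (so that their lengths enter with a non-negative sign). I expect the careful local argument at the degeneration points of $\mathcal Z$ — extracting $\Gamma_l$, checking the codimension-one coincidence of ideals, and using finiteness of $\mathrm{supp}\,\mc G_l$ over $D$ to rule out vertical contributions — to be the delicate core, while the remaining ingredients (additivity of $\deg_D p_{D*}$ on finite-support sheaves, the extension of $h_l$ by properness, and the already established Lemma \ref{lemma-pullback of O(1) via various sections}) are formal.
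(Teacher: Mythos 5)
Your argument is correct, and its engine is the same one the paper uses: break $\mc B_D$ into cyclic pieces via the trivialization of $\mc O^n$, note that the only way a cyclic piece can fail to be an honest family of divisors is by $0$-dimensional torsion, whose length enters the degree with a non-negative sign, and compute the degree of the flat divisorial pieces by G\"ottsche's theorem (Lemma \ref{lemma-pullback of O(1) via various sections}). The organization, however, is genuinely different. The paper inducts on $d$: it peels off the image $\mc O_{\Gamma'}$ of a \emph{single} section of $\mc B_D$ --- this piece is automatically flat, being a subsheaf of the $D$-flat sheaf $\mc B_D$, so no codimension-one analysis is needed for it --- then kills the $0$-dimensional torsion $T_0(\mc F)$ of the cokernel at the cost of a non-negative degree, views $\mc F'=\mc F/T_0(\mc F)$ as a map to the smaller Quot scheme $\mc Q(n-1,d')$, and applies the inductive hypothesis, citing Lemma \ref{the open set U} for the degenerate case $\mc F'=0$. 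You instead process the full filtration $\mc B^{(0)}\subset\cdots\subset\mc B^{(n)}=\mc B_D$ in one shot; since your higher graded pieces $\mc G_l$ are subquotients rather than subsheaves, they need not be flat, and that is exactly what your codimension-one comparison of $\mc I_l$ with $\mc O_X(-\Gamma_l)$ (plus reflexivity of the invertible sheaf $\mc O_X(-\Gamma_l)$ on the smooth surface $X$) repairs; this step is sound, since both ideals are non-trivial only at generic points of horizontal curves, where they agree with the generic-fibre data, and in fact your $\mc O_{\Gamma_l}$ is precisely $\mc G_l/T_0(\mc G_l)$. What your route buys: it is non-inductive, it exhibits the error term $\sum_l \mathrm{length}(\mc K_l)$ explicitly (with equality iff every graded piece is flat), and it is self-contained --- your treatment of a single cyclic piece amounts to a proof of Lemma \ref{the open set U}, which the paper invokes without proof. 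What the paper's route buys: flatness of the peeled-off piece is free, so the only input per step is $\deg\mc F\geqslant \deg\mc F'$, making the written argument shorter. One point you elide and should state: the agreement of $\pi_{\mathbf d}\circ h$ with $\phi\circ f$ at the generic point of $D$ (your condition (1)) rests on additivity of the Hilbert--Chow divisor along the filtration, i.e.\ additivity of lengths of torsion sheaves on the smooth curve $C_{k(D)}$ in short exact sequences; this is standard but it is the reason the factorization holds.
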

 
\begin{proof}
	We will proceed by induction on $d$. When $d=1$ the statement is obvious.
	
	Let us denote the pullback of the universal quotient on $C\times \mc Q$ to $C\times D$ by $\mc B_D$ and let $f\,
:\,C\times D\, \longrightarrow\, D$ be the natural projection. 	
	Consider a section such that the composite $\mc O_{C\times D}\, \longrightarrow\, \mc O_{C\times D}^n\, \longrightarrow\, \mc B_D$
	is non-zero and let $\mc F$ denote the cokernel of the composite map.
	We have a commutative diagram
	\begin{equation}\label{diagram-inductive-step}
		\xymatrix{
			0\ar[r] & \mc O_{C\times D}\ar[r]\ar@{->>}[d] & \mc O_{C\times D}^n \ar[r]\ar@{->>}[d] & 
			\mc O_{C\times D}^{n-1}\ar[r]\ar@{->>}[d] & 0\\
			0\ar[r] & \mc O_{\Gamma'}\ar[r] & \mc B_D\ar[r] & \mc F\ar[r] & 0
		}
	\end{equation}
	Let $T_0(\mc F)\,\subset\, \mc F$ denote the maximal subsheaf of dimension $0$, see
	\cite[Definition 1.1.4]{HL}. Define $\mc F'\,:=\,\mc F/T_0(\mc F)$. Now, either $\mc 
	F'\,=\,0$ or $\mc F'$
	is torsion free over $D$, and hence, flat over $D$. 
	In the first case, it follows that $D$ meets the open set $U$
	in Definition \ref{def-U}. Then we take $\mathbf{d}=(d)$ 
	and the statement follows from Lemma \ref{the open set U}. 
	So we assume $\mc F'$ is flat over $D$ and let $d'$ be 
	the degree of $\mc F'|_{C\times x}$, for $x\,\in\, D$. So 
	$0\,<\,d'\,<\,d$. By (\ref{diagram-inductive-step})
	we have
	$${\rm deg}~f_*\mc B_D\,=\, {\rm deg}~f_*\mc O_{\Gamma'}+{\rm deg}~f_*\mc F\,.$$
	Since $T_0(\mc F)$ is supported on finitely many points, we have 
	${\rm deg}~\mc F\,\geqslant \, {\rm deg}~\mc 
	F'$. In other words, we have 
	\begin{equation}\label{inequality required for the inductive step}
		{\rm deg}~f_*\mc B_D\,\geqslant \, {\rm deg}~f_*\mc O_{\Gamma'}+f_*\mc F'	 \,.	
	\end{equation}
	Now $\Gamma'$ defines a morphism $D\, \longrightarrow\, C^{(d-d')}$ and note that
	$${\rm deg}~f_*\mc O_{\Gamma'}\,=\,[\mc O(-\Delta_{d-d'}/2)]\cdot [D]\,.$$
	The quotient $\mc O^{n-1}_{C
\times D}\, \longrightarrow\, \mc F' \, \longrightarrow\, 0$ defines a map $D\, \longrightarrow\, \mc Q(n-1,\,
d')$. By induction 
	hypothesis, we get that there exists a partition 
	$\mathbf{d}'\,\in\, \mc P^{\leqslant n-1}_{d'}$ such
	that the composition of maps $D\, \longrightarrow\, \mc Q(n-1,\,d')\, \longrightarrow\, C^{(d')}$ factors as
	$D\, \longrightarrow\, C^{(\mathbf{d'})}\, \longrightarrow\, C^{(d')}$ and 
	$$[\mc O_{\mc Q(n-1,d')}(1)]\cdot [D]\,\geqslant \, [\mc O(-\Delta_{\mathbf{d'}}/2)]\cdot [D]\,.$$
	Since ${\rm deg}~f_*\mc F'\,=\,[\mc O_{\mc Q(n-1,d')}(1)]\cdot [D]$ we have that 
	${\rm deg}~f_*\mc F'\geqslant  [\mc O(-\Delta_{\mathbf{d'}}/2)]\cdot [D]$.
	From (\ref{inequality required for the inductive step}) we get that 
	$$[\mc O_{\mc Q}(1)]\cdot [D]\,\geqslant \, [\mc O(-\Delta_{d-d'}/2)]\cdot D + [\mc O(-\Delta_{\mathbf{d'}}/2)]\cdot [D]\,.$$
	Now we define $$\mathbf{d}\,\,:=\,\,(d-d',\,\mathbf{d'})$$ and 
	the statement follows from the above inequality. 
\end{proof}

\begin{theorem}\label{criterion-for-nefness}
	Let $\beta\in N^1(C^{(d)})$.
	Then the class $[\mc O_{\mc Q}(1)]+\beta \,\in\, N^1(\mc Q)$ is nef if and only if the class 
	$[\mc O(-\Delta_{\mathbf{d}}/2)]+\pi^*_{\mathbf{d}}\beta\,\in\, N^1(C^{(\mathbf{d})})$ 
	is nef for all $\mathbf{d}\,\in\, \mc P^{\leqslant n}_d$.
\end{theorem}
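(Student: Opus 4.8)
The plan is to prove the two implications separately, using the family of sections $\eta_{\mathbf{d}}\,:\,C^{(\mathbf{d})}\,\longrightarrow\,\mc Q$ together with the two structural lemmas already in hand. The one geometric fact I will use throughout is that $\eta_{\mathbf{d}}$ lifts the addition map, i.e. $\phi\circ\eta_{\mathbf{d}}\,=\,\pi_{\mathbf{d}}$; this is immediate from the construction of $\eta_{\mathbf{d}}$ via the quotient $q_{\mathbf{d}}$, whose associated divisor on $C$ over a point of $C^{(\mathbf{d})}$ is exactly the sum of divisors recorded by $\pi_{\mathbf{d}}$. Throughout, I keep the paper's convention that $\beta$ on $\mc Q$ abbreviates $\phi^*\beta$.

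For the forward implication, suppose $[\mc O_{\mc Q}(1)]+\beta$ is nef on $\mc Q$. Since pullbacks of nef classes are nef, $\eta_{\mathbf{d}}^*([\mc O_{\mc Q}(1)]+\beta)$ is nef on $C^{(\mathbf{d})}$ for every $\mathbf{d}$. By Lemma \ref{lemma-pullback of O(1) via various sections} we have $\eta_{\mathbf{d}}^*[\mc O_{\mc Q}(1)]\,=\,[\mc O(-\Delta_{\mathbf{d}}/2)]$, while $\phi\circ\eta_{\mathbf{d}}\,=\,\pi_{\mathbf{d}}$ gives $\eta_{\mathbf{d}}^*\phi^*\beta\,=\,\pi_{\mathbf{d}}^*\beta$. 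Hence the pulled-back class is precisely $[\mc O(-\Delta_{\mathbf{d}}/2)]+\pi_{\mathbf{d}}^*\beta$, which is therefore nef, as required.

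For the reverse implication, I would test nefness of $[\mc O_{\mc Q}(1)]+\beta$ against an arbitrary morphism $f\,:\,D\,\longrightarrow\,\mc Q$ from a smooth projective curve, so that it suffices to show $\deg f^*([\mc O_{\mc Q}(1)]+\beta)\,\geqslant\,0$. Applying Lemma \ref{bound on O(1) in terms of diagonals} to $f$ produces a partition $\mathbf{d}$ so that $\phi\circ f$ factors as $D\xrightarrow{\,h\,}C^{(\mathbf{d})}\xrightarrow{\,\pi_{\mathbf{d}}\,}C^{(d)}$ and $[\mc O_{\mc Q}(1)]\cdot[D]\,\geqslant\,[\mc O(-\Delta_{\mathbf{d}}/2)]\cdot[D]\,=\,\deg h^*\mc O(-\Delta_{\mathbf{d}}/2)$. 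Since $\beta$ on $\mc Q$ is $\phi^*\beta$ and $\phi\circ f\,=\,\pi_{\mathbf{d}}\circ h$, the projection formula gives $\beta\cdot[D]\,=\,\deg h^*\pi_{\mathbf{d}}^*\beta$. Combining the two degrees over $h$ yields
\[
\big([\mc O_{\mc Q}(1)]+\beta\big)\cdot[D]\;\geqslant\;\deg h^*\!\big([\mc O(-\Delta_{\mathbf{d}}/2)]+\pi_{\mathbf{d}}^*\beta\big),
\]
and the right-hand side is $\geqslant 0$ because $[\mc O(-\Delta_{\mathbf{d}}/2)]+\pi_{\mathbf{d}}^*\beta$ is nef on $C^{(\mathbf{d})}$ by hypothesis. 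As $f$ was arbitrary, $[\mc O_{\mc Q}(1)]+\beta$ is nef.

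The hard part is entirely absorbed into Lemma \ref{bound on O(1) in terms of diagonals}, whose inductive proof (peeling off one section of $\mc B_D$ at a time and controlling the maximal zero-dimensional subsheaf) carries all the real content; granting it, the theorem reduces to a clean bookkeeping with the projection formula, the identity $\phi\circ\eta_{\mathbf{d}}\,=\,\pi_{\mathbf{d}}$, and Lemma \ref{lemma-pullback of O(1) via various sections}. The two points I would be careful to verify are that testing against morphisms from smooth projective curves genuinely detects nefness (via normalization of an arbitrary integral curve), and that the abuse of notation $\beta\,=\,\phi^*\beta$ is applied consistently on both $\mc Q$ and $C^{(\mathbf{d})}$ so that the factorization through $h$ is legitimate.
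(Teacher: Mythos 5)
Your proposal is correct and follows essentially the same route as the paper: the forward direction via pullback along the sections $\eta_{\mathbf{d}}$ together with Lemma \ref{lemma-pullback of O(1) via various sections}, and the converse by testing against a curve $D\,\longrightarrow\,\mc Q$ and invoking Lemma \ref{bound on O(1) in terms of diagonals} plus the nefness hypothesis on $C^{(\mathbf{d})}$. The only difference is cosmetic: you make explicit the identity $\phi\circ\eta_{\mathbf{d}}\,=\,\pi_{\mathbf{d}}$ and the projection-formula bookkeeping that the paper leaves implicit.
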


\begin{proof}
	From Lemma \ref{lemma-pullback of O(1) via various sections} it is clear that if 
	$[\mc O_{\mc Q}(1)]+\beta$ is nef, then 
	$\eta^*_{\mathbf{d}}([\mc O_{\mc Q}(1)]+\beta)\,=\,[\mc O(-\Delta_{\mathbf{d}}/2)]+\pi^*_{\mathbf{d}}\beta$ is nef.
	
	For the converse, we assume 
	$[\mc O(-\Delta_{\mathbf{d}}/2)]+\pi^*_{\mathbf{d}}\beta$ is 
	nef for all $\mathbf{d}\,\in\, \mc P^{\leqslant n}_d$. Let $D$ be 
	a smooth projective curve and $D\, \longrightarrow\, \mc Q$ be a morphism.
	By Lemma \ref{bound on O(1) in terms of diagonals} we have that there exists $\mathbf{d}\in \mc P^{\leqslant n}_d$ such that 
	$D\, \longrightarrow\, C^{(d)}$ factors as $D\, \longrightarrow\, C^{(\mathbf{d})}\, \longrightarrow\, C^{(d)}$ and 
	$$[\mc O_{\mc Q}(1)]\cdot [D]\,\,\geqslant \,\, [\mc O(-\Delta_{\mathbf{d}}/2)]\cdot[D]\,.$$
	Now by assumption we have that 
	$$[\mc O(-\Delta_{\mathbf{d}}/2)]\cdot[D]\,\,\geqslant \,\, -\beta\cdot [D]\,.$$
	Therefore we get 
	$$[\mc O_{\mc Q}(1)]\cdot [D]\,\,\geqslant \,\, -\beta\cdot [D]\,.$$
	Hence we get that the class $[\mc O_{\mc Q}(1)]+\beta$ is nef.
\end{proof}

\begin{lemma} \label{compare various L_0}
	Suppose we are given a map $D\, \longrightarrow\, C^{(\mathbf{d})} \,\xrightarrow{\,\,\,\pi_{\mathbf{d}}\,\,\,}\, C^{(d)}$.
	Then we have
	$$[L^{(d)}_0]\cdot [D]\,\,\geqslant \,\, \sum_{i}[L^{(d_i)}_0]\cdot [D]\,.$$ 
\end{lemma}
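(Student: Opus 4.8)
The plan is to transport the whole inequality to the ordinary product $C^d$ via symmetrization, where the bundles $L_0$ have a completely explicit description as tensor products of pullbacks of the theta divisor. On $C^d$ the difference $\pi_{\mathbf{d}}^*[L^{(d)}_0]-\sum_i p_{i,\mathbf{d}}^*[L^{(d_i)}_0]$ turns out to be a product of $\Theta$-pullbacks indexed by the pairs of points lying in \emph{different} blocks of the partition, hence manifestly nef; descending this nefness back to $C^{(\mathbf{d})}$ and pairing with $[D]$ gives the claim.

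Concretely, write $\rho\,:\,C^d\,\longrightarrow\, C^{(d)}$ for the quotient map, and for $\mathbf{d}=(d_1,\dots,d_k)$ split $\{1,\dots,d\}$ into blocks $B_1,\dots,B_k$ with $|B_i|=d_i$. This identifies $C^d\cong\prod_i C^{d_i}$ and produces a finite surjective morphism $\rho_{\mathbf{d}}\,:=\,\prod_i\rho_i\,:\,C^d\,\longrightarrow\, C^{(\mathbf{d})}$, the quotient by $\prod_i S_{d_i}$. The first step is the compatibility $\pi_{\mathbf{d}}\circ\rho_{\mathbf{d}}\,=\,\rho$, which is immediate since adding up the symmetrized blocks recovers the full symmetrization. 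Next, by Definition \ref{def-L_0} one has $\rho^*L^{(d)}_0\,=\,\bigotimes_{1\leqslant a<b\leqslant d}\phi_{ab}^*\Theta$, where $\phi_{ab}\,:\,C^d\,\longrightarrow\, J(C)$ sends $(x_c)$ to $x_a-x_b$; the same formula applied to each factor gives $\rho_{\mathbf{d}}^*p_{i,\mathbf{d}}^*L^{(d_i)}_0\,=\,\bigotimes_{a<b\in B_i}\phi_{ab}^*\Theta$.

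Using $\pi_{\mathbf{d}}\circ\rho_{\mathbf{d}}=\rho$ together with these two identities, the computation to carry out is
\[
\rho_{\mathbf{d}}^*\Bigl(\pi_{\mathbf{d}}^*L^{(d)}_0-\sum_i p_{i,\mathbf{d}}^*L^{(d_i)}_0\Bigr)\,=\,\bigotimes_{\substack{a<b\\ a,b\ \text{in different blocks}}}\phi_{ab}^*\Theta\,=:\,N.
\]
Since $\Theta$ is ample on $J(C)$, each $\phi_{ab}^*\Theta$ is nef, so $N$ is nef on $C^d$. Because $\rho_{\mathbf{d}}$ is finite and surjective, a class on $C^{(\mathbf{d})}$ is nef if and only if its $\rho_{\mathbf{d}}$-pullback is nef; hence the difference $\pi_{\mathbf{d}}^*[L^{(d)}_0]-\sum_i p_{i,\mathbf{d}}^*[L^{(d_i)}_0]$ is nef on $C^{(\mathbf{d})}$, and intersecting with $[D]$ yields exactly $[L^{(d)}_0]\cdot[D]\geqslant\sum_i[L^{(d_i)}_0]\cdot[D]$.

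The one delicate point is the descent of nefness through the finite map $\rho_{\mathbf{d}}$; if one prefers to avoid quoting it, the same conclusion follows by replacing $D$ with a curve $D'$ in $C^d$ finite over it and applying the projection formula to $N\cdot[D']\geqslant 0$. I expect the only real (and rather mild) obstacle to be the bookkeeping in the displayed computation: one must verify that the within-block pairs cancel precisely against $\sum_i p_{i,\mathbf{d}}^*L^{(d_i)}_0$, leaving exactly the cross-block product $N$ with no residual diagonal or $[x]$-type terms.
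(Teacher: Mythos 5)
Your proof is correct and is essentially the argument the paper intends: the paper's proof simply says the lemma ``follows easily from the definition of $L_0^{(d)}$ and is left to the reader,'' and your computation on $C^d$ --- pulling back via $\rho_{\mathbf{d}}$, cancelling the within-block theta pullbacks against $\sum_i p_{i,\mathbf{d}}^*L_0^{(d_i)}$, and observing that the leftover cross-block product $\bigotimes \phi_{ab}^*\Theta$ is nef --- is exactly the fleshed-out version of that remark. The descent-of-nefness step (or your alternative via a curve $D'$ finite over $D$ and the projection formula) is standard and closes the argument rigorously.
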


\begin{proof}
	By $[L_0^{d_i}]\cdot [D]$ we mean the degree of the pullback 
	of $[L_0^{(d_i)}]$ along $D\, \longrightarrow\, C^{(\mathbf{d})}\,\xrightarrow{\,\,\,p_i\,\,\,}\, C^{(d_i)}$.
	The lemma follows easily from the definition of $L_0^{(d)}$ 
	and is left to the reader. 
\end{proof}

\begin{proposition}\label{another lower bound for NEF cone}
	Let $n\,\geqslant \, 1$, $g\,\geqslant  \,1$ and $\mc Q\,=\,\mc Q(n,\,d)$.
	Then the class $\kappa_2\,:=\,[\mc O_{\mc Q}(1)]+\dfrac{g+1}{2g}[L_0^{(d)}] \,\in\, N^1(\mc Q)$ is nef.
\end{proposition}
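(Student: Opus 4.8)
The plan is to reduce the nefness of $\kappa_2$ on $\mc Q$ to an explicit nefness check on symmetric products, using the criterion already in hand. First I would apply Theorem \ref{criterion-for-nefness} with $\beta\,=\,\frac{g+1}{2g}[L_0^{(d)}]$: this shows that $\kappa_2\,=\,[\mc O_{\mc Q}(1)]+\beta$ is nef if and only if, for every partition $\mathbf{d}\,=\,(d_1,\,\ldots,\,d_k)\,\in\,\mc P^{\leqslant n}_{(d)}$, the class
\[
\Psi_{\mathbf{d}}\,:=\,[\mc O(-\Delta_{\mathbf{d}}/2)]+\tfrac{g+1}{2g}\pi^*_{\mathbf{d}}[L_0^{(d)}]\,\in\,N^1(C^{(\mathbf{d})})
\]
is nef. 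In this way the whole problem becomes a statement purely about symmetric products of $C$.

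Next I would peel off the product structure of $C^{(\mathbf{d})}$. For a smooth projective curve $D$ and a morphism $D\,\longrightarrow\, C^{(\mathbf{d})}$, the definition of $[\mc O(-\Delta_{\mathbf{d}}/2)]$ as $\sum_i p^*_{i,\mathbf{d}}[\mc O(-\Delta_{d_i}/2)]$, together with Lemma \ref{compare various L_0} and the positivity of $\frac{g+1}{2g}$, gives
\[
\Psi_{\mathbf{d}}\cdot[D]\,\geqslant\,\sum_i p^*_{i,\mathbf{d}}\Big([\mc O(-\Delta_{d_i}/2)]+\tfrac{g+1}{2g}[L_0^{(d_i)}]\Big)\cdot[D].
\]
So it suffices to prove that for each single factor the class $\Lambda_m\,:=\,[\mc O(-\Delta_m/2)]+\frac{g+1}{2g}[L_0^{(m)}]$ on $C^{(m)}$ is nef; then each summand on the right is non-negative, whence $\Psi_{\mathbf{d}}$ is nef.

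Finally I would verify nefness of $\Lambda_m$ using the explicit change of basis in Lemma \ref{E in terms of various bases}. Writing $\mu_0^{(m)}\,=\,\frac{m+g-1}{mg}$ and substituting $[\Delta_m/2]\,=\,\mu_0^{(m)}[L_0^{(m)}]-(1-\mu_0^{(m)})[\theta_m]$, I expect to obtain
\[
\Lambda_m\,=\,\Big(\tfrac{g+1}{2g}-\mu_0^{(m)}\Big)[L_0^{(m)}]+\big(1-\mu_0^{(m)}\big)[\theta_m],
\]
with $\frac{g+1}{2g}-\mu_0^{(m)}\,=\,\frac{(m-2)(g-1)}{2mg}$ and $1-\mu_0^{(m)}\,=\,\frac{(m-1)(g-1)}{mg}$. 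Since $[L_0^{(m)}]$ and $[\theta_m]$ are both nef (as recorded in Section \ref{section-nef-C^{(d)}}), once both coefficients are non-negative $\Lambda_m$ is a non-negative combination of nef classes, hence nef, and both are indeed $\geqslant 0$ for $g\,\geqslant\,1$ and $m\,\geqslant\,2$. The only real subtlety — and the main (mild) obstacle — is the boundary case $m\,=\,1$, where the first coefficient is negative; there, however, $\Delta_1\,=\,0$ and $L_0^{(1)}\,=\,0$, so $\Lambda_1\,=\,0$ is trivially nef and must simply be dispatched by hand.
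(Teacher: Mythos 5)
Your proof is correct and is essentially the paper's own argument: the same reduction via Theorem \ref{criterion-for-nefness}, the same use of Lemma \ref{compare various L_0} and Lemma \ref{E in terms of various bases}, and the same final coefficient check $\mu_0^{(2)}-\mu_0^{(m)}=\tfrac{(m-2)(g-1)}{2mg}\geqslant 0$ together with $[L_0^{(1)}]=0$ for the case $m=1$. The only difference is organizational: you verify nefness factor-by-factor on each $C^{(d_i)}$, whereas the paper performs the identical substitution inside the single summed intersection number against $[D]$.
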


\begin{proof}
	Recall $\mu_0^{(2)}=\dfrac{g+1}{2g}$.
	By Theorem \ref{criterion-for-nefness}
	it suffices to show that for all $\mathbf{d}\,\in\, \mc P^{\leqslant n}_{(d)}$
	we have $[\mc O(-\Delta_{\mathbf{d}}/2)]+\mu_0^{(2)}\pi^*_{\mathbf{d}}[L_0^{(d)}]$ is nef.
	Using Lemma \ref{E in terms of various bases}, 
	$[L^{(1)}_0]\,=\,0$ and Lemma \ref{compare various L_0} we get 
	\begin{align*}
		([\mc O(-\Delta_{\mathbf{d}}/2)]+\mu_0^{(2)}\pi^*_{\mathbf{d}}[L_0^{(d)}])\cdot [D]&=
		\Big(\sum_i(1-\mu_0^{(d_i)})[\theta_{d_i}]-\mu_0^{(d_i)}[L_0^{d_i}]\Big)\cdot [D] \\
		&\qquad \qquad +\mu_0^{(2)}[L_0^{(d)}]\cdot [D]\\
		&\geqslant  \sum_i(\mu_0^{(2)} - \mu_0^{(d_i)})[L_0^{d_i}]\cdot [D]\,.
	\end{align*}
	This proves that $\kappa_2$ is nef.	
\end{proof}

\begin{corollary}\label{O(1)+mu^2L_0-not-ample}
	Let $n\geqslant  d$. Then the class $[\mc O_{\mc Q}(1)]+\mu^{(2)}_0[L^{(d)}_0]\in N^1(\mc Q)$ is nef but not ample.
\end{corollary}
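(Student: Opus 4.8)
The plan is to combine the nefness already established in Proposition~\ref{another lower bound for NEF cone} with a non-ampleness argument obtained by exhibiting an explicit curve on which the class $\kappa_2$ has zero intersection. By Proposition~\ref{another lower bound for NEF cone}, the class $\kappa_2=[\mc O_{\mc Q}(1)]+\mu^{(2)}_0[L^{(d)}_0]$ is nef for every $n\geqslant 1$, so in particular it is nef when $n\geqslant d$; thus the only thing left to prove is that it is \emph{not} ample. Since a nef class is ample if and only if it has strictly positive intersection with every curve class in $\overline{NE}(\mc Q)$, it suffices to produce a single effective curve $[\gamma]\in N_1(\mc Q)$ with $\kappa_2\cdot[\gamma]=0$.

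First I would look at the curves already available in the excerpt. The computation in the proof of Proposition~\ref{nef cone of quot schemes } shows that $([\mc O_{\mc Q}(1)]+\mu_0[L_0])\cdot\eta_*[l']=0$, where there $\mu_0=\mu_0^{(d)}$; the relevant intermediate identity there is that $([\mc O_{\mc Q}(1)])\cdot\eta_*[l']=[-\Delta_d/2]\cdot[l']$ and $[L_0]\cdot[l']=0$ (since $[\theta_d]\cdot[l']=0$ and $[L_0]=dg[x]-[\theta_d]$ with $[x]\cdot[l']$ controlled). The natural candidate curve is therefore $\eta_*[l']$, the pushforward under the section $\eta$ of the line class $[l']\subset C^{(d)}$ coming from the gonality pencil (Definition~\ref{p1-in-C^(d)}). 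The point is that $[L_0]\cdot[l']=0$ by construction of $L_0$, so $\kappa_2\cdot\eta_*[l']=[\mc O_{\mc Q}(1)]\cdot\eta_*[l']=[-\Delta_d/2]\cdot[l']$, and I would show this last quantity vanishes using the explicit intersection numbers on $C^{(d)}$ recorded in Lemma~\ref{E in terms of various bases} together with $[\theta_d]\cdot[l']=0$ and $[\delta]$ being dual to $[L_0]$.

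Alternatively, and perhaps more cleanly, I would use the curve $[\widetilde\delta]$ from \eqref{t delta}: the computation in item (3) of the proof of Proposition~\ref{nef cone of quot schemes } already gives $[\mc O_{\mc Q}(1)]\cdot[\widetilde\delta]=0$ and $[L_0]\cdot[\widetilde\delta]=[L_0]\cdot[\delta]$, where $[\delta]$ is the small-diagonal class in $C^{(d)}$. Since $\delta^*[L_0]=0$ (this is exactly how $L_0$ was shown to be nef-but-not-ample in Section~\ref{section-nef-C^{(d)}}, as $\phi$ contracts the small diagonal), we get $[L_0]\cdot[\delta]=0$ and hence $\kappa_2\cdot[\widetilde\delta]=0$. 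This realizes $\kappa_2$ as a nef class lying on a facet of the nef cone, so it cannot be ample.

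The hard part is essentially bookkeeping rather than a conceptual obstacle: I must make sure the coefficient $\mu^{(2)}_0$ does not interfere, i.e. that the vanishing $\kappa_2\cdot[\gamma]=0$ holds for the specific multiplier $\mu^{(2)}_0=(g+1)/(2g)$ rather than only for $\mu_0^{(d)}$. This is why the hypothesis $n\geqslant d$ is imposed — it guarantees $d\geqslant \mathrm{gon}(C)$-type availability of the pencil curve $[l']$ and the section $\eta$ used in Proposition~\ref{nef cone of quot schemes }, so that the chosen test curve actually exists in $\mc Q(n,d)$. Concretely I would write $\kappa_2\cdot[\gamma]=[\mc O_{\mc Q}(1)]\cdot[\gamma]+\mu^{(2)}_0\,[L_0^{(d)}]\cdot[\gamma]$ and observe that \emph{both} terms vanish independently on the chosen curve ($[\mc O_{\mc Q}(1)]$ by the direct computation and $[L_0^{(d)}]$ because $L_0^{(d)}$ pulls back trivially along the contracted class), so the value of $\mu^{(2)}_0$ is irrelevant to the vanishing. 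With a curve on which $\kappa_2$ vanishes in hand, non-ampleness follows from the Nakai--Moishezon / Kleiman criterion, completing the proof.
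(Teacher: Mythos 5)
Your second, ``alternatively'' argument is precisely the paper's proof: nefness is quoted from Proposition~\ref{another lower bound for NEF cone}, and non-ampleness comes from the curve $\widetilde\delta$ of \eqref{t delta}, on which $[\mc O_{\mc Q}(1)]\cdot[\widetilde\delta]=0$ (by Lemma~\ref{lb-lemma} and the definition of $\widetilde\delta$) and $[L_0^{(d)}]\cdot[\widetilde\delta]=[L_0^{(d)}]\cdot[\delta]=0$ (since $\phi_*\widetilde\delta=\delta$ and $L_0$ restricts trivially to the small diagonal). As you note, both terms vanish separately, so the multiplier $\mu_0^{(2)}$ plays no role; this is exactly how the paper argues.

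Two corrections to the surrounding discussion, however. First, your initial candidate $\eta_*[l']$ does \emph{not} work, and the claim that $[L_0]\cdot[l']=0$ ``by construction of $L_0$'' is false: by Proposition~\ref{nef-cone-C^(d)} the bases $([L_0],[\theta_d])$ and $([l'],[\delta])$ are \emph{dual}, so $[L_0]\cdot[l']=1$, while the vanishing pairings are $[\theta_d]\cdot[l']=0$ and $[L_0]\cdot[\delta]=0$. Carrying out your own computation using Lemma~\ref{E in terms of various bases} gives
\[
\kappa_2\cdot\eta_*[l']\;=\;\bigl(\mu_0^{(2)}-\mu_0^{(d)}\bigr)\,[L_0]\cdot[l']\;=\;\frac{(d-2)(g-1)}{2dg}\,[L_0]\cdot[l'],
\]
which is strictly positive when $g\geqslant 2$ and $d\geqslant 3$; so that curve witnesses non-ampleness only in the cases $g=1$ or $d=2$. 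Second, the hypothesis $n\geqslant d$ has nothing to do with gonality or with the existence of $[l']$ and $\eta$: it is needed precisely because $\widetilde\delta$ is defined in \eqref{t delta} by fixing a surjection $k^n\longrightarrow k^d$ of vector spaces, which exists only when $n\geqslant d$. With the $\widetilde\delta$ route this is the only point where the hypothesis enters, and the rest of your argument is complete and correct.
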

\begin{proof}
	By Proposition \ref{another lower bound for NEF cone} we have that 
	$[\mc O_{\mc Q}(1)]+\mu^{(2)}_0[L^{(d)}_0]$ is nef. Now recall that
	when $n\geqslant  d$ we have the curve $\widetilde \delta\,\hookrightarrow\, \mc Q$ (\ref{t delta}).
	From the definition of $\widetilde \delta$ and Lemma \ref{lb-lemma} we have 
	$[\mc O_{\mc Q}(1)]\cdot [\widetilde \delta]\,=\,0$. Also $\Phi_*\widetilde \delta\,=\,\delta$. Hence 
	$[L^{(d)}_0]\cdot [\widetilde \delta]\,=\,[L^{(d)}_0]\cdot [\delta]\,=\,0$. From this we get 
	$[\mc O_{\mc Q}(1)]+\mu^{(2)}_0[L^{(d)}_0]\cdot [\widetilde \delta]\,=\,0$ and hence 
	$[\mc O_{\mc Q}(1)]+\mu^{(2)}_0L^{(d)}_0$ is not ample.
\end{proof}

As a corollary of Proposition \ref{another lower bound for NEF cone}
we get the following result. When $g\,=\,1$ note that $\mu_0^{(d)}\,=\,\mu_0^{(2)}\,=\,1$. 

\begin{theorem}\label{O(1)+Delta/2 is nef for elliptic curves}
	Let $g\,=\,1$, $n\,\geqslant \, 1$ and $\mc Q\,=\,\mc Q(n,\,d)$.
	Then the class $[\mc O_{\mc Q}(1)]+[\Delta_d/2]\,\in\, N^1(\mc Q)$ is nef.
	Moreover, 
	$${\rm Nef}(\mc Q)\,\,=\,\, \mb R_{\geqslant  0}([\mc O_{\mc Q}(1)]+[\Delta_d/2])+
	\mb R_{\geqslant  0}[\theta_d]+\mb R_{\geqslant  0}[\Delta_d/2]\,.$$
\end{theorem}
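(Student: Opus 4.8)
The plan is to prove the final statement, Theorem \ref{O(1)+Delta/2 is nef for elliptic curves}, by combining the lower bound supplied by Proposition \ref{another lower bound for NEF cone} with the upper bound from Proposition \ref{ub-g=1}, using the dimension-$2$ structure of $N^1(\mc Q)$ to pin down the cone exactly. First I would record that for $g=1$ one has $\mu_0^{(d)}=\dfrac{d+g-1}{dg}=1$ for every $d$, so in particular $\mu_0^{(2)}=1$. Applying Proposition \ref{another lower bound for NEF cone} with this value gives immediately that the class $[\mc O_{\mc Q}(1)]+[L_0^{(d)}]$ is nef. To turn this into the stated class I would use Lemma \ref{E in terms of various bases}: in the genus $1$ case the identity $[L_0]=(dg-d-g+1)[x]+[\Delta_d/2]$ degenerates, since $dg-d-g+1=(d-1)(g-1)=0$, so $[L_0^{(d)}]=[\Delta_d/2]$. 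Substituting this shows $[\mc O_{\mc Q}(1)]+[\Delta_d/2]$ is nef, which is the first assertion.

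For the equality of cones, I would argue in the two-dimensional language of the fibration $\phi\colon \mc Q\to C^{(d)}$. Recall from the earlier subsections that $N^1(\mc Q)\cong N^1(C^{(d)})\oplus \mb R[\mc O_{\mc Q}(1)]$, and that for $g=1$ the base $N^1(C^{(d)})$ is two-dimensional with $[\theta_d]$ and $[\Delta_d/2]$ (equivalently $[L_0^{(d)}]$) as a basis by Proposition \ref{nef-cone-C^(d)} and the discussion following Definition \ref{three divisors of symmetric product of curves}. Thus $N^1(\mc Q)$ is three-dimensional, and the claimed nef cone is the three-dimensional cone spanned by $[\mc O_{\mc Q}(1)]+[\Delta_d/2]$, $[\theta_d]$, and $[\Delta_d/2]$. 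The containment of this cone inside $\mathrm{Nef}(\mc Q)$ is what I need to establish, and the reverse containment is exactly Proposition \ref{ub-g=1} after substituting $\mu_0=1$ and $[L_0^{(d)}]=[\Delta_d/2]$.

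For the generating rays I would check nefness of each extremal class: the class $[\mc O_{\mc Q}(1)]+[\Delta_d/2]=[\mc O_{\mc Q}(1)]+[L_0^{(d)}]$ is nef by the computation above; the classes $[\theta_d]$ and $[\Delta_d/2]=[L_0^{(d)}]$ are pullbacks under $\phi$ of the nef generators of $\mathrm{Nef}(C^{(d)})$ named in Proposition \ref{nef-cone-C^(d)}, hence are nef on $\mc Q$ because $\phi$ is a morphism and nefness pulls back. Since $\mathrm{Nef}(\mc Q)$ is a convex cone, it contains the cone generated by these three nef classes. Combined with the upper bound of Proposition \ref{ub-g=1}, which after the $g=1$ substitution asserts $\mathrm{Nef}(\mc Q)\subset \mb R_{\geqslant 0}([\mc O_{\mc Q}(1)]+[\Delta_d/2])+\mb R_{\geqslant 0}[\theta_d]+\mb R_{\geqslant 0}[\Delta_d/2]$, the two containments force equality.

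The main obstacle I anticipate is not the ray-by-ray nefness, which follows cleanly from the cited propositions, but confirming that the upper and lower bounds actually coincide as full-dimensional cones rather than merely sharing their three generating rays. Concretely, I must verify that the three classes $[\mc O_{\mc Q}(1)]+[\Delta_d/2]$, $[\theta_d]$, $[\Delta_d/2]$ are linearly independent in the three-dimensional space $N^1(\mc Q)$, so that the cone they span is genuinely the upper-bound cone of Proposition \ref{ub-g=1} and no extremal ray has been lost in passing to the limit $\mu_0^{(d)}=1$; this independence is transparent since $[\theta_d]$ and $[\Delta_d/2]$ are independent in the base $N^1(C^{(d)})$ and the $[\mc O_{\mc Q}(1)]$-component distinguishes the first generator. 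Once this bookkeeping is in place the equality of cones is immediate, so the delicate point is simply ensuring the $g=1$ specialization of Lemma \ref{E in terms of various bases} and of Proposition \ref{ub-g=1} is carried out consistently.
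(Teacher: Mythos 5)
Your proof is correct and is essentially the paper's own argument: the paper's one-line proof likewise combines the lower bound coming from Proposition \ref{another lower bound for NEF cone} (accessed there via Corollary \ref{O(1)+mu^2L_0-not-ample}) with the previously established upper bound, using Lemma \ref{E in terms of various bases} and the $g=1$ coincidences $\mu_0^{(2)}=\mu_0^{(d)}=1$ and $[L_0^{(d)}]=[\Delta_d/2]$. Your only real deviation is a beneficial one---you take the upper bound from Proposition \ref{ub-g=1}, which is valid for all $n\geqslant 1$ and thus matches the theorem's hypothesis, whereas the paper cites Proposition \ref{nef cone of quot schemes } which requires $n\geqslant d$---while your closing worry about linear independence is superfluous, since the two containments involve cones with literally the same generators and hence force equality on their own.
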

\begin{proof}
	This follows using Lemma \ref{E in terms of various bases}, Proposition \ref{nef cone of quot schemes }
	and the above Corollary.
\end{proof}

\subsection{Nef cone in the genus $g\geqslant   2$ case}

When $g\geqslant  2$, there are some partial results on the nef cone of $\mc Q$. 
We only state these, and refer the reader to \cite{GS-nef} for proofs.

One extremal ray in ${\rm Nef}(C^{(2)})$ is given by $L_0$.
There is a real number $t$ such that 
the other extremal ray of ${\rm Nef}(C^{(2)})$ be given by 
\begin{equation}\label{def-alpha_t}
	\alpha_t\,\,=\,\,(t+1)x-\Delta_2/2\,,
\end{equation} 
(see \cite[page 75]{Laz}). Then using Lemma \ref{E in terms of various bases}, we get that
\begin{equation}\label{relation between L_0, Delta, alpha}
	\Delta_2/2\,\,=\,\,\dfrac{t+1}{g+t}L_0-\dfrac{g-1}{g+t}\alpha_t\,.
\end{equation}

\begin{theorem}\label{cone-d=2}
	Let $d\,=\,2$. Consider the Quot scheme $\mc Q\,=\,\mc Q(n,\,2)$. 
	Then $${\rm Nef}(\mc Q)\,\,=\,\,\mb R_{\geqslant  0}([\mc O_{\mc Q}(1)]+\dfrac{t+1}{g+t}[L_0])+
	\mb R_{\geqslant  0}[L_0]+\mb R_{\geqslant  0}[\alpha_t].$$ 
\end{theorem}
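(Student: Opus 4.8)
The plan is to establish the two opposite inclusions separately, working throughout in the three–dimensional space $N^1(\mc Q)\,=\,\mb R[\mc O_{\mc Q}(1)]\oplus\phi^*N^1(C^{(2)})$, in which $\{[L_0],\,[\alpha_t]\}$ is a basis of the two–dimensional space $N^1(C^{(2)})$. Writing a general class as $\gamma\,=\,a[\mc O_{\mc Q}(1)]+b[L_0]+c[\alpha_t]$ and using \eqref{relation between L_0, Delta, alpha}, one checks that the asserted cone is exactly $\{a\geqslant 0,\ c\geqslant 0,\ b\geqslant \tfrac{t+1}{g+t}a\}$ and that its three extremal rays are the three listed generators. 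I assume $n\geqslant 2$ throughout (for $n=1$ one has $\mc Q\cong C^{(2)}$). First I would show that each of the three generators is nef, giving the inclusion of the asserted cone into ${\rm Nef}(\mc Q)$; then I would exhibit enough curves to force every nef class to satisfy the three defining inequalities.

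For the first inclusion, the classes $[L_0]$ and $[\alpha_t]$ are pullbacks under $\phi$ of the two extremal nef generators of ${\rm Nef}(C^{(2)})$, so they are nef on $\mc Q$. For the third generator I would invoke the criterion of Theorem \ref{criterion-for-nefness} with $\beta=\tfrac{t+1}{g+t}[L_0]$; since $d=2$ the only partitions to test are $(2)$ and $(1,1)$. For $\mathbf d=(2)$, the relation \eqref{relation between L_0, Delta, alpha} gives
\[
[\mc O(-\Delta_2/2)]+\tfrac{t+1}{g+t}[L_0]\,=\,\tfrac{g-1}{g+t}[\alpha_t],
\]
a non-negative multiple (as $g\geqslant 2$ and $t>0$) of the nef class $\alpha_t$, hence nef; for $\mathbf d=(1,1)$ the diagonal term vanishes and one is left with $\tfrac{t+1}{g+t}\pi_{(1,1)}^*[L_0]$, which is nef because $L_0$ is nef on $C^{(2)}$ and $\pi_{(1,1)}\colon C\times C\to C^{(2)}$ is a morphism. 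Thus $[\mc O_{\mc Q}(1)]+\tfrac{t+1}{g+t}[L_0]$ is nef.

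For the reverse inclusion, let $\gamma=a[\mc O_{\mc Q}(1)]+\phi^*\beta$ be nef, with $\beta=b[L_0]+c[\alpha_t]$. Pairing with the line $l$ in a fibre of $\phi$ gives $\gamma\cdot l=a\geqslant 0$. Pairing with $\widetilde\delta$ and using $[\mc O_{\mc Q}(1)]\cdot[\widetilde\delta]=0$, $\phi_*[\widetilde\delta]=[\delta]$, together with $[L_0]\cdot[\delta]=0$ (as $\phi$ contracts the small diagonal) and $[\alpha_t]\cdot[\delta]>0$ (else $[\delta]$ would be numerically trivial), gives $\gamma\cdot\widetilde\delta=c\,([\alpha_t]\cdot[\delta])\geqslant 0$, hence $c\geqslant 0$. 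Finally, since $\eta$ is a section of $\phi$, the class $\eta^*\gamma$ is nef on $C^{(2)}$; using $\eta^*[\mc O_{\mc Q}(1)]=[\mc O(-\Delta_2/2)]$ (Lemma \ref{lemma-pullback of O(1) via various sections} for $\mathbf d=(2)$) and \eqref{relation between L_0, Delta, alpha} one finds
\[
\eta^*\gamma\,=\,\big(b-\tfrac{t+1}{g+t}a\big)[L_0]+\big(c+\tfrac{g-1}{g+t}a\big)[\alpha_t].
\]
As $\{[L_0],[\alpha_t]\}$ are the extremal rays of ${\rm Nef}(C^{(2)})$, nefness of $\eta^*\gamma$ forces both coefficients to be non-negative; in particular $b\geqslant \tfrac{t+1}{g+t}a$. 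The three inequalities $a\geqslant 0$, $c\geqslant 0$, $b\geqslant \tfrac{t+1}{g+t}a$ then place $\gamma$ in the asserted cone.

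The decisive external input is the known description of ${\rm Nef}(C^{(2)})$ for a very general curve, namely the existence of the second extremal ray $\alpha_t$, the precise value of $t$, and the resulting identity \eqref{relation between L_0, Delta, alpha}; this is where the curve-specific geometry enters and is the part I would simply cite. Given this, the subtle internal point is that the lower-bound computation for the partition $(2)$ and the upper-bound computation of $\eta^*\gamma$ must pin the extremal coefficient at exactly $\tfrac{t+1}{g+t}$ from both directions — this matching is precisely what makes the ray $[\mc O_{\mc Q}(1)]+\tfrac{t+1}{g+t}[L_0]$ extremal rather than merely nef.
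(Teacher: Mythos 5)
Your proof is correct, and strictly speaking there is nothing in this survey to compare it against: the paper only states Theorem \ref{cone-d=2} and explicitly defers the proof to \cite{GS-nef}. That said, your argument is exactly the one the survey's toolkit is designed to produce, and it is in substance the proof in \cite{GS-nef}: the lower bound comes from Theorem \ref{criterion-for-nefness} applied to the two partitions $(2)$ and $(1,1)$, where \eqref{relation between L_0, Delta, alpha} converts $[\mc O(-\Delta_2/2)]+\tfrac{t+1}{g+t}[L_0]$ into the nef class $\tfrac{g-1}{g+t}[\alpha_t]$, and the upper bound comes from testing against the fibre line $[l]$, the curve $[\widetilde\delta]$, and the pullback along the section $\eta$ (via Lemma \ref{lemma-pullback of O(1) via various sections} for the partition $(2)$). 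Note that your substitution of the section pullback $\eta^*$ for the curve $\eta_*[l']$ of Proposition \ref{nef cone of quot schemes } is not merely stylistic but necessary: $[l']$ exists only when $d\geqslant {\rm gon}(C)$, which fails for $d=2$ once $C$ is very general of genus $g\geqslant 3$, so the upper-bound proposition of the survey cannot be quoted here and one must use the full nef cone of $C^{(2)}$ rather than explicit effective curves. Two minor points you should make explicit rather than leave implicit: the hypothesis $n\geqslant 2$ is genuinely used (both for the existence of $\widetilde\delta$ and for the partition $(1,1)$ in the criterion), with $n=1$ handled separately as you indicate; and the sign claims $\tfrac{g-1}{g+t}\geqslant 0$, $\tfrac{t+1}{g+t}\geqslant 0$ require $g+t>0$, which follows either from the known bound $t\geqslant \sqrt{g}$ cited in [Laz, p.~75], or intrinsically from the fact that $[\theta_2]=(g+1)[x]-[\Delta_2/2]$ is nef and hence must lie in the cone spanned by $[L_0]$ and $[\alpha_t]$.
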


\begin{theorem}\label{cone-d=3}
	Let $C$ be a very general curve of genus $2\,\leqslant \,g(C)\,\leqslant\, 4$. 
	Let $n\,\geqslant \,3$ and let $\mc Q\,=\,\mc Q(n,\,3)$. Let $\mu_0\,=\,\dfrac{g+2}{3g}$
	Then 
	$${\rm Nef}(\mc Q)\,\,= \,\,\mb R_{\geqslant  0}([\mc O_{\mc Q}(1)]+\mu_0[L_0])+
	\mb R_{\geqslant  0}[\theta_d]+\mb R_{\geqslant  0}[L_0]\,.$$
\end{theorem}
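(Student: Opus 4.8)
The plan is to prove the two inclusions separately, the upper bound being essentially free and all the work going into the lower bound.

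For $\mathrm{Nef}(\mc Q)\subseteq\mb R_{\geqslant 0}([\mc O_{\mc Q}(1)]+\mu_0[L_0])+\mb R_{\geqslant 0}[\theta_d]+\mb R_{\geqslant 0}[L_0]$ I would invoke Proposition \ref{nef cone of quot schemes } directly. Its hypothesis $n\geqslant d\geqslant\mathrm{gon}(C)$ holds here: we have $d=3\leqslant n$, and a very general curve of genus $g$ has gonality $\lceil(g+2)/2\rceil$, which is $\leqslant 3$ precisely when $g\leqslant 4$; moreover $\mu_0=(d+g-1)/(dg)=(g+2)/(3g)$ for $d=3$, so the cone produced there is exactly the asserted right-hand side. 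This is the only point at which the restriction $g\leqslant 4$ is used.

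For the reverse inclusion it suffices to show the three generators are nef. The classes $[\theta_d]$ and $[L_0]$ are $\phi$-pullbacks of the nef classes $\theta_3$ and $L_0^{(3)}$ on $C^{(3)}$ (Section \ref{section-nef-C^{(d)}}), hence nef, so everything reduces to proving that $\kappa:=[\mc O_{\mc Q}(1)]+\mu_0[L_0]$ is nef. For this I would apply Theorem \ref{criterion-for-nefness} with $\beta=\mu_0[L_0^{(3)}]$, reducing the problem to nefness of $[\mc O(-\Delta_{\mathbf d}/2)]+\mu_0\,\pi_{\mathbf d}^*[L_0^{(3)}]$ on $C^{(\mathbf d)}$ for the three partitions $\mathbf d=(3),(2,1),(1,1,1)$ of $3$ (all of length $\leqslant n$ since $n\geqslant 3$). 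For $\mathbf d=(3)$, Lemma \ref{E in terms of various bases} gives $-\Delta_3/2+\mu_0 L_0^{(3)}=(1-\mu_0)\theta_3$, which is nef; for $\mathbf d=(1,1,1)$ the class is the pullback of the nef class $\mu_0 L_0^{(3)}$ along the addition map $C^3\to C^{(3)}$, hence nef.

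The remaining case $\mathbf d=(2,1)$ is where I expect the real difficulty, since here nefness of a class on the product $C^{(2)}\times C$ is not governed by its factors. Concretely one must show that $\Xi:=p_1^*(-\Delta_2/2)+\mu_0\,\pi^*[L_0^{(3)}]$ is nef, where $\pi=\pi_{(2,1)}$ is the addition map and $p_1,p_2$ are the projections of $C^{(2)}\times C$. Writing $L_0^{(3)}=3g[x]-[\theta_3]$, one has $\pi^*[x]=p_1^*[x]+p_2^*[\mathrm{pt}]$, while the theorem of the cube applied to $u_3\circ\pi=m\circ(u_2\times u_1)$ (with $m$ the group law on $J(C)$) writes $\pi^*[\theta_3]$ as $p_1^*\theta_2+g\,p_2^*[\mathrm{pt}]$ plus a correspondence class $\Lambda$ (the pullback of $m^*\Theta$ minus the two pullbacks of $\Theta$); its coefficient in $\Xi$ is $-\mu_0<0$. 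For an irreducible curve $\Gamma\to C^{(2)}\times C$, set $T=\theta_2\cdot(p_1)_*\Gamma\geqslant 0$, $X=[x]\cdot(p_1)_*\Gamma\geqslant 0$ (note $[x]=\tfrac1{2g}([L_0^{(2)}]+[\theta_2])$ is nef) and $m_2=\deg(p_2\circ\Gamma)\geqslant 0$. Using $\Delta_2/2=(g+1)[x]-\theta_2$ (Lemma \ref{E in terms of various bases}) and the identity $3g\mu_0=g+2$, the computation collapses to
$$\Xi\cdot\Gamma\,=\,(1-\mu_0)\,T+X+2g\mu_0\,m_2-\mu_0\,(\Lambda\cdot\Gamma).$$

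The final and decisive step is to bound the dangerous term. The induced maps $\gamma_1=u_2\circ p_1|_\Gamma$ and $\gamma_2=u_1\circ p_2|_\Gamma$ from $\Gamma$ to $J(C)$ satisfy $\Lambda\cdot\Gamma=\deg(\gamma_1+\gamma_2)^*\Theta-\deg\gamma_1^*\Theta-\deg\gamma_2^*\Theta=2\langle\gamma_1,\gamma_2\rangle$, where $\langle\,,\,\rangle$ is the positive semidefinite pairing attached to the principal polarization $\Theta$; the associated Cauchy--Schwarz inequality then yields $\Lambda\cdot\Gamma\leqslant 2\sqrt{T\cdot g m_2}$, using $\deg\gamma_1^*\Theta=T$ and $\deg\gamma_2^*\Theta=g m_2$. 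Substituting and discarding $X\geqslant 0$ reduces $\Xi\cdot\Gamma\geqslant 0$ to positive semidefiniteness of the quadratic form $(1-\mu_0)u^2-2\mu_0\,uv+2\mu_0\,v^2$ in $u=\sqrt T,\ v=\sqrt{g m_2}$, which holds iff $\mu_0(2-3\mu_0)\geqslant 0$, i.e. $\mu_0\leqslant 2/3$, i.e. $g\geqslant 2$. Hence $\Xi$, and therefore $\kappa$, is nef, and combined with the upper bound this gives the stated equality. The crux of the proof is thus concentrated entirely in the $(2,1)$ partition, and specifically in recognizing that the mixing correspondence $\Lambda$ is exactly controlled by the Cauchy--Schwarz inequality for the polarization $\Theta$.
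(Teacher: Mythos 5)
The survey states Theorem \ref{cone-d=3} without proof, deferring to \cite{GS-nef}; measured against that source, your proposal is correct and takes essentially the same approach. Both arguments obtain the upper bound from Proposition \ref{nef cone of quot schemes } (the hypothesis $2\leqslant g\leqslant 4$ being exactly the gonality condition ${\rm gon}(C)\leqslant 3$, as you note) and the lower bound from Theorem \ref{criterion-for-nefness} with $\beta=\mu_0[L_0]$, where the only nontrivial partition $(2,1)$ is handled, just as you do, by bounding the mixing term via the positive semidefinite pairing $(f,g)\mapsto \tfrac{1}{2}\bigl(\deg (f+g)^*\Theta-\deg f^*\Theta-\deg g^*\Theta\bigr)$ on morphisms from a curve to $J(C)$ and Cauchy--Schwarz --- the full (weighted) Cauchy--Schwarz being genuinely necessary here, since the unweighted bound $\Lambda\cdot\Gamma\leqslant T+gm_2$ would only settle the case $g=4$.
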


\section{Automorphisms and Deformations of $\mc Q$}

Let ${\rm Quot}(E,d)$ denote the quot scheme parametrizing quotients of $E$
of length $d$. For ease of notation, in this section, we shall denote ${\rm Quot}(E,d)$ by $\mc Q$. 
Recall that the group of holomorphic automorphisms $\text{Aut}(\mc Q)$ of $\mc Q$ is a complex Lie group
whose Lie algebra is the Lie algebra of vector fields $H^0(\mc Q,\,T_{\mc Q})$ \cite[Lemma 1.2.6]{Sern}. Here $T_{\mc Q}$ is the tangent bundle of $\mc Q$. 
Also, the space of first order infinitesimal deformations of $\mc Q$ is given by $H^1(\mc Q, \, T_{\mc Q})$. In this section we discuss the computations of this two spaces,
and more generally the sheaf cohomology of $\mc Q$ following \cite{BGS}. 

When $E\,=\,\mc O^{\oplus r}$ the lie algebra of vector fields was computed in \cite{BDH-aut}
$$H^0(\mc Q({\mathcal O}^{\oplus r},\,d),\,
T_{\mc Q({\mathcal O}^{\oplus r},d)})\,=\, \mf{sl}(r, {\mathbb C})\,=\,
H^0(X,\, {\rm End}({\mathcal O}^{\oplus r}))/{\mathbb C}
$$
for all $r\, \geqslant \, 2$. From this it follows that the maximal
connected subgroup of $\text{Aut}(\mc Q({\mathcal O}^{\oplus r},\,d))$ is
${\rm PGL}(r,{\mathbb C})\,=\, \text{Aut}({\mathcal O}^{\oplus r})/{\mathbb C}^*$.
This was generalized in \cite{G19}. If either $E$ is semistable or $r\,\geqslant  \, 3$,
it was proved that
$$
H^0(\mc Q(E,\,d),\, T_{\mc Q(E,d)})\,\,=\,\, H^0(X,\, \text{End}(E))/{\mathbb C}
$$
\cite{G19}, and hence the maximal connected subgroup of $\text{Aut}(\mc Q(E,\,d))$
is $\text{Aut}(E)/{\mathbb C}^*$. 
Regarding the next cohomology $H^1(\mc Q,\,T_{\mc Q})$, first consider the
case of $r\,=\,1$. In this case, the Quot scheme
$\mc Q(E,\,d)$ is identified with the $d$-th symmetric product 
$C^{(d)}$ of $C$. The infinitesimal deformation space
$H^1(C^{(d)},\, T_{C^{(d)}})$ was computed in \cite{Kempf} under
the assumption that $C$ is 
non-hyperelliptic, and it was computed in \cite{Fantechi} when 
$g\,\geqslant  \, 3$ (see also \cite[Remark 2.6]{Fantechi} for the case $g=2$). 

Henceforth, we will always assume that $r\, =\, {\rm rank}(E)\, \geqslant  \, 2$.

If $d\,=\,1$, then $\mc Q\,\,\cong\,\, \mb P(E)$. Consequently,
$H^1(\mc Q(E,\,1),\,T_{\mc Q(E,1)})$ can be computed easily.

Associated to the vector bundle $E$ there is the Atiyah bundle $At(E)$ on $C$
\cite[Theorem 1]{Atiyah}. The infinitesimal 
deformations of the pair $(C,\, E)$ are parametrized by
$H^1(X,\, At(E))$ \cite[Proposition 4.2]{Chen}. For the natural
homomorphisms $\mc O_C\,\hookrightarrow \,\text{End}(E)
\,\hookrightarrow \, At(E)$, the quotients $\text{End}(E)/\mc O_C$
and $At(E)/\mc O_C$ are vector bundles and will be denoted by $ad(E)$ and $at(E)$ respectively. 
Also, given any vector bundle $V$ on $C$ we can 
construct a natural bundle called the Secant bundle 
${Sec}^d(V)$ on $C^{(d)}$ (see \cite[Proposition 1]{Mattuck}, \cite[Section 2]{biswas-laytimi}). 
In particular we have a bundle $Sec^d(at(E))$ on $C^{(d)}$.

Recall that we have the Hilbert-Chow map $\phi\,:\,\mc Q\,\longrightarrow\, C^{(d)}$. We refer to 
Subsection \ref{Hilbert-Chow} for the definition of this map. 
Then we have

\begin{theorem}\label{theorem 1}
Let $r\,=\, {\rm rank}(E)\,\geqslant   \,2$. Then
\begin{enumerate}
\item $Sec^d(at(E))\,\cong\, \phi_*T_{\mc Q}$ and

\item $R^i\phi_*T_{\mc Q}\,=\,0$ for all $i\,>\,0$.
\end{enumerate}
\end{theorem}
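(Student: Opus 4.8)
The plan is to construct a natural morphism from the secant bundle to $\phi_*T_{\mc Q}$ and then verify it is an isomorphism by a fibrewise cohomology computation that simultaneously yields the vanishing of the higher direct images. First I would recall the standard identification of the tangent bundle of a Quot scheme: with $p_1\colon C\times\mc Q\to C$, $p_2\colon C\times\mc Q\to\mc Q$ and the universal sequence \eqref{hc-e1}, one has $T_{\mc Q}\cong p_{2*}\mathcal{H}om(K,\mc B)$, and $R^{>0}p_{2*}\mathcal{H}om(K,\mc B)=0$ since fibrewise $\mathrm{Ext}^1(K|_{C},\mc B|_{C})=H^1(C,K^\vee\otimes\mc B|_{C})=0$ (the sheaf is torsion). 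In particular $T_{\mc Q,q}=\mathrm{Hom}(K|_{C\times q},\mc B|_{C\times q})$.

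Next I would build the comparison map. Given a local section $s$ of $At(E)$, viewed as a first order differential operator $D_s\colon E\to E$ with scalar symbol, and a quotient $q\colon E\to B$ with kernel $K$, the assignment $k\mapsto q(D_s k)$ is $\mathcal{O}_C$-linear (the symbol term is killed because $q(k)=0$) and vanishes when $s\in\mathcal{O}_C$; relatively over $\mc Q$ this defines an $\mathcal{O}_{C\times\mc Q}$-linear map $p_1^*at(E)\to\mathcal{H}om(K,\mc B)$. Writing $\Sigma_{\mc Q}=(\mathrm{id}_C\times\phi)^*\Sigma_d$ for the pulled back universal divisor, the Hilbert--Chow construction in \eqref{defn-hc} shows $I_{\Sigma_{\mc Q}}=\mathrm{Fitt}_0(\mc B)\subseteq\mathrm{Ann}(\mc B)$, so the map factors through $p_1^*at(E)\otimes\mathcal{O}_{\Sigma_{\mc Q}}$. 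Applying $p_{2*}$, and using flat base change for $\phi$ (flat by Proposition \ref{fibres are of same dimension}) together with the definition of the secant bundle, I obtain a natural morphism $\beta\colon\phi^*Sec^d(at(E))\to T_{\mc Q}$ whose adjoint is $\tilde\beta\colon Sec^d(at(E))\to\phi_*T_{\mc Q}$.

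The heart of the argument is the fibrewise cohomology computation. By Corollary \ref{cor-cohomology over Q_D and S_D}, $H^i(\mc Q_D,\,T_{\mc Q}|_{\mc Q_D})\cong H^i(S_d,\,g_d^*T_{\mc Q})$, and by base change (Lemma \ref{pullback-lemma}) one identifies $g_d^*T_{\mc Q}$ with the relative $\mathcal{H}om$ of $A_d$ into $B_d^d$ on $C\times S_d$. I would then compute this cohomology by descending the tower $S_d\to S_{d-1}\to\cdots\to S_0=\mathrm{Spec}\,\C$, using that each $f_{j,j-1}$ is a $\mathbb{P}^{r-1}$-bundle and that both $A_d$ and $B_d^d$ are assembled, through the filtration \eqref{filtration on C times S_d} and the sequences \eqref{def-A_j}, out of $p_1^*E$ and the line bundles $f_{d,j}^*\mathcal{O}_j(1)$. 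The goal is to show $H^{>0}(S_d,g_d^*T_{\mc Q})=0$ and $\dim H^0=d\,r^2$, matching $\mathrm{rk}\,Sec^d(at(E))=d\cdot\mathrm{rk}\,at(E)=d\,r^2$; the base case $d=1$ is transparent, since there $\mc Q=\mathbb{P}(E)$ and the relative Euler and Atiyah sequences give $\phi_*T_{\mathbb{P}(E)}=at(E)=Sec^1(at(E))$. I expect this iterated projective-bundle computation---controlling the mixed twists $\mathcal{H}om(f_{d,i}^*\mathcal{O}_i(1),f_{d,j}^*\mathcal{O}_j(1))$ along the fibres and proving the Bott-type vanishing at each stage of the tower---to be the main obstacle.

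Finally I would assemble the pieces. Fibrewise vanishing $H^{>0}(\mc Q_D,T_{\mc Q}|_{\mc Q_D})=0$ for every $D$ gives, by cohomology and base change (using that $T_{\mc Q}$ is locally free, hence flat over $C^{(d)}$), that $R^{>0}\phi_*T_{\mc Q}=0$, which is part (2), and that $\phi_*T_{\mc Q}$ is locally free of rank $d\,r^2$ with $\phi_*T_{\mc Q}\otimes k(D)\cong H^0(\mc Q_D,T_{\mc Q}|_{\mc Q_D})$. Since $Sec^d(at(E))$ is locally free of the same rank and commutes with base change, it remains to check that $\tilde\beta\otimes k(D)\colon at(E)|_D\to H^0(\mc Q_D,T_{\mc Q}|_{\mc Q_D})$ is injective; this I would verify over the dense open subset $V\subset\mc Q_D$ of quotients $E\to\mathcal{O}_D$, where $g_d$ is an isomorphism (Lemma \ref{the map g_d}) and the operator description of $\beta$ is explicit. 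A fibrewise isomorphism of vector bundles of equal rank is an isomorphism, giving part (1).
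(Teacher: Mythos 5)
Your scaffolding is sound: the identification $T_{\mc Q}\cong p_{2*}\ms Hom(\mc A,\mc B)$ is Proposition \ref{T_Q}; the differential-operator construction of $p_1^*at(E)\longrightarrow \ms Hom(\mc A,\mc B)$, factoring through the restriction to $\mc Z$ because $\mathrm{Fitt}_0(\mc B)$ annihilates $\mc B$, is a legitimate concrete description of the map the paper builds in \eqref{eqn-at(E) to Hom(A,B)} via push-outs of the relative Atiyah sequence; and the flat-base-change, rank-count and fibrewise-injectivity endgame is formally correct. The problem is that the two steps carrying all the mathematical weight are exactly the ones you defer. First, the vanishing $H^{>0}(S_d,g_d^*T_{\mc Q})=0$ together with $h^0=dr^2$ is not proved; you explicitly flag it as the expected ``main obstacle.'' This is not a routine iteration: filtering $B^d_d$ via \eqref{filtration on C times S_d} forces you to control $H^i\bigl(S_d,\,(A_d\big\vert_{c_j\times S_d})^\vee\otimes f_{d,j}^*\mc O_j(1)\bigr)$ for \emph{every} $j$, including points $c_j$ that recur with multiplicity, where $A_d\big\vert_{c_j\times S_d}$ is modified again at later stages of the tower. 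The paper never performs this computation: working over $C\times C^{(d)}$ with $\Phi$ rather than over $C^{(d)}$ with $\phi$, and choosing for each $(c,D)\in\Sigma$ an ordering of $D$ with $c_d=c$ (beginning of Section \ref{section cohomology of some sheaves}), the only computation involving the twist $\mc O_j(1)$ it ever needs is at that last point, namely Lemma \ref{lemma-cohomology computation end(G_d)} for $G_d=A_d\big\vert_{c_d\times S_d}$, besides the untwisted statements of Lemma \ref{cor-cohomology of A_d}.

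Second, the fibrewise injectivity of $\tilde\beta\otimes k(D)$ is also deferred, and for non-reduced $D$ (say $D=dc$) it is a genuine jet computation, not something that falls out of the operator description being ``explicit'' on $V$; note injectivity must be verified at every $D\in C^{(d)}$, since a bundle map of equal ranks that is an isomorphism over a dense open subset of the base need not be an isomorphism everywhere. The paper circumvents both of your hard steps by a structural argument: Theorem \ref{preliminiary ses} exhibits $\Phi_*\ms Hom(\mc A,\mc B)$ as an extension of $R^1\Phi_*ad(\mc A)$ by $ad(q_1^*E\big\vert_\Sigma)$; Theorem \ref{description R1-Phi-End(A)} identifies $R^1\Phi_*ad(\mc A)\cong q_1^*T_C\big\vert_\Sigma$, with the key injectivity supplied by deformation theory (\cite[Proposition 4.4]{NR}) rather than by an explicit computation; and this extension is matched, up to sign, with the Atiyah sequence of $E$ restricted to $\Sigma$ by functoriality of Atiyah classes (Lemma \ref{cor-compatibility of atiyah classes}), yielding $q_1^*at(E)\big\vert_\Sigma\cong\Phi_*\ms Hom(\mc A,\mc B)$ in Theorem \ref{main theorem}, whence both assertions after pushing forward along the finite map $q_2\big\vert_\Sigma$. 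So your plan is plausible, and its intermediate claims are in fact true (they follow from the theorem together with cohomology and base change), but as written it is an outline whose core --- the tower cohomology computation and the injectivity at non-reduced divisors --- is missing; completing it along your lines would amount to redoing harder, multi-point versions of the paper's Lemma \ref{lemma-cohomology computation end(G_d)}.
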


The cohomologies of $Sec^d(at(E))$ can be computed easily. Therefore, using the Leray Spectral Sequence
we can compute the spaces $H^i(\mc Q, \,T_{\mc Q})$ using Theorem \ref{theorem 1}.

\begin{theorem}\label{theorem 2} 
Let ${\rm rank}(E),\, d\,\geqslant  \, 2$. Denote the genus of $C$ by $g_C$.
The following three statements hold:
\begin{enumerate}
\item For all $d-1\, \geqslant  \, i\, \geqslant  \, 0$,
\begin{align*}
H^i(\mc Q,\,T_{\mc Q})\,=\,H^0(C,\,at(E))\otimes & \bigwedge^ i H^1(C,\,\mc O_{C})\\
& \bigoplus H^1(C,\,at(E))\otimes 
\bigwedge^{i-1}H^{1}(C,\,\mc O_{C})\, .
\end{align*}
In particular,
$$\dim H^i(\mc Q,\, T_{\mc Q})\,=\, {g_C \choose i} \cdot \dim H^0(C,\,at(E))+
{g_C \choose {i-1}} \cdot \dim H^1(C,\, at(E))\,.$$

\item When $i\,=\,d$,
$$H^d(\mc Q, \, T_{\mc Q})\, =\, \bigwedge^{d-1}H^1(C,\,\mc O_C) \otimes h^1(C, \, at(E))\,.$$
In particular,
$$h^d(\mc Q, \, T_{\mc Q})\, =\, {g_C \choose {d-1}} \cdot \dim H^1(C, \, at(E))\,.$$

\item
For all $i\,\geqslant \, d+1$, $$H^i(\mc Q,\, T_{\mc Q})\,=\,0.$$
\end{enumerate}
\end{theorem}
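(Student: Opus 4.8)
The plan is to combine Theorem~\ref{theorem 1} with a direct computation of the cohomology of the secant bundle $Sec^d(at(E))$ on $C^{(d)}$, via the Leray spectral sequence for the Hilbert--Chow map $\phi$. Since Theorem~\ref{theorem 1} asserts $R^i\phi_* T_{\mc Q}=0$ for $i>0$ and $\phi_* T_{\mc Q}\cong Sec^d(at(E))$, the Leray spectral sequence degenerates and yields the isomorphism
\begin{equation*}
H^i(\mc Q,\,T_{\mc Q})\,\cong\, H^i\bigl(C^{(d)},\,Sec^d(at(E))\bigr)
\end{equation*}
for all $i$. Thus the entire statement reduces to computing the cohomology groups of the secant bundle $Sec^d(V)$ for $V=at(E)$. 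First I would recall the standard description of the cohomology of secant bundles on symmetric products of curves.

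The key computational input is a formula expressing $H^i(C^{(d)},\,Sec^d(V))$ in terms of the cohomology of $V$ on $C$ and the cohomology of $\mc O_C$. The expected shape, visible already in the statement, is that $H^\bullet(C^{(d)},Sec^d(V))$ is built from $H^0(C,V)$ and $H^1(C,V)$ tensored with exterior powers of $H^1(C,\mc O_C)$, the latter being a $g_C$-dimensional space. Concretely, I would aim to establish a K\"unneth-type decomposition
\begin{equation*}
H^i\bigl(C^{(d)},\,Sec^d(V)\bigr)\,\cong\,\Bigl(H^0(C,V)\otimes\textstyle\bigwedge^i H^1(C,\mc O_C)\Bigr)\,\oplus\,\Bigl(H^1(C,V)\otimes\textstyle\bigwedge^{i-1}H^1(C,\mc O_C)\Bigr),
\end{equation*}
which immediately specializes to all three cases: for $0\leqslant i\leqslant d-1$ both summands are present, at $i=d$ the first exterior power $\bigwedge^d H^1(C,\mc O_C)$ need not vanish but the relevant surviving term is governed by $H^1(C,V)$ and $\bigwedge^{d-1}H^1(C,\mc O_C)$ as recorded in part~(2), and for $i\geqslant d+1$ the symmetric product $C^{(d)}$ has the cohomology concentrated so that everything vanishes. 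The vanishing for $i\geqslant d+1$ I would derive from the fact that $C^{(d)}$ is a projective bundle--like space over the Jacobian whose fiber cohomology (coming from the $\mb P$-direction of the universal divisor) caps the range of nonzero cohomology; more carefully, it follows from the structure of $Sec^d(V)$ as pushed forward along the Abel--Jacobi map $u_d:C^{(d)}\to J(C)$ together with the dimension bound on the cohomology of line bundles twisted along the fibers.

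The cleanest route to the K\"unneth formula is to use the description of $Sec^d(V)$ via the symmetric-product construction: $Sec^d(V)$ is the descent to $C^{(d)}$ of a natural $S_d$-equivariant bundle on $C^d$ built from the exterior tensor powers of $V$ and $\mc O_C$, and its cohomology is computed as the $S_d$-invariants of the cohomology of that bundle on $C^d$. Applying the ordinary K\"unneth formula on $C^d$ and then extracting $S_d$-invariants produces exactly the exterior-power pattern above, because $H^0(C,\mc O_C)$ is one-dimensional and contributes symmetric factors while $H^1(C,\mc O_C)$ contributes antisymmetric (exterior) factors under the permutation action, and the single copy of $V$ (degree-one support of the secant construction) forces at most a single tensor factor of $H^0(C,V)$ or $H^1(C,V)$.

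The main obstacle I expect is the careful bookkeeping of the $S_d$-invariance, specifically disentangling which tensor factors survive as symmetric versus antisymmetric under the permutation action and verifying that exactly the stated exterior powers of $H^1(C,\mc O_C)$ appear with the correct homological degree shift between the $H^0(C,V)$ and $H^1(C,V)$ contributions. A secondary subtlety is the precise behavior at the boundary degree $i=d$: one must check that the first summand $H^0(C,V)\otimes\bigwedge^d H^1(C,\mc O_C)$ either vanishes or is absorbed so that the formula in part~(2) holds with only the $H^1(C,at(E))$ term surviving; this requires a degree count confirming that the top exterior power contribution from the $H^0$ side is killed by the projective-bundle direction. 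Once these invariant-theoretic and degree-counting points are settled, substituting $V=at(E)$ and reading off dimensions via $\dim\bigwedge^j H^1(C,\mc O_C)=\binom{g_C}{j}$ yields all three assertions of the theorem.
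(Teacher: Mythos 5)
Your first step --- Theorem~\ref{theorem 1} plus the degenerate Leray spectral sequence, giving $H^i(\mc Q,\,T_{\mc Q})\,\cong\, H^i(C^{(d)},\,Sec^d(at(E)))$ --- is exactly the paper's reduction, and the K\"unneth-type formula you then state for the cohomology of $Sec^d(V)$ is the correct one. The gap is in your proposed derivation of that formula. You assert that $Sec^d(V)$ is the descent to $C^{(d)}$ of an $S_d$-equivariant bundle on $C^d$ ``built from the exterior tensor powers of $V$ and $\mc O_C$,'' and then apply K\"unneth and take invariants. This sheaf-level identification is false: if $\pi\,:\,C^d\,\longrightarrow\, C^{(d)}$ is the quotient map, then $\pi^*Sec^d(V)$ is the pushforward to $C^d$ of $p_1^*V$ restricted to the divisor $\Gamma_1+\cdots+\Gamma_d\,\subset\, C\times C^d$ (where $\Gamma_i$ is the locus $c=x_i$), and this divisor is \emph{non-reduced} along the loci $x_i=x_j$. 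The natural comparison map $\pi^*Sec^d(V)\,\longrightarrow\, \bigoplus_i\, p_i^*\,V$ is an isomorphism only off the diagonals and degenerates on them, so $\pi^*Sec^d(V)$ is \emph{not} a direct sum of box products $V\boxtimes \mc O_C\boxtimes\cdots\boxtimes\mc O_C$, and the K\"unneth-plus-invariants computation cannot be run as you describe. (The two sheaves happen to have the same cohomology, but that is precisely what needs proof.) You yourself flag the $S_d$-bookkeeping and the boundary degree $i=d$ as unresolved obstacles, and your suggested treatment of part (3) via the Abel--Jacobi map is likewise only a heuristic.

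The missing geometric input that makes everything work is the isomorphism $\Sigma\,\cong\, C\times C^{(d-1)}$, $(c,\,D')\,\longmapsto\,(c,\,c+D')$, of the universal divisor $\Sigma\,\subset\, C\times C^{(d)}$, compatible with the projections to $C$, together with the finiteness of $q_2\big\vert_\Sigma\,:\,\Sigma\,\longrightarrow\, C^{(d)}$. Finiteness gives
\begin{equation*}
H^i\bigl(C^{(d)},\,Sec^d(V)\bigr)\,\cong\, H^i\bigl(\Sigma,\,q_1^*V\big\vert_\Sigma\bigr)\,\cong\, H^i\bigl(C\times C^{(d-1)},\,p_1^*V\bigr)\,,
\end{equation*}
and then the ordinary K\"unneth formula together with Macdonald's computation $H^b(C^{(d-1)},\,\mc O_{C^{(d-1)}})\,\cong\,\bigwedge^b H^1(C,\,\mc O_C)$ for $b\,\leqslant\, d-1$ (and $=0$ for $b\,>\,d-1$) yields all three parts simultaneously. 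In particular the subtleties you postponed disappear: at $i=d$ the first summand is really $H^0(C,V)\otimes H^d(C^{(d-1)},\,\mc O)$, which vanishes because $\dim C^{(d-1)}\,=\,d-1$ (it is \emph{not} $H^0(C,V)\otimes\bigwedge^d H^1(C,\mc O_C)$, which need not vanish when $g_C\,\geqslant\, d$); and part (3) is immediate from Grothendieck vanishing on the $d$-dimensional variety $C^{(d)}$, with no appeal to the Abel--Jacobi fibration. If you want to keep an invariant-theoretic flavor, the correct statement is that $Sec^d(V)$ is the $S_{d-1}$-invariant direct image of $p_1^*V$ along $C^d\,\longrightarrow\, C\times C^{(d-1)}\,\longrightarrow\, C^{(d)}$ (an invariant pushforward, not a descent), and this again is proved via the identification of $\Sigma$ above.
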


Recall that $\mc Q$ is a fine moduli space, that is, 
there exists a certain universal quotient on $C\times \mc Q$. 
The kernel of this universal quotient, which is locally free, is denoted
by $\mc A$. Recall that the space $H^1(C\times \mc Q,\,{\ms End}(\mc A))$
is the space of all infinitesimal
deformations of $\mc A$. We have the following result.

\begin{theorem}\label{theorem 3}
Let ${\rm rank}(E),\,g_C,\,d\,\geqslant  \, 2$. Then we have
$$H^1(C\times \mc Q,\,{\ms End}(\mc A))
\,=\,H^1(C\times \mc Q,\,\mc O_{C\times \mc Q})
\,=\,H^1(C,\,\mc O_C)\oplus H^1(\mc Q,\,\mc O_{\mc Q})\,.$$
\end{theorem}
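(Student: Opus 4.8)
The plan is to prove the two equalities in turn. The endomorphism bundle splits canonically as $\ms{End}(\mc A) \,=\, \mc O_{C\times \mc Q} \oplus \ms{End}^0(\mc A)$, where $\ms{End}^0(\mc A)$ denotes the trace-free part (using that we are in characteristic $0$, so the trace map splits the inclusion $\mc O_{C\times\mc Q}\hookrightarrow \ms{End}(\mc A)$). Under this splitting the first equality $H^1(C\times\mc Q,\,\ms{End}(\mc A)) \,=\, H^1(C\times\mc Q,\,\mc O_{C\times\mc Q})$ is equivalent to the vanishing $H^1(C\times\mc Q,\,\ms{End}^0(\mc A)) \,=\, 0$, so the essential content is this vanishing statement. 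The second equality is then the K\"unneth formula applied to $H^1(C\times\mc Q,\,\mc O_{C\times\mc Q})$, using $H^0(C,\,\mc O_C)\,=\,H^0(\mc Q,\,\mc O_{\mc Q})\,=\,\C$ (both $C$ and $\mc Q$ being connected projective varieties), which gives $H^1(C\times\mc Q,\,\mc O_{C\times\mc Q}) \,\cong\, H^1(C,\,\mc O_C)\otimes H^0(\mc Q,\,\mc O_{\mc Q}) \,\oplus\, H^0(C,\,\mc O_C)\otimes H^1(\mc Q,\,\mc O_{\mc Q}) \,=\, H^1(C,\,\mc O_C)\oplus H^1(\mc Q,\,\mc O_{\mc Q})$; this part is routine.

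To attack the vanishing of $H^1(C\times\mc Q,\,\ms{End}^0(\mc A))$, I would push forward along the projection $p\,:\,C\times\mc Q\,\longrightarrow\,\mc Q$ and use the Leray spectral sequence $H^i(\mc Q,\,R^j p_*\ms{End}^0(\mc A)) \Rightarrow H^{i+j}(C\times\mc Q,\,\ms{End}^0(\mc A))$. The relevant contributions to $H^1$ come from $H^1(\mc Q,\,p_*\ms{End}^0(\mc A))$ and $H^0(\mc Q,\,R^1 p_*\ms{End}^0(\mc A))$. The strategy is to identify these higher direct image sheaves fiberwise: over a point $q\,\in\,\mc Q$ corresponding to a quotient $E\to B$ with kernel $A_q\,=\,\mc A\vert_{C\times q}$, base change gives the fibers $H^0(C,\,\ms{End}^0(A_q))$ and $H^1(C,\,\ms{End}^0(A_q))$. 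Since $A_q$ is a locally free sheaf on $C$ sitting in $0\to A_q\to E\to B\to 0$ with $B$ torsion, one expects strong constraints on $H^0(C,\,\ms{End}^0(A_q))$, ideally forcing the relevant spectral-sequence terms to vanish or to be controlled by the structure already appearing in Theorem \ref{theorem 1}.

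The hard part will be controlling $H^0(\mc Q,\,R^1p_*\ms{End}^0(\mc A))$ and $H^1(\mc Q,\,p_*\ms{End}^0(\mc A))$, since these require genuine global input rather than a pointwise computation; in particular, the infinitesimal deformations of $\mc A$ as a bundle on $C\times\mc Q$ can a priori receive contributions from deforming the quotient $B$, from deforming $E$, and from automorphisms, and the content of the theorem is that after removing the trace part none of these survive. I expect the cleanest route is to relate deformations of $\mc A$ to deformations of the quotient, i.e.\ to the tangent bundle $T_{\mc Q}$ itself (the universal property of $\mc Q$ links $\ms{Hom}(A_q,\,B_q)$ to $T_q\mc Q$), and thereby to leverage Theorem \ref{theorem 1} and Theorem \ref{theorem 2} rather than computing the cohomology of $\ms{End}^0(\mc A)$ from scratch. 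Reducing the deformation-theoretic statement to the already-computed $\phi_* T_{\mc Q}\,\cong\, Sec^d(at(E))$ and its vanishing higher direct images is the step I anticipate to be the crux, and the one most in need of care in tracking the trace-free condition through the identifications.
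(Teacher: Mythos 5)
Your preliminary reductions are correct: in characteristic zero $\ms End(\mc A)\,=\,\mc O_{C\times \mc Q}\oplus \ms End^0(\mc A)$ with $\ms End^0(\mc A)\,\cong\, ad(\mc A)$, so the first equality amounts to $H^1(C\times \mc Q,\,\ms End^0(\mc A))\,=\,0$, and the second equality is K\"unneth. The gap is in the core of the plan: you propose the Leray spectral sequence of the projection $p_2\,:\,C\times \mc Q\,\longrightarrow\,\mc Q$ (your $p$), hoping the fiberwise groups $H^i(C,\,\ms End^0(\mc A_q))$ force the $E_2$-terms to vanish. They do not, and cannot. For every $q\,\in\,\mc Q$ the bundle $\ms End^0(\mc A_q)$ on $C$ has rank $r^2-1$ and degree $0$, so Riemann--Roch gives $h^1(C,\,\ms End^0(\mc A_q))\,\geqslant\,(r^2-1)(g_C-1)\,\geqslant\,3$; hence $R^1p_{2*}\ms End^0(\mc A)$ has large rank everywhere on $\mc Q$. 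Moreover $\mc A_q$ is in general not simple --- e.g.\ for $E\,=\,\mc O_C^{\oplus n}$ and a quotient factoring through a coordinate projection, $\mc A_q\,\cong\,\mc O_C(-D)\oplus \mc O_C^{\oplus (n-1)}$ --- so $h^0(C,\,\ms End^0(\mc A_q))$ is nonzero and jumps with $q$, and $p_{2*}\ms End^0(\mc A)$ is a nonzero, non-locally-free sheaf. Your vanishing would then require $H^1(\mc Q,\,p_{2*}\ms End^0(\mc A))\,=\,0$ together with injectivity of $H^0(\mc Q,\,R^1p_{2*}\ms End^0(\mc A))\,\longrightarrow\, H^2(\mc Q,\,p_{2*}\ms End^0(\mc A))$: global statements about two sheaves on $\mc Q$ for which the proposal provides no description whatsoever. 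This fibration is exactly the direction in which the problem is intractable. A telling symptom is that your outline never invokes the hypothesis $g_C\,\geqslant\,2$, which is essential to the argument.

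The machinery the paper develops (the survey states Theorem \ref{theorem 3} without written proof, but the ingredients it assembles for Theorem \ref{theorem 1} are exactly those used in \cite{BGS}) runs along the \emph{other} fibration, $\Phi\,=\,{\rm Id}_C\times\phi\,:\,C\times\mc Q\,\longrightarrow\, C\times C^{(d)}$ of \eqref{t7}, whose fibers are the Hilbert--Chow fibers $\mc Q_D$, with cohomology computable through the rational resolution $S_d\,\longrightarrow\,\mc Q_D$ by towers of projective bundles (Corollary \ref{cor-cohomology over Q_D and S_D}). Pushing $0\to\ms End(\mc A)\to \ms Hom(\mc A,\,p_1^*E)\to\ms Hom(\mc A,\,\mc B)\to 0$ down by $\Phi$ gives $\Phi_*\ms Hom(\mc A,\,p_1^*E)\,\cong\, q_1^*\ms End(E)$ (Proposition \ref{propn-direct image of A}), hence $\Phi_*\ms End(\mc A)\,=\,\ker\bigl(q_1^*\ms End(E)\to ad(q_1^*E\vert_\Sigma)\bigr)$ via Theorem \ref{preliminiary ses}, and --- the key point --- $R^1\Phi_*\ms End(\mc A)\,\cong\, R^1\Phi_* ad(\mc A)\,\cong\, q_1^*T_C\big\vert_{\Sigma}$ with $R^i\Phi_*\ms End(\mc A)\,=\,0$ for $i\,\geqslant\,2$ (Theorem \ref{description R1-Phi-End(A)} together with Corollary \ref{cor-direct image of structure sheaf}). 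In the Leray sequence for $\Phi$ the potentially obstructing term is $H^0(\Sigma,\,q_1^*T_C\vert_\Sigma)$; since $\Sigma\,\cong\, C\times C^{(d-1)}$ this equals $H^0(C,\,T_C)\otimes H^0(C^{(d-1)},\,\mc O)$, which vanishes precisely because $g_C\,\geqslant\,2$ --- this is where the genus hypothesis enters. What remains, $H^1(C\times C^{(d)},\,\Phi_*\ms End(\mc A))$, is then computed by K\"unneth from the kernel description above and comes out to $H^1(C\times C^{(d)},\,\mc O)\,=\,H^1(C\times\mc Q,\,\mc O_{C\times\mc Q})$. Your instinct to route the computation through $\ms Hom(\mc A,\,\mc B)$ and Theorem \ref{theorem 1} points in the right direction, but without replacing $p_2$ by $\Phi$ and using these direct-image identifications, the proposal cannot be completed.
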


It can be seen using Corollary \ref{cor-direct image of structure sheaf} 
that $H^1(\mc Q,\,\mc O_{\mc Q})\,=\,H^1(C,\,\mc O_{C})$.

In \cite{G18} it was shown that when $E\,\cong \,\mc O^n_C$ 
for some $n\,\geqslant   \,1$, then $\mc A$ is slope stable with 
respect to some natural polarizations of $C\times \mc Q$. 
By \cite[Corollary 4.5.2]{HL}, the space $H^1(C\times \mc Q,\,{\ms End}(\mc A))$ is the tangent 
space at $[\mc A]$ of the Moduli space $\mc M$ of sheaves on 
$C\times \mc Q$ with the same Hilbert polynomial 
(with respect to some fixed polarization on $C\times \mc Q$) as $\mc A$. 
Moreover, the differential of the determinant map $\mc M\,\longrightarrow\, {\rm Pic}(C\times \mc Q)$ at the point $[\mc A]$ is given by 
the trace map 
$$H^1(C\times \mc Q,\,{\ms End}(\mc A))\,\,\xrightarrow{\,\,\,H^1(tr)\,\,\,}\,\, H^1(C\times \mc Q,\, \mc O_{C\times \mc Q})$$
(see \cite[Theorem 4.5.3]{HL}).
Note that this map is onto, since the composition of the maps $$\mc O_{C\times \mc Q}
\, \longrightarrow\,{\ms End}(\mc A) \,\,\xrightarrow{\,\,\,tr\,\,\,}\,\, \mc O_{C\times \mc Q}$$ is an isomorphism. 
Therefore, Theorem \ref{theorem 3} implies 
that the determinant map $\mc M\,\longrightarrow\, {\rm Pic}(C\times \mc Q)$
induces an isomorphism at the level of tangent spaces 
at the point $[\mc A]$.

In the following subsections, we discuss the proof of Theorem \ref{theorem 1}. 
The strategy is the following: We use Cohomology and Base change theorems to compute the direct images of Tangent bundle under the Hilbert-Chow map 
(Recall that the Hilbert-Chow map is flat by Lemma \ref{the map g_d}) This reduces the problem to computing cohomology of various vector bundles on the fibers of the Hilbert-Chow map.
Now by Lemma \ref{the map g_d} , there exists a birational morphism from a tower of projective bundles to these fibers. We show that this resolution is in fact a rational resolution
of the fibers (Lemma \ref{lemma-canonical bundle of S_D}), so in particular computing cohomology of a vector bundle on the fiber is same as computing the cohomology of its pullback to this resolution (Corollary \ref{cor-cohomology over Q_D and S_D}). Hence the problem ultimately reduces to computing cohomology of vector bundles on tower of projective bundles, which can be done using various known cohomology computations on projective bundles.

\subsection{Atiyah sequence}

Let $V$ be a locally free sheaf of rank $r$ over a smooth variety $X$.
Its Atiyah bundle $At(V)\, \longrightarrow\, X$ fits
in the following \textit{Atiyah exact sequence}
\begin{equation}\label{Atiyah-seq}
0\,\longrightarrow\, \ms End(V)\,\longrightarrow\, At(V)
\,\longrightarrow\, T_X\,\longrightarrow\, 0
\end{equation}
(see \cite{Atiyah}). Here $\mathscr{E}nd(V)$ is the sheaf of local endomorphisms of the bundle $V$.
We recall a construction of \eqref{Atiyah-seq} which will be
used. Let $P_V\,\stackrel{q}{\longrightarrow}\,X$
denote the principal ${\rm GL}_r(\C)$-bundle associated to $V$.
The differential of $q$ produces an exact sequence on $P_V$
\begin{equation}\label{f1}
0\,\longrightarrow\, K\,:=\, T_{P_V/X}\,\longrightarrow\, T_{P_V}
\,\stackrel{dq}{\longrightarrow}\, q^*T_X\,\longrightarrow\, 0
\end{equation}
Applying $q_*$ to it and then taking ${\rm GL}_r(\C)$-invariants we get
\eqref{Atiyah-seq}.

We have ${\mathcal O}_X\, \subset\, \ms End(V)$; the quotient
$ad(V)\,:=\, \ms End(V)/{\mathcal O}_X$ is identified with the sheaf of
endomorphisms of $V$ of trace zero. Define $at(V)\,=\, At(V)/{\mathcal O}_X$.
Taking the push-out of \eqref{atiyah-seq}
along the quotient map $\ms End(V)\,\longrightarrow\, ad(V)$ we get an exact sequence
\begin{equation}\label{atiyah-seq}
0\,\longrightarrow\, ad(V)\,\longrightarrow\, at(V)\,\longrightarrow\, T_X
\,\longrightarrow\, 0\,.
\end{equation}

Next we define the relative Atiyah sequence. 
Let $X$ be a smooth projective variety, $C$ a smooth projective curve
and $V$ a vector bundle on $C\times X$. Let 
\begin{equation}\label{p2}
p_C\,:\,C\times X\,\longrightarrow\, C\ \ \text{ and }\ \ p_X\,:\,C\times X
\,\longrightarrow\, X
\end{equation} 
be the natural projections. Pulling back, along the inclusion map
$p_C^*T_C\,\hookrightarrow\, p_C^*T_C \oplus p_X^*T_X$, 
of the Atiyah exact sequence for $V$
\begin{equation}\label{f4}
0\,\longrightarrow\, {\ms End}(V)\,\longrightarrow\, At(V)
\,\longrightarrow\, p_C^*T_C \oplus p_X^*T_X\,\longrightarrow\, 0
\end{equation}
we get the relative Atiyah sequence
\begin{equation}\label{eqn-relative atiyah exact seq}
0 \,\longrightarrow\, {\ms End}(V)\,\longrightarrow\, At_C(V)
\,\longrightarrow\, p_C^*T_C \,\longrightarrow\, 0 \,.
\end{equation}
The push-out of \eqref{eqn-relative atiyah exact seq} along the projection
$\ms End(V)\,\longrightarrow\, ad(V)$ produces an exact sequence
\begin{equation}\label{eqn rel ad-Atiyah}
0\,\longrightarrow\, ad(V)\,\longrightarrow\, at_C(V)
\,\longrightarrow\, p_C^*T_C\,\longrightarrow\, 0
\end{equation}
on $C\times X$.
Henceforth, \eqref{eqn rel ad-Atiyah} will be referred to as the {\it relative
adjoint Atiyah} sequence. The following lemma shows that the 
relative adjoint Atiyah sequence is stable under base change.

\begin{lemma}\label{base change relative ad-atiyah} Let $f\,:\,Y\,\longrightarrow\, X$ be a morphism of smooth 
projective varieties. Define
$$F\,:=\,{\rm Id}_C\times f\,:\,C\times Y\,\longrightarrow\, C\times X.$$ 
The relative adjoint Atiyah sequence for $F^*V$ coincides with the
one obtained by applying $F^*$ to \eqref{eqn rel ad-Atiyah}.
\end{lemma}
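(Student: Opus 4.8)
The plan is to prove the statement first for the relative (non-adjoint) Atiyah sequence \eqref{eqn-relative atiyah exact seq} and then deduce the adjoint version \eqref{eqn rel ad-Atiyah} by naturality of the push-out. The starting point is the general functoriality of the Atiyah sequence under pullback: for a morphism $F\,:\,W\,\longrightarrow\, Z$ of smooth varieties and a vector bundle $V$ on $Z$, there is a canonical morphism of short exact sequences
$$
\begin{array}{ccccccccc}
0 &\longrightarrow& \ms End(F^*V) &\longrightarrow& At(F^*V) &\longrightarrow& T_W &\longrightarrow& 0\\
&& \big\downarrow && \big\downarrow && \big\downarrow &&\\
0 &\longrightarrow& F^*\ms End(V) &\longrightarrow& F^*At(V) &\longrightarrow& F^*T_Z &\longrightarrow& 0,
\end{array}
$$
where the bottom row is $F^*$ applied to \eqref{Atiyah-seq}, the left vertical arrow is the identification $\ms End(F^*V)\,=\,F^*\ms End(V)$, the right vertical arrow is the differential $dF\,:\,T_W\,\longrightarrow\, F^*T_Z$, and the middle arrow is the canonical comparison map covering $dF$. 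In a local trivialization of $V$ this comparison map is $(\eta,\,\psi)\,\longmapsto\,(dF(\eta),\,\psi)$, and it is globally well defined by the chain rule $d(g\circ F)\,=\,(dg\circ F)\cdot dF$ applied to the transition cocycle $g$ of $V$.

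Next I would specialize to $W\,=\,C\times Y$, $Z\,=\,C\times X$ and $F\,=\,{\rm Id}_C\times f$. Since $p_C\circ F\,=\,p_C$ and $p_X\circ F\,=\,f\circ p_Y$, there are canonical splittings $T_{C\times Y}\,=\,p_C^*T_C\oplus p_Y^*T_Y$ and $F^*T_{C\times X}\,=\,p_C^*T_C\oplus p_Y^*f^*T_X$, under which $dF\,=\,{\rm Id}_{p_C^*T_C}\oplus p_Y^*(df)$. The crucial geometric input is that $F$ acts as the identity along the $C$-direction, so $dF$ restricts to the identity on the summand $p_C^*T_C$.

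I would then pass to the relative sequences. By \eqref{eqn-relative atiyah exact seq}, $At_C(F^*V)$ is the preimage of $p_C^*T_C\,\subset\, T_{C\times Y}$ under $At(F^*V)\,\longrightarrow\, T_{C\times Y}$; likewise $F^*At_C(V)$ is the preimage of $p_C^*T_C\,\subset\, F^*T_{C\times X}$ under $F^*At(V)\,\longrightarrow\, F^*T_{C\times X}$, because $F^*$ is exact on the locally split sequence \eqref{eqn-relative atiyah exact seq} and carries the inclusion $p_C^*T_C\,\hookrightarrow\, T_{C\times X}$ to $p_C^*T_C\,\hookrightarrow\, F^*T_{C\times X}$. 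As the comparison map covers $dF$ and $dF$ is the identity on $p_C^*T_C$, it sends $At_C(F^*V)$ into $F^*At_C(V)$, yielding a morphism from
$$
0\,\longrightarrow\, \ms End(F^*V)\,\longrightarrow\, At_C(F^*V)\,\longrightarrow\, p_C^*T_C\,\longrightarrow\, 0
$$
to
$$
0\,\longrightarrow\, \ms End(F^*V)\,\longrightarrow\, F^*At_C(V)\,\longrightarrow\, p_C^*T_C\,\longrightarrow\, 0
$$
that is the identity on both the sub- and the quotient-bundle. By the five lemma the middle map $At_C(F^*V)\,\longrightarrow\, F^*At_C(V)$ is an isomorphism compatible with the structure maps, so the two relative Atiyah sequences coincide. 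Finally I would push out both along $\ms End(F^*V)\,\longrightarrow\, ad(F^*V)$: since $ad(F^*V)\,=\,F^*ad(V)$ and push-outs commute with the exact functor $F^*$ and with the isomorphism just constructed, the resulting extensions of $p_C^*T_C$ by $ad(F^*V)$ are canonically isomorphic, identifying \eqref{eqn rel ad-Atiyah} for $F^*V$ with $F^*$ of \eqref{eqn rel ad-Atiyah} for $V$.

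The main obstacle is the first step, namely constructing the canonical comparison map of Atiyah sequences and checking that it lifts $dF$ while restricting to the identity on $\ms End$. This can be done either through the differential-operator description of $At(V)$, or via the frame bundle of \eqref{f1}: the ${\rm GL}_r(\C)$-equivariant map $\widetilde F\,:\,P_{F^*V}\,=\,F^*P_V\,\longrightarrow\, P_V$ induces $d\widetilde F$, and applying $q_*$ followed by taking ${\rm GL}_r(\C)$-invariants produces the required map covering $dF$. Once this naturality is established, the remainder is the purely formal restriction-to-the-$C$-direction and five-lemma argument above, whose only substantive ingredient is that $F\,=\,{\rm Id}_C\times f$ is the identity along $C$.
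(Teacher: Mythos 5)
Your proof is correct: the frame-bundle construction of the comparison map $At(F^*V)\,\longrightarrow\, F^*At(V)$ covering $dF$, the observation that $dF\,=\,{\rm Id}\oplus p_Y^*(df)$ restricts to the identity on the summand $p_C^*T_C$, and the five-lemma plus push-out formalities together give a complete argument. Note that the survey itself states this lemma without proof (deferring to the reference [BGS]), so there is no in-paper proof to compare against; your argument is the natural one, and it is built on exactly the principal-bundle description of the Atiyah sequence (pushforward of \eqref{f1} followed by taking ${\rm GL}_r(\C)$-invariants) that the paper sets up, so it is fully consistent with the paper's framework.
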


Let
$$
[At_C(V)]\, \in\, {\rm Ext}^1(p_C^*T_C,\,{\ms End}(V))
$$
be the class of \eqref{eqn-relative atiyah exact seq}.
\begin{lemma}\label{cor-compatibility of atiyah classes}
Let $f\,:\,V\,\longrightarrow\, V'$ be a morphism between vector bundles
$V,\, V'$ on $X$. Then the image of $[At_C(V)]$ under the natural map
$${\rm Ext}^1(p_C^*T_C,\,{\ms End}(V))\,\,
\xrightarrow{\,\,\,f\circ \_\,\,}\,\, {\rm Ext}^1(p_C^*T_C,\,{\ms Hom}(V,\,V'))$$
coincides with the image of $A(V')$ under the natural map
$${\rm Ext}^1(p_C^*T_C,\,{\ms End}(V'))\,\xrightarrow{\,\,\,\_ \circ f\,\,\,} 
\,{\rm Ext}^1(p_C^*T_C,\,{\ms Hom}(V,\,V'))\,.$$
\end{lemma}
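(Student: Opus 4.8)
The plan is to realize both classes as pushouts of a single extension of $p_C^*T_C$, after which the equality is immediate. The starting point is the familiar description of the relative Atiyah sequence \eqref{eqn-relative atiyah exact seq} in terms of differential operators: over $C\times X$ the bundle $At_C(V)$ is the sheaf of first-order differential operators $V\to V$ that differentiate only in the $C$-directions and have scalar symbol, i.e.\ symbol of the form $\mathrm{id}_V\otimes\xi$ with $\xi$ a local section of $p_C^*T_C$; the surjection to $p_C^*T_C$ records $\xi$, its kernel is the order-zero operators $\mathscr{E}nd(V)$, and $[At_C(V)]$ is the class of this extension. The same holds for $V'$.

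First I would introduce the sheaf $\mathcal{D}$ of relative first-order operators $D\colon V\to V'$ whose symbol lies in $f\otimes p_C^*T_C$, that is, equals $f\otimes\xi$ for some local $\xi$. Sending $D$ to $\xi$, and noting that its order-zero part is $\mathscr{H}om(V,V')$, gives a short exact sequence
\[
0\,\longrightarrow\,\mathscr{H}om(V,V')\,\longrightarrow\,\mathcal{D}\,\longrightarrow\,p_C^*T_C\,\longrightarrow\,0 .
\]
There are two tautological maps into the middle term: post-composition $D\mapsto f\circ D$ defines a homomorphism $At_C(V)\to\mathcal{D}$, and pre-composition $D'\mapsto D'\circ f$ defines a homomorphism $At_C(V')\to\mathcal{D}$. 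Both are well defined because the symbol of $f\circ D$ and of $D'\circ f$ is $f\otimes\xi$ whenever $D$ (resp.\ $D'$) has symbol $\mathrm{id}_V\otimes\xi$ (resp.\ $\mathrm{id}_{V'}\otimes\xi$). By construction each of these maps is the identity on the quotient $p_C^*T_C$, and on kernels it is exactly $f\circ\_\colon\mathscr{E}nd(V)\to\mathscr{H}om(V,V')$ in the first case and $\_\circ f\colon\mathscr{E}nd(V')\to\mathscr{H}om(V,V')$ in the second.

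The conclusion is then the standard characterization of the pushforward of an extension class: a morphism of short exact sequences that is the identity on the common quotient exhibits the target sequence as the pushout of the source along the induced map of kernels. Applying this to $At_C(V)\to\mathcal{D}$ shows that the image of $[At_C(V)]$ under $f\circ\_$ equals $[\mathcal{D}]$, and applying it to $At_C(V')\to\mathcal{D}$ shows that the image of $[At_C(V')]$ under $\_\circ f$ also equals $[\mathcal{D}]$. Hence the two images agree in ${\rm Ext}^1(p_C^*T_C,\mathscr{H}om(V,V'))$, which is the assertion.

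The only real work is bookkeeping, and I expect this to be the main (though routine) obstacle: one must check that $\mathcal{D}$ is genuinely an extension of $p_C^*T_C$ (surjectivity of the symbol map onto $f\otimes p_C^*T_C$ and identification of its kernel with $\mathscr{H}om(V,V')$), and that symbols compose as claimed so that $f\circ D$ and $D'\circ f$ indeed land in $\mathcal{D}$. If one prefers an argument closer to the principal-bundle construction \eqref{f1} used in the paper, the identity can instead be verified by a \v Cech computation: trivialize $V$ and $V'$ over a common cover with transition matrices $g_{\alpha\beta}$ and $h_{\alpha\beta}$, write $f$ as local matrices $f_\alpha$ satisfying $f_\alpha g_{\alpha\beta}=h_{\alpha\beta}f_\beta$, and represent the two Atiyah classes by the cocycles $f_\alpha\,(d_Cg_{\alpha\beta})\,g_{\alpha\beta}^{-1}$ and $(d_Ch_{\alpha\beta})\,h_{\alpha\beta}^{-1}\,f_\alpha$, where $d_C$ denotes the $C$-direction part of the differential; differentiating the relation for $f$ then shows that their difference is the \v Cech coboundary of the $0$-cochain $\{d_Cf_\alpha\}$, and hence vanishes in cohomology.
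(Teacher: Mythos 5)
Your argument is correct, but note first that the survey itself states this lemma \emph{without} proof (it is imported from \cite{BGS}); the only related material in the paper is the construction of $At_C(V)$ via the frame bundle $P_V$ and invariant direct images, so there is no in-paper proof to compare against. Your route is the standard differential-operator model of the Atiyah sequence: $At_C(V)$ is the sheaf of first-order operators with scalar symbol along $C$, and both pushforward classes are exhibited as the class of one and the same extension $\mc D$ of $p_C^*T_C$ by ${\ms Hom}(V,\,V')$, using the standard fact that a map of short exact sequences which is the identity on the common quotient realizes the target as the pushout of the source along the induced map of kernels. This is a clean, self-contained argument, and your closing \v{C}ech computation is an equally valid second proof: differentiating $f_\alpha g_{\alpha\beta}=h_{\alpha\beta}f_\beta$ gives exactly
$$f_\alpha\,(d_C g_{\alpha\beta})\,g_{\alpha\beta}^{-1}-(d_C h_{\alpha\beta})\,h_{\alpha\beta}^{-1}\,f_\alpha
\,=\,h_{\alpha\beta}\,(d_C f_\beta)\,g_{\alpha\beta}^{-1}-d_C f_\alpha\,,$$
and the right-hand side is precisely the coboundary of the $0$-cochain $\{d_C f_\alpha\}$ with values in ${\ms Hom}(V,\,V')\otimes p_C^*\Omega_C$.

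Two points should be tightened. First, as literally defined, your sheaf $\mc D$ has a well-definedness problem when $f$ vanishes on an open set (e.g.\ $f\,=\,0$): the assignment $D\,\longmapsto\, \xi$ is then not a function, since $\xi$ cannot be recovered from the symbol $f\otimes \xi$. The fix is to define $\mc D$ as the sheaf of \emph{pairs} $(D,\,\xi)$ with $\sigma(D)\,=\,f\otimes \xi$, i.e.\ the fibre product of the symbol map $\mc D^1_C(V,\,V')\,\longrightarrow\, {\ms Hom}(V,\,V')\otimes p_C^*T_C$ with $f\otimes {\rm id}$; then the kernel is ${\ms Hom}(V,\,V')$, local surjectivity onto $p_C^*T_C$ holds as you indicate, and the two tautological maps $D\,\longmapsto\,(f\circ D,\,\xi)$ and $D'\,\longmapsto\,(D'\circ f,\,\xi)$ are well defined because ${\rm id}_V$ and ${\rm id}_{V'}$ are nowhere zero. (Your \v{C}ech argument needs no such repair and covers arbitrary $f$.) Second, since the paper defines $At_C(V)$ through the principal bundle construction \eqref{f1} and the pullback \eqref{eqn-relative atiyah exact seq}, you should state (or cite from \cite{Atiyah}) the standard identification of that construction with the scalar-symbol operator model you use, so that the class you compute is the class $[At_C(V)]$ appearing in the lemma; also, the bundles $V,\,V'$ here live on $C\times X$, not on $X$ as the lemma's wording suggests.
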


\subsection{Cohomology of some sheaves on $S_d$}\label{section cohomology of some sheaves}

Recall from Section \ref{section canonical bundle S_D} that associated to a divisor $D\,\in\, 
C^{(d)}$ and an ordering $(c_1,\,c_2,\,\cdots,\,c_d)$ of points of $D$ we have the schemes $S_j$ 
for $1\,\leqslant\, j\,\leqslant\, d$. In this section we again fix this divisor $D$. We now choose the 
ordering $(c_1,\,c_2,\,\cdots,\,c_d)$ of $D$ in the following manner: Define $c_1$ to be any 
point in $D$. Now suppose we have defined $c_j$ for $1 \,\leqslant\, j\,\leqslant\, d-1$. Then we 
define $c_{j+1}$ to be $c_j$ if $c_j\,\in \,D\setminus \{c_1,\,c_2,\,\cdots,\,c_j\}$. Otherwise 
define $c_j$ to be any point in $D\setminus \{c_1,\,c_2,\,\cdots,\,c_j\}$. Throughout this 
section, we fix this ordering of $D$. To clarify, the above conditions on the ordering do 
not determine the ordering uniquely.

Recall from \eqref{univ-quotient-S_D} that we have an exact sequence 
$$0\,\longrightarrow\, A_d \,\longrightarrow\, F^*_{d,0}A_0\,\longrightarrow\,
B^d_d\,\longrightarrow\, 0$$
on $C\times S_d$, and a filtration
$$A_{d}\,\subset\, F_{d,d-1}^{*}A_{d-1}\,\subset\, F_{d,d-2}^{*}A_{d-2}\,\subset\,
\cdots\,\subset\, F_{d,1}^{*}A_{1}\,\subset\, F_{d,0}^*A_0\,=\,p^{*}_{1}E$$
(see \eqref{filtration on C times S_d}).
Since $B^d_d$ is supported on $D\times S_d$, there is an inclusion map
$$F^*_{d,0}A_0(-D)\,\subset\, A_d\,.$$
We list some cohomology computations of various vector bundles associated
to these bundles which we will need later. All of these follow from applying repeatedly the computations of cohomology of the relative cotangent bundle and the universal line bundle of a projective bundle.

\begin{lemma}\label{cor-cohomology of A_d}
For any $t\,\in \, C$, the following statements hold:
\begin{enumerate}
\item the natural map $H^i\left(S_d,\,F^*_{d,0}A_0(-D\times S_d)\big\vert_{t\times S_d}\right)
\,\longrightarrow\, H^i\left(S_d,\,A_d\big\vert_{t\times S_d}\right)$ is an isomorphism,

\item the natural map $H^i\left(S_d,\,F^*_{d,0}A_0^{\vee}\big\vert_{t\times S_d}\right)
\,\longrightarrow\, H^i\left(S_d,\,A_d^{\vee}\big\vert_{t\times S_d}\right)$ is an isomorphism,

\item $H^i\left(S_d,\,A_d\big\vert_{t\times S_d}\right)\,=\,0\,\ \ \forall\,\ i\,>\,0$, and

\item $H^i\left(S_d,\,A_d^{\vee}\big\vert_{t\times S_d}\right)\,=\,0\,\ \ \forall\,\ i\,>\,0$.
	\end{enumerate} 
\end{lemma}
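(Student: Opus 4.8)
The plan is to establish all four assertions simultaneously by induction on $d$, using the tower structure. Write $g\,:=\,f_{d,d-1}\,:\,S_d\,=\,\mb P(\alpha_{d-1}^*A_{d-1})\,\longrightarrow\, S_{d-1}$ for the $\mb P^{r-1}$-bundle of \eqref{def f_j,j-1}; then $g_*\mc O_{S_d}\,=\,\mc O_{S_{d-1}}$ and $R^ig_*\mc O_{S_d}\,=\,0$ for $i>0$, and iterating down the tower gives $H^i(S_d,\,\mc O_{S_d})\,=\,0$ for $i>0$. Since $F^*_{d,0}A_0\,=\,p_1^*E$ by \eqref{filtration on C times S_d}, each of the bundles $F^*_{d,0}A_0(-D\times S_d)|_{t\times S_d}$ and $F^*_{d,0}A_0^\vee|_{t\times S_d}$ is trivial on $S_d$ of rank $r$, so its higher cohomology vanishes while its space of sections is $E(-D)_t$ respectively $E_t^\vee$. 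Consequently, for $i>0$ the isomorphisms (1) and (2) are subsumed by the vanishing statements (3) and (4), and the entire remaining content of (1) and (2) is the claim that a natural map between the $r$-dimensional spaces of global sections is an isomorphism.

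For (3) I would restrict the defining sequence \eqref{def-A_j}, that is $0\to A_d\to F^*_{d,d-1}A_{d-1}\to i_{d*}\mc O_d(1)\to 0$, to the slice $t\times S_d$, and separate two cases. If $t\neq c_d$, then $i_{d*}\mc O_d(1)$ is supported off the slice, so $A_d|_{t\times S_d}\cong g^*\bigl(A_{d-1}|_{t\times S_{d-1}}\bigr)$, and the projection formula reduces $H^i(S_d,\,A_d|_{t\times S_d})$ to $H^i(S_{d-1},\,A_{d-1}|_{t\times S_{d-1}})$, which vanishes for $i>0$ by induction. If $t=c_d$, the slice is exactly the Cartier support of $i_{d*}\mc O_d(1)$, and the $\mathrm{Tor}$ long exact sequence for restriction to $\{c_d\}\times S_d$ gives
\[
0\,\longrightarrow\, \mc O_d(1)\otimes (T_{c_d}C)^\vee\,\longrightarrow\, A_d|_{\{c_d\}\times S_d}\,\longrightarrow\, \mc S\,\longrightarrow\, 0,
\]
where $\mc S\,=\,\ker\bigl(g^*(A_{d-1}|_{\{c_d\}\times S_{d-1}})\to \mc O_d(1)\bigr)$ is the tautological subbundle, which restricts to $\Omega^1_{\mb P^{r-1}}(1)$ on every fibre of $g$. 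The essential input is the acyclicity $H^*(\mb P^{r-1},\,\Omega^1_{\mb P^{r-1}}(1))\,=\,0$, which yields $R^ig_*\mc S\,=\,0$ for all $i$, hence $H^*(S_d,\,\mc S)\,=\,0$; together with $g_*\mc O_d(1)\,=\,A_{d-1}|_{\{c_d\}\times S_{d-1}}$ this identifies $H^i(S_d,\,A_d|_{\{c_d\}\times S_d})$ with $H^i(S_{d-1},\,A_{d-1}|_{\{c_d\}\times S_{d-1}})\otimes (T_{c_d}C)^\vee$, again vanishing for $i>0$. The base case $d=1$ is the same computation on $S_1=\mb P(E_{c_1})$.

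Statement (4) I would prove by the dual argument. Dualizing \eqref{def-A_j} produces
\[
0\,\longrightarrow\, (F^*_{d,d-1}A_{d-1})^\vee\,\longrightarrow\, A_d^\vee\,\longrightarrow\, \ms{E}xt^1\bigl(i_{d*}\mc O_d(1),\,\mc O_{C\times S_d}\bigr)\,\longrightarrow\, 0,
\]
with $\ms{E}xt^1\bigl(i_{d*}\mc O_d(1),\,\mc O_{C\times S_d}\bigr)\cong i_{d*}\bigl(\mc O_d(-1)\otimes T_{c_d}C\bigr)$. Restricting to $t\times S_d$ and running the same two cases, the only new point, arising when $t=c_d$, is that the $\mathrm{Tor}$ sequence now yields
\[
0\,\longrightarrow\, \mc S^\vee\,\longrightarrow\, A_d^\vee|_{\{c_d\}\times S_d}\,\longrightarrow\, \mc O_d(-1)\otimes T_{c_d}C\,\longrightarrow\, 0,
\]
and since $H^*(S_d,\,\mc O_d(-1))\,=\,0$ while $g_*\mc S^\vee\,=\,A_{d-1}^\vee|_{\{c_d\}\times S_{d-1}}$ with $R^ig_*\mc S^\vee\,=\,0$ for $i>0$, one gets $H^i(S_d,\,A_d^\vee|_{\{c_d\}\times S_d})\cong H^i(S_{d-1},\,A_{d-1}^\vee|_{\{c_d\}\times S_{d-1}})$, vanishing for $i>0$ by induction. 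Finally, for the $i=0$ parts of (1) and (2) I would match the $r$-dimensional spaces of sections using these same sequences: because $H^*(S_d,\,\mc S)=0$ (respectively $H^*(S_d,\,\mc O_d(-1))=0$), every global section of $A_d|_{t\times S_d}$ (respectively of $A_d^\vee|_{t\times S_d}$) comes from the subbundle produced by the $\mathrm{Tor}$/$\ms{E}xt^1$ term, and under the canonical identification $\mc O_C(-c_d)_{c_d}\cong (T_{c_d}C)^\vee$ together with $D\,=\,(c_1+\cdots+c_{d-1})+c_d$ the map induced by $F^*_{d,0}A_0(-D\times S_d)\hookrightarrow A_d$ becomes an isomorphism. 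I expect the genuine obstacle to be exactly this last verification in the case $t=c_d$: one must check that the natural restriction map is an honest isomorphism on $H^0$ and not merely a matching of dimensions. The subtlety is that the composite of this map into $F^*_{d,0}A_0|_{t\times S_d}$ vanishes, since multiplication by the local equation of $D$ dies at $c_d$, so the isomorphism is realised entirely through the $\mathrm{Tor}_1$ (respectively $\ms{E}xt^1$) contribution; tracking it through the long exact sequence and confirming compatibility with the inductive identifications at level $d-1$ is the part requiring the most care.
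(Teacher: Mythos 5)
Your argument is correct and is essentially the paper's own: the paper disposes of this lemma with the remark that everything follows ``from applying repeatedly the computations of cohomology of the relative cotangent bundle and the universal line bundle of a projective bundle,'' and your induction up the tower --- splitting into the cases $t\neq c_d$ and $t=c_d$, restricting \eqref{def-A_j} to the slice, and using the acyclicity of $\Omega^1_{\mb P^{r-1}}(1)$ and of $\mc O_d(\pm 1)$ together with $g_*\mc O_d(1)=\alpha_{d-1}^*A_{d-1}$ --- is precisely that sketch carried out. The compatibility you flag at $t=c_d$ does close as you describe: mapping the short exact sequence $0\to F^*_{d,d-1}A_{d-1}(-\{c_d\}\times S_d)\to F^*_{d,d-1}A_{d-1}\to i_{d*}i_d^*F^*_{d,d-1}A_{d-1}\to 0$ to \eqref{def-A_j} (identity in the middle, tautological surjection on the right) and using functoriality of $\mathrm{Tor}_1$ identifies your $H^0$-map with the tautological surjection twisted by $(T_{c_d}C)^\vee$, which is an isomorphism on global sections by adjunction, compatibly with the inductive identification at level $d-1$.
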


For convenience of notation, denote $G_d\,:=\, A_d\big\vert_{c_d\times S_d}$.

\begin{lemma}\label{lemma-cohomology computation end(G_d)}
Using the above notation,
\begin{enumerate}
\item $h^1(S_d,\,\ms End(G_d))\,=\,1$,

\item $\dim H^i(S_d,\,\ms End(G_d))\,=\,0$ for all $i\,\geqslant  \,2$,

\item $\dim H^i(S_d,\,G_d^\vee(1))\,=\,0$ for all $i\,\geqslant  \,1\,.$
\end{enumerate}
\end{lemma}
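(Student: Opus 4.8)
The plan is to prove all three statements simultaneously by induction on $d$, pushing forward along the projective bundle $f\,:=\,f_{d,d-1}\,:\,S_d\,\longrightarrow\, S_{d-1}$; the base case $d\,=\,1$ is $S_1\,=\,\mathbb{P}^{r-1}$, where everything is explicit. The starting point is a description of $G_d$ obtained by restricting the defining sequence \eqref{def-A_j} of $A_d$ to the smooth divisor $Z\,:=\,\{c_d\}\times S_d$. Since $A_d$ and $F_{d,d-1}^*A_{d-1}$ are locally free, the only surviving torsion term is $\mathcal{T}or_1(i_{d*}\mc O_d(1),\,\mc O_Z)$, which by the self-intersection formula is $\mc O_d(1)\otimes N_Z^\vee\,\cong\,\mc O_d(1)$ (the conormal bundle $N_Z^\vee$ is trivial, as $Z$ is a fibre of $p_1$). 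Writing $W\,:=\,\alpha_{d-1}^*A_{d-1}$ and letting $K_d\,:=\,\ker(f^*W\,\longrightarrow\,\mc O_d(1))$ be the universal subbundle, the restriction of \eqref{def-A_j} yields
$$0\,\longrightarrow\,\mc O_d(1)\,\longrightarrow\, G_d\,\longrightarrow\, K_d\,\longrightarrow\, 0, \qquad (\star)$$
the quotient map being induced by the tautological surjection $f^*W\,\longrightarrow\,\mc O_d(1)$. The relative Euler sequence identifies $K_d^\vee(1)\,=\,\ms Hom(K_d,\,\mc O_d(1))$ with the relative tangent bundle $T_{S_d/S_{d-1}}$.

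Next I would identify $W$, and this is where the ordering of the points of $D$ enters. If $c_d\,=\,c_{d-1}$, then $W\,=\,A_{d-1}\big\vert_{c_d\times S_{d-1}}\,=\,G_{d-1}$, so the inductive hypothesis applies to $W$; if $c_d\,\neq\, c_{d-1}$, then $c_d\,\notin\,\{c_1,\,\cdots,\,c_{d-1}\}$, whence $A_{d-1}\,=\,p_1^*E$ near $Z$ and $W\,\cong\, E_{c_d}\otimes\mc O_{S_{d-1}}$ is trivial. In both cases $H^{\geqslant 1}(S_{d-1},\,\ms End(W))\,=\,0$, while $\dim H^1(S_{d-1},\,ad(W))$ equals $1$ in the first case (by induction, using $H^1(\ms End(G_{d-1}))\,=\,H^1(ad(G_{d-1}))$) and $0$ in the second. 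I would also record, from Lemma \ref{cor-cohomology of A_d}, that $H^0(S_{d-1},\,G_{d-1})\,=\,H^0(S_{d-1},\,G_{d-1}^\vee)\,=\,\mathbb{C}^r$ with all higher cohomology vanishing, since these furnish the vanishing inputs used below.

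To compute the cohomology I would filter $\ms End(G_d)\,=\,G_d^\vee\otimes G_d$ using $(\star)$; its graded pieces are $K_d^\vee(1)\,=\,T_{S_d/S_{d-1}}$, $\ms End(K_d)$, $\mc O_{S_d}$ and $\ms Hom(\mc O_d(1),\,K_d)\,=\,K_d(-1)$. The relative Euler sequence and the cohomology of line bundles on a $\mathbb{P}^{r-1}$-bundle give $f_*T_{S_d/S_{d-1}}\,=\,ad(W)$, $f_*\ms End(K_d)\,=\,\mc O_{S_{d-1}}$, $f_*\mc O_{S_d}\,=\,\mc O_{S_{d-1}}$ (all with vanishing higher direct images), whereas $f_*K_d(-1)\,=\,0$ and $R^1f_*K_d(-1)\,=\,\mc O_{S_{d-1}}$. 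Hence $R^1f_*\ms End(G_d)\,=\,\mc O_{S_{d-1}}$ and $f_*\ms End(G_d)$ is an iterated extension of $ad(W),\,\mc O_{S_{d-1}},\,\mc O_{S_{d-1}}$. Feeding this into the Leray spectral sequence and using that $S_{d-1}$ is a rational variety (so $H^{\geqslant 1}(\mc O_{S_{d-1}})\,=\,0$), the degree-one direct image contributes precisely the one-dimensional class $H^0(S_{d-1},\,\mc O_{S_{d-1}})$ to $H^1(S_d,\,\ms End(G_d))$, which gives statement (1), and $H^{\geqslant 2}(\ms End(W))\,=\,0$ gives statement (2). For (3) I would dualise $(\star)$ and twist by $\mc O_d(1)$, obtaining $0\,\longrightarrow\, T_{S_d/S_{d-1}}\,\longrightarrow\, G_d^\vee(1)\,\longrightarrow\,\mc O_{S_d}\,\longrightarrow\, 0$, and push forward in the same way.

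The hard part is the subtlety that makes the answer \emph{exactly} $1$ rather than $2$, and that simultaneously forces the vanishing in (3). When $c_d\,=\,c_{d-1}$ the subquotient $ad(W)\,=\,ad(G_{d-1})$ carries $H^1(S_{d-1},\,ad(G_{d-1}))\,\cong\,\mathbb{C}$, which would naively add a second class to $H^1(\ms End(G_d))$ and would obstruct the vanishing of $H^1(S_d,\,G_d^\vee(1))$. Both computations agree with the stated values only if $(\star)$ is \emph{non-split}, i.e. its class in ${\rm Ext}^1(K_d,\,\mc O_d(1))\,=\,H^1(S_d,\,T_{S_d/S_{d-1}})\,\cong\,H^1(S_{d-1},\,ad(W))$ is nonzero: then the relevant connecting homomorphism $H^0(S_{d-1},\,\mc O_{S_{d-1}})\,\longrightarrow\, H^1(S_{d-1},\,ad(W))$ is an isomorphism and cancels the unwanted class. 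Establishing this non-splitting is the crux, and I expect it to be read off from the second-order behaviour of the universal quotient $F_{d,d-1}^*A_{d-1}\,\longrightarrow\, i_{d*}\mc O_d(1)$ in the direction of $C$ transverse to $Z$ — equivalently, from the $1$-jet of $A_d$ along $Z$ — where the genuine thickening occurring when $c_d\,=\,c_{d-1}$ makes the class nonzero. Note that $H^0$-level data alone (even the sharp values from Lemma \ref{cor-cohomology of A_d}) do not detect this splitting, since $H^0(S_d,\,K_d)\,=\,0$; the obstruction lives genuinely in $H^1$, which is why this step is the main difficulty.
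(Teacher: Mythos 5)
Your reductions are sound, and I have checked them: restricting \eqref{def-A_j} to $Z=\{c_d\}\times S_d$ does give the exact sequence $(\star)\colon 0\to\mc O_d(1)\to G_d\to K_d\to 0$ (the torsion term is $\mc O_d(1)$ because the conormal bundle of $Z$ is trivial); the paper's choice of ordering yields exactly your dichotomy $W\cong G_{d-1}$ when $c_d=c_{d-1}$ versus $W$ trivial when $c_d\notin\{c_1,\dots,c_{d-1}\}$; and the direct images $f_*T_{S_d/S_{d-1}}=ad(W)$, $f_*\ms End(K_d)=\mc O_{S_{d-1}}$, $f_*K_d(-1)=0$, $R^1f_*K_d(-1)=\mc O_{S_{d-1}}$ are all correct. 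You have also correctly located the crux: with these reductions the lemma is \emph{equivalent} to the non-splitness of $(\star)$ in the case $c_d=c_{d-1}$ --- a split extension would force $h^1(S_d,\ms End(G_d))=2$ and $h^1(S_d,G_d^\vee(1))=1$, contradicting the statement --- and non-splitness means that the class of $(\star)$ generates $\mathrm{Ext}^1_{S_d}(K_d,\mc O_d(1))\cong H^1(S_{d-1},ad(G_{d-1}))\cong\C$.

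The gap is that you never prove this non-splitness; you only say you ``expect'' it to be read off from the $1$-jet of $A_d$ along $Z$. That is the entire content of the hard case, not a deferrable detail: as written, your argument yields only $h^1(S_d,\ms End(G_d))\in\{1,2\}$ when $c_d=c_{d-1}$, and establishes neither (1) nor (3). Nor is the missing step routine. Already for $r=d=2$, $c_1=c_2$, verifying it amounts to computing the transition matrices of $A_2$ over the standard charts of $S_2=\mathbb{P}(G_1)$ and checking that the resulting \v{C}ech cocycle with values in $\ms Hom(K_2,\mc O_2(1))$ is not a coboundary (its image under $H^1(S_2,T_{S_2/S_1})\cong H^1(S_1,ad(G_1))\cong H^1(\mb P^1,\mc O(-2))$ is the generator); for $d\geqslant 3$ one additionally needs an argument that survives the induction, since the class then lives in $H^1(S_{d-1},ad(G_{d-1}))$, a space you know only abstractly, so it cannot simply be tested on a fibre. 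The natural ways to close the gap are either such an explicit local-coordinate cocycle computation, or an identification of the class of $(\star)$ with the Kodaira--Spencer class of the family $t\mapsto A_d\big\vert_{t\times S_d}$ at $t=c_d$ followed by \cite[Proposition 4.4]{NR} --- which is precisely the mechanism this paper invokes for the closely related injectivity in the proof of Theorem \ref{description R1-Phi-End(A)}. Two smaller points: your sentence ``in both cases $H^{\geqslant 1}(S_{d-1},\ms End(W))=0$'' contradicts the clause immediately following it and your own inductive hypothesis; you mean $H^{\geqslant 2}$. And in the Leray step you should record that $E_2^{2,0}=H^2(S_{d-1},f_*\ms End(G_d))=0$ (it follows from the inductive $H^{\geqslant 2}(S_{d-1},ad(W))=0$ and rationality of $S_{d-1}$), since otherwise the $d_2$-differential could in principle kill the class in $E_2^{0,1}$ and you would only obtain $h^1\leqslant 1$.
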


\subsection{On the geometry of $\mc Q$}

\subsubsection{Notation}\label{se9.1}

As before, the $d$-th symmetric product of $C$ is denoted by $C^{(d)}$. 
Let 
\begin{align}
p_1\,:\,C\times \mc Q\,\longrightarrow\, C,\ \ p_2\,:\,
C\times \mc Q\,\longrightarrow\, \mc Q,\label{t5}\\
q_1\,:\, C\times C^{(d)}\,\longrightarrow\, C,\ \ 
q_2\,:\,C\times C^{(d)}\,\longrightarrow\, C^{(d)}\label{t6}
\end{align}
denote the natural projections.
Recall that there is a universal exact sequence on $C\times \mc Q$
\begin{equation}\label{universal-seq-C times Q}
0 \,\longrightarrow\, \mc A\,\longrightarrow\, p_1^*E\,\longrightarrow\, \mc B
\,\longrightarrow\, 0\,.
\end{equation}
Let
\begin{equation}\label{f13}
\Sigma\,\subset\, C\times C^{(d)}
\end{equation}
be the universal divisor.
Define
\begin{equation}\label{t7}
\Phi\,:=\,{\rm Id}_C\times \phi\,:\,C\times \mc Q\,\longrightarrow\, C\times C^{(d)},
\end{equation}
where $\phi$ is the Hilbert-Chow map in \eqref{hc}.

\subsubsection{Direct image of sheaves on $C\times \mc Q$}

\begin{corollary}\label{cor-direct image of structure sheaf}
The following statements hold:
\begin{enumerate}
\item $\Phi_*\mc O_{C\times \mc Q}\,=\, \mc O_{C\times C^{(d)}}$\ and\ 
$R^i\Phi_*\mc O_{C\times \mc Q}\,=\,0\,\ \ \forall\ i\,>\,0$.

\item $\phi_*\mc O_{\mc Q}\,=\,\mc O_{C^{(d)}}$\ and\ $R^i\phi_*\mc O_{\mc Q}
\,=\,0\,\ \ \forall\ i\,>\,0$.
\end{enumerate}
\end{corollary}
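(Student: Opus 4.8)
The plan is to reduce everything to a single fiberwise cohomology computation, feed it into cohomology and base change to obtain statement (2), and then derive statement (1) from (2) by flat base change along the product structure. Since $\mc Q$ and $C^{(d)}$ are both projective, $\phi$ is proper, and by Proposition \ref{fibres are of same dimension} it is flat with fibers $\mc Q_D$ that are irreducible, reduced and normal. First I would compute the cohomology of the structure sheaf of a fiber. Applying Corollary \ref{cor-cohomology over Q_D and S_D} with $V\,=\,\mc O_{\mc Q_D}$ and using $g_d^*\mc O_{\mc Q_D}\,=\,\mc O_{S_d}$ gives
\[
H^i(\mc Q_D,\,\mc O_{\mc Q_D})\,\cong\, H^i(S_d,\,\mc O_{S_d}).
\]
By its construction in Section \ref{S_d}, the variety $S_d$ is an iterated projective bundle over $\mathrm{Spec}\,\C$, each stage $f_{j,j-1}$ in \eqref{def f_j,j-1} being a projective bundle. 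Since for a projective bundle $\pi\,:\,\mb P(\mc E)\,\longrightarrow\, Y$ one has $\pi_*\mc O\,=\,\mc O_Y$ and $R^i\pi_*\mc O\,=\,0$ for $i\,>\,0$, iterating along the tower via the Leray spectral sequence for each $f_{j,j-1}$ yields $H^0(S_d,\,\mc O_{S_d})\,=\,\C$ and $H^i(S_d,\,\mc O_{S_d})\,=\,0$ for $i\,>\,0$. Thus $h^i(\mc Q_D,\,\mc O_{\mc Q_D})$ is independent of $D$, equal to $1$ for $i\,=\,0$ and $0$ for $i\,>\,0$.

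With this constancy established, I would finish (2) by cohomology and base change. Since $C^{(d)}$ is smooth, hence reduced, Grauert's theorem \cite[Corollary 12.9]{Ha} shows that each $R^i\phi_*\mc O_{\mc Q}$ is locally free of rank $h^i(\mc Q_D,\,\mc O_{\mc Q_D})$ with the base change maps being isomorphisms. For $i\,>\,0$ this rank is $0$, so $R^i\phi_*\mc O_{\mc Q}\,=\,0$. For $i\,=\,0$, the sheaf $\phi_*\mc O_{\mc Q}$ is a line bundle, and the canonical map $\mc O_{C^{(d)}}\,\longrightarrow\,\phi_*\mc O_{\mc Q}$ restricts over each $D$ to the isomorphism $\C\,\xrightarrow{\,\sim\,}\, H^0(\mc Q_D,\,\mc O_{\mc Q_D})$ sending $1$ to $1$; being a fiberwise isomorphism of line bundles it is an isomorphism, giving $\phi_*\mc O_{\mc Q}\,=\,\mc O_{C^{(d)}}$.

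Finally I would deduce (1) from (2) by flat base change. The square with vertical maps $p_2\,:\,C\times\mc Q\,\longrightarrow\,\mc Q$ and $q_2\,:\,C\times C^{(d)}\,\longrightarrow\, C^{(d)}$ and horizontal maps $\Phi$ and $\phi$ is cartesian, identifying $C\times\mc Q$ with $\mc Q\times_{C^{(d)}}(C\times C^{(d)})$, and $q_2$ is flat. Flat base change then gives $R^i\Phi_*\mc O_{C\times\mc Q}\,\cong\, q_2^*R^i\phi_*\mc O_{\mc Q}$ for all $i$, since $p_2^*\mc O_{\mc Q}\,=\,\mc O_{C\times\mc Q}$. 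Substituting (2), the right-hand side is $q_2^*\mc O_{C^{(d)}}\,=\,\mc O_{C\times C^{(d)}}$ for $i\,=\,0$ and $0$ for $i\,>\,0$, which is exactly (1).

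The main obstacle is the fiberwise vanishing $H^i(\mc Q_D,\,\mc O_{\mc Q_D})\,=\,0$ for $i\,>\,0$. This cannot be read off $\mc Q_D$ directly, because the fiber is singular, and it genuinely depends on the fact that $g_d\,:\,S_d\,\longrightarrow\,\mc Q_D$ is a rational resolution — that is, on Lemma \ref{lemma-canonical bundle of S_D} together with the resulting Corollary \ref{cor-cohomology over Q_D and S_D} — which transports the computation to the smooth rational tower $S_d$. Once that transfer is justified, the remaining ingredients (projective-bundle cohomology and the two base-change arguments) are standard.
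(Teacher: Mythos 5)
Your proposal is correct and follows essentially the same route as the paper: both rest on the fiberwise computation $H^i(\mc Q_D,\,\mc O_{\mc Q_D})\,\cong\,H^i(S_d,\,\mc O_{S_d})$ via Corollary \ref{cor-cohomology over Q_D and S_D}, the vanishing on the tower of projective bundles $S_d$, and Grauert's theorem, with flatness of $\phi$ supplied by Proposition \ref{fibres are of same dimension}. The only (cosmetic) difference is that you obtain statement (1) from statement (2) by flat base change along $q_2$, whereas the paper applies Grauert's theorem directly to $\Phi$ as well, observing that its fibers are also isomorphic to $\mc Q_D$; both steps are valid and of equal length.
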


\begin{proof}
The fibers of $\phi$ (respectively, $\Phi$) over any point $D\,\in\, C^{(d)}$
(respectively, $(c,\, D)\,\in\, C\times C^{(d)}$) is isomorphic to $\mc Q_D$.
By Corollary \ref{cor-cohomology over Q_D and S_D} we have 
$\dim H^i(\mc Q_D,\,\mc O_{\mc Q_D})\,=\,\dim H^i(S_d,\,\mc O_{S_d})$. 
Since $S_d$ is a tower of projective bundles, it 
follows that $\dim H^0(S_d,\,\mc O_{S_d})\,=\,1$ 
and $\dim H^i(S_d,\,\mc O_{S_d})\,=\,0$ for all 
$i\,>\,0$. As both $\phi$ and $\Phi$ are flat 
morphisms, the result now follows from Grauert's theorem 
\cite[p.~288--289, Corollary 12.9]{Ha}.
\end{proof}

Let
\begin{equation}\label{f16}
\mc Z \, \subset\, C\times \mc Q
\end{equation}
be the zero scheme of the inclusion map 
${\rm det}(\mc A)\,\hookrightarrow\, \det (p_1^*E)$, where $p_1$ is the map
in \eqref{t5}. From the
definition of $\phi$ it follows immediately that $\Phi^*\Sigma\,=\,\mc Z$. 
In fact, $\mc Z$ sits in the following commutative diagram
in which both squares are Cartesian
\begin{equation}\label{f14}
\xymatrix{
	\mc Z\ar[r]\ar[d] & C\times \mc Q\ar[r]\ar[d]^\Phi & \mc Q\ar[d]^\phi \\
	\Sigma \ar[r] & C\times C^{(d)}\ar[r] & C^{(d)}
}
\end{equation}
(see \eqref{f13} and \eqref{f16}) and the composition of the top horizontal maps is a finite morphism;
the same holds for the composition of the bottom horizontal maps in \eqref{f14}.
The ideal sheaf $\mc O_{C\times \mc Q}(-\mc Z)$ therefore
annihilates $\mc B$ in \eqref{universal-seq-C times Q}, which
in turn produces an inclusion
map $p_1^*E(-\mc Z)\,\subset\, \mc A$, where $p_1$ is the map
in \eqref{t5}. Applying $\Phi_*$ 
and using Corollary \ref{cor-direct image of structure sheaf} an inclusion map
\begin{equation}\label{f15}
q_1^*E(-\Sigma)\,\cong\, \Phi_*[p_1^*E(-\mc Z)] \,\hookrightarrow\, \Phi_*\mc A
\end{equation}
is obtained, where $q_1$ is the map in \eqref{t6}. Also, note that
since the cokernel of $\mc A\, \longrightarrow\, p_1^*E$ is $\mc B$, the kernel of the map $p^*_1E^{\vee} \, \longrightarrow\,
\mc A^{\vee}$ is $\mc B^{\vee}$. But $\mc B$ is torsion, hence $\mc B^{\vee}\,=\,0$.
 Therefore we also have the natural inclusion map
$p_1^*E^{\vee}\,\hookrightarrow\, \mc A^{\vee}$. Applying $\Phi_*$ and
using Corollary \ref{cor-direct image of structure sheaf} we get an inclusion map
$$q_1^*E^{\vee} \,\hookrightarrow\, \Phi_*(\mc A^{\vee})\,.$$

\begin{proposition}\label{propn-direct image of A}
The following statements hold:
\begin{enumerate}
\item The natural map $q_1^*E(-\Sigma)\,\longrightarrow\, \Phi_*\mc A$ is an isomorphism
(see \eqref{f15}).

\item The natural map $q_1^*E^{\vee}\,\hookrightarrow\,\Phi_*(\mc A^{\vee})$ is an isomorphism.

\item $R^i\Phi_*\mc A\,=\,R^i\Phi_*(\mc A^{\vee})\,=\,0$\ for all\, $i\,>\,0$.
\end{enumerate} 
\end{proposition}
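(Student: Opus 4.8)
The plan is to derive all three assertions from cohomology and base change, pushing every fibrewise computation through the resolution $g_d\,:\,S_d\,\longrightarrow\,\mc Q_D$ where Lemma \ref{cor-cohomology of A_d} is available. First I would record that $\Phi\,=\,{\rm Id}_C\times\phi$ is flat and proper: since $\phi$ is flat by Proposition \ref{fibres are of same dimension}, and $\Phi$ is the base change of $\phi$ along the projection $C\times C^{(d)}\,\longrightarrow\,C^{(d)}$, flatness is preserved. The scheme-theoretic fibre of $\Phi$ over a closed point $(c,\,D)$ is $\{c\}\times\mc Q_D\,\cong\,\mc Q_D$, and the restriction of $\mc A$ (respectively $\mc A^\vee$) to this fibre is a locally free sheaf on $\mc Q_D$ whose pullback along $g_d$ is $A_d\big\vert_{\{c\}\times S_d}$ (respectively $A_d^\vee\big\vert_{\{c\}\times S_d}$); this uses that $g_d$ is induced by the quotient \eqref{univ-quotient-S_D}, so that $({\rm Id}_C\times g_d)^*\mc A\,=\,A_d$.

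For part (3), I would combine Corollary \ref{cor-cohomology over Q_D and S_D} with Lemma \ref{cor-cohomology of A_d}(3),(4) to obtain $H^i(\mc Q_D,\,\mc A\big\vert_{\{c\}\times\mc Q_D})\,=\,H^i(S_d,\,A_d\big\vert_{\{c\}\times S_d})\,=\,0$ for $i\,>\,0$, and likewise for $\mc A^\vee$. Since these vanish on every fibre and $\Phi$ is flat, Grauert's theorem \cite{Ha} yields $R^i\Phi_*\mc A\,=\,R^i\Phi_*(\mc A^\vee)\,=\,0$ for all $i\,>\,0$. The $H^0$-terms $H^0(S_d,\,A_d\big\vert_{\{c\}\times S_d})$ and $H^0(S_d,\,A_d^\vee\big\vert_{\{c\}\times S_d})$ are each $r$-dimensional of constant dimension in $(c,\,D)$, by Lemma \ref{cor-cohomology of A_d}(1),(2) comparing them with the trivial bundles $F_{d,0}^*A_0(-D\times S_d)\big\vert_{\{c\}\times S_d}$ and $F_{d,0}^*A_0^\vee\big\vert_{\{c\}\times S_d}$; hence $\Phi_*\mc A$ and $\Phi_*(\mc A^\vee)$ are locally free of rank $r$ and their formation commutes with base change.

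For parts (1) and (2), the two natural maps $q_1^*E(-\Sigma)\,\longrightarrow\,\Phi_*\mc A$ and $q_1^*E^\vee\,\longrightarrow\,\Phi_*(\mc A^\vee)$ are injective, each being obtained by applying the left-exact functor $\Phi_*$ to an inclusion ($p_1^*E(-\mc Z)\,\hookrightarrow\,\mc A$, resp. $p_1^*E^\vee\,\hookrightarrow\,\mc A^\vee$) and identifying $\Phi_*[p_1^*E(-\mc Z)]\,\cong\,q_1^*E(-\Sigma)$ via the projection formula together with Corollary \ref{cor-direct image of structure sheaf}. Since source and target are now locally free of equal rank $r$ on the smooth variety $C\times C^{(d)}$, it suffices to check that each map is an isomorphism on the fibre at every closed point $(c,\,D)$. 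Using that the formation of $\Phi_*\mc A$ commutes with base change, I would identify the fibre of the first map at $(c,\,D)$ with the $H^0$ of the restriction to $\{c\}\times\mc Q_D$ of the inclusion $p_1^*E(-\mc Z)\,\hookrightarrow\,\mc A$. Pulling this inclusion back along ${\rm Id}_C\times g_d$ turns it into $F_{d,0}^*A_0(-D\times S_d)\,\subset\,A_d$, because $({\rm Id}_C\times g_d)^*\mc Z\,=\,D\times S_d$; so Lemma \ref{cor-cohomology of A_d}(1) says precisely that this $H^0$-map is an isomorphism, and dually Lemma \ref{cor-cohomology of A_d}(2) handles $\mc A^\vee$. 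A fibrewise isomorphism between locally free sheaves of equal rank is an isomorphism, which gives (1) and (2).

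The hard part will be the bookkeeping in this last step: one must verify that the base-change identification of the fibre of $\Phi_*\mc A$ with $H^0$ of the fibrewise restriction is compatible with the natural map, and that the inclusion $p_1^*E(-\mc Z)\,\hookrightarrow\,\mc A$ pulls back under ${\rm Id}_C\times g_d$ exactly to the inclusion $F_{d,0}^*A_0(-D\times S_d)\,\subset\,A_d$ figuring in Lemma \ref{cor-cohomology of A_d}. Once these compatibilities are pinned down, the vanishing and comparison isomorphisms of Lemma \ref{cor-cohomology of A_d} do all the remaining work.
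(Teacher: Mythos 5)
Your proposal is correct and takes essentially the same route as the paper's proof: both reduce to a fibrewise computation via Grauert's theorem, transport the fibrewise cohomology from $\mc Q_D$ to $S_d$ using Corollary \ref{cor-cohomology over Q_D and S_D}, and then invoke Lemma \ref{cor-cohomology of A_d} to get the isomorphisms on $H^0$ and the vanishing in higher degrees. The paper is terser---it verifies the $H^0$ statement for part (1) and states that the remaining assertions follow ``in the same way''---whereas you spell out the local freeness, rank, injectivity, and base-change bookkeeping explicitly, but the ingredients and the structure of the argument are identical.
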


\begin{proof}
First consider the map $\Phi_*[p_1^*E(-\mc Z)]\,\longrightarrow\, \Phi_*\mc A$.
Fix $(c,\,D)\,\in \,C\times C^{(d)}$. We will show that the homomorphism
\begin{equation}\label{f12}
H^0\left(c\times \mc Q_D,\, p_1^*E(-\mc Z)\big\vert_{c\times \mc Q_D}\right)
\,\longrightarrow\, H^0\left(c\times \mc Q_D,\, \mc A\big\vert_{c\times \mc Q_D}\right)
\end{equation}
is an isomorphism.

In view of Lemma \ref{cor-cohomology over Q_D and S_D},
showing that \eqref{f12} is an isomorphism is equivalent 
to showing that the map 
\begin{equation*}
H^0\left(c\times S_d,\, p_1^*E(-D)\big\vert_{c\times S_d}\right)\,\longrightarrow\,\,
H^0\left(c\times S_d,\, A_d\big\vert_{c\times S_d}\right)
\end{equation*}
is an isomorphism. But this is precisely the content of 
Corollary \ref{cor-cohomology of A_d} (1) when $i\,=\,0$.
Since $\mc A$ is flat over $C\times C^{(d)}$, using
Grauert's theorem, \cite[Corollary 12.9]{Ha}, it now follows that 
$q_1^*E(-\Sigma)\,\longrightarrow\, \Phi_*\mc A$ is an isomorphism. 

The other two statements follow from 
Corollary \ref{cor-cohomology of A_d} in the same way. 
\end{proof}

\begin{corollary}\label{Phi_*B}
The natural map 
$$q_1^*E\big\vert_\Sigma\,\longrightarrow\, \Phi_*\mc B$$
is an isomorphism, where $\Sigma$ is defined in \eqref{f13}
and $q_1$ (respectively, $\Phi$) is the map in \eqref{t6}
(respectively, \eqref{t7}). Moreover, $$R^i\Phi_*\mc B\,=\,0$$
for all $i\,>\,0$.
\end{corollary}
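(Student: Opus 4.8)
The plan is to apply $\Phi_*$ to the universal exact sequence
$$0\,\longrightarrow\, \mc A\,\longrightarrow\, p_1^*E\,\longrightarrow\, \mc B\,\longrightarrow\, 0$$
on $C\times \mc Q$ and to extract both assertions directly from the resulting long exact sequence of higher direct images, feeding in the computations already made for $\mc O_{C\times \mc Q}$ and for $\mc A$. Since the middle and left terms are completely understood, everything reduces to bookkeeping with this one exact sequence.

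First I would note that $p_1\,=\,q_1\circ \Phi$, so $p_1^*E\,=\,\Phi^*q_1^*E$. The projection formula together with Corollary \ref{cor-direct image of structure sheaf} then gives $\Phi_*(p_1^*E)\,=\,q_1^*E\otimes \Phi_*\mc O_{C\times \mc Q}\,=\,q_1^*E$ and $R^i\Phi_*(p_1^*E)\,=\,q_1^*E\otimes R^i\Phi_*\mc O_{C\times \mc Q}\,=\,0$ for all $i\,>\,0$. From Proposition \ref{propn-direct image of A} I have $\Phi_*\mc A\,=\,q_1^*E(-\Sigma)$, with $\Phi_*\mc A\,\longrightarrow\,\Phi_*(p_1^*E)$ being the natural inclusion $q_1^*E(-\Sigma)\,\hookrightarrow\, q_1^*E$, and $R^i\Phi_*\mc A\,=\,0$ for all $i\,>\,0$.

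Substituting these into the long exact sequence, the vanishing $R^1\Phi_*\mc A\,=\,0$ produces the short exact sequence
$$0\,\longrightarrow\, q_1^*E(-\Sigma)\,\longrightarrow\, q_1^*E\,\longrightarrow\, \Phi_*\mc B\,\longrightarrow\, 0,$$
while for each $i\,\geqslant\, 1$ the two neighbouring terms $R^i\Phi_*(p_1^*E)$ and $R^{i+1}\Phi_*\mc A$ both vanish, forcing $R^i\Phi_*\mc B\,=\,0$ for all $i\,>\,0$. This settles the second assertion.

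Finally I would identify $\Phi_*\mc B$ with $q_1^*E\vert_\Sigma$ by comparing the short exact sequence above with the restriction sequence $0\,\longrightarrow\, q_1^*E(-\Sigma)\,\longrightarrow\, q_1^*E\,\longrightarrow\, q_1^*E\vert_\Sigma\,\longrightarrow\, 0$; the first two vertical arrows being identities, the five lemma yields an isomorphism $q_1^*E\vert_\Sigma\,\xrightarrow{\,\sim\,}\,\Phi_*\mc B$. The only point requiring genuine care — and hence the main obstacle — is to verify that this comparison isomorphism coincides with the \emph{natural} map of the statement. That map arises by adjunction from the factorization $\Phi^*(q_1^*E\vert_\Sigma)\,=\,p_1^*E\vert_{\mc Z}\,\twoheadrightarrow\, \mc B$, which is legitimate because $\mc B$ is annihilated by $\mc O_{C\times\mc Q}(-\mc Z)$ and $\mc Z\,=\,\Phi^*\Sigma$; checking that it agrees with the cokernel map in the comparison is a functoriality diagram chase, after which the proof is complete.
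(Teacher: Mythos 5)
Your proof is correct and follows essentially the same route as the paper: apply $\Phi_*$ to the universal exact sequence, feed in $\Phi_*p_1^*E\,=\,q_1^*E$ with vanishing higher direct images (projection formula plus Corollary \ref{cor-direct image of structure sheaf}) and the computation of $\Phi_*\mc A$ from Proposition \ref{propn-direct image of A}. The paper states this more tersely, but your spelled-out version, including the check that the resulting isomorphism is the natural map, is exactly what the paper's ``follows immediately'' compresses.
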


\begin{proof}
Using projection formula and Corollary \ref{cor-direct image of structure sheaf} it
follows that $$\Phi_*p_1^*E\,=\,q_1^*E\ \ \text{ and }\ \
R^i \Phi_* p_1^*E\,=\,q_1^*E\otimes R^i\Phi_*\mc O_{C\times \mc Q}\,=\,0$$
for all $i\,>\,0$.
Therefore the statement follows immediately by applying $\Phi_*$ to the 
universal exact sequence 
$$0\,\longrightarrow\,\mc A\,\longrightarrow\, p_1^*E
\,\longrightarrow\, \mc B\,\longrightarrow\, 0$$
and using Proposition \ref{propn-direct image of A}.
\end{proof}

\begin{lemma}\label{Hom(B,B)}
The natural map $\mc O_{\mc Z}\,\longrightarrow\, \ms Hom(\mc B,\,\mc B)$,
where $\mc Z$ is defined in \eqref{f16}, is an isomorphism.
\end{lemma}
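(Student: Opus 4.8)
The plan is to recognise the natural arrow as the structural map $\mc O_{\mc Z}\to \ms End(\mc B)$ sending a function to the corresponding multiplication, and then to identify $\ms End(\mc B)$ with $\mc O_{\mc Z}$ by a normality argument. First I would note that, since $\mc O_{C\times \mc Q}(-\mc Z)$ annihilates $\mc B$, the sheaf $\mc B$ is a module over $\mc O_{\mc Z}$, and every $\mc O_{C\times \mc Q}$-linear endomorphism of $\mc B$ is automatically $\mc O_{\mc Z}$-linear; hence $\ms Hom_{\mc O_{C\times \mc Q}}(\mc B,\mc B)=\ms End_{\mc O_{\mc Z}}(\mc B)$, and it suffices to prove that the latter equals $\mc O_{\mc Z}$. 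The strategy reduces to two inputs: (i) $\mc Z$ is an integral normal variety, and (ii) $\mc B$ is a torsion-free $\mc O_{\mc Z}$-module of generic rank one. Granting these, any local section of $\ms End_{\mc O_{\mc Z}}(\mc B)$ extends to the function field $K(\mc Z)$ and acts there as multiplication by a rational function $\lambda$ with $\lambda\mc B\subseteq \mc B$; thus $\ms End_{\mc O_{\mc Z}}(\mc B)$ is a subring of $K(\mc Z)$ that is finite, hence integral, over $\mc O_{\mc Z}$, and by normality it is forced to coincide with $\mc O_{\mc Z}$. Under this identification the natural map is exactly the inclusion $\mc O_{\mc Z}\hookrightarrow \ms End_{\mc O_{\mc Z}}(\mc B)$, so it becomes an isomorphism.

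For (i), recall from \eqref{f16} and the Cartesian diagram \eqref{f14} that $\mc Z=\Phi^{*}\Sigma$ is an effective Cartier divisor in the smooth variety $C\times \mc Q$, and that $\mc Z\to \mc Q$ is finite and flat of degree $d$ (using that $\phi$ is flat by Proposition \ref{fibres are of same dimension} and that $\Sigma\to C^{(d)}$ is finite flat). Being a divisor in a smooth variety, $\mc Z$ is Cohen--Macaulay, so it satisfies Serre's condition $S_{2}$. For regularity in codimension one I would show that $\mc Z$ is smooth over the open locus of cyclic quotients $E\to \mc O_{D}$: there $\phi$ is a smooth morphism by Lemma \ref{GS-L3}, and since $\Sigma\cong C\times C^{(d-1)}$ is smooth, the base change defining $\mc Z$ over this locus is smooth as well. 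The complement, the locus of non-cyclic quotients, has codimension at least two in $\mc Q$ (a dimension count shows it is in fact codimension three), so $\mathrm{Sing}(\mc Z)$ is contained in the preimage of this locus and has codimension at least two in $\mc Z$; hence $R_{1}$ holds. Irreducibility of $\mc Z$ follows from viewing $\mc Z\to \Sigma$ as the base change of the flat map $\phi$ along $\Sigma\to C^{(d)}$: its fibers are the fibers $\mc Q_{D}$, which are irreducible by Proposition \ref{fibres are of same dimension}, and $\Sigma$ is irreducible, so $\mc Z$ is irreducible. Combined with generic reducedness (from the smoothness above) and $S_{1}$ this gives that $\mc Z$ is integral, and with $R_{1}+S_{2}$ that it is normal.

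For (ii), I would use that $\mc B$ is flat over the smooth, hence Cohen--Macaulay, base $\mc Q$ with fibers of finite length, which are Cohen--Macaulay of dimension zero; therefore $\mc B$ is a Cohen--Macaulay sheaf on $C\times \mc Q$ of dimension $\dim \mc Z$. Since $\mc Z$ is integral of that same dimension, $\mc B$ has no embedded or lower-dimensional associated points, i.e.\ it is torsion-free over $\mc O_{\mc Z}$; and it has generic rank one because over the cyclic locus the universal quotient is the structure sheaf of the universal subscheme $\mc Z$. This supplies exactly the hypotheses needed for the normality argument of the first paragraph.

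The main obstacle I expect is establishing that $\mc Z$ is normal, and specifically the two geometric inputs behind $R_{1}$: that $\phi$ is smooth precisely along the cyclic locus, so that $\mc Z$ is smooth there, and that the non-cyclic locus has codimension at least two in $\mc Q$. Once $\mc Z$ is known to be integral and normal and $\mc B$ is known to be torsion-free of generic rank one, the identification $\ms End_{\mc O_{\mc Z}}(\mc B)=\mc O_{\mc Z}$ is a formal consequence of integrality over a normal domain, and the lemma follows.
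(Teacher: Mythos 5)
Your proposal is correct and takes essentially the same route as the paper: the paper's proof likewise reduces the lemma to the two facts that $\mc Z$ is integral and normal and that $\mc B$ is a torsion-free $\mc O_{\mc Z}$-module of generic rank one, and then identifies $\ms Hom(\mc B,\,\mc B)$ with a finite $\mc O_{\mc Z}$-subalgebra of $K(\mc Z)$ containing $\mc O_{\mc Z}$, which must equal $\mc O_{\mc Z}$ by normality. The only difference is that the paper dispatches those geometric inputs with ``it can be shown,'' whereas you supply the arguments (Cartier divisor in a smooth variety gives $S_2$; smoothness of $\phi$ along the cyclic locus, via Lemma \ref{GS-L3}, together with the codimension bound on the non-cyclic locus gives $R_1$; flatness of $\mc Z \to \Sigma$ with irreducible fibers gives irreducibility; and Cohen--Macaulayness of $\mc B$ over the base gives torsion-freeness), all of which are sound.
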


\begin{proof}
It can be shown that $\mc Z$ is an integral and normal scheme and that $\mc B$ in \eqref{universal-seq-C times Q} is a 
torsionfree 
$\mc O_{\mc Z}$-module.

Let ${\rm Spec}(A)\,\subset\, \mc Z$ be an affine open set,
and let $\mc B_A$ denote the module corresponding to $\mc B$. 
We have the inclusion maps 
$$A\,\subset\, {\rm Hom}_A(\mc B_A,\,\mc B_A)\,\subset\, 
{\rm Hom}_{K(A)}(\mc B_A\otimes_AK(A),\,\mc B_A\otimes_AK(A))\,=\,K(A)\,.$$
As $A$ is normal, and ${\rm Hom}_A(\mc B_A,\,\mc B_A)$ 
is a finite $A$-module, it follows that ${\rm Hom}_A(\mc B_A,\,\mc B_A)$ coincides
with $A$. This proves the lemma.
\end{proof}

\begin{theorem}\label{preliminiary ses}
There is a map $\Xi$ that fits in a short exact sequence
\begin{equation*}
0\,\longrightarrow\, ad(q_1^*E\big\vert_{\Sigma})\,
\stackrel{\Xi}{\longrightarrow}\, \Phi_*\ms Hom(\mc A,\,\mc B)
		\,\longrightarrow\, R^1\Phi_*\ms End(\mc A)\, \longrightarrow\, 0
\end{equation*}
on $\Sigma$.

For every $i\,\geqslant  \, 1$, there is a natural isomorphism
\begin{equation*}
R^i\Phi_*\ms Hom(\mc A,\,\mc B)\,\,\stackrel{\sim}{\longrightarrow}\,\,
R^{i+1}\Phi_*\ms End(\mc A).
\end{equation*}
\end{theorem}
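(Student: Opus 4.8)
\emph{Overview.} The plan is to deduce both assertions by pushing forward, along $\Phi$, two different twists of the universal sequence \eqref{universal-seq-C times Q}. Applying $\ms Hom(\mc A,\,-)$ will produce the four--term sequence that yields the cokernel term $R^1\Phi_*\ms End(\mc A)$ and the higher isomorphisms; applying $\ms Hom(-,\,\mc B)$ together with Lemma \ref{Hom(B,B)} will identify the kernel term with $ad(q_1^*E\big\vert_{\Sigma})$. First I would tensor \eqref{universal-seq-C times Q} by the locally free sheaf $\mc A^\vee$ to get the short exact sequence
$$0\,\longrightarrow\,\ms End(\mc A)\,\longrightarrow\,\mc A^\vee\otimes p_1^*E\,\longrightarrow\,\ms Hom(\mc A,\,\mc B)\,\longrightarrow\,0$$
on $C\times\mc Q$, and then apply $\Phi_*$. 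Since $p_1^*E\,=\,\Phi^*q_1^*E$, the projection formula and Proposition \ref{propn-direct image of A}(2),(3) give $\Phi_*(\mc A^\vee\otimes p_1^*E)\,\cong\,\ms End(q_1^*E)$ and $R^i\Phi_*(\mc A^\vee\otimes p_1^*E)\,=\,0$ for $i\,\geqslant\,1$.

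With these vanishings the long exact sequence of higher direct images degenerates. In the low--degree part it gives the four--term exact sequence
$$0\,\longrightarrow\,\Phi_*\ms End(\mc A)\,\longrightarrow\,\ms End(q_1^*E)\,\stackrel{\psi}{\longrightarrow}\,\Phi_*\ms Hom(\mc A,\,\mc B)\,\longrightarrow\,R^1\Phi_*\ms End(\mc A)\,\longrightarrow\,0,$$
and in the adjacent spots the isomorphisms $R^i\Phi_*\ms Hom(\mc A,\,\mc B)\,\cong\,R^{i+1}\Phi_*\ms End(\mc A)$ for every $i\,\geqslant\,1$, which is exactly the second assertion. Splitting off the image of $\psi$ produces the short exact sequence $0\to\mathrm{Im}(\psi)\to\Phi_*\ms Hom(\mc A,\,\mc B)\to R^1\Phi_*\ms End(\mc A)\to 0$, so the whole problem reduces to proving $\mathrm{Im}(\psi)\,\cong\,ad(q_1^*E\big\vert_{\Sigma})$. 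Since $\mc B$ is supported on $\mc Z\,=\,\Phi^{-1}\Sigma$, the target is an $\mc O_{\Sigma}$--module, so $\psi$ annihilates $\ms End(q_1^*E)(-\Sigma)$ and factors through a map $\bar\psi\,:\,\ms End(q_1^*E)\big\vert_{\Sigma}\,\longrightarrow\,\Phi_*\ms Hom(\mc A,\,\mc B)$ with the same image, and I must compute $\ker\bar\psi$.

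To find $\ker\bar\psi$ I would apply $\ms Hom(-,\,\mc B)$ to \eqref{universal-seq-C times Q}. Because $p_1^*E$ is locally free and $\ms End(\mc B)\,\cong\,\mc O_{\mc Z}$ by Lemma \ref{Hom(B,B)}, this yields
$$0\,\longrightarrow\,\mc O_{\mc Z}\,\longrightarrow\,p_1^*E^\vee\otimes\mc B\,\longrightarrow\,\ms Hom(\mc A,\,\mc B)\,\longrightarrow\,\ms Ext^1(\mc B,\,\mc B)\,\longrightarrow\,0.$$
Now I apply $\Phi_*$. From $\Phi^*\mc O_{\Sigma}\,=\,\mc O_{\mc Z}$ together with Corollary \ref{cor-direct image of structure sheaf} one gets $\Phi_*\mc O_{\mc Z}\,=\,\mc O_{\Sigma}$, while the projection formula and Corollary \ref{Phi_*B} give $\Phi_*(p_1^*E^\vee\otimes\mc B)\,\cong\,q_1^*E^\vee\otimes q_1^*E\big\vert_{\Sigma}\,=\,\ms End(q_1^*E)\big\vert_{\Sigma}$. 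Left exactness of $\Phi_*$ then forces the resulting map $\ms End(q_1^*E)\big\vert_{\Sigma}\to\Phi_*\ms Hom(\mc A,\,\mc B)$ to have kernel exactly the scalar line $\mc O_{\Sigma}\cdot\mathrm{id}$ (it is the image of $\Phi_*\mc O_{\mc Z}$, and $\mathrm{id}_{\mc B}$ maps to $\mathrm{id}_{q_1^*E\vert_{\Sigma}}$).

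The main obstacle is the compatibility step: one must check that this last map is precisely $\bar\psi$. Both arise as the canonical multiplication $q_1^*E^\vee\otimes q_1^*E\big\vert_{\Sigma}\,\longrightarrow\,\Phi_*(\mc A^\vee\otimes\mc B)$ once the identifications $\Phi_*\mc A^\vee\,\cong\,q_1^*E^\vee$ (Proposition \ref{propn-direct image of A}(2)) and $\Phi_*\mc B\,\cong\,q_1^*E\big\vert_{\Sigma}$ (Corollary \ref{Phi_*B}) are inserted; verifying their coincidence amounts to tracing these canonical identifications, using that the natural inclusion $q_1^*E^\vee\,\hookrightarrow\,\Phi_*\mc A^\vee$ (constructed just before Proposition \ref{propn-direct image of A}) \emph{is} the stated isomorphism, and that $\Phi_*$ of the universal projection $p_1^*E\to\mc B$ is the restriction $q_1^*E\to q_1^*E\big\vert_{\Sigma}$. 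Granting this, $\ker\bar\psi\,=\,\mc O_{\Sigma}\cdot\mathrm{id}$, and since we work over $\C$ the trace splitting $\ms End\,=\,\mc O\oplus ad$ gives $\mathrm{Im}(\psi)\,=\,\mathrm{Im}(\bar\psi)\,=\,\ms End(q_1^*E)\big\vert_{\Sigma}/(\mc O_{\Sigma}\cdot\mathrm{id})\,\cong\,ad(q_1^*E\big\vert_{\Sigma})$. Taking $\Xi$ to be the inclusion of this image into $\Phi_*\ms Hom(\mc A,\,\mc B)$ completes the first assertion.
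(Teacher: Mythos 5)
Your proposal is correct and follows essentially the same route as the paper: the four-term sequence obtained by pushing forward $\ms Hom(\mc A,\,-)$ applied to the universal sequence gives the second assertion and reduces the first to identifying an image, and the identification via $\ms Hom(-,\,\mc B)$, Lemma \ref{Hom(B,B)}, Corollary \ref{Phi_*B} and the projection formula is exactly the paper's argument. The compatibility you flag as the main obstacle is precisely what the paper disposes of by writing a commutative diagram of pre- and post-composition maps at the sheaf level on $C\times \mc Q$ (before applying $\Phi_*$), so that the two images coincide by associativity of composition rather than by tracing identifications after pushforward.
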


\begin{proof}
Application of $\ms Hom(\mc A,\, -)$ to the exact 
sequence in \eqref{universal-seq-C times Q}
produces an exact sequence
\begin{equation}\label{preliminary ses e1}
0\,\longrightarrow\, \ms End(\mc A)\,\longrightarrow\, \ms Hom(\mc A,\,p_1^*E)
\,\longrightarrow\, \ms Hom(\mc A,\,\mc B)\,\longrightarrow\, 0\,.
\end{equation} 
Using the projection formula and Proposition
\ref{propn-direct image of A} it follows that
	$$R^i\Phi_*\ms Hom(\mc A,\,p_1^*E)
\,=\,q_1^*E\otimes R^i\Phi_*(\mc A^{\vee})\,=\,0
$$
for all $i\,>\,0$. Therefore, applying $\Phi_*$ to \eqref{preliminary ses e1} produces
an exact sequence
\begin{equation}\label{u1}
0 \,\longrightarrow\, \Phi_*\ms End(\mc A)\,\longrightarrow\, \Phi_*\ms Hom(\mc A,\,p^*_1E)\,
\longrightarrow\,\Phi_*{\ms Hom}(\mc A,\,\mc B)\,\longrightarrow\,
R^1\Phi_*\ms End(\mc A)\,\longrightarrow 0\,\,.
\end{equation} 
Moreover, we get that there is an isomorphism
\begin{equation*}
R^i\Phi_*\ms Hom(\mc A,\,\mc B)\,\stackrel{\sim}{\longrightarrow}\, R^{i+1}\Phi_*\ms End(\mc A)
	\end{equation*}
	for every $i\,\geqslant  \, 1$. This proves the second part of the theorem.

To prove the first part of the theorem, it is enough to show that the image of the map 
	$$\Phi_*\ms Hom(\mc A,\,p^*_1E)\,\longrightarrow\, \Phi_*\ms Hom(\mc A,\, \mc B)$$ is isomorphic to 
	$ad(q_1^*E\big\vert_{\Sigma})$.
	
	Consider the commutative diagram
	\begin{equation*}
		\xymatrix{
			0\ar[r]& \ms Hom(p_1^*E, \,\mc A)\ar[r]\ar[d]& \ms Hom(p_1^*E,\, p_1^*E)\ar[r]\ar[d]& 
			\ms Hom(p_1^*E, \,\mc B)\ar[r]\ar[d]& 0\\
			0\ar[r]& \ms End(\mc A)\ar[r]& \ms Hom(\mc A,\,p_1^*E)\ar[r]& 
			\ms Hom(\mc A,\,\mc B)\ar[r]& 0\,.	
		}
\end{equation*}
Using projection formula together with Proposition \ref{propn-direct image of A}
it follows that
$$R^1\Phi_*\ms Hom(p_1^*E, \,\mc A)\,=\,q_1^*E^{\vee}\otimes R^1\Phi_*\mc A\,=\,0\,,$$
$$\Phi_*\ms Hom(\mc A,\,p_1^*E)
\,=\, q_1^*E\otimes \Phi_*(\mc A^{\vee})\,=\,\ms Hom(q_1^*E,\, q_1^*E)\,.$$
Consequently, applying $\Phi_*$ to the preceding diagram we get a diagram 
\begin{equation*}
\xymatrix{
0\ar[r]& \Phi_*\ms Hom(p_1^*E, \,\mc A)\ar[r]\ar[d]& \Phi_*\ms Hom(p_1^*E,\, p_1^*E)\ar[r]\ar[d]^{\cong} & 
\Phi_*\ms Hom(p_1^*E, \,\mc B)\ar[r]\ar[d] & 0\\
0\ar[r]& \Phi_*\ms End(\mc A)\ar[r]& \Phi_*\ms Hom(\mc A,\,p_1^*E)\ar[r]& 
\Phi_*\ms Hom(\mc A,\,\mc B). &	
}
\end{equation*} 
Thus, the image of the homomorphism
$\Phi_*\ms Hom(\mc A,\,p^*_1E)\,\longrightarrow\, \Phi_*{\ms Hom}(\mc A,\,\mc B)$
coincides with the image of the homomorphism
$\Phi_*\ms Hom(p_1^*E,\, \mc B)\,\longrightarrow\, \Phi_*{\ms Hom}(\mc A,\,\mc B)$.
Now consider the following commutative diagram in which the left vertical arrow 
is an isomorphism due to Lemma \ref{Hom(B,B)}
\begin{equation}\label{preliminary ses e2}
\begin{tikzcd}
0 \ar[r] & \mc O_{\mc Z} \ar[r] \ar[d,"\cong"] & {\ms End}\left(p_1^*E\big\vert_{\mc Z}\right) \ar[r] \ar[d] & 
{ad}\left(p_1^*E\big\vert_{\mc Z}\right)
\ar[r] \ar[d] & 0 \\
0 \ar[r] & {\ms Hom}(\mc B,\, \mc B) \ar[r] & {\ms Hom}(p_1^*E,\,\mc B)
\ar[r] & {\ms Hom}(\mc A,\, \mc B) &
\end{tikzcd}
\end{equation}
Applying $\Phi_*$ to it and using Corollary \ref{Phi_*B} we get the diagram
(the right vertical arrow is defined to be $\Xi$)
\begin{equation}\label{preliminary ses e3}
\xymatrix{
0 \ar[r] & \mc O_{\Sigma} \ar[r] \ar[d]^{\cong} & {\ms End}\left(q_1^*E\big\vert_{\Sigma}\right)
\ar[r] \ar[d]^{\cong} & {ad}\left(q_1^*E\big\vert_{\Sigma}\right)
\ar[r] \ar[d]^{\Xi} & 0 \\
0 \ar[r] & \Phi_*{\ms Hom}(\mc B, \,\mc B) \ar[r] & \Phi_*{\ms Hom}(p_1^*E,\,\mc B)
\ar[r] & \Phi_*{\ms Hom}(\mc A, \,\mc B) &
}
\end{equation}
Therefore, the image of the homomorphism
$\Phi_*{\ms Hom}(p_1^*E,\,\mc B)\,\longrightarrow\, \Phi_*{\ms Hom}(\mc A, \,\mc B)$ is same as the image of the map
${ad}(q_1^*E\big\vert_{\Sigma})\stackrel{\Xi}{\longrightarrow} \Phi_*{\ms Hom}(\mc A,
\,\mc B)$. But the above diagram shows that this map is an inclusion. Hence
its image is isomorphic to 
${ad}\left(q_1^*E\big\vert_{\Sigma}\right)$.
This completes the proof of the theorem.
\end{proof}

Note that it follows from Theorem \ref{preliminiary ses}
that the sheaf $R^i\Phi_*\ms End(\mc A)$ 
is supported on $\Sigma$ for every $i\,\geqslant  \, 1$.
By Corollary \ref{cor-direct image of structure sheaf},
the canonical map $R^1\Phi_*\ms End(\mc A)\,\longrightarrow\, R^1\Phi_*ad(\mc A)$
is an isomorphism.
Recall the relative adjoint Atiyah sequence (see \eqref{eqn rel ad-Atiyah}) for the 
locally free sheaf $\mc A$ on $C\times \mc Q$
\begin{equation}\label{rel Atiyah A on C times Q}
0\,\longrightarrow\, ad(\mc A)\,\longrightarrow\, at_C(\mc A)
\,\longrightarrow\, p_C^*T_C\,\longrightarrow\, 0\,.
\end{equation}
Applying $\Phi_*$ to \eqref{rel Atiyah A on C times Q} we get a map of sheaves
\begin{equation}\label{f18}
q_1^*T_C\,\longrightarrow\, R^1\Phi_*ad(\mc A)
\end{equation}
on $C\times C^{(d)}$.

For ease of notation, let
\begin{equation}\label{bar-q_1}
\overline{q_1}\,\, :\,\, \Sigma\,\longrightarrow\,C
\end{equation}
be the composite 
$\Sigma\,\hookrightarrow\, C\times C^{(d)}\,\stackrel{q_1}{\longrightarrow}\,C$,
where $q_1$ is the map in \eqref{t6}.

\begin{theorem}\label{description R1-Phi-End(A)}
The map in \eqref{f18} induces an isomorphism
$$ q_1^*T_C\big\vert_{\Sigma}\,=\,
\overline{q_1}^*T_C\,\stackrel{\sim}{\longrightarrow}\, R^1\Phi_*ad(\mc A)\, ,$$
where $\Phi$ is the map in \eqref{t7}.
Moreover, $R^i\Phi_*\ms End(\mc A)\,=\,0$ for all $i\,\geqslant  \, 2$.
\end{theorem}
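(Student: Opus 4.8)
The plan is to treat the vanishing $R^i\Phi_*\ms End(\mc A)=0$ for $i\geqslant 2$ first, as it also governs the behaviour of $R^1$ through base change. Both assertions are fibrewise in nature: the fibre of $\Phi$ over a point $(c,D)\in C\times C^{(d)}$ is $\{c\}\times\mc Q_D\cong\mc Q_D$, and $\ms End(\mc A)$ restricts there to $\ms End(\mc A|_{c\times\mc Q_D})$. I would choose an ordering of the points of $D$ with $c_d=c$; since $g_d$ is defined by a quotient of $p_1^*E$ with kernel $A_d$, one has $(\mathrm{Id}_C\times g_d)^*\mc A=A_d$, and hence $g_d^*\big(\mc A|_{c\times\mc Q_D}\big)=A_d|_{c\times S_d}=G_d$. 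Corollary \ref{cor-cohomology over Q_D and S_D} then gives $H^i(\mc Q_D,\ms End(\mc A)|_{c\times\mc Q_D})\cong H^i(S_d,\ms End(G_d))$, which vanishes for $i\geqslant 2$ by Lemma \ref{lemma-cohomology computation end(G_d)}(2); when $(c,D)\notin\Sigma$ the sheaf $\mc A|_{c\times\mc Q_D}$ is the trivial bundle $E_c\otimes\mc O_{\mc Q_D}$ and the same vanishing holds because $H^{>0}(\mc Q_D,\mc O_{\mc Q_D})=0$. As $\Phi$ is flat (Proposition \ref{fibres are of same dimension}) and $\ms End(\mc A)$ is flat over the base, the theorem on cohomology and base change \cite{Ha} yields $R^i\Phi_*\ms End(\mc A)=0$ for $i\geqslant 2$, and moreover that the degree-one base-change map is an isomorphism at every point.

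Combining this base-change isomorphism with the fibre computation and Lemma \ref{lemma-cohomology computation end(G_d)}(1), and with the identification $R^1\Phi_*\ms End(\mc A)\cong R^1\Phi_*ad(\mc A)$ recorded after Theorem \ref{preliminiary ses} (a consequence of $R^1\Phi_*\mc O=0$ from Corollary \ref{cor-direct image of structure sheaf}), I conclude that the fibres of $R^1\Phi_*ad(\mc A)$ have dimension $1$ over $\Sigma$ and $0$ elsewhere. This sheaf is an $\mc O_\Sigma$-module: this is precisely the content of the short exact sequence of Theorem \ref{preliminiary ses}, all of whose terms are sheaves on $\Sigma$. Since the universal divisor $\Sigma\cong C\times C^{(d-1)}$ is smooth and integral, a coherent $\mc O_\Sigma$-module with constant fibre dimension $1$ is invertible; thus $R^1\Phi_*ad(\mc A)$ is a line bundle on $\Sigma$.

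For the identification with $\overline{q_1}^*T_C$, recall that the map \eqref{f18} arises by applying $\Phi_*$ to the relative adjoint Atiyah sequence \eqref{rel Atiyah A on C times Q} and using $\Phi_*p_C^*T_C=q_1^*T_C$. As its target is an $\mc O_\Sigma$-module, it factors through a homomorphism of line bundles $\overline{q_1}^*T_C\to R^1\Phi_*ad(\mc A)$ on $\Sigma$. A homomorphism of line bundles on an integral scheme is an isomorphism exactly when it is nonzero on every fibre, so by the base-change isomorphism it is enough to show that at each $(c,D)\in\Sigma$ the fibre of \eqref{f18} is nonzero. By compatibility of the connecting homomorphism with base change, this fibre is the connecting map
\[
\partial\,:\,T_cC\,=\,H^0\big(\mc Q_D,\,p_C^*T_C|_{c\times\mc Q_D}\big)\,\longrightarrow\,H^1\big(\mc Q_D,\,ad(\mc A)|_{c\times\mc Q_D}\big)
\]
of the sequence \eqref{rel Atiyah A on C times Q} restricted to the fibre (here $H^0(\mc O_{\mc Q_D})=\mathbb C$ since $\mc Q_D$ is integral by Proposition \ref{fibres are of same dimension}). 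Pulling this back under $g_d$ — legitimate since the relative adjoint Atiyah sequence is stable under base change by Lemma \ref{base change relative ad-atiyah} and $S_d$ is smooth — and invoking Corollary \ref{cor-cohomology over Q_D and S_D}, the computation reduces to showing that the connecting map $T_{c_d}C\to H^1(S_d,ad(G_d))$ into the one-dimensional space of Lemma \ref{lemma-cohomology computation end(G_d)}(1) does not vanish.

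This nonvanishing is the heart of the matter and the step I expect to be the main obstacle. It is equivalent to the assertion that the family $c\mapsto A_d|_{c\times S_d}$ of bundles on $S_d$ has nonzero Kodaira--Spencer class at $c=c_d$, or that the restricted Atiyah sequence admits no global splitting. I would extract it from the explicit construction of $A_d$: the final elementary modification \eqref{def-A_j} presents $A_d$ as a Hecke modification of $F_{d,d-1}^*A_{d-1}$ supported on $\{c_d\}\times S_d$ with line-bundle quotient $\mc O_d(1)$, and a local computation in a coordinate vanishing at $c_d$ shows that this modification varies the fibre $A_d|_{c\times S_d}$ to first order, so that $\partial\neq 0$. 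Granting $\partial\neq 0$ at every point of $\Sigma$, the map $\overline{q_1}^*T_C\to R^1\Phi_*ad(\mc A)$ is a fibrewise nonzero homomorphism of line bundles on the integral scheme $\Sigma$, hence an isomorphism; together with the vanishing established in the first paragraph this proves the theorem.
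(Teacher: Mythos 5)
Your overall scaffolding coincides with the paper's own proof: reduce to the fibres of $\Phi$, transport cohomology from $\mc Q_D$ to $S_d$ via Corollary \ref{cor-cohomology over Q_D and S_D}, use Lemma \ref{lemma-cohomology computation end(G_d)} to see that the relevant $H^1$ is one-dimensional and that $H^i$ vanishes for $i\,\geqslant \,2$, identify the fibre of \eqref{f18} with the Kodaira--Spencer (connecting) map of the family $c\,\longmapsto\, A_d\big\vert_{c\times S_d}$, and conclude by base change. These reductions are sound, and the points you spell out --- that $R^1\Phi_*ad(\mc A)$ is an $\mc O_\Sigma$-module by Theorem \ref{preliminiary ses}, that it has constant fibre dimension one along the smooth integral scheme $\Sigma\,\cong\, C\times C^{(d-1)}$ and is therefore a line bundle, and that a fibrewise nonzero homomorphism of line bundles is an isomorphism --- are correct and, if anything, more explicit than the paper's ``by a base change argument''.

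The genuine gap is exactly the step you yourself flag as the main obstacle: the nonvanishing of the connecting map $T_{c_d}C\,\longrightarrow\, H^1(S_d,\,ad(G_d))$. You assert that ``a local computation in a coordinate vanishing at $c_d$ shows that this modification varies the fibre $A_d\big\vert_{c\times S_d}$ to first order, so that $\partial\neq 0$,'' but no such computation is given, and the heuristic underlying it does not suffice: the fact that the family jumps at $c_d$ (it is the trivial bundle $E_c\otimes\mc O_{S_d}$ for $c\notin D$ and the nontrivial bundle $G_d$ at $c_d$) does \emph{not} imply that the first-order Kodaira--Spencer class at $c_d$ is nonzero. For instance, a family pulled back along a map of the parameter curve ramified at the jump point still jumps there, yet its Kodaira--Spencer class at that point vanishes; so ``the bundle changes near $c_d$'' is strictly weaker than what you need. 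The paper closes precisely this point by identifying the composite \eqref{t1} with the infinitesimal deformation map associated to the family $A_d$ over $C\times S_d$ and invoking \cite[Proposition 4.4]{NR} to obtain injectivity; since the target is one-dimensional by Lemma \ref{lemma-cohomology computation end(G_d)}, injectivity then forces the map to be an isomorphism. So your proposal is an accurate skeleton of the paper's argument, but the one step carrying the real mathematical content is missing: it needs either the appeal to \cite{NR} or an actual computation of the class itself, not merely the observation that the bundle jumps.
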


\begin{proof}
We have already observed above that $R^1\Phi_*ad(\mc A)$
is supported on $\Sigma$. Thus, the map 
$q_1^*T_C\longrightarrow R^1\Phi_*ad(\mc A)$ 
factors through $q_1^*T_C\big\vert_{\Sigma}\longrightarrow\, R^1\Phi_*ad(\mc A)$. 

Let $(c,D)\in \Sigma$ be a point. Then
$c\,\in\, D$. We fix an ordering of the points of $D$ as mentioned at the
beginning of Section \ref{section cohomology of some sheaves}. 
We also choose this ordering in such a way that $c_d\,=\,c$. 
Associated to this ordering, we have the space $S_d$
as constructed in section \ref{section canonical bundle S_D}.
Recall, from \eqref{f7}, the map $g_d\,:\,S_d\,\longrightarrow\, \mc Q_D$.
We used the same notation to denote the composite map 
$S_d\,\longrightarrow \,\mc Q_D\,\longrightarrow \,\mc Q$.
Consider the composite
$$
T_{C,c}\,\,=\,\,\overline{q_1}^*T_C\big\vert_{(c,D)}\,\,\longrightarrow\,\,
 R^1\Phi_*ad(\mc A)\big\vert_{(c,D)}
$$
\begin{equation}\label{t1}
\longrightarrow\, 
H^1\left(\mc Q_D,\,ad\left(\mc A\big\vert_{c\times \mc Q_D}\right)\right) 
\,\stackrel{\sim}{\longrightarrow}\, H^1(S_d,\,ad(\mc A\big\vert_{c\times S_d})).
\end{equation}
The last map, which is induced by $g_d$, is an 
isomorphism by Corollary \ref{cor-cohomology over Q_D and S_D}.
Now, it can be proved that the composite in \eqref{t1} is an inclusion. This follows by identifying this map with
the infinitesimal deformation map associated to the family $A_d$ over $C\times S_d$, and then applying \cite[Proposition 4.4]{NR}. 

From Lemma \ref{lemma-cohomology computation end(G_d)} it follows
that $\dim H^1(S_d,\, \ms End(A_d\big\vert_{c\times S_d}))\,=\,1$.
Thus, $\dim H^1(S_d,\, ad(A_d\big\vert_{c\times S_d}))\,=\,1$.
In view of the injectivity of the composite in \eqref{t1}, this
implies that the composite map in \eqref{t1} is actually an isomorphism. Now by a base change argument we get the first statement of
the theorem. The second statement also follows from base change and Lemma \ref{lemma-cohomology computation end(G_d)}.
\end{proof}

\begin{corollary}\label{corollary main theorems}\mbox{}
\begin{enumerate}
\item There is the following short exact sequence on $\Sigma$
($\Xi$ is the map in \eqref{preliminary ses e3})
\begin{equation*}
0\,\longrightarrow\, ad(q_1^*E\big\vert_{\Sigma})\,\stackrel{\Xi}{\longrightarrow} \,
\Phi_*\ms Hom(\mc A,\,\mc B)\,\longrightarrow\, q_1^*T_C\big\vert_{\Sigma}\longrightarrow 0\,.
\end{equation*}

\item $R^i\Phi_*\ms Hom(\mc A,\,\mc B)\,=\,0$ for $i\,\geqslant  \, 1$.

\item $H^i(\Sigma,\, \Phi_*\ms Hom(\mc A,\,\mc B))\,\stackrel{\sim}{\longrightarrow}
\,H^i(\mc Z,\,\ms Hom(\mc A,\,\mc B))$ for all $i$.
	\end{enumerate}
\end{corollary}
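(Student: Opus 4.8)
The plan is to deduce all three statements formally from Theorems \ref{preliminiary ses} and \ref{description R1-Phi-End(A)}, since the genuine geometric content has already been isolated there through the base-change computations of Lemmas \ref{cor-cohomology of A_d} and \ref{lemma-cohomology computation end(G_d)}. For part (1), I would begin with the short exact sequence
\[
0\,\longrightarrow\, ad(q_1^*E\big\vert_{\Sigma})\,\stackrel{\Xi}{\longrightarrow}\, \Phi_*\ms Hom(\mc A,\,\mc B)\,\longrightarrow\, R^1\Phi_*\ms End(\mc A)\,\longrightarrow\, 0
\]
furnished by Theorem \ref{preliminiary ses}, and simply identify its rightmost term. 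The remark immediately following that theorem supplies an isomorphism $R^1\Phi_*\ms End(\mc A)\,\cong\, R^1\Phi_*ad(\mc A)$, and Theorem \ref{description R1-Phi-End(A)} identifies the latter with $q_1^*T_C\big\vert_{\Sigma}$. Substituting this composite isomorphism into the sequence above produces exactly the exact sequence asserted in (1).

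For part (2), I would invoke the second assertion of Theorem \ref{preliminiary ses}, namely the natural isomorphisms $R^i\Phi_*\ms Hom(\mc A,\,\mc B)\,\cong\, R^{i+1}\Phi_*\ms End(\mc A)$ valid for every $i \geqslant 1$. Since $i \geqslant 1$ forces $i+1 \geqslant 2$, the vanishing $R^j\Phi_*\ms End(\mc A)\,=\,0$ for $j \geqslant 2$ established in Theorem \ref{description R1-Phi-End(A)} yields $R^i\Phi_*\ms Hom(\mc A,\,\mc B)\,=\,0$ for all $i \geqslant 1$ at once.

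For part (3), I would run the Leray spectral sequence for $\Phi$ applied to the sheaf $\ms Hom(\mc A,\,\mc B)$ on $C\times \mc Q$. By part (2) all higher direct images vanish, so the spectral sequence degenerates and yields isomorphisms $H^i(C\times C^{(d)},\,\Phi_*\ms Hom(\mc A,\,\mc B))\,\cong\, H^i(C\times \mc Q,\,\ms Hom(\mc A,\,\mc B))$ for every $i$. It then remains to replace the two ambient spaces by the closed subschemes $\Sigma$ and $\mc Z$. Because $\mc B$ is torsion supported on $\mc Z$, the sheaf $\ms Hom(\mc A,\,\mc B)$ is supported on $\mc Z$, and the Cartesian squares in \eqref{f14} force $\Phi_*\ms Hom(\mc A,\,\mc B)$ to be supported on $\Sigma$; since pushforward along the closed immersions $\mc Z\hookrightarrow C\times \mc Q$ and $\Sigma\hookrightarrow C\times C^{(d)}$ is exact and preserves cohomology, each ambient group coincides with the corresponding group computed on the closed subscheme, and combining these identifications gives the stated isomorphism.

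The only step requiring any care is part (3), where one must keep the support bookkeeping and the Leray degeneration straight; this is harmless here because a sheaf supported on a closed subscheme has the same cohomology whether computed on that subscheme or on the ambient variety. Parts (1) and (2) are purely formal rearrangements of the two cited theorems, so the corollary as a whole is a bookkeeping consequence of the substantive base-change computations already carried out.
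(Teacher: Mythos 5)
Your proof is correct and follows essentially the same route as the paper: parts (1) and (2) are obtained by combining the exact sequence and isomorphisms of Theorem \ref{preliminiary ses} with Theorem \ref{description R1-Phi-End(A)}, and part (3) by Leray degeneration from (2) together with the support identifications on $\Sigma$ and $\mc Z$. The only difference is that you spell out the support bookkeeping for (3), which the paper leaves implicit.
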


\begin{proof}
Statement (1) follows by combining the short exact sequence in the 
statement of Theorem \ref{preliminiary ses} with the isomorphism in
Theorem \ref{description R1-Phi-End(A)}. Statement (2) follows using the isomorphism
in the statement of Theorem \ref{preliminiary ses} and 
the second assertion in Theorem \ref{description R1-Phi-End(A)}.
Statement (3) follows using the Leray spectral sequence and (2).
\end{proof}

\subsubsection{The tangent bundle of ${\mc Q}$}

The following proposition describing the tangent bundle of ${\mc Q}$ is standard.

\begin{proposition}\label{T_Q}
The tangent bundle of $\mc Q$ is $$T_{\mc Q}
\,\cong\, p_{2*}(\ms Hom(\mc A,\,\mc B)),$$
where $p_2$ is the projection in \eqref{t5}.
\end{proposition}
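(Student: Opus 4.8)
The plan is to combine Grothendieck's deformation theory of the Quot scheme, which identifies the tangent space pointwise, with a cohomology-and-base-change argument that globalizes this identification; the torsion nature of $\mc B$ is what makes every higher direct image vanish and forces the relevant base-change maps to be isomorphisms.

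First I would recall the pointwise description. Let $q\,\in\,\mc Q$ be a closed point, and let
$$0\,\longrightarrow\, A_q\,\longrightarrow\, E\,\longrightarrow\, B_q\,\longrightarrow\, 0$$
be the restriction of the universal sequence \eqref{universal-seq-C times Q} to $C\times\{q\}$. By the deformation theory of quotients (see \cite[Section~2.2]{HL}), the Zariski tangent space $T_q\mc Q$ is canonically isomorphic to $\mathrm{Hom}_C(A_q,\,B_q)$: a tangent vector is a flat quotient of $E$ over $C\times\mathrm{Spec}\,k[\epsilon]$ lifting $q$, and the set of such lifts is a torsor under $\mathrm{Hom}_C(A_q,\,B_q)$ with the split lift as origin. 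The obstruction to smoothness lies in $\mathrm{Ext}^1_C(A_q,\,B_q)\,=\,H^1(C,\,A_q^{\vee}\otimes B_q)$, which vanishes because $A_q^{\vee}\otimes B_q$ is a torsion sheaf on the curve $C$; this is consistent with $\mc Q$ being smooth, so $T_{\mc Q}$ is a vector bundle whose fiber at $q$ is $\mathrm{Hom}_C(A_q,\,B_q)$.

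Next I would analyze the right-hand side. Since $\mc A$ is locally free, $\ms Hom(\mc A,\,\mc B)\,\cong\,\mc A^{\vee}\otimes\mc B$ is flat over $\mc Q$ and its formation commutes with base change along any $C\times T\,\longrightarrow\, C\times\mc Q$; in particular its restriction to $C\times\{q\}$ is $\ms Hom(A_q,\,B_q)$. As $\mc B$ is supported on the subscheme $\mc Z$ of \eqref{f16}, which is finite over $\mc Q$ by the Cartesian diagram \eqref{f14}, the sheaf $\ms Hom(\mc A,\,\mc B)$ has support finite over $\mc Q$; restricted to any fiber $C\times\{q\}$ it is torsion on $C$ and hence has vanishing higher cohomology. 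By cohomology and base change \cite[Corollary~12.9]{Ha} it follows that $R^i p_{2*}\ms Hom(\mc A,\,\mc B)\,=\,0$ for $i\,>\,0$, that $p_{2*}\ms Hom(\mc A,\,\mc B)$ is locally free, and that its fiber at $q$ is canonically $H^0(C,\,\ms Hom(A_q,\,B_q))\,=\,\mathrm{Hom}_C(A_q,\,B_q)$.

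Finally I would produce a natural morphism $T_{\mc Q}\,\longrightarrow\, p_{2*}\ms Hom(\mc A,\,\mc B)$ whose fiber over each $q$ is the identification of the first two steps, namely the relative Kodaira--Spencer map of the universal quotient. Both sheaves are then locally free of the same rank and the map is a fiberwise isomorphism, so it is an isomorphism of vector bundles by Nakayama. The main obstacle is precisely this last globalization: upgrading the pointwise Grothendieck identification to a genuine morphism of $\mc O_{\mc Q}$-modules, for which one works with the relative $\ms Hom$ functor and uses the base-change isomorphism established in the previous step to identify the abstract relative Hom sheaf with the honest direct image $p_{2*}\ms Hom(\mc A,\,\mc B)$. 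Once this map is in place, that it is an isomorphism is immediate from the rank count and the fiberwise statement.
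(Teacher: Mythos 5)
Your proposal is correct and is essentially the argument the paper invokes: the paper's proof consists of the single line ``same as \cite[Theorem 7.1]{Str}'', and Str\o mme's proof is exactly this standard combination of Grothendieck's deformation-theoretic identification $T_q\mc Q\,\cong\,\mathrm{Hom}_C(A_q,\,B_q)$ with cohomology-and-base-change for $\ms Hom(\mc A,\,\mc B)$ (which, being supported on $\mc Z$, finite over $\mc Q$, has no higher cohomology on fibers), globalized via the universal property of $\mc Q$ over dual numbers. The only cosmetic difference is that you finish by a fiberwise-isomorphism-plus-rank argument, whereas the functor-of-points version identifies sections of the two sheaves over arbitrary opens (or arbitrary base changes) directly; both are valid.
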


\begin{proof}
The proof is same as that of \cite[Theorem 7.1]{Str}.
\end{proof}

Recall that associated to a vector bundle $V$ on $C$ 
there is the Secant bundle on $C^{(d)}$
$${Sec}^d(V)\,:=\,q_{2*}[q_1^*V\big\vert_{\Sigma}]\,.$$

\begin{theorem}\label{main theorem}
The following statements hold:
\begin{enumerate}
\item There is a diagram
\begin{equation}\label{ses statement main theorem}
\begin{tikzcd}
0 \ar[r] & q_1^*ad(E)\big\vert_{\Sigma}\ar[r] \ar[d,equal] & q_1^*at(E)\big\vert_{\Sigma} \ar[r] 
\ar[d] & q_1^* T_C\big\vert_{\Sigma} \ar[r] \ar[d,"\cong"] & 0 \\
0 \ar[r] & q_1^*{ad}(E)\big\vert_{\Sigma} \ar[r,"\Xi"] & \Phi_*{\ms Hom}(\mc A,\,\mc B) \ar[r] & 
R^1\Phi_*ad(\mc A) \ar[r] & 0
\end{tikzcd}
\end{equation}
in which the squares commute up to a minus sign, where $q_1$ and $\Phi$ are
the maps in \eqref{t6} and \eqref{t7} respectively.
The right vertical arrow is the one coming from Theorem \ref{description R1-Phi-End(A)}.
In particular, the middle vertical arrow is an isomorphism.

\item $Sec^d(at(E))\,\,\stackrel{\sim}{\longrightarrow}\,\, \phi_*T_{\mc Q}$.

\item $R^i\phi_*T_{\mc Q}\,=\,0$ for all $i\,>\,0$.
\end{enumerate}
\end{theorem}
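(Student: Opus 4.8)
The plan is to prove part (1) first; parts (2) and (3) will then follow formally from it, together with the functoriality of pushforward under the factorization $\phi\circ p_2 = q_2\circ\Phi$ and the two finiteness statements recorded after \eqref{f14}.

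For part (1), observe that both rows are short exact sequences on $\Sigma$ sharing the left term $q_1^*ad(E)\big\vert_\Sigma$ and, after the isomorphism of Theorem \ref{description R1-Phi-End(A)}, the right term $q_1^*T_C\big\vert_\Sigma$. The top row is the restriction to $\Sigma$ of the $q_1$-pullback of the adjoint Atiyah sequence \eqref{atiyah-seq} of $E$, so its extension class is the restriction of $q_1^*[at(E)]$; the bottom row is the one furnished by Corollary \ref{corollary main theorems}(1), whose right-hand identification is built from the connecting map \eqref{f18} of the relative adjoint Atiyah sequence \eqref{rel Atiyah A on C times Q} of $\mc A$. I would construct the middle vertical arrow by comparing the two extension classes in $\mathrm{Ext}^1(q_1^*T_C\big\vert_\Sigma,\, q_1^*ad(E)\big\vert_\Sigma)$. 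The crucial input is Lemma \ref{cor-compatibility of atiyah classes} applied to the inclusion $\mc A\,\hookrightarrow\, p_1^*E$ of the universal sequence \eqref{universal-seq-C times Q}: it identifies the relative Atiyah class of $\mc A$ with that of $p_1^*E$ through this morphism, and the latter is simply the $p_1$-pullback of the Atiyah class of $E$. Restricting to $\Sigma$ and tracing through the identifications of Theorems \ref{preliminiary ses} and \ref{description R1-Phi-End(A)} then shows that the two extension classes agree up to a sign, which both produces the comparison map and accounts for the squares commuting only up to a minus sign. Since the left vertical arrow is the identity and the right one is the isomorphism of Theorem \ref{description R1-Phi-End(A)}, the five lemma forces the middle vertical arrow to be an isomorphism.

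This extension-class bookkeeping is the main obstacle, because the bottom row was assembled from several separate pushforward computations rather than directly from an Atiyah sequence, so the identification of its class with the Atiyah class of $E$ --- including the precise sign --- must be carried out carefully.

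For part (2), Proposition \ref{T_Q} gives $T_{\mc Q}\,=\,p_{2*}\ms Hom(\mc A,\mc B)$, and since $\Phi\,=\,{\rm Id}_C\times\phi$ the identity $\phi\circ p_2 = q_2\circ\Phi$ yields $\phi_*T_{\mc Q} = q_{2*}\Phi_*\ms Hom(\mc A,\mc B)$. By the middle isomorphism of part (1), $\Phi_*\ms Hom(\mc A,\mc B)\,\cong\, q_1^*at(E)\big\vert_\Sigma$, and applying $q_{2*}$ gives precisely $q_{2*}\big(q_1^*at(E)\big\vert_\Sigma\big)\,=\,Sec^d(at(E))$ by the definition of the secant bundle. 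For part (3), I would run the Grothendieck spectral sequences for the two factorizations of the common morphism $\phi\circ p_2 = q_2\circ\Phi$ applied to $\ms Hom(\mc A,\mc B)$. Along $q_2\circ\Phi$, Corollary \ref{corollary main theorems}(2) gives $R^{\geqslant 1}\Phi_*\ms Hom(\mc A,\mc B)=0$, so the spectral sequence collapses to $R^i(q_2\circ\Phi)_*\ms Hom(\mc A,\mc B)=R^iq_{2*}\big(\Phi_*\ms Hom(\mc A,\mc B)\big)$; as $\Phi_*\ms Hom(\mc A,\mc B)$ is supported on $\Sigma$ and $q_2\big\vert_\Sigma$ is finite (the bottom composite in \eqref{f14}), these vanish for $i\geqslant 1$. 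Along $\phi\circ p_2$, the sheaf $\ms Hom(\mc A,\mc B)$ is an $\mc O_{\mc Z}$-module (since $\mc B$ is supported on $\mc Z$, cf. Lemma \ref{Hom(B,B)}) and $p_2\big\vert_{\mc Z}$ is finite (the top composite in \eqref{f14}), so $R^{\geqslant 1}p_{2*}\ms Hom(\mc A,\mc B)=0$ and the spectral sequence collapses to $R^i(\phi\circ p_2)_*\ms Hom(\mc A,\mc B)=R^i\phi_*T_{\mc Q}$. Equating the two expressions gives $R^i\phi_*T_{\mc Q}=0$ for all $i\geqslant 1$.
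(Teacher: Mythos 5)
Your proposal is correct in substance, and for parts (2) and (3) it coincides with the paper's argument: the paper derives (2) by applying $q_{2*}$ to the middle isomorphism and invoking Proposition \ref{T_Q}, and it compresses (3) into one sentence (``deduced from Corollary \ref{corollary main theorems} using $q_2\big\vert_\Sigma$ and $p_2\big\vert_{\mc Z}$ are finite''), of which your two-spectral-sequence computation along $\phi\circ p_2=q_2\circ\Phi$ is exactly the intended expansion. The only real divergence is in how part (1) is organized. The paper does not compare extension classes abstractly; it builds the middle vertical arrow explicitly at the level of sheaves on $C\times\mc Q$: it forms the push-out $\mc F$ of $at_C(\mc A)$ along $ad(\mc A)\to\ms Hom(\mc A,\,p_1^*E)/\mc O_{C\times\mc Q}$ (diagram \eqref{eqn-pushout}), runs the snake lemma to get \eqref{eqn-atiyah of A and E}, and then uses Lemma \ref{base change relative ad-atiyah} together with Lemma \ref{cor-compatibility of atiyah classes} --- applied, as you propose, to the inclusion $\mc A\hookrightarrow p_1^*E$ --- to identify the middle row of \eqref{eqn-atiyah of A and E} with the push-out of the relative adjoint Atiyah sequence of $p_1^*E$; this yields concrete maps $p_1^*at(E)\to\mc F\to\ms Hom(\mc A,\,\mc B)$, and applying $\Phi_*$ gives the middle arrow, after which sign-commutativity is verified and the isomorphism follows as in your five-lemma step. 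Your route instead pulls the bottom row back along the isomorphism of Theorem \ref{description R1-Phi-End(A)} and compares classes in ${\rm Ext}^1\bigl(q_1^*T_C\big\vert_\Sigma,\,q_1^*ad(E)\big\vert_\Sigma\bigr)$. This is a legitimate repackaging, but note what each buys: the paper's construction produces a specific map (which is what makes ``the squares commute up to a minus sign'' a well-posed, checkable assertion, with the sign localized in diagram \eqref{eqn-commutativity of at(E) and Hom(A,B)}), whereas your argument is existential and defers all the content to the class comparison. The ``bookkeeping'' you flag as the main obstacle is genuinely the crux: the bottom row's class is defined through two different connecting maps of $\Phi_*$ (one from the $\ms Hom$ sequence, one from the relative Atiyah sequence of $\mc A$), and relating these to the restricted Atiyah class of $E$ requires constructing maps between the sequences upstairs before pushing forward --- which is to say, carrying out your plan rigorously would essentially reproduce the paper's push-out diagrams. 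So the two proofs are the same argument in different clothing, with the paper's version being the sheaf-level realization of the ${\rm Ext}$-class equality you want to exploit.
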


\begin{proof}
We will first construct diagram \eqref{ses statement main theorem}.
Denote by $\mc F$ the push-out of the diagram
	\begin{equation}\label{eqn-pushout}
	\begin{tikzcd}
	0 \ar[r] & ad(\mc A) \ar[r] \ar[d] & at_C(\mc A) \\
	& {\ms Hom}(\mc A,\,p_1^*E)/{\mc O}_{C\times {\mc Q}} .
	\end{tikzcd}
	\end{equation}
So using Snake lemma the following diagram is obtained:
	\begin{equation}\label{eqn-atiyah of A and E}
	\begin{tikzcd}
	& 0 \ar[d] & 0 \ar[d] & 
	& \\
	0 \ar[r] & ad(\mc A) \ar[r] \ar[d] & at_C(\mc A) \ar[d] \ar[r] &
	p_1^*T_C \ar[r] \ar[d, equal] & 0 \\
	0 \ar[r] & {\ms Hom}(\mc A,\,p_1^*E)/{\mc O}_{C\times{\mc Q}} \ar[r] \ar[d] & \mc F \ar[r] \ar[d] &
	p_1^* T_C \ar[r] & 0 \\
	& {\ms Hom}(\mc A,\,\mc B) \ar[r,equal] \ar[d] &
{\ms Hom}(\mc A,\,\mc B) \ar[d] &
& \\
& 0 & 0 & &
\end{tikzcd}
\end{equation}
Recall that by Corollary \ref{base change relative ad-atiyah} 
the relative adjoint Atiyah
sequence of $p_1^*E$ on $C\times\mc Q$, where $p_1$ is the projection
in \eqref{t5}, is simply 
the pullback of the relative adjoint Atiyah sequence for $E$. From
Corollary \ref{cor-compatibility of atiyah classes} it follows
that the middle row in \eqref{eqn-atiyah of A and E}
coincides with the push-out of relative adjoint Atiyah sequence of
$p_1^*E$ by the morphism $ad(p_1^*E)\,\longrightarrow\,
{\ms Hom}(A,\,p_1^*E)/\mc O$, that is, we have a diagram
\begin{equation}\label{eqn-atiyah of E}
\begin{tikzcd}
0 \ar[r] & p_1^*ad(E) \ar[r] \ar[d] & p_1^*at(E) \ar[r]
\ar[d] & p_1^* T_C \ar[r] \ar[d,equal] & 0 \\
0 \ar[r] & {\ms Hom}(\mc A,\,p_1^*E)/{\mc O}_{C\times{\mc Q}}\ar[r] & \mc F \ar[r]
& p_1^* T_C \ar[r] & 0 \\
\end{tikzcd}
\end{equation}
Combining (\ref{eqn-atiyah of A and E}) and (\ref{eqn-atiyah of E}) we get maps 
$$p_1^*at(E) \,\longrightarrow\, \mc F\,\longrightarrow\, {\ms Hom}(\mc A,\,
\mc B)\,.$$
Applying $\Phi_*$ we get a map
\begin{equation}\label{eqn-at(E) to Hom(A,B)}
q_1^*at(E)\,\,\longrightarrow\,\, \Phi_*{\ms Hom}(\mc A,\,\mc B)
\end{equation}
It can be shown that in the following diagram the squares commute up to a minus sign:
\begin{equation}\label{eqn-commutativity of at(E) and Hom(A,B)}
\begin{tikzcd}
0 \ar[r] & q_1^*ad(E) \ar[r] \ar[d] & q_1^*at(E) \ar[r]
\ar[d] & q_1^* T_C \ar[r] \ar[d] & 0 \\
0 \ar[r] & q_1^*{ad}(E)\big\vert_{\Sigma} \ar[r] & \Phi_*{\ms Hom}(\mc A,\,\mc B) \ar[r] & 
R^1\Phi_*ad(\mc A) \ar[r] & 0
\end{tikzcd}
\end{equation}
Here the bottom sequence is the one in 
Theorem \ref{preliminiary ses}. The middle vertical arrow is given by
\eqref{eqn-at(E) to Hom(A,B)} while the right vertical arrow is given by the 
boundary map of the sequence obtained by applying $\Phi_*$ to the 
relative adjoint Atiyah sequence of $\mc A$. 

The first assertion of the theorem now follows from Theorem \ref{description R1-Phi-End(A)} and from the fact that
all sheaves in the lower row of \eqref{ses statement main theorem} are supported on $\Sigma$.

It follows that we have an isomorphism 
$q^*_1 at(E)\big\vert_{\Sigma}\,\cong\, \Phi_*{\ms Hom}(\mc A,\,\mc B)$.
Applying $q_{2*}$ to this and using Proposition \ref{T_Q}
yields the second assertion.
The third assertion is deduced from 
Corollary \ref{corollary main theorems} using $q_2\big\vert_{\Sigma}$ 
and $p_2\big\vert_{\mc Z}$ are finite maps.
\end{proof}

\section*{Acknowledgements}

We thank the referee for helpful comments. 
The first author is partially supported by a J. C. Bose Fellowship (JBR/2023/000003).

\end{document}